\begin{document}
\title{Scattering in quantum dots via noncommutative rational functions}
 \author{
L\'aszl\'o Erd\H{o}s\footnote{\hspace{0.15cm}Partially funded by ERC Advanced Grant RANMAT No. 338804} 
\\
{\small \begin{tabular}{c}{IST Austria}\\{lerdos@ist.ac.at} \end{tabular}} 
\and Torben Kr\"uger\footnote{\hspace{0.15cm} Partially supported by VILLUM FONDEN research grant no. 29369 and the Hausdorff Center for Mathematics  \newline  Date: \today }
\addtocounter{footnote}{-2}\addtocounter{Hfootnote}{-2}\\ 
{\small \begin{tabular}{c}  University of Copenhagen \\ tk@math.ku.dk  \end{tabular}}
\and Yuriy Nemish\footnotemark\\
{\small \begin{tabular}{c} UC San Diego\\ ynemish@ucsd.edu \end{tabular}}
}
\date{} % empty argument removes the date!

\maketitle

\thispagestyle{empty} %%% no page numbers!!!

\begin{abstract}
  In the customary random matrix model for transport in quantum dots with $M$ internal degrees of freedom coupled to a chaotic environment via $N\ll M$ channels,  the density $\rho$ of transmission eigenvalues is computed from a  specific invariant ensemble for which  explicit formula for the joint probability density of all eigenvalues is available.
 We revisit this problem in the large $N$ regime allowing for (i) arbitrary ratio $\cstS := N/M\le1$;  and (ii) general distributions  for the matrix elements of the Hamiltonian of the quantum dot. 
  In the limit $\cstS\to 0$  we recover the formula for the density $\rho$ that  Beenakker~\cite{Been97} has derived for a special matrix ensemble.
We also  prove that the inverse square root singularity of the density  at zero and full transmission  in Beenakker's formula persists for any $\cstS<1$ but in the borderline case $\cstS=1$ an anomalous $\lambda^{-2/3}$ singularity arises at zero. 
To access this level of generality we develop the theory of global and local laws on the spectral density of a large class of noncommutative rational expressions in large  random matrices with i.i.d. entries. 
\end{abstract}

\noindent \emph{Keywords:} Noncommutative rational functions, quantum dot, local law, generalized resolvent, linearization\\
\textbf{AMS Subject Classification:}  15B52, 46L54, 60B20, 81V65

\section{Introduction and results}
\label{sec:introduction}

Since the pioneering discovery of E. Wigner on the universality of eigenvalue statistics of large random matrices \cite{Wign58}, random matrix theory has become one of the most successful phenomenological theories to study disordered quantum systems, see \cite{AkemBaikDiFrBook} for a broad overview.
Among many other applications, it has been used for open quantum systems and quantum transport,  in particular  to predict the distribution of  transmission eigenvalues of scattering in  quantum dots and wires. 
The theory has been  developed over many excellent works starting with the ground-breaking papers by Mello, Pereyra, Seligman~\cite{MellPereSeli85} and by Verbaarschot, Weidenm\"uller and Zirnbauer~\cite{VerbWeidZirn85}; for a complete overview  with extensive references see reviews  by Beenakker  \cite{Been97, Been11b}, Fyodorov and Savin \cite{FyodSavi11} and Schomerus \cite{Scho17}.

We will focus on quantum dots, i.e. systems without internal structure, coupled to an environment (electron reservoir)
via scattering channels. Quantum wires, with a typically quasi one-dimensional internal structure,
will be left for further works. In the simplest setup the quantum dot 
is described by a self-adjoint Hamiltonian (complex Hermitian matrix\footnote{Our method works for the real symmetric case  
  as well but  for simplicity we stay in the complex Hermitian symmetry class.}) $H\in \CC^{M\times M}$ acting  on  an 
$M$ dimensional state space $\CC^M$.
It is coupled to an environment with $N_0$ effective degrees of freedom via an $M\times N_0$ complex coupling matrix $W$.
 Following Wigner's paradigm, both the  Hamiltonian $H$ and the coupling matrix $W$ are  drawn
from  random matrix ensembles respecting the basic symmetries of the model. Typically the entries of $W$ are
independent, identically distributed ({\it i.i.d.}), while $H$ is a Wigner matrix, i.e. 
it has i.i.d. entries on and above the diagonal. We allow for general distributions in contrast to most existing works in the
literature that assume $H$ has Gaussian  or Lorentzian distribution.

The Hamiltonian of the total system at Fermi energy $E\in \RR$ is given by  (see  \cite[Eq. (80)]{Been97})
$$
{\mathcal H} = \sum_{a=1}^{N_0} | a \rangle E \langle a | + \sum_{\mu,\nu=1}^M |\mu\rangle H_{\mu\nu} \langle \nu|
+ \sum_{\mu=1}^M\sum_{a=1}^{N_0} \Big[ |\mu \rangle W_{\mu a} \langle a| + |a\rangle W_{\mu a}^* \langle \mu| \Big].
$$
One common assumption is that the interaction $W$ is independent of the Fermi energy $E$.

At any fixed energy $E\in \RR$ we define the {\it scattering matrix} (see      \cite[Eq. (81)]{Been97})
\begin{equation}
  \label{eq:1}
  S(E)
  :=
  I - 2\pi \imu W^* (E\cdot I - H + \imu \pi W W^*)^{-1} W \in \CC^{N_0\times N_0}.
\end{equation}
This is the finite dimensional analogue of the  Mahaux-Weidenm\"uller formula in nuclear physics \cite{MahaWeid68}
that can be derived from ${\mathcal H}$ in the $N_0\to \infty$ limit.
The definition \eqref{eq:1}  will be the  starting point of our
mathematical  analysis.
Since $H=H^*$, one can easily check that $S(E)$ is unitary.

To distinguish between transmission and reflection, we assume that the scattering channels are split into two groups, left and right channels,
with dimensions $N_1+N_2=N_0$ and the interaction Hamiltonian
is also split accordingly; $W = (W_1 , W_2) \in \CC^{M\times (N_1+N_2)}$.
Therefore $S(E)$ 
has a natural $2\times 2$ block structure and we can write it as
(see  \cite[Eq. (23)]{Been97}) 
\begin{equation}
  \label{eq:2}
  S(E)
  =
  \left(
    \begin{array}{cc}
      R & T'
      \\
      T & R'
    \end{array}
  \right)
  ,
\end{equation}
where $R\in \CC^{N_1 \times N_1}$, $R'\in \CC^{N_2 \times N_2}$ are the {\it reflection matrices}
and $T\in \CC^{N_2 \times N_1}$, $T' \in \CC^{N_1 \times N_2}$ are  the {\it transmission matrices}.

As a consequence of unitarity, one finds that $TT^*$, $T'(T')^*$, $I-RR^*$ and $I  -R' (R')^*$ have the same set of nonzero eigenvalues.
For simplicity, we assume that $N_1=N_2=N$
i.e. generically these four matrices have no zero eigenvalues,
the general $N_1 \neq N_2$ case has been considered in \cite{BrouBeen96}.
We denote these \emph{transmission eigenvalues}  by $\lambda_1, \lambda_2, \ldots, \lambda_N$.
They express the rate of the transmission through each channel.
By unitarity of $S$,  clearly $\lambda_i\in [0,1]$ for all $i$; $\lambda_i=0$ means the channel is  closed,
while $\lambda_i=1$ corresponds to a fully open channel. 
The transmission eigenvalues carry important physical properties of the system. For example $\Tr TT^* =\sum_i \lambda_i$
gives  the {\it zero temperature conductance}  ({\it Landauer formula}  \cite[Eq. (33)]{Been97}), while
\begin{equation}\label{shotnoise}
   \sum_i \lambda_i (1-\lambda_i) = \Tr TT^* - \Tr (TT^*)^2 
\end{equation}
is the {\it shot noise power}, giving the zero temperature fluctuation of the current ({\it B\"uttiker's formula}, \cite{Butt90},  \cite[Eq. (35)]{Been97}).
The dimensionless ratio of the shot noise power and the conductance is called the {\it Fano factor} (see \cite[Eq. (2.15)]{Been11b})
\begin{equation}\label{fano}
  F: =\frac{ \sum_i \lambda_i (1-\lambda_i)}{\sum_i \lambda_i}.
\end{equation}
The current fluctuation is therefore given by  a certain linear statistics  of the transmission eigenvalues
and  thus it can  be computed from the density 
$\rho$  of these eigenvalues. Therefore, determining $\rho$ is a main task in the theory of quantum dots.

In many physical  situations  it is found 
that $\rho$ has a {\it bimodal} structure with a peak at zero and a peak at unit transmission rates.
Furthermore, $\rho$ exhibits a power law singularity at the edges of its support $[0,1]$. One main 
result of the theory in \cite{Been97} is that in the $M\gg N\gg 1$ regime at energy $E=0$ the density of transmission eigenvalues for a quantum dot is given by
\begin{equation}\label{bimodal}
  \rho_{\tiny \mbox{Bee}}(\lambda) = \frac{1}{\pi \sqrt{\lambda(1-\lambda)}},
\end{equation}
(the answer is different for quantum wires), 
i.e. it has an inverse square root singularity at both edges, see \cite[Eq. (3.12)]{Been11b}. In this case, the
Fano factor is $F=1/4$ which fits well the experimental data.

The goal of this paper is to revisit and substantially generalize
the problem of transmission eigenvalues 
 with very different methods than Beenakker and collaborators used.  While
those works  used invariant matrix  ensembles for $H$ and  
 relied on explicit computations for the circular ensemble,
we consider very general  distribution for the matrix elements of $H$.
In particular,  we show that  in the regime $\cstS:= N/M \in (0,1]$, $M\to\infty$,
the empirical density of transmission eigenvalues has a deterministic limit $\rho=\rho_{\cstS}$ and 
we give a simple algebraic equation to compute it. The solution can be continuously extended as $\cstS\to0$,
the equation simplifies for $\cstS=0$, the density becomes explicitly computable and 
 it coincides with~\eqref{bimodal}; $\lim_{\cstS\to 0} \rho_{\cstS} =\rho_{\cstS=0}=  \rho_{\tiny \mbox{Bee}}$
 for $E=0$ and our formula holds for any $E$ in the bulk spectrum of $H$.
 While no short explicit formula is available for 
the general case  $\cstS \in (0, 1]$,  we can analyse the singularities of $ \rho_{\cstS}$
for any fixed  $\cstS \in (0, 1]$ in detail. 

More precisely,  we rigorously prove that for any fixed $\cstS\in (0,1)$ the density $\rho_{\cstS}$ has an inverse square root singularity 
both at 0 and at 1,
\begin{equation}\label{square}
\rho_{\cstS} (\lambda) \sim \lambda^{-1/2}, \;\quad  \mbox{for $0<\lambda\ll 1$}, \quad \mbox{and} \quad
\rho_{\cstS} (\lambda) \sim (1-\lambda)^{-1/2}, \;\quad \mbox{for $0<1-\lambda\ll 1$},
\end{equation}
qualitatively in line with  $\rho_{\tiny \mbox{Bee}}$ from~\eqref{bimodal}. However, $\rho_{\cstS}$ is not symmetric  around  $\lambda=\frac{1}{2}$ 
and the Fano factor at $E=0$ slightly  differs from $\frac{1}{4}$ when $\cstS\ne 0$.
Fig.~\ref{fig:1}  shows the Fano factor for different values of $\cstS$ numerically computed from our theory.
\begin{figure}[h]
  \centering
  \includegraphics{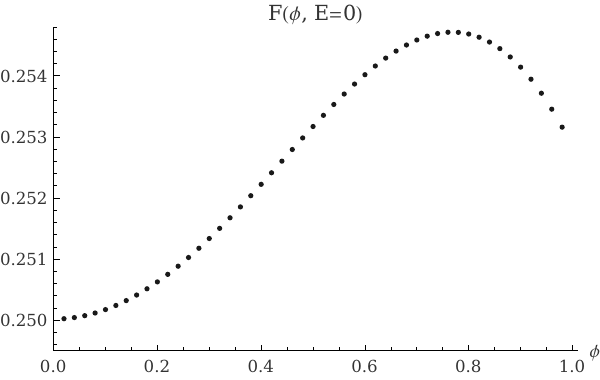}
  \caption{Fano factor for $E=0$ and $\cstS \in (0,1)$}
  \label{fig:1}
\end{figure}
We mention that the deviation from $1/4$ is within   $2\%$ for the entire
range of $\cstS \in [0,1)$ which is well below the error bar of the experimental results presented in Fig. 6 of \cite{Been97} and adapted from \cite{OberSukhStruSchoHeinHoll01}. 

 The value $\cstS=1$ is special, since the singularity of $\rho$ at $\lambda\approx 0$ changes
to 
\begin{equation}\label{cube}
\rho_{\cstS=1}(\lambda) \sim \lambda^{-2/3}, \;\quad \mbox{for $0<\lambda\ll 1$}
\end{equation}
 from $\lambda^{-1/2}$ in~\eqref{square}, while the 
inverse square root singularity at 1 persists. This enhancement of  singularity signals the emergence of a $\delta$-function 
component at 0 in $\rho_{\cstS}$ as $\cstS$ becomes larger than 1, which is a direct consequence of $TT^*$ not having full rank  when $N>M$.
 We remark that this regime is quite unphysical since it corresponds
to more scattering channels than the total number of internal states of the quantum dot. 
In realistic quantum dots $N$ is smaller than $M$ since not every mode of the dot may
participate in scattering. 
We therefore do
not pursue the detailed analysis of $\rho_{\cstS}$ for $\cstS>1$, although our method can easily be extended to these $\cstS$'s as well.

There are two main differences between our model and that of Beenakker {\it et al.}
First, the distribution imposed on the random matrix $H$ is  different and we consider random $W$.
Second, our current method works in the entire regime  $\cstS = N/M\in(0,1]$, while Beenakker assumes
$M\gg N$, i.e. $\cstS\ll 1$. We now explain both differences.

Following Wigner's original vision, any relevant distribution must 
respect the basic symmetry of the model; in our case this demands that $H$ be complex Hermitian, while
no symmetry constraint is imposed on $W$. Respecting basic symmetries,
one may define ensembles essentially in two ways. Invariant ensembles are defined 
by imposing that the entire distribution be invariant; it is typically achieved
via a global Gibbs factor times the natural flat measure
on the space of matrices satisfying the basic symmetry. Wigner ensembles and their generalizations
impose distributions directly on the matrix elements and often demand independence
(up to the basic symmetry constraint). These two procedures typically yield different ensembles.

While in the simplest case of random Hermitian matrices both types of ensembles have been actively investigated,
 for ensembles with more complicated structure, like our $S$ that 
is a rational function of the basic ingredients $H$ and $W$ \eqref{eq:1},  up to recently only the invariant approach 
was available.  Sophisticated explicit formulas have been developed to
find the joint distribution of eigenvalues for  more and more 
complicated structured ensembles (see \cite{ForrBook}), which could then be
combined with orthogonal polynomial methods to obtain local correlation functions.
The heavy reliance on explicit formulas, however, imposes a serious limitation on how 
complicated functions of random matrices, as well as
how general distributions on these matrices can be considered.
For example,  the Gibbs factor is often restricted to Gaussian or closely related ensembles
to remain in the realm of explicitly computable orthogonal polynomials.

There have been  considerable developments in the other type of ensembles in the recent years.
Departing from the invariant world, about ten years ago the Wigner-Dyson-Mehta universality of local 
eigenvalue statistics has been proven  for Wigner matrices with i.i.d. entries, see \cite{ErdoYauBook}
and references therein. Later the i.i.d. condition was relaxed and even matrices with quite general 
correlation structures among their entries can be handled \cite{ErdoKrugSchr18}. One of the key
ingredients was to better understand the {\it Matrix Dyson equation (MDE)}, the basic equation 
governing the density of states \cite{AjanErdoKrug19}. Together with the linearization trick, this allows us to handle
arbitrary polynomials in  i.i.d. random matrices \cite{ErdoKrugNemi_Poly} and in the current work 
we extend our method to a large class of rational functions. Note that even if the 
building block matrices have independent entries, the linearization of their rational expressions
will have dependence, but the general MDE can handle it (see \eqref{eq:18}).
In our work we  deal only with bounded rational functions
and relatively straightforward regularizations of unbounded rational functions,
the general theory of 
unbounded rational functions is still in development, see~\cite{MaiSpeiYin19} and references therein.
We stress that the distribution of the matrix elements of $H$ and $W$ can be practically arbitrary.
 In particular, our result is not
restricted to the Gaussian world. 

In comparison, the result of Beenakker {\it et al.} \cite[Sect. III.B]{Been11b}, see also \cite[Sect. IV]{Brou95}, postulates
that $H$ is GUE, while $W$ may even be deterministic and only its singular values are relevant.
For example, if all nonzero singular values are equal one  and $N_0=2N\le M$, 
i.e. $\cstS \le \frac{1}{2}$, then  $S$ can be written as
$$
S = \frac{I + i\pi \widetilde H}{I- i\pi \widetilde H}, \qquad \widetilde H: = Q^T(H-E \cdot I )^{-1} Q,
$$
where $Q\in \CC^{M\times 2N}$ with $Q_{ij}=\delta_{ij}$. 
In fact, for the sake of explicit calculations, 
it is necessary to replace the GUE by a Lorentzian distribution  (irrelevant constants ignored),
\begin{equation}\label{lor}
  P(H) \sim \mbox{det}[ I + H^2]^{-M},
\end{equation}
since in this way $\widetilde H$ is also Lorentzian,
and argue separately that in the sense of correlation functions \eqref{lor}
is close to a GUE when $M$ is very large \cite[Section III]{Brou95}. 
Under these conditions  $S$ becomes Haar distributed
on $U(2N)$ as $M\to \infty$ and $N$ is fixed; this is the step where $M\gg N$ is necessary.
Furthermore, one can verify \cite[Section II.A.1]{Been97} that at energy $E=0$ the transmission eigenvalues
of a Haar distributed scattering matrix follow the circular ensemble on $[0,1]$. Therefore $\lambda_i$'s have
a well known joint distribution
$$
P\big(\{\lambda_i\}\big)\sim \prod_{i<j} (\lambda_j-\lambda_i)^2  \quad \mbox{on} \;\;\; [0,1]^N,
$$
and their density can be easily computed, yielding \eqref{bimodal} in the $N\to \infty$ limit.

While Beenakker's result relies on an impressive identity, it allows little flexibility in the inputs: $H$ needs
to be Lorentzian with very large dimension, moreover $M\gg N$ and $E=0$
are required.
 In contrast, our setup allows for a large freedom in 
choosing the distribution of  $H$, it covers the entire  range $M\ge N$ and also applies to any $E$ in the bulk, 
however  for computational simplicity we  model the contacts $W$ also  by a random matrix.
While  the most relevant regime for scattering on quantum dots is still $M\gg N$,
as scattering involves surface states only, a very recent work~\cite{Fyod20} introduces absorbing channels well
inside the quantum dot that leads to  physical models with $M\sim N$.

The flexibility in our result stems from the fact that our method directly aims at the density
of states via an extension of the MDE theory to linearizations of rational functions of random matrices. It seems unnecessarily ambitious, hence requiring too restrictive conditions,
to attempt to find the joint distribution of all eigenvalues. Even for Wigner matrices this is
a hopeless task beyond the Gaussian regime. 
We remark that the present analysis of the density of transmission eigenvalues  for the quantum dot is only one 
convenient application of our 
approach. This method
  is powerful enough to answer many related questions concerning 
the density of states such as   the analysis of the scattering poles
\cite{Fyod16} as well as extensions from  quantum dots to quantum wires that we will address in future work.

\subsection{Model and main theorem}
\label{sec:model-main-theorem}
In order to accommodate all parameters that will appear in our analysis, we introduce a generalized version of the scattering matrix \eqref{eq:1} 
\begin{equation}
  \label{eq:73}
  S(\cstY):= I - 2 \,\imu \cstT \, W^*(\cstY \cdot I -H + \imu \cstT\, WW^*)^{-1} W \,
\end{equation}
with $\cstT>0$  and $\cstY \in \CC_{+}$ for $\cstS\in (0,1/2]$ and $\cstY \in \CC_+ \cup \RR$ for $\cstS \in (1/2,1]$,
where  $\CC_+ : =\{z \in \CC: \Im z >0\}$ denotes the complex upper half plane.
 The constant $\cstT>0$  
quantifies the coupling between the internal quantum dot Hamiltonian $H$ and the  external leads $W_1,W_2$, i.e. the effective Hamiltonian of the open quantum systems becomes $H-\imu \cstT W W^{*}$. The spectral parameter 
$\cstY$ is used to regularize the potentially unstable inverse in \eqref{eq:73}. 
Note that for $\cstS \in (0,1/2)$ the matrix  $WW^*$  has a nontrivial kernel (and even for $\phi=1/2$ it has
a very small  eigenvalue), hence initially a regularization 
with a small positive imaginary part is necessary  that will later be carefully  removed.
For the technically easier $\cstS \in (1/2,1]$ regime
we may directly  choose $\cstY$ to be real since  the  eigenvalues of $WW^*$
 are bounded away from zero with very high probability.

The general formula~\eqref{eq:1} recovers 
 the scattering matrix \eqref{eq:1} by setting $\cstT = \pi$, $\Re \cstY = E$ and $\Im \cstY = 0$.
The regularized scattering matrix still has the $2\times 2$-block structure from \eqref{eq:2} with $T \in \mathbb{C}^{N \times N}$.
We consider the following random matrix model (see \eqref{eq:1} and \eqref{eq:2})
\begin{equation}
  \label{eq:3}
  \Tb_{\cstY,\cstS, \cstT}
  :=
  T T^* 
  = 
  4  \cstT^2  W_{2}^*\frac{1}{\cstY-H + \imu  \cstT  W W^{*}} W_{1} W_{1}^* \frac{1}{\overline{\cstY}-H - \imu  \cstT  W W^{*}} W_{2}
  ,
\end{equation}
where the triple of parameters $(\cstY, \cstS, \cstT)$ belongs to the same set as for \eqref{eq:73}.
Furthermore, for $M,N\in\NN$, $\cstS:= N/M$ the matrices $H\in \CC^{M \times M}$ and  $W_{1},W_{2} \in \CC^{M\times N}$ are three independent random matrix ensembles satisfying the following assumptions 
\begin{itemize}
\item[\textrm{(\textbf{H1}qd)}] $H$ is a Hermitian random matrix having independent (up to symmetry constraints) centered entries of variance $1/M$;
\item[\textrm{(\textbf{H2}qd)}] $W_{1}$ and $W_{2}$ are (non-Hermitian) random matrices having independent centered entries of variance $1/M$;
\item[\textrm{(\textbf{H3}qd)}] entries of $H$, $W_{1}$ and $W_{2}$, denoted by $H(i,j)$, $W_{1}(i,j)$ and $W_{2}(i,j)$ correspondingly, have finite moments of all orders, i.e., there exist $\cstA_{n} >0$, $n\in \NN$, such that
  \begin{equation}
    \label{eq:4}
    \max_{\substack{1\leq i,j \leq M}} \EE\big[\,|\sqrt{M}H(i,j)|^{n}\big] + \max_{\substack{1\leq i \leq M \\ 1\leq j \leq N}}\Big(\EE\big[\,|\sqrt{M}W_{1}(i,j)|^{n} \big]+\EE\big[\,|\sqrt{M}W_{2}(i,j)|^{n}\big]\Big)
    \leq
    \cstA_{n}
    .
  \end{equation}
\end{itemize}
\begin{rem}[Constant matrices] \label{rem:constantMatrices}
  In \eqref{eq:3} and later in the paper, for $B \in \CC$, $n\in \NN$ and $I_{n}\in \CC^{n\times n}$ the identity matrix of size $n$, we use the shorthand notation $B \cdot I_{n} = B$.
  This notation is used only when the dimension of $I_{n}$ can be unambiguously determined from the context.
\end{rem}
\begin{rem}
   In the sequel we consider the coupling constant $\cstT$ to be a fixed positive number, therefore we will omit the dependence on $\cstT$ in our notation. 
\end{rem}
Denote by $\mu_{\Tb_{\cstY,\cstS}}(d\lambda):=\frac{1}{N}\sum_{i=1}^{N} \delta_{\lambda_{i}}$ the empirical spectral measure of $\Tb_{\cstY,\cstS}$, where $\lambda_{i}$ are the eigenvalues of the Hermitian matrix $\Tb_{\cstY,\cstS}$.
To simplify the presentation, we will assume in this paper that the  dimensions of the matrices $H$, $W_1$ and $W_2$ grow over a subsequence $(N, M)= (kn, ln)$, $n\in \NN$, i.e. $N/M=\cstS$ is kept fixed. One could easily extend our argument to include the general situation when one considers two 
sequences $N=N_n$ and $M=M_n$ tending to infinity such that $\cstS_n = N_n/M_n \to \cstS$.

 We now formulate our  two main results. The first one, Theorem~\ref{thr:Global law}, is the {\it global law} for 
 the empirical eigenvalue density; it
  shows that
$\mu_{\Tb_{\cstY,\cstS}}(d\lambda)$ has a deterministic limit denoted by $\rho_{\cstY,\cstS}(d\lambda)$. The second
result, Theorem~\ref{thm:main}, contains key properties of $\rho_{\cstY,\cstS}(d\lambda)$.
It shows that the regularization in the spectral parameter $\cstY$ can be removed and that 
$\rho_{\cstY,\cstS}$ can be continuously extended to $\cstS\to0$; moreover, it explicitly identifies
its singularities at zero and one.

  \begin{thm}[Global law] \label{thr:Global law} Fix a positive real number
  $\cstT>0$ and  a rational number $\cstS\in (0,1]\cap \QQ$.
Let $\cstY$ be a fixed spectral parameter: for $\cstS \in (0,1/2]$ we assume  that  $\cstY\in  \CC_+ $, while for
$\cstS \in (1/2,1]$ we assume $\cstY\in \CC_+ \cup \RR $.
  Then there exists a deterministic probability measure $\rho_{\cstY,\cstS}(d\lambda)$ with $\mathrm{supp}\, \rho_{\cstY,\cstS} \subset [0,1]$ such that $\mu_{\Tb_{\cstY,\cstS}}(d\lambda)$ converges weakly to $\rho_{\cstY,\cstS}(d\lambda)$ in probability (and almost surely) as $M,N \rightarrow \infty$ with $N/M = \cstS$.  
  \end{thm}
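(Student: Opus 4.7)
The plan is to reduce the global law to the matrix Dyson equation (MDE) framework, following the linearization approach developed in \cite{ErdoKrugNemi_Poly, AjanErdoKrug19}. Since $\Tb_{\cstY,\cstS}$ is a noncommutative rational expression in $H$, $W_1$, $W_2$ (and their adjoints), by repeatedly applying Schur complement identities one can construct a Hermitian block matrix $L(\cstY,z)$ of dimension $K(M+N)\times K(M+N)$ for some fixed $K$ depending only on the structure of \eqref{eq:3}. This $L(\cstY,z)$ is affine in each of the random blocks $H, W_1, W_2$, linear of the form $A-zP$ in the scalar $z$, and satisfies
\[
(\lambda - \Tb_{\cstY,\cstS})^{-1} = \bigl[L(\cstY,\lambda)^{-1}\bigr]_{11},
\]
where $[\,\cdot\,]_{11}$ denotes extraction of a distinguished $N\times N$ upper-left block. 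The regularity condition imposed on $\cstY$ (positive imaginary part when $\cstS\in(0,1/2]$, real values allowed when $\cstS\in(1/2,1]$) ensures invertibility of $L(\cstY,\lambda)$ for $\lambda\in\CC_+$.

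The next step is to set up and solve the associated MDE
\[
-M(z)^{-1} = A(\cstY,z) + \mathcal{S}[M(z)] ,
\]
where $A(\cstY,z)$ is the deterministic part of $L(\cstY,z)$ and $\mathcal{S}$ is the self-energy superoperator encoding the covariances of the random blocks $H, W_1, W_2$. Even though the same copies of $H, W_1, W_2$ appear in several positions within $L(\cstY,z)$, the induced block-correlation structure is of a type controllable by the general MDE theory: there exists a unique solution $M:\CC_+\to \CC^{K(M+N)\times K(M+N)}$ with $\Im M(z)>0$, and the normalized trace of the appropriate $N\times N$ diagonal block of $M$ is the Stieltjes transform $m_{\cstY,\cstS}$ of a positive measure of total mass one.

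The global law for the normalized trace of $L(\cstY,\cdot)^{-1}$ follows from standard resolvent expansions combined with large-deviation bounds for quadratic forms in the entries of $H, W_1, W_2$ (whose hypotheses are supplied by (H3qd)) and the stability of the MDE. Translating back via the Schur complement identity yields
$\tfrac{1}{N}\Tr(\lambda-\Tb_{\cstY,\cstS})^{-1}\to m_{\cstY,\cstS}(\lambda)$ in probability for every $\lambda\in\CC_+$; polynomial moment bounds on the error, combined with Borel--Cantelli, upgrade this to almost-sure convergence. Stieltjes continuity then gives the weak convergence of $\mu_{\Tb_{\cstY,\cstS}}$ to the probability measure $\rho_{\cstY,\cstS}$ corresponding to $m_{\cstY,\cstS}$. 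The inclusion $\mathrm{supp}\,\rho_{\cstY,\cstS}\subseteq[0,\infty)$ holds because $\Tb_{\cstY,\cstS}=TT^*\geq 0$; the upper bound $\mathrm{supp}\,\rho_{\cstY,\cstS}\subseteq[0,1]$ follows from the unitarity $S(E)^*S(E)=I$ at real spectral parameter $\cstY=E$, which forces $\|TT^*\|\leq 1$, and is extended to complex $\cstY$ by analyticity of $m_{\cstY,\cstS}(\lambda)$ in $\cstY$.

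The main obstacle is the construction and analysis of the linearization pencil together with the corresponding MDE. One has to choose $L(\cstY,z)$ carefully so that (i) the associated MDE is uniquely solvable on $\CC_+$ uniformly in the regularization parameter $\Im\cstY\geq 0$ in the appropriate range of $\cstS$, and (ii) the stability estimates are strong enough to absorb the loss coming from extracting the $N\times N$ block via the Schur complement. Structurally, for $\cstS\in(1/2,1]$ the Marchenko--Pastur-type lower bound on the smallest singular value of $W$ keeps $(\cstY-H+\imu\cstT WW^{*})^{-1}$ well behaved even for real $\cstY$; for $\cstS\in(0,1/2]$ the small eigenvalues of $WW^{*}$ make the regularization $\Im\cstY>0$ essential to prevent degeneration of this inverse.
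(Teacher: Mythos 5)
Your proposal follows essentially the same route as the paper: linearize $\Tb_{\cstY,\cstS}$ into a self-adjoint affine pencil whose generalized resolvent recovers $(\Tb_{\cstY,\cstS}-z)^{-1}$ in a corner block, pose the associated Dyson equation for the linearization, identify its deterministic solution, run the Kronecker-matrix global law, and upgrade to almost-sure convergence via Borel--Cantelli. Two points need repair.

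First, the Dyson equation you write is not a standard MDE: the spectral parameter $z$ multiplies only a rank-$N$ projection $J$ (because only the first block of the linearization carries the resolvent variable), so existence and uniqueness of a solution with $\Im M\ge 0$ does not follow from the standard MDE theory of Helton--Mai--Speicher or Ajanki--Erd\H{o}s--Kr\"uger. The paper instead constructs the solution explicitly as $\Sol_z=(\id\otimes\tau_\Sc)\big[(\Lb^{(\mathrm{sc})}-zJ\otimes\scone)^{-1}\big]$, where $\Lb^{(\mathrm{sc})}$ is the pencil evaluated at freely independent semicircular and circular elements, and verifies that this object solves the equation by combining the free-probability MDE existence result with a regularization $zJ\mapsto zJ+\imu u\,I$ that is removed afterwards.

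Second, the support argument via analyticity in $\cstY$ does not go through. Analyticity of the Stieltjes transform $m_{\cstY,\cstS}(\lambda)$ in $\cstY$ does not constrain the support of the underlying measure (poles move as the parameter moves), and for $\cstS\in(0,1/2]$ you have no real-$\cstY$ base case to extend from, since the measure is only defined for $\cstY\in\CC_+$ there. The correct observation is elementary and direct: for every $\cstY\in\CC_+\cup\RR$, writing $A=\cstY-H+\imu\cstT WW^*$ one computes
\[
S(\cstY)^*S(\cstY)=I-4\cstT(\Im\cstY)\,W^*(A^*)^{-1}A^{-1}W\le I,
\]
so $S(\cstY)$ is a contraction (unitary exactly when $\Im\cstY=0$), hence $\|T\|\le1$ and $\mathrm{supp}\,\mu_{\Tb_{\cstY,\cstS}}\subset[0,1]$ deterministically on the event where the model is defined; this inclusion then passes to the weak limit $\rho_{\cstY,\cstS}$.
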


\begin{thm}[Properties of the transmission eigenvalue density]
 Let the numbers $\cstT, \cstY,\cstS$  and the measure  $\rho_{\cstY,\cstS}(d\lambda)$ be as in Theorem~\ref{thr:Global law}.  
  \label{thm:main}
  \begin{itemize}
  \item[(i)] The function
    \[\Big( \big((0,1/2]\cap \QQ \big) \times  \CC_+  \Big)\cup \Big( \big( (1/2,1]\cap \QQ \big)\times \big( \CC_+  \cup \RR \big)\Big) \ni (\cstS,\cstY) \mapsto \rho_{\cstY,\cstS}(d\lambda)\]
    from Theorem~\ref{thr:Global law} with values in probability measures on $[0,1]$ can be extended to a function on $(0,1] \times ( \CC_+  \cup \RR)$ that is continuous in the weak topology. In particular, the weak limit 
    \begin{equation}
      \label{eq:233}
      \rho_{E,\cstS}(d\lambda)=\lim_{\, \CC_+   \ni \cstY \to E}\rho_{\cstY,\cstS}(d\lambda)
    \end{equation}
   exists for all $E \in \RR$. For every $E \in \RR$ and $\cstS \in (0,1]$ the measure $\rho_{E,\cstS}(d\lambda)$ is absolutely continuous, i.e. $\rho_{E,\cstS}(d\lambda)=\rho_{E,\cstS}(\lambda)d \lambda$. 
   The function $\rho_{E,\cstS}(\lambda)$  is bounded  when $\lambda$ is away from 0 and 1.
   Furthermore, the weak limit $\rho_{E,0}(d\lambda):=\lim_{\cstS \downarrow 0}\rho_{E,\cstS}(d\lambda)$  exists for every $E \in \RR$. 
\item[(ii)] If $\cstS = 1$ and $\cstT>0$, then $\rho_{E,\cstS}(\lambda)$ has  the following asymptotics near 0 and 1:
    \begin{itemize}
    \item[(a)] for $E\in \RR$
          \begin{equation}
      \label{eq:5}
      \rho_{E, 1}(\lambda)
      =
      \frac{1}{\pi}\sqrt[3]{\frac{1+E^2}{4 \cstT^{2} }} \, \lambda^{-2/3} + \OO{\lambda^{-1/3}} \mbox{ as }\lambda\rightarrow 0_{+},
    \end{equation}
  \item[(b)] for $|E| < \frac{1}{ \cstT } \Big(2 \sqrt{1 + 6  \cstT^2  +  \cstT^4 }-2  \cstT^2  - 2 \Big)^{1/2}$
    \begin{equation}
      \label{eq:179}
      \rho_{E, 1}(\lambda)
      =
      \frac{1}{\pi} \, \frac{ -4 \xi_{0}}{\xi_{0}^{2}+  \cstT^2  E^2 + 4 }\, (1-\lambda)^{-1/2} + \OO{1} \mbox{ as } \lambda\rightarrow 1_{-}
      ,
    \end{equation}
    where $\xi_{0} = -\sqrt{2 \sqrt{1 + 6  \cstT^2 + \cstT^4 }-2  \cstT^2  -2- \cstT^2  E^2}$.
    \end{itemize}
  \item[(iii)] If $\cstS \in (0,1)$ and $\cstT>0$,  then
    \begin{itemize}
    \item[(a)] for $E\in \RR$
          \begin{equation}
      \label{eq:15}
      \rho_{E,\cstS}(\lambda)
      =
      \frac{1}{\pi} \,  \frac{ \cstT^2  E^2 \xi_0^4+ \big(\xi_0^2+2\cstS\big)^2\big(4+  \cstT^2  \xi_0^2\big)}{4 \cstT  \xi_0\big(\xi_0^2+2\cstS\big)^2}\, \lambda^{-1/2} + \OO{1}
      \mbox{ as }\lambda\rightarrow 0_{+},
    \end{equation}
    where $\xi_{0}>0$ is the unique positive solution of 
 \begin{equation}
 \xi_0^6 +  \big(E^2+8\cstS-4\big) \xi_0^4+ 4 \cstS \big(E^2+ 5\cstS-4\big) \xi_0^2+16(\cstS-1)\cstS^2=0\,.
\end{equation}
  \item[(b)] for $|E| < E_\ast:=\frac{1}{ \cstT }\Big(2 \sqrt{ 1 + 2 (1+2\cstS)  \cstT^2   + (2 \cstS-1)^2  \cstT^4  }  - 2  \cstT^2  (2\cstS-1) -2\Big)^{1/2}$ the density satisfies 
    \begin{equation}
      \label{eq:35}
      \rho_{E,  \cstS }(\lambda)
      =
      \frac{1}{\pi} \, \frac{-4 \xi_1}{\xi_1^2 + \cstT^2  E^2+4  }\,(1-\lambda)^{-1/2}  + \OO{1} 
      \mbox{ as } \lambda\rightarrow 1_{-}
      ,
    \end{equation} 
    where
        \begin{equation}
     \xi_1 = -  \cstT  \sqrt{E_\ast^2- E^2}\,.
         \end{equation} 
  \end{itemize}
  \item[(iv)] If $\cstS =0$, $\cstT>0$ and $|E| < 2$ the density is given globally by the explicit formula 
    \begin{equation}
      \label{eq:235}
           \rho_{E,  0 }(\lambda)
      =
  \frac{1}{ \pi  }\frac{\cstT\big(1+ \cstT^2 \big) }{\big(1+ \cstT^2 \big)^2\lambda +  \cstT^2  \big(4-E^2\big)(1-\lambda)} \,\sqrt{\frac{4-E^2}{\lambda(1-\lambda)}}\,, \qquad \lambda \in (0,1)\,.
           \end{equation} 
  \end{itemize}
\end{thm}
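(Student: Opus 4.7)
The plan is to combine the global law of Theorem~\ref{thr:Global law} with a careful analysis of the algebraic equation governing the Stieltjes transform of $\rho_{\cstY,\cstS}$. Denote this Stieltjes transform by
\[
g_{\cstY,\cstS}(z) := \int \frac{\rho_{\cstY,\cstS}(d\lambda)}{\lambda-z}, \qquad z \in \CC_+ .
\]
The first step is to represent $TT^*$ from \eqref{eq:3} as a designated block entry of the resolvent of a self-adjoint linearization matrix $L=L(H,W_1,W_2;\cstY,\cstT)$ whose entries are \emph{linear} in the three input ensembles. The extended MDE theory described in the introduction (and detailed in the body of the paper) then applies to $L$ and yields a matrix Dyson equation whose deterministic solution $m(z)$ is, by the symmetries of the self-energy, block-constant on a fixed finite partition of the coordinates. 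Eliminating the resulting auxiliary scalar unknowns via resultants produces an irreducible polynomial relation
\[
P\bigl(g_{\cstY,\cstS}(\lambda),\,\lambda,\,\cstY,\,\cstS,\,\cstT\bigr)=0,
\]
from which $\rho_{\cstY,\cstS}(\lambda)$ is recovered by $\pi^{-1}\lim_{\eta\downarrow 0}\Im g_{\cstY,\cstS}(\lambda+\imu\eta)$. All of parts (i)--(iv) will be extracted from this single polynomial.

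For part (i), the continuous dependence on $(\cstS,\cstY)$, including the boundary extensions $\cstY\to E\in\RR$ and $\cstS\to 0$, follows by an implicit-function argument applied to $P$: the solution branch selected by $\Im g>0$ depends continuously on the coefficients of $P$ away from a discrete set of singular $\lambda$, and the standard cubic-root stability estimates for MDE solutions (as in the references cited in the introduction) give a uniform spectral gap of this branch for $\lambda\in(0,1)$ bounded away from $\{0,1\}$. This simultaneously yields absolute continuity and local boundedness of $\rho_{E,\cstS}$ on compact subsets of $(0,1)$. For part (iv), at $\cstS=0$ the polynomial $P$ collapses dramatically because the $WW^*$--contribution decouples in the MDE balance; the equation reduces to a quadratic/cubic in $g$ essentially driven by the semicircle Stieltjes transform of $H$, and a direct computation of $\Im g$ on the real axis produces \eqref{eq:235}. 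The condition $|E|<2$ appears as the bulk condition of the underlying semicircle.

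For parts (ii) and (iii), I would perform a local Puiseux expansion of $P$ around $\lambda=0$ and $\lambda=1$. In the generic case $\cstS\in(0,1)$, the expansion near $\lambda=0$ produces a square-root branch point $g_{E,\cstS}(\lambda) = g_0 + c_0 \lambda^{-1/2} + \OO{1}$ after matching leading powers; the cubic equation for $\xi_0$ in \eqref{eq:15} arises as the condition that the discriminant-like factor of $P$ vanishes at $\lambda=0$, and the prefactor is read off from the leading coefficient of the expansion. The edge analysis at $\lambda=1$ is analogous: the threshold $E_\ast$ is identified as the critical energy below which the upper spectral edge pins at $\lambda=1$, the edge there is regular (square-root), and \eqref{eq:35} with $\xi_1=-\cstT\sqrt{E_\ast^2-E^2}$ follows from the same matching procedure. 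The distinguished case $\cstS=1$ requires an additional observation: the coefficient of the would-be $\lambda^{-1/2}$ term vanishes identically, reflecting the loss of lower-boundedness of the spectrum of $WW^*$ when $N=M$. Reorganizing the expansion then forces the cube-root balance $g\sim\lambda^{-2/3}$, producing \eqref{eq:5}, while the upper edge at $\lambda=1$ remains unaffected and \eqref{eq:179} is established as in the $\cstS<1$ case.

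The main obstacle I anticipate is twofold. First, the explicit derivation and manipulation of $P$: linearizing the nested rational expression \eqref{eq:3}, writing out the MDE, and carrying the symbolic elimination to a tractable scalar polynomial whose coefficients are sufficiently transparent to support the singularity extraction in (ii)--(iv) is algebraically heavy and requires care to avoid introducing spurious branches. Second, the $\cstS=1$ analysis is genuinely delicate: one must confirm that the square-root coefficient cancels \emph{exactly}, not merely approximately, and that the expansion reorganizes cleanly into powers of $\lambda^{1/3}$; this should be traced to a coalescence mechanism in the spectrum at $\lambda=0$ (an edge merging with a potential atom at zero), rather than being deduced from generic MDE edge theory. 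An ancillary technical point is to justify substituting the boundary values $\cstY\to E\in\RR$ into these edge expansions, which relies on the stability of the MDE near the real axis provided by the bulk analysis of part (i).
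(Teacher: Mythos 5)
Your proposed route --- eliminate the auxiliary unknowns in the matrix Dyson equation down to a single scalar polynomial relation $P(g,\lambda,\cstY,\cstS,\cstT)=0$ and then run a Puiseux analysis at $\lambda=0,1$ --- is different from what the paper does, and it would hit two genuine obstacles.

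First, the paper never reduces to a scalar polynomial by resultants. It keeps a coupled system: the DEL solution is block-diagonal $\Sol_{z,\cstY}=\mathrm{diag}(\Sol_1,\Sol_2,\Sol_3)$, the $2\times 2$ block $\Sol_3$ satisfies a compact matrix equation \eqref{eq:29} (or \eqref{eq:77} for $\cstS<1$), and the extra identities $m_{22}=4\cstT^2 z m_{11}$ and $m_{21}-m_{12}=\imu\cstT^{-1}m_{22}$ reduce the effective number of unknowns to two. This explicit low-dimensional structure is what makes the singularity analysis tractable: the paper substitutes a parametrized ansatz \eqref{eq:106}, \eqref{eq:108}, \eqref{ansatz with tau at z=0}, \eqref{ansatz with tau at z=1} into the matrix equation, reduces the residual $\Delta$ to a two-component polynomial system $(q_1,q_2)$ in two unknowns $(\xi,\nu)$ and $t$, computes its Jacobian, and invokes the implicit function theorem. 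A brute-force elimination to an irreducible $P$ would destroy this structure, yielding a polynomial of very high degree whose Newton polygon would be extremely hard to read; one would have to recover all the cancellations the block structure gives for free.

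Second, and more critically, your branch selection is not justified. The implicit function theorem produces \emph{a} solution to the DEL near $z=0$ or $z=1$, but the DEL has many solutions and only one is the Herglotz one defining $\rho$. Your criterion ``the branch selected by $\Im g>0$'' is the right criterion, but it is not automatic that the Puiseux branch you construct is that one, nor that the free-probability solution from \eqref{eq:20}/\eqref{eq:168} admits a Puiseux expansion at all. The paper closes this gap with a separate argument: it verifies directly that the ansatz has positive definite imaginary part as $z$ approaches the singularity from above (see \eqref{eq:37}--\eqref{eq:38}), and then uses a $\delta$-regularized Dyson equation whose fixed-point map is a strict contraction for the Carath\'eodory metric (Step~3 of Lemma~\ref{lem:sing-sol}) to prove the ansatz coincides with $\Sol_3$ defined via free probability. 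Without some version of this, your derivation of \eqref{eq:5}, \eqref{eq:179}, \eqref{eq:15}, \eqref{eq:35} is not complete even granting the algebra.

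A smaller but real issue: you describe the $\cstS=1$ cube-root singularity as arising from the ``exact cancellation'' of the would-be $\lambda^{-1/2}$ coefficient after which the expansion ``reorganizes.'' That narrative is misleading. The paper does not first derive a square-root expansion at $\cstS=1$ and find it degenerate; it sets $t=z^{1/3}$ \emph{ab initio} for $\cstS=1$ (with a different ansatz \eqref{eq:106}), leading to a nondegenerate Jacobian $\det\nabla_\Xi Q = -3\zeta^4(1+E^2)^2$ at $t=0$. The right heuristic is that the $\lambda^{-1/2}$ prefactor in \eqref{eq:15} diverges as $\cstS\to 1$ (since $\xi_0\to 0$), so the Puiseux ramification changes discontinuously; you would want to detect this directly from the Newton polygon of your equation at $\cstS=1$, not as a cancellation.

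Finally, you dismiss the $\cstY\to E$ and $\cstS\to 0$ limits as ancillary, but for $\cstS\in(0,1/2]$ the formula \eqref{eq:168} for $\Sol_{z,\cstY}$ is only defined for $\Im\cstY>0$, and the existence of the limiting solution on the real axis is not automatic. The paper must establish uniform bounds in the regularization parameter (Lemma~\ref{lem:boundedness-2}(ii), where $\cstY=E+\imu\cstS\cstX$) and combine them with a normal-family/Vitali-Porter argument (Step~2 of Lemma~\ref{lem:sinularities-2}) to show the limit exists and is analytic in $z$. Your implicit-function argument on $P$ alone does not address the degeneration of the defining equations as the regularization is removed.
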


\begin{proof}[Proofs of Theorems~\ref{thr:Global law}  and~\ref{thm:main}]
  We now explain the structure of the paper that contains the proof of both theorems. 
The density function $\rho_{E, \cstS}(\lambda)$ will be derived from a deterministic equation, the Dyson equation of the linearizaton of \eqref{eq:3} (see equations \eqref{eq:18} and \eqref{eq:24} for $\cstS=1$, as well as \eqref{eq:52} and \eqref{eq:103} for $\cstS \in (0,1)$  below). This equation plays a central role in our analysis. 
 Sections \ref{sec:proofs} and \ref{sec:proof-theor-refthm:m} below contain the proofs of the model specific parts of Theorem~\ref{thm:main} which use some key conclusions of the general theory on noncommutative rational functions developed in Appendix~\ref{sec:spectr-prop-gener}.
 The  $\cstS=1$ case is treated in Section~\ref{sec:proofs} with full details, while in Section~\ref{sec:proof-theor-refthm:m} we explain the modifications for the general  rational   $\cstS \in (0,1)$ case.
 Theorem~\ref{thr:Global law} is proven in Lemma \ref{lem:global-law}  for $\cstS = 1$ and Lemma~\ref{lem:global-law-gen} for general rational $\cstS \in (0,1)$   using the general global law from Appendix~\ref{sec:spectr-prop-gener}.
 Part (ii)  is proven in Section~\ref{sec:proof-ii-iii} after having established key properties of the solution for the  Dyson equation.
 Section~\ref{sec:proof-theor-refthm:m} follows the same structure as Section~\ref{sec:proofs} and proves parts (i), (iii) and (iv) of Theorem~\ref{thm:main} for the $\cstS \in [0,1)$ case. 
\end{proof}
\begin{rem}
  The restriction $|E|<E_*$ in Theorem~\ref{thm:main} is used to identify the regime in which the density $\rho_{E,\phi}$ has two singularities.
  For $|E|>E_*$, the singularity at $\lambda = 1$ disappears and the support of the density becomes bounded away from $\lambda=1$.
  Physically this indicates that there are no fully open channels in this regime.
  This effect is most severe in the case $\phi=0$, where the system becomes completely reflective as $|E|$ increases above  the threshold $E_*=2$.
  Our approach is also applicable for $|E|>E_*$, but for the simplicity of the presentation we restrict our analysis to the physically more relevant situation when two singularities exist simultaneously.
\end{rem}
\medskip

We formulated Theorem~\ref{thr:Global law} about the specific matrix~\eqref{eq:3}.
However, our method works for very general  noncommutative (NC) rational expressions in large matrices with 
 i.i.d. entries (with or without Hermitian symmetry) generalizing  our previous work~\cite{ErdoKrugNemi_Poly} on polynomials. 
 For convenience of the readers interested
 only in the concrete scattering problem the main part of our paper focuses on this model and
  we defer the general theory to Appendix~\ref{sec:spectr-prop-gener}.  The details of this appendix are
  not necessary in order to follow the main arguments in Sections \ref{sec:proofs} and \ref{sec:proof-theor-refthm:m} provided
  some basic facts from the appendix are accepted. These facts will be re-stated for the specialization to our 
  concrete operator $\Tb_{ \cstY ,\cstS}$ from~\eqref{eq:3}.  
  On the other hand, Appendix~\ref{sec:spectr-prop-gener} 
  is written in a self-contained form, so readers interested in the general theory can directly go to it,
   skipping the concrete application. 
  
  In Appendix~\ref{sec:spectr-prop-gener}, we first give
 the precise definition of the set of admissible rational expressions  that  requires some technical 
 preparation, see Section~\ref{sec:line-rati-funct} for details.  Roughly speaking, we can consider any  rational 
 expression whose denominators are stably invertible with overwhelming probability. 
 We remark that this property  holds for~\eqref{eq:3}  since the imaginary part of $\cstY-H + \imu   \cstT  W W^{*}$ 
 has a positive lower bound  as long as $\cstS>1/2$ or $\cstY \in  \CC_+ $.
  With this definition at hand, we develop the theory of global and 
 local laws as well as the identification of the pseudospectrum for such rational expressions
 in Sections~\ref{sec:proof-local-law} and \ref{sec:conv-spectr-rati}, respectively.
  
 \medskip

 {\it Acknowledgement.} The authors are  very grateful to Yan Fyodorov for discussions on the physical background 
and for providing references, and to the anonymous referee for numerous valuable remarks.

\section{Proof of Theorem~\ref{thm:main} for the special case  $\cstS = 1$}
\label{sec:proofs}

In this section we study the model \eqref{eq:3} for  fixed $\cstT>0$, $\cstS = 1$ and $\cstY \in \CC_{+}\cup \RR$.
This special choice of parameter $\cstS $ ensures that the linearization of $\Tb_{ \cstY ,\cstS}$ has a fairly simple structure, which makes the proof of  Theorems~\ref{thr:Global law} and \ref{thm:main}  more transparent and streamlined.
Generalization to $\cstS\in  (0,1) $ is postponed to Section~\ref{sec:proof-theor-refthm:m}.
Since the parameter $\cstS$ is fixed to be equal to 1, we will omit the dependence on $\cstS$  in the notation throughout the current section.
The information about linearizations of general rational functions
is collected in the Appendix~\ref{sec:line-rati-funct}.
Here we often refer to specialization of these results to $\Tb_{ \cstY }$.

\subsection{Linearization trick and the Dyson equation for linearization}
\label{sec:linearization-trick}

 We consider the random matrix model $\Tb_{ \cstY }$  defined in \eqref{eq:3}  as  a self-adjoint rational function of random matrices $H$, $W_1$ and $W_2$. 
In order to study its eigenvalues we introduce the self-adjoint linearization matrix $\Hb_{ \cstY }\in \CC^{8N\times 8N}$
\begin{equation}
  \label{eq:6}
  \Hb_{ \cstY }
  :=  
  \left(
    \renewcommand\arraystretch{1.4}
    \begin{array}{ccc|ccc|cc}
      0 & 0 & 0 & 0 & 0 & 0 & 0 & W_{2}^* \\
      0 & 0 & \frac{\imu}{ \cstT }  & 0 & 0 & 0 & 0 & W_{2}^* \\
      0 & -\frac{\imu}{ \cstT } & 0 & 0 & 0 & 0 & W_{2}^* & 0 \\ \hline
      0 & 0 & 0 & 0 & 0 & \frac{\imu}{ \cstT} & 0 & W_{1}^* \\
      0 & 0 & 0 & 0 & -\frac{1}{4 \cstT^2} & 0 & W_{1}^* & 0 \\
      0 & 0 & 0 & -\frac{\imu}{ \cstT }& 0 & 0 & W_{1}^* & 0 \\ \hline
      0 & 0 & W_{2} & 0 & W_{1} & W_{1} & 0 &  \cstY -H \\
      W_{2} & W_{2} & 0 & W_{1} & 0 & 0 &  \overline{\cstY} -H & 0 \\
    \end{array}
  \right)
\end{equation}
 with $\cstY \in \CC_{+}\cup \RR$. 
 Denote by $J_{m} \in \CC^{m\times m}$, $m\in \NN$, a matrix whose $(1,1)$ entry is equal to $1$ and all other entries are equal to $0$.
For any $n\in \NN$ and $R\in \CC^{n \times n}$ we define $\|R\|$ to be the operator norm of $R$.
 The following definition is a specialization of the notion of generalized resolvent from Appendix~\ref{sec:trivial-bound}. 
 \begin{defn}[Generalized resolvent] \label{defn:generalized-resolvent}
  We call the matrix-valued function $z \mapsto (\Hb_{\cstY} - z J_{8}\otimes I_N)^{-1}$ defined for $z\in \CC_+$ the \emph{generalized resolvent} of $\Hb_{\cstY}$.
\end{defn}
The results below are formulated using the notion of \emph{asymptotically overwhelming probability}.
\begin{defn}
  We say that a sequence of events $\{\Omega_{N}\}_{N\in \NN}$ holds asymptotically with overwhelming probability (\emph{a.w.o.p.} for short) if for any $D>0$ there exists $C_{D}>0$ such that
  \begin{equation}
    \label{eq:228}
    \PP[\,\Omega_{N}]
    \geq
    1-\frac{C_{D}}{N^{D}}
    .
  \end{equation}
\end{defn}
 Consider the set 
  \begin{equation}
    \label{eq:11}
    \Theta_N
    :=
    \Big\{\|H\| \leq 3,\, \| W_{1}\| \leq 3, \, \|W_2\| \leq 3, \,\|(W W^*)^{-1}\| \leq  12 \Big\}\; .
  \end{equation}
  Note that $ W W^*$ is a sample covariance matrix with concentration ratio $1/2$, hence its spectrum is asymptotically supported on  an arbitrarily small neighborhood of $[(1-\frac{1}{\sqrt{2}})^{2}, (1+\frac{1}{\sqrt{2}})^{2}]$ with very high probability.
   The limiting support follows from the 
classical Bai-Yin theorem \cite{BaiYin93}, the corresponding large deviation result under somewhat different conditions follows, e.g. from Corollary~V.2.1 of \cite{FeldSodi10}.
In fact, the boundedness of $\| (WW^*)^{-1}\|$  and  $\| W_i\|$ also follows from \cite{RudeVers09},
at least for subgaussian distributions.
Alternatively, under our conditions~\eqref{eq:4} this result also follows
from Lemma~\ref{lem:norm-bounds-random} in the Appendix by choosing $n=M$, $l=1$ and $m=2$.   
  Similarly, from the properties of classical Wigner and  \emph{iid} ensembles (see, e.g., \cite[Section~5]{BaiSilvBook}), we obtain that the event $\{\,\|H\| \leq 3,\, \| W_{1}\| \leq 3, \, \|W_2\| \leq 3 \, \}$ also holds \emph{a.w.o.p.}, and we conclude that for any $D>0$ there exists $C_D>$ such that
    \begin{equation}
    \label{eq:12}
    \PP[\Theta_{N}] \geq 1- \frac{C_{D}}{N^{D}}
    .
  \end{equation}
  In the rest of this section we consider the random matrix models \eqref{eq:73} and \eqref{eq:3} \emph{restricted} to the set $\Theta_N$.
  Since on this set the smallest eigenvalue of $WW^*$ cannot be smaller that $1/12$, we have that $\|(\cstY-H + \imu  \cstT  W W^{*})^{-1} \| \leq \frac{1}{12 \cstT}$ for all $\cstY \in \CC_{+}\cup \RR$ and thus the model \eqref{eq:3} is well defined on $\Theta_N$. 
  In the next lemma we establish an a priori bound for the generalized resolvent $(\Hb_{\cstY} - z J_{8}\otimes I_N)^{-1}$.
  
\begin{lem}[Basic properties of the generalized resolvent] \leavevmode
  \label{lem:trivial-gen-res}
  \begin{itemize}
  \item[(i)] For any $ \cstT>0$ there exists $C_{\cstT}>0$ such that \emph{a.w.o.p.}
    \begin{equation}
      \label{eq:8}
       \big\| (\Hb_{ \cstY} - z J_{8} \otimes I_N)^{-1}\big\|
      \leq
      C_{\cstT} \bigg(1 + \frac{1}{\Im z}\bigg)
    \end{equation}
     uniformly for all $z\in\CC_{+}$ and $\cstY \in \CC_{+}\cup \RR$. 
  \item[(ii)] For all $ \cstY \in \CC_+ \cup \RR$, $z\in \CC_{+}$ and $1\leq i,j\leq N$
    \begin{equation}
      \label{eq:9}
      \Big[\big(\Hb_{ \cstY }-zJ_{8}\otimes I_N\big)^{-1}\Big]_{ij}
      =
      \Big[\big(\Tb_{ \cstY }- zI_N\big)^{-1}\Big]_{ij}
      ,\quad
      1\leq i,j \leq N
      .
    \end{equation}
  \end{itemize}
\end{lem}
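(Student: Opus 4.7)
The plan is to derive both parts from a single block Schur complement computation. Partition $\Hb_{\cstY}-zJ_{8}\otimes I_N$ according to its top-left $N\times N$ corner, writing
\[
\Hb_{\cstY}-z J_{8}\otimes I_N \;=\; \begin{pmatrix} -z\, I_N & \mathbf{b} \\ \mathbf{c} & \widetilde D \end{pmatrix},
\]
where $\widetilde D\in\CC^{7N\times 7N}$ is the Hermitian lower-right submatrix and $\mathbf{b},\mathbf{c}$ collect the first row and column of blocks. The $(1,1)$ block of the inverse is the inverse of the Schur complement $-zI_N-\mathbf{b}\widetilde D^{-1}\mathbf{c}$, so part (ii) reduces to the algebraic identity $\mathbf{b}\widetilde D^{-1}\mathbf{c}=-\Tb_{\cstY}$, and part (i) reduces to controlling $\widetilde D^{-1}$, $\mathbf{b}$ and $\mathbf{c}$ uniformly on $\Theta_N$.

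For (ii), I would solve $(\Hb_{\cstY}-zJ_{8}\otimes I_N)(v_1,\ldots,v_8)^{T}=(e,0,\ldots,0)^{T}$ by hand. Rows $2$--$6$ of the linearization couple only a scalar multiple of $I_N$ to one of $W_1^{*}v_{7}$, $W_1^{*}v_{8}$, $W_2^{*}v_{7}$ or $W_2^{*}v_{8}$, so they express $v_2,\ldots,v_6$ explicitly in terms of $v_7$ and $v_8$ via $\cstT$-dependent constants. Substituting into rows $7$ and $8$ collapses the cross-terms $W_1W_1^{*}$ and $W_2W_2^{*}$ into $WW^{*}=W_1W_1^{*}+W_2W_2^{*}$, producing the decoupled system
\[
(\cstY-H+\imu\cstT WW^{*})\,v_8 \;=\; -4\cstT^{2}W_1W_1^{*}v_7,\qquad (\overline{\cstY}-H-\imu\cstT WW^{*})\,v_7 \;=\; -W_2 v_1.
\]
Inverting these two equations (well-defined on $\Theta_N$) and substituting $W_2^{*}v_8$ back into row~$1$, namely $-zv_1+W_2^{*}v_8=e$, yields precisely $(\Tb_{\cstY}-z)v_1=e$, recognising the formula~\eqref{eq:3} and proving (ii).

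Part (i) is driven by the same elimination together with the uniform bounds on $\Theta_N$. On this event $\Im(\cstY-H+\imu\cstT WW^{*})=(\Im\cstY)I_N+\cstT WW^{*}\geq (\cstT/12)\,I_N$ holds for every $\cstY\in\CC_{+}\cup\RR$, hence $\|(\cstY-H\pm\imu\cstT WW^{*})^{-1}\|\leq 12/\cstT$; combined with $\|W_i\|\leq 3$, this yields $\|\widetilde D^{-1}\|+\|\mathbf{b}\|+\|\mathbf{c}\|+\|\Tb_{\cstY}\|\leq C_{\cstT}$, all independent of $\cstY$. Since $\Tb_{\cstY}=TT^{*}$ is positive semidefinite Hermitian, $\|(\Tb_{\cstY}-zI_N)^{-1}\|\leq (\Im z)^{-1}$ for $z\in\CC_{+}$. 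Plugging these estimates into the standard $2\times 2$ block-inverse formula bounds every block of $(\Hb_{\cstY}-zJ_{8}\otimes I_N)^{-1}$ by $C_{\cstT}(1+(\Im z)^{-1})$, and \eqref{eq:12} upgrades the result to an \emph{a.w.o.p.} statement.

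The main technical step is the explicit Schur computation in the second paragraph; once $\mathbf{b}\widetilde D^{-1}\mathbf{c}=-\Tb_{\cstY}$ is verified, part (i) is pure bookkeeping. What makes it work—and justifies the particular arrangement of the scalar entries $\pm\imu/\cstT$ and $-1/(4\cstT^{2})$ in the definition of $\Hb_{\cstY}$—is that they are calibrated precisely so that the elimination regroups the $W_iW_i^{*}$ contributions into $WW^{*}$, producing the imaginary damping $\imu\cstT WW^{*}$ that makes the regularized inner Hamiltonian invertible uniformly in $\cstY\in\CC_{+}\cup\RR$.
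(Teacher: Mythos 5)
Your proof is correct, and it goes by a more direct route than the paper's. The paper does not compute the Schur complement of $\Hb_{\cstY}$ directly: instead it relates $\Hb_{\cstY}$ to the linearization $\Hb_{\cstY}^{\mathrm{(init)}}$ produced by the general algorithm of Appendix~A.2 through the permutation-and-scaling identity~\eqref{eq:10}, proves the a priori bound for $\Hb_{\cstY}^{\mathrm{(init)}}$ by invoking the abstract Lemmas~\ref{lem:invertibility} and~\ref{lem:trivialBound} (which control $\widehat{\Lb}^{-1}$ for any linearization on the effective domain), and transfers the bound and the $(1,1)$-block identity across the conjugation. You instead verify both facts in one shot by Gaussian elimination: rows $2$--$6$ express $v_2,\ldots,v_6$ as bounded linear functions of $W_i^{*}v_7,W_i^{*}v_8$, the substitution into rows $7$ and $8$ reassembles $W_1W_1^{*}+W_2W_2^{*}=WW^{*}$ to produce the regularized inverses $(\cstY-H\pm\imu\cstT WW^{*})^{-1}$, and row $1$ then closes to $(\Tb_{\cstY}-z)v_1=e$. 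This is an explicit, self-contained verification that $\mathbf{b}\widetilde D^{-1}\mathbf{c}=-\Tb_{\cstY}$ and that $\|\widetilde D^{-1}\|\le C_{\cstT}$ on $\Theta_N$, so the block-inverse formula gives both claims. What the paper's route buys is reusability: the appendix lemmas apply to any rational expression without case-by-case elimination, and the identity~\eqref{eq:10} makes $\Hb_{\cstY}$ traceable to that general framework. What your route buys is transparency and brevity for this specific $8\times 8$ linearization, plus the observation — absent from the paper's proof — of why the particular scalar entries $\pm\imu/\cstT$, $-1/(4\cstT^{2})$ were chosen. One small thing worth spelling out in a final version: the bound $\|\widetilde D^{-1}\|\le C_{\cstT}$, which you assert, does require running the same elimination with a general right-hand side $(w_2,\ldots,w_8)$ rather than $(0,\ldots,0,-W_2v_1)$; the coefficients that appear are still products of $W_i$, $(\cstY-H\pm\imu\cstT WW^{*})^{-1}$ and the fixed scalars, so the bound follows, but it is a separate (if routine) check from the resolvent identity itself.
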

\begin{proof}
  Let $\Hb_{ \cstY }^{\mathrm{(init)}}$ be the linearization matrix obtained via the linearization algorithm described in Appendix~\ref{sec:line-algor}
\begin{equation}
    \label{eq:182}
    \Hb_{ \cstY }^{\mathrm{(init)}}
    =
    \left(
      \begin{array}{cccccccc}
        0&0&0&0&0&2 \cstT  W_{2}^{*}&0&0
        \\
        0&0&0&0&W_{1}& -(\cstY -H) & -\sqrt{ \cstT } W_{1} & -\sqrt{ \cstT } W_{2}
        \\
        0&0&0&0&0& -\sqrt{ \cstT } W_{1}^{*} & -\imu & 0
        \\
        0&0&0&0&0& -\sqrt{ \cstT }  W_{2}^{*} & 0 & -\imu
        \\
        0& W_{1}^{*} &0&0&-1&0&0&0
        \\
        2 \cstT W_{2} & -( \overline{\cstY}  -H) & -\sqrt{ \cstT } W_{1} & - \sqrt{ \cstT } W_{2} &0&0&0&0
        \\
        0&-\sqrt{ \cstT } W_{1}^{*} & \imu&0&0&0&0&0
        \\
        0&-\sqrt{ \cstT }  W_{2}^{*} & 0&\imu&0&0&0&0
      \end{array}
    \right)
    .
  \end{equation}
   Denote by $\{E_{ij}\,, 1\leq i,j \leq 8 \,\}$ the standard basis of $\CC^{\,8\times 8}$ 
 \begin{equation*}
   E_{ij}
   =
   \big(\delta_{ki}\delta_{jl}\big)_{k,l=1}^{8}
   ,
 \end{equation*}
 where $\delta_{\alpha \beta}$ is the Kronecker delta.
  Then one can easily check that $\Hb_{ \cstY }$ in \eqref{eq:6} can be obtained from $\Hb_{ \cstY }^{\mathrm{(init)}}$ by applying the following transformation
  \begin{equation}
    \label{eq:10}
    \Hb_{ \cstY }
    =
    \widetilde{T}\widetilde{P}_{78}\widetilde{P}_{23}\widetilde{P}_{34}\widetilde{P}_{67}\widetilde{P}_{28} \Hb_{ \cstY }^{\mathrm{(init)}} \widetilde{P}_{28}\widetilde{P}_{67}\widetilde{P}_{34}\widetilde{P}_{23}\widetilde{P}_{78} \widetilde{T}
  \end{equation}
  with $\widetilde{T} = \mathrm{diag}\big(1,-2\sqrt{ \cstT },\frac{1}{2 \cstT ^{3/2}} , -2\sqrt{ \cstT },-\frac{1}{2 \cstT } , \frac{1}{2 \cstT ^{3/2}}, -2 \cstT , \frac{1}{2 \cstT }\big) \otimes I_{N}$, $\widetilde{P}_{ij}= \big( E_{ij} + E_{ji} + \sum_{l \notin \{i,j\}} E_{ll} \big)  \otimes I_N$.
 Note that all transposition matrices $\widetilde{P}_{ij}$  in~\eqref{eq:10}  leave  the upper-left $N\times N$ block intact.
  Thus \eqref{eq:9} follows from the Definition~\ref{def:linearization} of linearization and the Schur complement formula (see, e.g., \eqref{eq:132}) by taking $\Ac = \CC^{N\times N}$.

  In order to prove the bound \eqref{eq:8},  consider the set $\Theta_N$ defined in \eqref{eq:11}.
  One can see that $\Hb_{ \cstY }^{\mathrm{(init)}}$ satisfies the bound \eqref{eq:8}  by specializing Lemma~\ref{lem:trivialBound} for $\Ac = \CC^{N\times N}$, $x_1 = H$, $y_{1} = W_{1}$, $y_{2} = W_{2}$, $C =  12/\cstT $ and the rational expression $q$ being  \eqref{eq:3} on the set $\Theta_N$, as well as using the standard relation between the operator and max norms, similarly as in, e.g., \eqref{eq:212}.
  On the other hand, $\Hb_{ \cstY }^{\mathrm{(init)}}$ and $\Hb_{ \cstY }$ are related by \eqref{eq:10}.
 For any $R\in \CC^{8\times 8}$, applying the transformation $R \mapsto \widetilde{P}_{ij}  R \widetilde{P}_{ij}$ does not change the norm of $R$, while applying the map  $R \mapsto \widetilde{T}  R \, \widetilde{T}$ might change the norm by 
  an  irrelevant  non-zero constant factor only. 
  We thus conclude that  $\Hb_{ \cstY }$ also satisfies the bound \eqref{eq:8} with a constant $C_{\cstT}$ being possibly different than the one  for $\Hb_{ \cstY }^{\mathrm{(init)}}$.
  Since the set $\Theta_N$ satisfies \eqref{eq:12}, the bound \eqref{eq:8} holds \emph{a.w.o.p.}
\end{proof}

Define
\begin{equation}
  \label{eq:13}
  \cstU_{1}
  =
  \left(
    \begin{array}{ccc}
      0&0&0
      \\
      0 & 0 & \frac{\imu}{ \cstT } 
      \\
      0 & -\frac{\imu}{ \cstT } & 0
    \end{array}
  \right)
  ,\quad
  \cstU_{2}
  =
  \left(
    \begin{array}{ccc}
      0&0&\frac{\imu}{ \cstT } 
      \\
      0 & -\frac{1}{4 \cstT^2} & 0
      \\
      -\frac{\imu}{ \cstT } & 0 & 0
    \end{array}
  \right)
\end{equation}
and
\begin{equation}
  \label{eq:14}
  \cstU_{3}
  =
  \left(
    \begin{array}{cc}
      0&  \cstY 
      \\
       \overline{\cstY} & 0 
    \end{array}
  \right)
  ,\quad
  \cstU_{4}
  =
  \left(
    \begin{array}{ccc}
      0&1&1
      \\
      1 & 0 & 0
    \end{array}
  \right)
  ,\quad
  \cstU_{5}
  =
  \left(
    \begin{array}{ccc}
      0 & 0 & 1
      \\
      1 & 1 & 0
    \end{array}
  \right)
  .
\end{equation}
With this notation $\Hb_{ \cstY }$ can be rewritten as
\begin{equation}
  \label{eq:16}
  \Hb_{ \cstY }
  =
  K_0( \cstY )\otimes I_N + K_1 \otimes H + L_1\otimes W_1 + L_1^{*}\otimes W_1^* + L_2\otimes W_2 + L_2^{*}\otimes W_2^*
  ,
\end{equation}
where $K_0=K_0( \cstY ),K_1,L_1,L_2 \in \CC^{8\times 8}$ are given by their block structures as
\begin{equation}
  \label{eq:17}
  K_0
  =
  \left(
    \begin{array}{ccc}
      \cstU_{1}&0&0
      \\
      0 & \cstU_{2} & 0
      \\
      0 & 0 &  \cstU_{3}  
    \end{array}
  \right)
  ,\quad
  K_1
  =
  \left(
    \begin{array}{ccc}
      0&0&0
      \\
      0 & 0 & 0
      \\
      0 & 0 & - \cstV_{1} 
    \end{array}
  \right)
  ,\quad
  L_1
  =
  \left(
    \begin{array}{ccc}
      0&0&0
      \\
      0 & 0 & 0
      \\
      0 & \cstU_4 & 0
    \end{array}
  \right)
  ,\quad
  L_2
  =
  \left(
    \begin{array}{ccc}
      0&0&0
      \\
      0 & 0 & 0
      \\
      \cstU_5 & 0 & 0
    \end{array}
  \right)
  ,
\end{equation}
 and $\cstV_{1} = \left(
   \begin{array}{cc}
     0 & 1
     \\
     1 & 0
   \end{array}
\right)$ is the usual Pauli matrix. 
Consider the \emph{Dyson equation for linearization (DEL)}  
\begin{equation}
  \label{eq:18}
  -\frac{1}{\Sol}
  =
  z J_{8} - K_0 (\cstY) + \SuOp\big[ \Sol \big]
\end{equation}
with a linear map $\SuOp: \CC^{\,8\times 8} \rightarrow \CC^{\,8\times 8}$ given by
\begin{equation}
  \label{eq:19}
  \SuOp\big[R\big]
  :=
  K_{1} R K_{1} + L_1 R L_1^* + L_1^* R L_1+ L_2 R L_2^* + L_2^* R L_2
  ,\qquad
  R\in \CC^{\,8\times 8}.
\end{equation}
\begin{lem}[Existence and basic properties of the solution to the DEL \eqref{eq:18}] \label{lem:exist-uniq-sol} \leavevmode
  For any $\cstT>0$, $\cstY \in \CC_{+} \cup \RR$ and $z \in \CC_{+}$ define $\Sol_{z, \cstY } \in \CC^{\,8\times 8}$ as
  \begin{equation}
  \label{eq:20}
  \Sol_{z, \cstY }
  :=
  (\id_8 \otimes \, \tau_{\Sc}) \Big( \big(K_0( \cstY ) - z J_{8}\big)\otimes \scone + K_{1}\otimes \semic + L_1\otimes c_1 +L_1^{*}\otimes c_1^* + L_2 \otimes c_2 + L_2^{*} \otimes c_2^*\Big)^{-1}
  ,
\end{equation}
where $s,c_1,c_2$ are freely independent semicircular and circular elements and $\tau_{\Sc}$ is a tracial state on a $C^*$-probability space $(\Sc, \tau_{\Sc})$ with the unit element $\scone$.
Then
  \begin{itemize}
  \item[(i)] For any $\cstT>0$ there exists $ C_{\cstT}>0$ such that $\Sol_{z, \cstY }$ satisfies the a priori bound
\begin{equation}
  \label{eq:21}
  \|\Sol_{z, \cstY }\|
  \leq
  C_{\cstT} \Big(1 + \frac{1}{\Im z}\Big)
\end{equation}
uniformly for all $\cstY \in \CC_{+}\cup \RR$ and $z\in\CC_{+}$.
\item[(ii)] For any $\cstT>0$, $\cstY \in \CC_{+} \cup \RR$ and $z\in \CC_{+}$, matrix $\Sol_{z, \cstY }$ satisfies the DEL \eqref{eq:18} and has positive semi\-definite imaginary part, $\Im  \Sol_{z, \cstY } \geq 0$.
  Moreover, for all $\cstT>0$ and  $\cstY \in \CC_{+} \cup \RR$, the matrix-valued function $z\mapsto \Sol_{z, \cstY }$ is analytic on $\CC_{+}$.
\item[(iii)] For any $\cstT>0$, $\cstY \in \CC_{+}\cup \RR$ and $z\in \CC_{+}$  function $z\mapsto \Sol_{z, \cstY }$ admits the representation
\begin{equation}
  \label{eq:22}
  \Sol_{z, \cstY }
  =
  \Sol^{\infty}_{ \cstY } + \int_{\RR} \frac{V_{ \cstY }(d\lambda)}{\lambda - z}
  ,
\end{equation}
where $\Sol_{\cstY}^{\infty} \in \CC^{\,8\times 8}$, and  $V_{ \cstY }(d\lambda)$ is a positive semidefinite  matrix-valued measure on $\RR$ with compact support. In particular, $\lim_{z\to\infty} \Sol_{z, \cstY }
  =  \Sol^{\infty}_{ \cstY }$.
\end{itemize}  
\end{lem}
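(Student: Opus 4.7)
The plan is to interpret \eqref{eq:20} as an operator-valued free probability construction and derive the three parts by combining the trivial bound from the appendix, the operator-valued Schwinger--Dyson identity, and the matrix-valued Herglotz representation. Throughout, I denote
\[
A_{z,\cstY} := (K_{0}(\cstY) - z J_{8})\otimes \scone + K_{1}\otimes \semic + L_{1}\otimes c_{1} + L_{1}^{*}\otimes c_{1}^{*} + L_{2}\otimes c_{2} + L_{2}^{*}\otimes c_{2}^{*},
\]
so that $\Sol_{z,\cstY} = (\id_{8}\otimes \tau_{\Sc})(A_{z,\cstY}^{-1})$ and $A_{z,\cstY}$ is the free probabilistic analogue of the generalized linearization $\Hb_{\cstY} - z J_{8}\otimes I_{N}$ from \eqref{eq:16}.

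For part (i), I would apply Lemma~\ref{lem:trivialBound} to $A_{z,\cstY}$ in exactly the same way as in the proof of Lemma~\ref{lem:trivial-gen-res}(i). The only additional input is that the free tuple $(\semic, c_{1}, c_{2})$ lies in the effective domain $\Dc_{\qb_{0},\qb_{1};C}$ with $\qb_{1} = y_{1}y_{1}^{*} + y_{2}y_{2}^{*}$ and $C = (1 - 1/\sqrt{2})^{-2}$, a fact already recorded in the paragraph preceding \eqref{eq:12}; consequently the rational expression evaluated on $(\semic,c_{1},c_{2})$ is a bounded self-adjoint element. The same Schur-complement manipulation used for $\Hb_{\cstY}$ then transfers verbatim to the free setting and yields \eqref{eq:21} uniformly in $\cstY \in \CC_{+}\cup \RR$.

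For part (ii), positivity and analyticity are immediate. A direct inspection of \eqref{eq:13}--\eqref{eq:14} confirms that $K_{0}(\cstY)$ is Hermitian for every $\cstY \in \CC_{+}\cup \RR$, $K_{1}$ is Hermitian, and the cross-terms $L_{i}\otimes c_{i} + L_{i}^{*}\otimes c_{i}^{*}$ are manifestly self-adjoint. Hence the only non-self-adjoint contribution to $A_{z,\cstY}$ is $-z J_{8}\otimes \scone$, with imaginary part $-\Im(z)\,J_{8}\otimes \scone \le 0$, which forces $\Im A_{z,\cstY}^{-1}\ge 0$; since $\id_{8}\otimes \tau_{\Sc}$ preserves positivity, $\Im \Sol_{z,\cstY}\ge 0$, and analyticity in $z\in\CC_{+}$ is inherited from $A_{z,\cstY}^{-1}$. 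For the DEL itself I would invoke the operator-valued Schwinger--Dyson identity for operator-valued semicirculars: decomposing $c_{i} = (s_{i}^{(1)} + \imu s_{i}^{(2)})/\sqrt{2}$ as combinations of free standard semicirculars exhibits $B := A_{z,\cstY} - (K_{0}(\cstY)-zJ_{8})\otimes\scone$ as an $M_{8}(\CC)$-valued semicircular, and a direct moment computation using $\tau_{\Sc}(\semic^{2}) = \tau_{\Sc}(c_{i}c_{i}^{*}) = \tau_{\Sc}(c_{i}^{*}c_{i}) = 1$ and the vanishing of all remaining second mixed moments identifies its covariance as precisely $\SuOp$ from \eqref{eq:19}. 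The subordination identity $(\id_{8}\otimes\tau_{\Sc})[(b - B)^{-1}] = (b - \SuOp[(\id_{8}\otimes\tau_{\Sc})[(b-B)^{-1}]])^{-1}$ at $b = zJ_{8} - K_{0}(\cstY)$ then rearranges to \eqref{eq:18}.

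For part (iii), (i) and (ii) together imply that $z \mapsto \Sol_{z,\cstY}$ is a matrix-valued Herglotz function on $\CC_{+}$ whose norm grows at most like $\OO{1 + 1/\Im z}$; in particular the linear-in-$z$ term of its Nevanlinna representation vanishes. Compactness of the support of $V_{\cstY}$ can be read off from a Schur-complement decomposition: isolating the $(1,1)$ scalar block (where $J_{8}$ acts), one sees that the $z$-dependence of every entry of $A_{z,\cstY}^{-1}$ is carried by a single resolvent $(z + \xi_{\cstY})^{-1}$ of a bounded self-adjoint element $\xi_{\cstY}\in\Sc$, so $\Sol_{z,\cstY}$ extends analytically to $\CC\setminus \sigma(-\xi_{\cstY})$; the representation \eqref{eq:22} then follows with $V_{\cstY}$ supported in $\sigma(-\xi_{\cstY})$ and $\Sol_{\cstY}^{\infty} = \lim_{\Im z \to \infty}\Sol_{z,\cstY}$, whose existence is again clear from the Schur formula. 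The principal technical point, which will require the most care, is the identification of $B$ as an operator-valued semicircular with covariance $\SuOp$ in part (ii); everything else reduces either to routine positivity/analyticity considerations or to an invocation of the appendix's trivial bound.
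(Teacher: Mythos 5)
Your proposal reproduces the paper's own argument, which is to apply the trivial bound (Lemma~\ref{lem:trivialBound}) over the effective domain to obtain (i), and then delegate (ii) and (iii) to the general appendix Lemma~\ref{lem:exist-uniq}, whose proof uses exactly the circular-to-semicircular decomposition, the operator-valued Schwinger--Dyson identity, the positivity formula \eqref{eq:152}, and the matrix Herglotz/Schur representation that you spell out in place. Two small points to tighten: the paper's effective domain is $\Dc_{\qb_{0},\{q_{1,\cstY}\};C}$ for the height-one denominator $q_{1,\cstY}=\cstY-x+\imu\cstT(y_1 y_1^*+y_2 y_2^*)$ (yours, $\qb_1=y_1y_1^*+y_2y_2^*$, is the building block used near \eqref{eq:12} for $\Theta_N$, not the one that certifies invertibility of the actual denominator of $\Tb_\cstY$), and your invocation of the Schwinger--Dyson/subordination identity at $b=zJ_{8}-K_{0}(\cstY)$ tacitly relies on the regularization $zJ_{8}\to zJ_{8}+\imu u I_{8}$ (so that $\Im b$ becomes positive definite and the results of Lehner and Haagerup--Thorbj{\o}rnsen apply) followed by $u\downarrow 0$ under the control of the trivial bound, which is precisely the step the appendix lemma makes explicit.
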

\begin{proof}
    Fix $\cstT>0$ and denote the noncommutative rational (in fact, polynomial) expression 
  \begin{equation}
    \label{eq:204}
    q_{1,\cstY}(x,y_1,y_2,y_1^*,y_2^*)
    :=
    \cstY-x + \imu \cstT (y_1y_1^* + y_2 y_2^*)
    .
  \end{equation}
  Using the fact that  $c_1 c_1^* + c_2 c_2^*$ has free Poisson distribution of rate 2, which in particular implies that $\|(c_1 c_1^* + c_2 c_2^*)^{-1}\| \leq (1-\frac{1}{\sqrt{2}})^{-1}$, we conclude that the triple $(s,c_1,c_2)$ belongs to the effective domain $\Dc_{\qb_{0},\{q_{1,\cstY}\};C}$ with $C = \cstT^{-1} (1-\frac{1}{\sqrt{2}})^{-1}$ \emph{for all} $\cstY \in \CC_{+}\cup \RR$ (see Section~\ref{sec:line-rati-funct} for the corresponding definitions).

Following  the proofs of Lemmas~\ref{lem:invertibility} and \ref{lem:trivialBound} and specializing them to our concrete case, we obtain  that for any fixed $\cstT>0$ there exists $C_{\cstT}>0$ such that
  \begin{equation}
    \label{eq:205}
    \Big\| \big( (K_0(\cstY ) - z J_{8})\otimes \scone + K_{1}\otimes \semic + L_1\otimes c_1 +L_1^{*}\otimes c_1^* + L_2 \otimes c_2 + L_2^{*} \otimes c_2^*\big)^{-1} \Big\|_{\CC^{\,8\times 8} \,\otimes \,\Sc}
    \leq
    C_{\cstT} \Big(1 + \frac{1}{\Im z}\Big)
  \end{equation}
  uniformly for all $\cstY \in \CC_{+}\cup \RR$,  which yields the a priori bound \eqref{eq:21}.
  Part (ii) of Lemma~\ref{lem:exist-uniq-sol}  now follows directly from parts (ii)-(iv) of Lemma~\ref{lem:exist-uniq}.
  Finally, \eqref{eq:22} follows from the representation \eqref{eq:139} in Lemma~\ref{lem:exist-uniq}.
\end{proof}

With \emph{DEL} \eqref{eq:18} we associate the corresponding \emph{stability operator} $\StOp_{z, \cstY }:\CC^{\,8\times 8} \rightarrow \CC^{\,8\times 8}$ given by
\begin{equation}
  \label{eq:23}
  \StOp_{z, \cstY }\big[R \,\big]
  :=
  R - \Sol_{z, \cstY } \, \SuOp \big[R\,\big] \,\Sol_{z, \cstY }
  ,\quad
  R\in \CC^{\,8\times 8}
  .
\end{equation}
The following lemma is directly obtained from Proposition~\ref{pr:glaw} and establishes  Theorem~\ref{thr:Global law} and the weak limits in the part $(i)$ of Theorem~\ref{thm:main} for $\cstS=1$.
\begin{lem}[Global law for $\mu_{\Tb_{ \cstY }}$] \label{lem:global-law}
  For $ \cstY \in \CC_{+} \cup \RR $ the empirical spectral measure $\mu_{\Tb_{ \cstY }}(d\lambda)$ converges weakly in probability (and almost surely) to $\rho_{ \cstY }(d\lambda)$, where $\rho_{ \cstY }(d\lambda):=\la e_1, V_{ \cstY }(d\lambda) \, e_1 \ra$ is the $(1,1)$ component of the matrix-valued measure $V_{ \cstY }(d\lambda)$ from \eqref{eq:22}.
   Moreover,  $\mathrm{supp} \, \rho_{ \cstY }(d\lambda) \subset [0,1]$ for all $\cstY \in \CC_{+}\cup \RR$ and $\rho_{ \cstY }(d\lambda)$ converges weakly to $\rho_{ E }(d\lambda)$ as $\cstY \in \CC_+$ tends to $E\in \RR$. 
\end{lem}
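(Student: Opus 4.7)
The plan is to recognize Lemma \ref{lem:global-law} as a direct specialization of the abstract global law (Proposition \ref{pr:glaw} in the appendix) to the linearization \eqref{eq:6}, and then to supply the two model-specific ingredients stated in the lemma: support localization in $[0,1]$ and weak continuity of $\cstY \mapsto \rho_\cstY$ up to the real axis.

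First I would verify the hypotheses of Proposition \ref{pr:glaw}. The random inputs $H, W_{1}, W_{2}$ satisfy \textrm{(\textbf{H1}qd)-(\textbf{H3}qd)}; the a.w.o.p.\ bound \eqref{eq:8} on the generalized resolvent and the matching deterministic bound \eqref{eq:21} on $\Sol_{z,\cstY}$, together with the analyticity of $z \mapsto \Sol_{z,\cstY}$ on $\CC_{+}$ and the integral representation \eqref{eq:22}, are exactly the inputs required by the general global law. Proposition \ref{pr:glaw} then yields that, for each fixed $z \in \CC_{+}$ and each pair $1 \le k,l \le 8$,
\begin{equation*}
  \frac{1}{N}\,\Tr \bigl[(\Hb_{\cstY} - z J_{8} \otimes I_{N})^{-1}\bigr]_{kl} \;\longrightarrow\; (\Sol_{z,\cstY})_{kl}
\end{equation*}
both in probability and almost surely. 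Taking $(k,l)=(1,1)$ and using the Schur identity \eqref{eq:9} rewrites the left-hand side as $m_{\mu_{\Tb_{\cstY}}}(z)$, while by \eqref{eq:22} the right-hand side equals $(\Sol^{\infty}_{\cstY})_{11} + \int_{\RR} (\lambda - z)^{-1} \rho_{\cstY}(d\lambda)$ with $\rho_{\cstY}(d\lambda) = \langle e_{1}, V_{\cstY}(d\lambda)\, e_{1}\rangle$. Matching the large-$|z|$ asymptotics on both sides (using that $\mu_{\Tb_{\cstY}}$ is a probability measure) forces $(\Sol^{\infty}_{\cstY})_{11} = 0$ and $\rho_{\cstY}(\RR) = 1$, so $(\Sol_{z,\cstY})_{11}$ is the Stieltjes transform of the probability measure $\rho_{\cstY}$. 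The Stieltjes continuity theorem then gives $\mu_{\Tb_{\cstY}} \Rightarrow \rho_{\cstY}$ in probability and almost surely.

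For the support inclusion I would argue at the level of the finite matrix. On the a.w.o.p.\ event $\Theta_{N}$ from \eqref{eq:11}, setting $K := \cstY - H + \imu \cstT WW^{*}$, a direct calculation using $\Im \cstY \ge 0$ yields
\begin{equation*}
  S(\cstY) S(\cstY)^{*} \;=\; I \,-\, 4 \cstT\,(\Im \cstY)\,W^{*} K^{-1} (K^{*})^{-1} W \;\preceq\; I,
\end{equation*}
so $\|S(\cstY)\| \le 1$ for every $\cstY \in \CC_{+} \cup \RR$; hence $\mathrm{spec}(\Tb_{\cstY}) \subset [0,1]$ a.w.o.p., which forces $\mathrm{supp}\,\rho_{\cstY} \subset [0,1]$. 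To obtain the weak limit as $\CC_{+} \ni \cstY \to E \in \RR$, I would use that the coefficient $K_{0}(\cstY)$ in the DEL \eqref{eq:18} depends continuously on $\cstY$ and that the uniform bound \eqref{eq:21} together with the boundedly invertible stability operator $\StOp_{z,\cstY}$ from \eqref{eq:23} upgrades this into locally uniform continuity of $\cstY \mapsto (\Sol_{z,\cstY})_{11}$ in $z \in \CC_{+}$; a final application of the Stieltjes continuity theorem yields $\rho_{\cstY} \Rightarrow \rho_{E}$.

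The only substantive step is the abstract global law of Proposition \ref{pr:glaw}, whose proof is deferred to the appendix and is used here as a black box. The remaining, model-specific work reduces to the Stieltjes-transform identification above and the elementary contractivity of $S(\cstY)$; within the lemma itself, the main obstacle is the clean identification of the $(1,1)$ entry of $\Sol_{z,\cstY}$ as a Stieltjes transform with no additive constant and unit total mass, which is settled by the comparison of large-$|z|$ asymptotics.
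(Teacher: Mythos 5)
Your proposal follows the same architecture as the paper's proof: specialize Proposition~\ref{pr:glaw} to the linearization \eqref{eq:6}, identify the $(1,1)$-block with the Stieltjes transform of $\mu_{\Tb_\cstY}$ via the Schur identity \eqref{eq:9}, and pass to weak convergence. Two remarks are worth making, one in your favor and one against.

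On the support inclusion your argument is actually more careful than the paper's. The paper simply asserts that $S(\cstY)$ is unitary, but for $\cstY\in\CC_+$ this is false: what holds is the contraction identity you compute, $S(\cstY)S(\cstY)^* = I - 4\cstT(\Im\cstY)\,W^*K^{-1}(K^*)^{-1}W \preceq I$, which suffices for $\mathrm{supp}\,\mu_{\Tb_\cstY}\subset[0,1]$. Your version closes a small imprecision in the paper's statement.

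However, there is a genuine gap in your treatment of almost sure convergence. Proposition~\ref{pr:glaw} is a stochastic-domination estimate (high-probability, polynomial error tails), which gives convergence in probability of the Stieltjes transform at each fixed $z\in\CC_+$, but it does \emph{not} by itself yield a.s.\ convergence, and the Stieltjes continuity theorem applied at a single $z$ does not upgrade in-probability convergence to a.s.\ weak convergence of the measures. The paper handles this by fixing a convergent sequence $\{z_i\}\subset\CC_+$ with an accumulation point, applying Borel--Cantelli to the events $A_N$ in \eqref{eq:241} to get a.s.\ convergence at each $z_i$, and then invoking the Vitali--Porter theorem to extend to all of $\CC_+$; that step must be included.

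A minor further comment: for the limit $\cstY\to E$ your proposal to route through bounded invertibility of the stability operator $\StOp_{z,\cstY}$ would require establishing that invertibility uniformly in $\cstY$ near $E$, which is not free. The paper's route is simpler: the resolvent identity together with the uniform bound \eqref{eq:205} gives the Lipschitz estimate \eqref{eq:214}, from which pointwise convergence of $\Sol_{z,\cstY}\to\Sol_{z,E}$ and hence weak convergence of the measures follows directly without invoking the stability operator at all.
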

\begin{proof} 
  We apply Proposition~\ref{pr:glaw} to the rational expression in random matrices \eqref{eq:3}.
  By \eqref{eq:185}, for any $\cstY \in \CC_{+}$ and fixed $\cstP>0$ the generalized resolvent of the linearization $(\Hb_{ \cstY } - z J_{8} \otimes I_N)^{-1}$, corresponding to $\GRes_{z}$ in \eqref{eq:185}, converges uniformly on $\{\,z : \, \Im z \geq \cstP^{-1}, \, |z| \leq \cstP \,\}$ to $\Sol_{z,\cstY} \otimes I_N$, corresponding to $\Sol_{z}^{(\mathrm{sc})} \otimes I_N$ in \eqref{eq:185}.
  In particular, the identity \eqref{eq:9}, similarly to \eqref{eq:160} (see also Definition~\ref{defn:stochastic-domination}), implies the pointwise convergence of the Stieltjes transform of the empirical spectral measure of $\Tb_{\cstY}$: for any $\cstP, \varepsilon, D > 0$ there exists a constant $C_{\cstP, \varepsilon,D}>0$ such that
   \begin{equation}
    \label{eq:183}
    \PP\bigg[\,\Big|\frac{1}{N}\Tr\, \Big(\Tb_{\cstY} - z I_N \Big)^{-1} - \Sol_{z, \cstY}(1,1)\Big| \geq \frac{N^{\varepsilon}}{N} \bigg]
    \leq
    \frac{C_{\cstP, \varepsilon,D}}{N^{D}}
  \end{equation}
  uniformly on $\{\,z : \, \Im z \geq \cstP^{-1}, \, |z| \leq \cstP \,\}$ and  $\cstY \in \CC_{+}\cup \RR$. 
  The convergence in~\eqref{eq:183} together with \eqref{eq:22}  imply that the weak convergence
  \begin{equation}
    \label{eq:194}
    \lim_{N\to \infty} \mu_{\Tb_{\cstY}}(d\lambda) = \rho_{ \cstY }(d\lambda)
  \end{equation}
  holds in probability (see, e.g., \cite[Theorem~2.4.4]{AndeGuioZeitBook}).
  
Now we prove  the almost sure convergence.  Take any $z_0\in \CC_{+}$, a sequence $\{z_i\}\subset\CC_{+}$
of different complex numbers with $ z_{i} \to z_{0}$ such that $z_0$ is the accumulation point  of $\{z_i\}$. 
Define  the sequence of events
  \begin{equation}
    \label{eq:241}
    A_N:= \bigg\{\max_{1\leq i \leq N}\Big\{\Big|\frac{1}{N}\Tr\, \Big(\Tb_{\cstY} - z_i I_N \Big)^{-1} - \Sol_{z_{i}, \cstY}(1,1)\Big|\Big\}
    \geq
    \frac{1}{\sqrt{N}}\bigg\}
    .
  \end{equation}
  Then it follows from \eqref{eq:183} and the Borel-Cantelli lemma applied to $\{A_N\}$ that with probability 1
  \begin{equation}
    \label{eq:242}
    \lim_{N\to \infty}\frac{1}{N}\Tr\, \Big(\Tb_{\cstY} - z_i I_N \Big)^{-1} = \Sol_{z_i, \cstY}(1,1)
  \end{equation}
  for all $i\in\NN$.
  Finally, applying the Vitali-Porter theorem we conclude that the weak convergence \eqref{eq:194} holds almost surely.
  
  To prove the bound on the support of $\rho_{ \cstY }(d\lambda)$ note, that the scattering matrix $S( \cstY )$, related to $\Tb_{ \cstY }= T T^{*}$ via \eqref{eq:2}, is unitary.
This implies that all singular values of $T$ are located in the interval $[0,1]$, thus $\mathrm{supp}\, \mu_{\Tb_{ \cstY }} \subset [0,1]$.
But from \eqref{eq:194} we know that the empirical spectral measure of $T T^{*}$ converges weakly to $\rho_{\cstY}(d\lambda)$, which yields $\mathrm{supp} \rho_{\cstY} \subset [0,1]$.

  It follows from \eqref{eq:205} and the definition of $\Sol_{z,\cstY}$ in \eqref{eq:20} that for any $\cstY \in \CC_{+}\cup \RR$, $E\in \RR$ and $z\in \CC_{+}$
  \begin{equation} 
    \label{eq:214}
    \| \Sol_{z,\cstY} - \Sol_{z,E}\|
    \leq
    2\, C_{\cstT}^{2} \,\big|\cstY - E\big|\, \Big(1 + \frac{1}{\Im z}\Big)^2
    .
  \end{equation}
  This implies the pointwise  convergence  
    \begin{equation}
    \label{eq:218}
    \lim_{\cstY \to E} \Sol_{z,\cstY} = \Sol_{z,E}
  \end{equation}
  for all $z\in \CC_{+}$.  
  By \eqref{eq:22} and $\rho_{ \cstY }(d\lambda)=\la e_1, V_{ \cstY }(d\lambda) \, e_1 \ra$, the $(1,1)$-components $\Sol_{z,\cstY}(1,1)$ and $\Sol_{z,E}(1,1)$ define the Stieltjes transforms of the measures $\rho_{\cstY}(d\lambda)$ and $\rho_{E}(d\lambda)$ correspondingly, from which we conclude that \eqref{eq:218}  yields the weak convergence of $\rho_{\cstY}(d\lambda)$ to $\rho_{E}(d\lambda)$ as $\cstY \to E$.
\end{proof}

Note that $\Sol_{z, \cstY }$ is a matrix-valued Herglotz function.
Therefore, from the properties of the (matrix-valued) Herglotz functions (see, e.g., \cite[Theorems 2.2 and 5.4]{GeszTsek00}), the \emph{absolutely continuous} part of $\rho_{ \cstY }(d\lambda)$ is given by the inverse Stieltjes transform of $\Sol_{z, \cstY }(1,1)$ (see Lemma~\ref{lem:exist-uniq})
\begin{equation}
  \label{eq:24}
  \rho_{ \cstY }(\lambda)
  :=
  \lim_{\eta\downarrow 0} \frac{1}{\pi}  \Im \Sol_{\lambda+\imu \eta, \cstY }(1,1)
  .
\end{equation}
We call the function $\rho_{ \cstY }(\lambda)$, defined in \eqref{eq:24}, the \emph{self-consistent density of states} of the solution to the \emph{DEL} \eqref{eq:18}.
It will be shown in Section~\ref{sec:proof-ii-iii} that $\rho_{ \cstY }(d\lambda)$ is in fact purely absolutely continuous, i.e., $\rho_{ \cstY }(d\lambda) = \rho_{ \cstY }(\lambda) d\lambda$.
The statements \emph{(a)} and \emph{(b)} of part \emph{(ii)} of Theorem~\ref{thm:main} will be derived from the study of $\Sol_{z, \cstY }$ for the spectral parameter $z$ being close to the real line.

Note that our particular choice of linearization \eqref{eq:6} allows rewriting the original \emph{DEL} \eqref{eq:18} in a slightly simpler form. 
More precisely, if
\begin{equation}
  \label{eq:25}
  R
  =
  \left(
    \begin{array}{ccc}
      R_{11}& R_{12}& R_{13}
      \\
      R_{21}& R_{22}& R_{23}
      \\
      R_{31}& R_{32}& R_{33}
    \end{array}
  \right)
  \in \CC^{8\times 8}
\end{equation}
with $R_{11}, R_{22} \in \CC^{3\times 3}$ and $R_{33} \in \CC^{2\times 2}$, then \eqref{eq:17} yields
\begin{equation}
  \label{eq:26}
  \SuOp[R]
  =
  \renewcommand\arraystretch{1.4}
  \left( 
    \begin{array}{ccc}
      \cstU_{5}^{t} R_{33} \cstU_{5}& 0 & 0
      \\ 
      0 &\cstU_{4}^{t} R_{33} \cstU_{4}& 0
      \\ 
      0 & 0 &  \cstV_{1}  R_{33}  \cstV_{1} + \cstU_{4} R_{22} \cstU_{4}^{t} + \cstU_{5} R_{11} \cstU_{5}^{t}
    \end{array}
  \right)
  ,
\end{equation}
so that the image $\SuOp[R]$ is a block-diagonal matrix.
Together with the definition of $K_0$ in \eqref{eq:17}, this implies that the right-hand side in \eqref{eq:18} is a block-diagonal matrix with blocks of sizes 3, 3, and 2 correspondingly.
We conclude that any solution to the \emph{DEL} \eqref{eq:18} has a block-diagonal form, which, in particular, allows us to write
\begin{equation}
  \label{eq:27}
  \Sol_{z, \cstY }
  =
  \left(
    \begin{array}{ccc}
      \Soll{1} & 0 & 0
      \\
      0 & \Soll{2} & 0
      \\
      0 & 0 & \Soll{3}
    \end{array}
  \right)
\end{equation}
with $\Soll{1},\Soll{2} \in \CC^{3\times 3}$ and $\Soll{3} \in \CC^{2\times 2}$, where we omit the dependence of the blocks on $z$ and $ \cstY $.
Now \emph{DEL}~\eqref{eq:18} can be decomposed into a system of three matrix equations  of smaller dimensions  
\begin{equation}
  \label{eq:28}
  -\frac{1}{\Soll{1}}
  =
  zJ_{3}-\cstU_{1}+  \cstU_{5}^t \Soll{3} \cstU_{5}
  ,\quad
  -\frac{1}{\Soll{2}}
  =
  -\cstU_{2}+ \cstU_{4}^t \Soll{3} \cstU_{4}
\end{equation}
and
\begin{equation}
  \label{eq:29}
  -\frac{1}{\Soll{3}}
  =
  - \cstU_{3}  (\cstY)  
  - \frac{ 1 }{ -\frac{1}{2 \cstT^2z}(I_2 + \cstV_3)-\frac{1}{ \cstT }\cstV_2 +  \Soll{3}}
  - \frac{1}{-2(I_2 - \cstV_3)-\frac{1}{ \cstT }\cstV_2+ \Soll{3} }
  + \cstV_{1} \Soll{3} \cstV_{1}
  ,
\end{equation}
where $\cstV_{1} = \left(
  \begin{array}{cc}
    0 & 1
    \\
    1 & 0
  \end{array}
\right)$, $\cstV_{2}= \left(
  \begin{array}{cc}
    0 & -\imu
    \\
    \imu & 0
  \end{array}
\right)$ and $\cstV_{3}= \left(
  \begin{array}{cc}
    1 & 0
    \\
    0 & -1
  \end{array}
\right)$ are the standard Pauli matrices. 
The proof of Theorem~\ref{thm:main} is based on the study of matrices $\Soll{1}$, $\Soll{2}$ and $\Soll{3}$.

\subsection{Useful identities}
From now on and until the end of Section~\ref{sec:proofs}, we study the 
matrix-valued function $\Sol_{z,\cstY}$ with $\cstY = E \in \RR$.
We start by collecting several important relations between the components of $\Sol_{z,E}$.
\label{sec:usefull-identities}
\begin{lem}
  \label{lem:useful-identities-3}
  For all $E\in \RR$ and $z\in \CC_{+}$
  \begin{equation}
    \label{eq:30}
    \Sol_{z,-E}
    =
     ( Q^{-}\,\Sol_{z,E} \, Q^{-})^{t}
    ,
  \end{equation}
where $Q^{-} = \mathrm{diag} (-1,-1,1,-1,1,1,-1,1) \in \CC^{8\times 8} $.
In particular, for all $E\in \RR$, $z\in\CC_{+}$ and $1\leq k\leq 8$
\begin{equation}
  \label{eq:41}
  \Sol_{z,E}(k,k)
  =
  \Sol_{z,-E}(k,k)
  .
\end{equation}
\end{lem}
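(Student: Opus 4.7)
The plan is to exploit uniqueness of the solution of the DEL (as established in Lemma~\ref{lem:exist-uniq-sol} and the corresponding statements in the appendix) by exhibiting $\widetilde\Sol := (Q^-\Sol_{z,E}Q^-)^t$ as a solution of the DEL with parameter $-E$ satisfying the same a priori bound \eqref{eq:21} and $\Im\widetilde\Sol\ge0$. Since $(Q^-)^2 = I$ and $Q^-$ is real diagonal, we have $\widetilde\Sol^{-1}=(Q^-\Sol_{z,E}^{-1}Q^-)^t$ and $\Im\widetilde\Sol = (Q^-(\Im\Sol_{z,E})Q^-)^t\ge0$. Thus, applying $(Q^-\,\cdot\,Q^-)^t$ to both sides of \eqref{eq:18} written for $(z,E)$, it suffices to check the three intertwining relations
\begin{equation}
\label{eq:plan-intertwine}
Q^- J_8 Q^- = J_8,
\qquad
(Q^- K_0(E) Q^-)^t = K_0(-E),
\qquad
(Q^-\SuOp[R]Q^-)^t = \SuOp\bigl[(Q^- R Q^-)^t\bigr]
\end{equation}
for every $R\in \CC^{8\times 8}$.

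The first identity is immediate since $J_8$ has only the $(1,1)$ entry nonzero and $Q^-(1,1)=-1$. For the second, writing $Q^-=\mathrm{diag}(Q^-_1,Q^-_2,Q^-_3)$ conformally with the block partition $(3,3,2)$ from \eqref{eq:17}, one checks by direct inspection of \eqref{eq:13}--\eqref{eq:14} that $(Q^-_1 \cstU_1 Q^-_1)^t=\cstU_1$, $(Q^-_2 \cstU_2 Q^-_2)^t=\cstU_2$, while the only $E$-dependent block transforms as $(Q^-_3\cstU_3(E)Q^-_3)^t = \cstU_3(-E)$ because $Q^-_3=\mathrm{diag}(-1,1)$ anticommutes with the off-diagonal sign structure of $\cstU_3$.

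The third identity in \eqref{eq:plan-intertwine} is the main computational step, but it is cleanly organized by the block decomposition \eqref{eq:26}: since $\SuOp[R]$ is block-diagonal and each of its blocks is a sandwich of one diagonal block of $R$ by $\cstU_4,\cstU_5$ or $\cstV_1$, the claim reduces to verifying the sign-intertwining relations
\begin{equation}
\label{eq:plan-signs}
\cstU_5 Q^-_1 = -Q^-_3\cstU_5,
\qquad
\cstU_4 Q^-_2 = -Q^-_3\cstU_4,
\qquad
\cstV_1 Q^-_3 = -Q^-_3\cstV_1,
\end{equation}
all of which are immediate from the explicit forms in \eqref{eq:14} and \eqref{eq:17}. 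The overall sign is squared away in the sandwich, so the identity for $\SuOp$ holds without any sign. Combining \eqref{eq:plan-intertwine}, taking the transpose of the conjugated DEL, and using the uniqueness of the solution of the DEL in the class of matrices with positive semidefinite imaginary part satisfying \eqref{eq:21} (the $\cstY$-independent bound of Lemma~\ref{lem:exist-uniq-sol} applies to both $E$ and $-E$) yields \eqref{eq:30}. The diagonal identity \eqref{eq:41} then follows since conjugation by the real diagonal involution $Q^-$ leaves diagonal entries invariant and transposition does too.

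The only possible obstacle is the sign bookkeeping in \eqref{eq:plan-signs} and the verification that conjugation by $Q^-$ and transposition preserve the positivity and decay required to invoke uniqueness; both are routine given the explicit form of $Q^-$ and the structural properties of $\Sol_{z,E}$ already established in Lemma~\ref{lem:exist-uniq-sol}.
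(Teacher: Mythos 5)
Your argument takes a genuinely different route from the paper's. The paper proves \eqref{eq:30} by working directly inside the free probability representation \eqref{eq:20}: conjugating $\Hb_{-E}^{(\mathrm{sc})}$ by $Q^-\otimes\scone$ and transposing amounts to replacing the freely independent family $(\semic,c_1,c_2)$ by $(-\semic,-c_1^*,-c_2^*)$, which has the same joint $*$-distribution, so the two traces coincide. You instead prove the intertwining relations \eqref{eq:plan-intertwine}--\eqref{eq:plan-signs} for the coefficient matrices of the DEL and then invoke uniqueness of the DEL solution in a suitable class. Your algebraic checks are correct, and they are essentially the same computations the paper would need to verify that the conjugation preserves the coefficient structure of $\Hb^{(\mathrm{sc})}$. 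The appeal of your formulation is that it works at the level of the defining equation and would generalize to any setting where a uniqueness theorem is available.

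The gap is precisely in that last step. The DEL \eqref{eq:18} is not a standard Matrix Dyson Equation because the spectral parameter enters through $zJ_8$ rather than $zI_8$; $\Im(zJ_8)$ is rank one, so the usual contraction/positivity arguments that give uniqueness of the MDE solution with $\Im M\ge 0$ do not apply directly, and the paper nowhere states a uniqueness theorem for \eqref{eq:18}. Indeed, when the authors need to identify a constructed solution with $\Sol_{z,E}$ in Lemma~\ref{lem:sing-sol} (Step~3), they explicitly introduce a $\delta$-regularization to restore strict positivity, obtain uniqueness via a Carath\'eodory-metric contraction, and then take $\delta\downarrow 0$; they do not simply say ``by uniqueness.'' Your proof can be repaired in either of two ways: (a) carry out the intertwining inside the representation \eqref{eq:20}/\eqref{eq:33} directly, using that $(-\semic,-c_1^*,-c_2^*)$ is equidistributed with $(\semic,c_1,c_2)$ --- which is exactly the paper's argument and bypasses uniqueness; or (b) apply your intertwining to the regularized equation with $zJ_8+\imu u I_8$ ($u>0$), where the MDE uniqueness is standard and $Q^-I_8Q^-=I_8$ causes no trouble, and then pass to $u\downarrow 0$ using the continuity of the free probability expression in $u$. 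As written, the step ``by uniqueness'' is unsupported.
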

\begin{proof}
Let $\Hb_{E}^{(\mathrm{sc})} \in \Sc^{8\times 8}$ be given by 
  \begin{equation}
\label{eq:32}
  \Hb_{E}^{(\mathrm{sc})}
  :=  
  \left(
    \renewcommand\arraystretch{1.4}
    \begin{array}{ccc|ccc|cc}
 0 & 0 & 0 & 0 & 0 & 0 & 0 & c_{2}^* \\
 0 & 0 & \frac{\imu}{ \cstT } \scone & 0 & 0 & 0 & 0 & c_{2}^* \\
 0 & -\frac{\imu}{ \cstT } \scone & 0 & 0 & 0 & 0 & c_{2}^* & 0 \\ \hline
 0 & 0 & 0 & 0 & 0 & \frac{\imu}{ \cstT }\scone & 0 & c_{1}^* \\
 0 & 0 & 0 & 0 & -\frac{1}{4 \cstT^2} \scone & 0 & c_{1}^* & 0 \\
 0 & 0 & 0 & -\frac{\imu}{ \cstT } \scone & 0 & 0 & c_{1}^* & 0 \\ \hline
 0 & 0 & c_{2} & 0 & c_{1} & c_{1} & 0 & E \scone -\semic \\
 c_{2} & c_{2} & 0 & c_{1} & 0 & 0 & E \scone -\semic & 0 \\
\end{array}
\right)
,
\end{equation}
where $\semic$ is a semicircular element, $c_1,c_2$ are circular elements, all freely independent in a $C^*$-probability space $(\Sc,\tau_{\Sc})$, so that
\begin{equation}
  \label{eq:33}
  \Sol_{z,E}
  :=
  (\id_{8}\otimes \,\tau_{\Sc}) (\Hb_{E}^{(\mathrm{sc})} - zJ_{8}\otimes \scone)^{-1}
  .
\end{equation}
Using the fact that $-\semic$,  $-c_{1}^*$ and $-c_{2}^*$ form again a freely independent family of one semicircular and two circular elements, we can easily check that (here $\times$ denotes multiplication in $\Sc ^{8\times 8}$)
\begin{equation}
  \label{eq:39}
  (\id_{8}\otimes \,\tau_{\Sc})\Big(\big((Q^{-}\otimes \scone ) \times (\Hb_{-E}^{(\mathrm{sc})} - zJ_{8}\otimes \scone ) \times  (Q^{-}\otimes \scone )\big)^{t}\Big)^{-1}
  =
  \Sol_{z,E}
  ,
\end{equation}
from which \eqref{eq:30} follows after factorizing $Q^{-}$.
\end{proof}
\begin{lem}
  \label{lem:useful-identities-4}
For all $E\in \RR$ and $z\in\CC_{+}$
\begin{equation}
  \label{eq:31}
  \Sol_{z,E}(8,8)
  =
  4 \cstT^2 z\, \Sol_{z,E}(7,7)
  .
\end{equation}
\end{lem}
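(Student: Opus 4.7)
The plan is to derive the identity directly from the reduced DEL~\eqref{eq:29}, producing a factored algebraic equation of the form $(\text{desired difference})\cdot Y=0$, and then to exclude the spurious branch $Y\equiv 0$ using the fact that $\Sol_{z,E}(1,1)$ is the (nonzero) Stieltjes transform of the measure $\rho_E$. By the block-diagonal form~\eqref{eq:27}, $\Sol_{z,E}(7,7)$ and $\Sol_{z,E}(8,8)$ are the diagonal entries of $\Soll{3}\in\CC^{2\times 2}$; denote the four entries of $\Soll{3}$ by $g_{11},g_{12},g_{21},g_{22}$. The goal becomes $g_{22}=4\cstT^2 z\,g_{11}$.

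Rearranging~\eqref{eq:29} to $\Soll{3}^{-1}+\cstV_1\Soll{3}\cstV_1=\cstU_3(E)+M_1^{-1}+M_2^{-1}$ and reading off the $(1,1)$ and $(2,2)$ scalar entries yields
\begin{align*}
  g_{22}/\det\Soll{3}+g_{22} &= g_{22}/\det M_1+(g_{22}-4)/\det M_2, \\
  g_{11}/\det\Soll{3}+g_{11} &= (g_{11}-1/(\cstT^2 z))/\det M_1+g_{11}/\det M_2.
\end{align*}
Dividing the first by $g_{22}$ and the second by $g_{11}$ and subtracting cancels the common term $1/\det\Soll{3}+1$ and produces $g_{22}\det M_2=4\cstT^2 z\,g_{11}\det M_1$. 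Combined with the elementary identity $\det M_1-\det M_2=4g_{11}-g_{22}/(\cstT^2 z)$, read off directly from the definitions of $M_1,M_2$, this rearranges into
\[
  (g_{22}-4\cstT^2 z\,g_{11})\cdot Y=0, \qquad Y:=g_{11}g_{22}-(g_{12}+\imu/\cstT)(g_{21}-\imu/\cstT).
\]
Each factor is analytic in $z\in\CC_+$ by Lemma~\ref{lem:exist-uniq-sol}(ii), so the identity theorem forces at least one of them to vanish identically on $\CC_+$.

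To rule out $Y\equiv 0$, I would use the bridge supplied by~\eqref{eq:28}: setting $A_1:=zJ_3-\cstU_1+\cstU_5^t\Soll{3}\cstU_5$, so that $\Soll{1}=-A_1^{-1}$, a direct cofactor expansion shows that $Y$ equals the $(1,1)$ minor of the $3\times 3$ matrix $A_1$ (the only subtlety is that the products $(g_{12}+\imu/\cstT)(g_{21}-\imu/\cstT)$ and $(g_{21}-\imu/\cstT)(g_{12}+\imu/\cstT)$ coincide since these are scalars). Consequently
\[
  \Sol_{z,E}(1,1)=(\Soll{1})_{11}=-(A_1^{-1})_{11}=-Y/\det A_1,
\]
so $Y\equiv 0$ on $\CC_+$ would imply $\Sol_{z,E}(1,1)\equiv 0$. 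But by Lemma~\ref{lem:global-law}, $\Sol_{z,E}(1,1)$ is the Stieltjes transform of the probability measure $\rho_E(d\lambda)$ on $[0,1]$, which cannot vanish identically on $\CC_+$. This contradiction forces the first factor to vanish, delivering the claimed identity.

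The main conceptual step, and the only place any real creativity is needed, is recognising the algebraic coincidence $Y=\text{minor}_{11}(A_1)$, which ties the otherwise mysterious spurious branch of the factored identity to the $(1,1)$ entry of the full DEL solution and hence to the Stieltjes transform of $\rho_E$. Once this bridge is in place the remaining argument is routine; the only mild bookkeeping is ensuring $g_{11},g_{22},\det M_1,\det M_2,\det\Soll{3}\neq 0$ on a dense subset of $\CC_+$, so that the division steps in the algebraic manipulation are legitimate, after which analyticity extends the resulting identity to all of $\CC_+$.
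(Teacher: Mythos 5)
Your argument is correct, but it takes a genuinely different route from the paper. The paper proves the identity by going back to the free-probability representation~\eqref{eq:33}: it Schur-complements the lower-right $2\times 2$ block of $\Hb^{(\mathrm{sc})}_E$ to get explicit $\tau_{\Sc}$-expectations for $\Sol_{z,E}(7,7)$ and $\Sol_{z,E}(8,8)$ (equations~\eqref{eq:49}--\eqref{eq:50}), then exploits the distributional invariance under relabeling $c_1\leftrightarrow c_2$ and under $a\mapsto -a^*$, combined with the $E\mapsto -E$ symmetry from Lemma~\ref{lem:useful-identities-3}, to match the two expressions. You instead stay entirely inside the reduced \emph{DEL}~\eqref{eq:29}: reading off its $(1,1)$- and $(2,2)$-scalar components and combining them with the elementary relation $\det T_1-\det T_2=4g_{11}-g_{22}/(\cstT^2 z)$ yields the factored identity $(g_{22}-4\cstT^2 z\,g_{11})\cdot Y=0$, and the key observation that $Y=\mathrm{minor}_{11}(A_1)=-\det(A_1)\,\Sol_{z,E}(1,1)$ via~\eqref{eq:28} ties the spurious factor to a known nonzero Stieltjes transform, ruling it out by the identity theorem. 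The paper's approach reveals the structural origin of the identity as a symmetry of the free model (and is what generalizes verbatim to the $\cstS\neq 1$ case in Lemma~\ref{lem:useful-identities-2}), whereas yours trades that for a heavier input (the nontriviality of $\rho_E$, i.e.\ effectively Lemma~\ref{lem:global-law} or the Herglotz representation~\eqref{eq:22}) but bypasses the free-probability model completely, and the link $Y=-\det A_1\cdot\Sol_{z,E}(1,1)$ is a nice insight. One minor remark: the divisions by $g_{11},g_{22}$ in your derivation are in fact legitimate for every $z\in\CC_+$, not merely a dense subset --- the component equations themselves, evaluated at any $z$ where $g_{11}(z)=0$ (resp.\ $g_{22}(z)=0$), would force $1/(\cstT^2 z\det T_1)=0$ (resp.\ $4/\det T_2=0$), a contradiction since $T_1,T_2$ are invertible --- or you can sidestep the issue entirely by multiplying the first equation by $g_{11}$, the second by $g_{22}$, and subtracting.
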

\begin{proof}
Using the Schur complement formula, the lower-right $2\times 2$ block of the inverse of $\Hb^{\mathrm{(sc)}}_{E} - zJ_{8} \otimes \scone $ can be written as
\begin{equation}
  \label{eq:43}
  \left(
    \begin{array}{cc}
      4  \cstT^2 c_{1} c_{1}^* & E \scone -\semic+ \imu  \cstT (c_{1}c_{1}^* + c_{2}c_{2}^*)
                              \\
      E \scone -\semic- \imu  \cstT (c_{1}c_{1}^* + c_{2}c_{2}^*) & \frac{1}{z} c_{2}c_{2}^*
    \end{array}
  \right)^{-1}
  .
\end{equation}
For convenience, change the rows in the above matrix, so that
\begin{equation}
  \label{eq:46}
  \Soll{3}  \left(
    \begin{array}{cc}
    0&1
    \\
    1&0
    \end{array}
       \right)
  =
  (\id_2\otimes \,\tau_{\Sc}) \left[\left(
    \begin{array}{cc}
      E \scone -a & \frac{1}{z} c_{2}c_{2}^*
      \\
      4  \cstT^2 c_{1} c_{1}^* & E \scone -a^*
    \end{array}
  \right)^{-1}  \right]
       ,
\end{equation}
where we introduced
\begin{equation}
  \label{eq:47}
  a
  :=
  \semic+\imu  \cstT  (c_{1}c_{1}^* + c_{2}c_{2}^*)
  .
\end{equation}
Notice, that since $c_{1}c_{1}^* + c_{2}c_{2}^*$ has a free Poisson distribution of rate 2, $c_{1}c_{1}^* + c_{2}c_{2}^* \geq (1-\frac{1}{\sqrt{2}})^2 \scone $ and thus both diagonal elements of the matrix on the right-hand side of \eqref{eq:46} are invertible.
Rewrite the matrix in the square brackets in the following way: for the entries of the first row apply the Schur complement formula with respect to the $(1,1)$-component, and for the second row apply the Schur complement formula with respect to the $(2,2)$-component.
This leads to the following expressions for $\Sol_{z,E}(7,7)$ and $\Sol_{z,E}(8,8)$
\begin{align}
  \label{eq:49}
  &\Sol_{z,E}(7,7)
  =
    \frac{1}{z}\tau_{\Sc} \Bigg(-\frac{1}{E \scone -a} c_{2}c_{2}^*\frac{1}{E \scone -a^*-\frac{4 \cstT^2}{z} c_{1}c_{1}^*\frac{1}{E \scone -a}c_{2}c_{2}^*} \,\Bigg)
    ,\\
  &\label{eq:50}
  \Sol_{z,E}(8,8)
  =
  4 \cstT^2 \tau_{\Sc}\Bigg(-\frac{1}{E \scone -a^*} c_{1}c_{1}^*\frac{1}{E \scone -a-\frac{4 \cstT^2}{z} c_{2}c_{2}^*\frac{1}{E \scone -a^*}c_{1}c_{1}^*} \, \Bigg)
  .
\end{align}
Under $\tau_{\Sc}$  we can swap the labels of $c_1$ and $c_2$ and replace $a$ with $-a^*$ without changing the value in \eqref{eq:50}.
After completing these operations, we obtain
\begin{equation}
  \label{eq:53}
  \Sol_{z,E}(8,8)
  =
  4 \cstT^2 \tau_{\Sc}\Bigg(-\frac{1}{E \scone +a} c_{2}c_{2}^*\frac{1}{E \scone +a^*-\frac{4 \cstT^2}{z} c_{1}c_{1}^*\frac{1}{E \scone +a}c_{2}c_{2}^*} \,\Bigg)
  .
\end{equation}
Multiplying both fractions under $\tau_{\Sc}$ in \eqref{eq:53} by $-1$, and swapping $E$ to $-E$ by \eqref{eq:41}, a comparison with \eqref{eq:49} yields \eqref{eq:31}.
\end{proof}
\begin{lem}
  \label{lem:useful-identities-5}
  For all $E\in \RR$ and $z\in\CC_{+}$
  \begin{equation}
    \label{eq:169}
    \Sol_{z,E}(8,7)-\Sol_{z,E}(7,8)
    =
    \frac{\imu}{ \cstT } \Sol_{z,E}(8,8)
  \end{equation}
\end{lem}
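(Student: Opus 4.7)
The plan is to derive \eqref{eq:169} directly from the decomposed Dyson equation \eqref{eq:29} for the $2 \times 2$ block $\Soll{3}$, together with the identity $\Sol_{z,E}(8,8) = 4\cstT^2 z\, \Sol_{z,E}(7,7)$ from Lemma~\ref{lem:useful-identities-4}. Write $\Soll{3} = (m_{ij})_{i,j=1}^{2}$ so that $m_{11} = \Sol_{z,E}(7,7)$, $m_{12} = \Sol_{z,E}(7,8)$, $m_{21} = \Sol_{z,E}(8,7)$, $m_{22} = \Sol_{z,E}(8,8)$, and set $X := m_{21} - m_{12}$. With $A' := -\frac{1}{2\cstT^2 z}(I_2 + \cstV_3) - \frac{1}{\cstT}\cstV_2$ and $B' := -2(I_2 - \cstV_3) - \frac{1}{\cstT}\cstV_2$ denoting the two deterministic matrices appearing in \eqref{eq:29}, the structural feature driving the identity is that $A'$ and $B'$ carry the same off-diagonal entries $+\imu/\cstT$ at position $(1,2)$ and $-\imu/\cstT$ at position $(2,1)$, and differ only on the diagonal.

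First I would subtract the $(1,2)$ component of \eqref{eq:29} from the $(2,1)$ component. The contribution of $-\cstU_3(E) = -E\cstV_1$ cancels by the swap symmetry, the conjugation term $\cstV_1 \Soll{3} \cstV_1$ contributes $m_{12} - m_{21} = -X$, and the two inverse terms are handled by Cramer's formula for $2 \times 2$ inverses. Setting $Y := 1/\det(A' + \Soll{3}) + 1/\det(B' + \Soll{3})$, the identity collapses into
\begin{equation*}
X\bigl(Y - 1 - 1/\det \Soll{3}\bigr) = \frac{2\imu Y}{\cstT}.
\end{equation*}

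Next I would extract the $(1,1)$ and $(2,2)$ diagonal components of \eqref{eq:29}; short Cramer-formula computations produce, respectively,
\begin{equation*}
\frac{4}{\det(B' + \Soll{3})} = m_{22}\bigl(Y - 1 - 1/\det \Soll{3}\bigr), \qquad \frac{1}{\det(A' + \Soll{3})} = \cstT^2 z\, m_{11}\bigl(Y - 1 - 1/\det \Soll{3}\bigr).
\end{equation*}
Summing these two and invoking Lemma~\ref{lem:useful-identities-4} to replace $4\cstT^2 z\, m_{11}$ by $m_{22}$, I obtain $Y = \tfrac{m_{22}}{2}\bigl(Y - 1 - 1/\det \Soll{3}\bigr)$. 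Plugging this into the first display immediately yields $X = \imu m_{22}/\cstT$, which is precisely \eqref{eq:169}.

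The main obstacle is the careful algebraic bookkeeping in computing $\det(A' + \Soll{3})$, $\det(B' + \Soll{3})$ and the associated cofactors for the individual entries of \eqref{eq:29}. In particular one must verify that the off-diagonal $\imu/\cstT$ contributions from $A'$ and $B'$ align with identical signs in the $(2,1)$-minus-$(1,2)$ combination, since this alignment is exactly what forces the coefficient $\imu/\cstT$ on the right-hand side of \eqref{eq:169}.
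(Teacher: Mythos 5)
Your proposal is correct and follows essentially the same route as the paper: extract the relevant components of the decomposed Dyson equation \eqref{eq:29} via the $2\times 2$ Cramer formula and use Lemma~\ref{lem:useful-identities-4} to couple the diagonal relations (the paper uses $\det T_1 = \det T_2$, which is just \eqref{eq:31} packaged differently, to merge the two fractions before taking the $(2,1)-(1,2)$ combination, whereas you keep the two determinants separate until the end). One small bookkeeping slip: your $(1,1)$-component display has $\frac{4}{\det(B'+\Soll{3})}$ on the left, so to produce $Y=\frac{1}{\det T_1}+\frac{1}{\det T_2}$ you must first divide that display by $4$ before adding it to the $(2,2)$-component display; after that correction $\cstT^2 z\,m_{11}+\frac{m_{22}}{4}=\frac{m_{22}}{2}$ and the rest goes through as you say.
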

\begin{proof}
Denote
\begin{equation}
  \label{eq:56}
  T_1
  :=
  \left(
    \begin{array}{cc}
      -\frac{1}{ \cstT^2 z} & \frac{\imu}{ \cstT }
      \\
      -\frac{\imu}{ \cstT } & 0
    \end{array}
  \right)
  +
  \Soll{3}
  ,\quad
    T_{2}
  :=
  \left(
    \begin{array}{cc}
      0 & \frac{\imu}{ \cstT }
      \\
      -\frac{\imu}{ \cstT } & -4
    \end{array}
  \right)
  +
  \Soll{3}
  ,
\end{equation}
so that \eqref{eq:29} can be rewritten as
\begin{equation}
  \label{eq:57}
  \frac{1}{\Soll{3}}- E \cstV_{1} - \frac{1}{T_{1}} - \frac{1}{T_{2}} + \cstV_{1} \Soll{3} \cstV_{1}
  =
  0
  .
\end{equation}
  Then from \eqref{eq:31} we have that
\begin{equation}
  \label{eq:59}
  \det T_{1} - \det T_{2}
  =
  4\Sol_{z,E}(7,7)- \frac{1}{ \cstT^2 z} \Sol_{z,E}(8,8)
  =
  0
\end{equation}
Rewrite \eqref{eq:57} componentwise using \eqref{eq:59}
\begin{align}
  \label{eq:61}
  \Big(\frac{1}{\det \Soll{3}} -\frac{2}{\det T_{1}} + 1 \Big) \Sol_{z,E}(8,8) & = -\frac{4}{\det T_1}
                                                                      ,
  \\ \label{eq:63}
  \Big( -\frac{1}{\det \Soll{3}}+\frac{2}{\det T_{1}} \Big) \Sol_{z,E}(7,8) + \Sol_{z,E}(8,7)& = E - \frac{2 \imu}{ \cstT } \frac{1}{\det T_1 }
                                                                             ,
  \\ \label{eq:64}
  \Big( -\frac{1}{\det \Soll{3}}+\frac{2}{\det T_{1}} \Big) \Sol_{z,E}(8,7) + \Sol_{z,E}(7,8)& = E + \frac{2 \imu}{ \cstT } \frac{1}{\det T_1 },
  \\ \label{eq:62}
   \Big(\frac{1}{\det \Soll{3}} -\frac{2}{\det T_{1}} + 1 \Big) \Sol_{z,E}(7,7) & = -\frac{1}{ \cstT^2 z\det T_1}
                                                                      .
\end{align}
Subtracting \eqref{eq:64} from \eqref{eq:63} gives
\begin{equation}
  \label{eq:65}
  \Big( \frac{1}{\det \Soll{3}} - \frac{2}{\det T_{1}} + 1\Big) (\Sol_{z,E}(8,7)-\Sol_{z,E}(7,8)) 
  =
  - \frac{ \imu}{ \cstT } \frac{4}{\det T_1 }
  ,
\end{equation}
which together with \eqref{eq:61} implies \eqref{eq:169}.
\end{proof}

\subsection{Boundedness of $\Sol_{z,E}$  away from $z=0$ and $z=1$ }
\label{sec:proof-boundedness}
\begin{lem}[Boundedness of $ \Sol_{z,E} $]\label{lem:boundedness}
  For any  small $\epsA>0$ there exists $\cstC>0$ such that 
  \begin{equation}
    \label{eq:67}
    \sup \bigg\{\,\|\Sol_{z,E}\|  \, : \, | z |\geq \epsA,   |1- z | \geq \epsA, \, \Im z > 0,\,  |E|\leq \frac{1}{\epsA} \,\bigg\} 
    \leq
    \cstC
    .
  \end{equation}
\end{lem}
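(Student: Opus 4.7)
The plan is to split the admissible region of $z$ into two regimes and treat them via the block decomposition \eqref{eq:27}. For $\Im z \geq \eta_{0}$, with $\eta_{0} = \eta_{0}(\epsA) > 0$ to be fixed, the a priori bound \eqref{eq:21} of Lemma~\ref{lem:exist-uniq-sol} already gives $\|\Sol_{z,E}\| \leq C_{\cstT}(1 + \eta_{0}^{-1})$ uniformly in $|E| \leq 1/\epsA$ and in $z$ with $|z| \leq 1/\epsA + 1$. The real content of the lemma therefore lies in the complementary regime $\{\,0 < \Im z < \eta_{0},\, |\Re z| \leq 1/\epsA,\, \mathrm{dist}(\Re z,\{0,1\}) \geq \epsA,\, |E| \leq 1/\epsA\,\}$, on which I would focus.

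In this regime I would reduce everything to a uniform bound on the $2 \times 2$ block $\Soll{3}(z,E)$, exploiting the reduced system \eqref{eq:28}--\eqref{eq:29}. The second relation in \eqref{eq:28} expresses $-\Soll{2}^{-1}$ as a matrix-valued polynomial in the entries of $\Soll{3}$; combining $\Im \Soll{3} \geq 0$ from Lemma~\ref{lem:exist-uniq-sol}(ii) with the explicit form of $\cstU_{4}$ yields invertibility of $-\cstU_{2} + \cstU_{4}^{t}\Soll{3}\cstU_{4}$ with a bound depending only on $\|\Soll{3}\|$ and a lower bound on the non-degenerate part of $\Im \Soll{3}$. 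The first relation in \eqref{eq:28} has the form $-\Soll{1}^{-1} = zJ_{3} - \cstU_{1} + \cstU_{5}^{t}\Soll{3}\cstU_{5}$, whose $(1,1)$-entry carries the spectral parameter $z$; its $(1,1)$-component of the inverse is the Stieltjes transform $\Sol_{z,E}(1,1)$ of the probability measure $\rho_{E}$ supported in $[0,1]$ (Lemma~\ref{lem:global-law}), and a direct $3\times 3$ Schur-complement computation then expresses the remaining entries of $\Soll{1}$ as rational functions of $z$, $\Sol_{z,E}(1,1)$, and the entries of $\Soll{3}$, with denominators controlled by the bound on $\Soll{3}$.

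For the bound on $\Soll{3}$ itself I would argue by contradiction. Suppose there exist sequences $z_{n} = \lambda_{n} + \imu \eta_{n}$ with $\eta_{n} \downarrow 0$ and $\lambda_{n}, E_{n}$ confined to the prescribed compact set, along which $\|\Soll{3}(z_{n}, E_{n})\| \to \infty$. Along a convergent subsequence, the equation \eqref{eq:29} forces one of the two matrix ``denominators'' $-\frac{1}{2\cstT^{2} z}(I_{2}+\cstV_{3})-\frac{1}{\cstT}\cstV_{2}+\Soll{3}$ or $-2(I_{2}-\cstV_{3})-\frac{1}{\cstT}\cstV_{2}+\Soll{3}$ to become singular in a controlled direction. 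Combined with the identities \eqref{eq:31} and \eqref{eq:169} from Lemmas~\ref{lem:useful-identities-4}--\ref{lem:useful-identities-5}, which couple the entries of $\Soll{3}$ to $\Sol_{z,E}(7,7)$, $\Sol_{z,E}(8,8)$ and the off-diagonal pair $\Sol_{z,E}(7,8)$, $\Sol_{z,E}(8,7)$, any such blow-up would propagate to a blow-up of entries of $\Sol_{z_{n},E_{n}}$ inconsistent with the a priori bound \eqref{eq:21} applied at $z_{n} + \imu \eta_{0}$, once one uses continuity of $z \mapsto \Sol_{z,E}$ on $\CC_{+}$ (Lemma~\ref{lem:exist-uniq-sol}(ii)) across the strip $\{\Im z \in [\eta_{n}, \eta_{0}]\}$.

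The principal obstacle, as I see it, is making this contradiction argument quantitative so as to produce a constant $\cstC$ in \eqref{eq:67} depending only on $\epsA$ and $\cstT$. A cleaner route that I would pursue in parallel is to regard \eqref{eq:29} as a finite polynomial system in the four entries of $\Soll{3}$ with coefficients that are rational functions of $z,E$ and bounded, and with top-degree coefficient bounded away from zero, in the prescribed parameter range; the set of solutions is then uniformly bounded by elementary algebraic considerations. Either route, once $\|\Soll{3}\| \leq \cstC$ is established, combined with the reductions to $\Soll{1}$ and $\Soll{2}$ described above, yields \eqref{eq:67}.
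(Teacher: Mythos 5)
Your reduction to bounding $\Soll{3}$ and the idea of arguing by contradiction along a sequence $(z_n,E_n)$ match the paper's strategy, and you correctly identify $\Im z\to 0$ as the hard regime. But both concrete arguments you offer for the core step have gaps.

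The continuity argument does not work. You propose to apply the a priori bound \eqref{eq:21} at the lifted point $z_n+\imu\eta_0$ and then use ``continuity of $z\mapsto\Sol_{z,E}$ on $\CC_+$'' to contradict a hypothetical blow-up at $z_n+\imu\eta_n$ with $\eta_n\downarrow 0$. Analyticity on the open half-plane gives no uniform control near the boundary: the function $z\mapsto -1/z$ is analytic and bounded at height $\eta_0$ but unbounded as $\Im z\downarrow 0$. The lemma's whole point is to rule out exactly this behaviour, and continuity plus the a priori bound cannot do it.

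The ``denominator becomes singular'' claim is also not right. If $\|\Soll{3}\|\to\infty$, the quantities $Z_i+\Soll{3}$ in \eqref{eq:29} also have \emph{growing} norm; nothing forces their determinants to vanish. More importantly, the delicate case is precisely the one where $\Soll{3}$ is large in norm but $\det\Soll{3}$ stays bounded, i.e. $\Soll{3}$ has one large and one small eigenvalue; then $-\Soll{3}^{-1}$ does \emph{not} tend to zero and there is no naive contradiction from the size of $\cstV_1\Soll{3}\cstV_1$. The paper handles this by deriving a scalar quadratic equation for $\det\Soll{3}$ (eq.~\eqref{eq:85}), solving it explicitly, and comparing each of the two roots with a second, independent computation of $\det\Soll{3}$ obtained from the symmetry identities $m_{22}=4\cstT^2 z\,m_{11}$ and $m_{21}-m_{12}=\frac{\imu}{\cstT}m_{22}$; the factor $z(1-z)$ appearing in that second computation is bounded away from zero precisely because $|z|\ge\epsA$ and $|1-z|\ge\epsA$, and this is what produces the contradiction. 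Also note that \eqref{eq:31} and \eqref{eq:169} are internal relations among the entries of $\Soll{3}$ itself (it is the $(3,3)$ block), not couplings to other blocks.

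Your alternative route, treating \eqref{eq:29} as a polynomial system with top-degree coefficient bounded away from zero, is morally the right idea and is what the paper's computation actually implements, but it is not self-evident: the ``top coefficient'' is the combination $z(1-z)$, which one only identifies after the explicit quadratic for $\det\Soll{3}$ and the symmetry relations have been set up. Without those identities (the quadratic \eqref{eq:85}, its explicit roots \eqref{eq:87}, and the relations \eqref{eq:197}--\eqref{eq:198}), the boundedness of the solution set of the system is not a consequence of general principles; polynomial systems with nonzero top-degree coefficients can still have unbounded solution sets when, as here, the equations are coupled and the unknowns are matrix entries. The missing content of the proof is exactly this explicit algebra.
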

\begin{proof}
Introduce the following notation for the entries of $\Sol_{3}$
\begin{equation}
  \label{eq:58}
  \left(
    \begin{array}{cc}
      m_{11}& m_{12}
      \\
      m_{21}& m_{22}
    \end{array}
  \right)
  :=
  \left(
    \begin{array}{cc}
      \Sol_{z,E}(7,7) & \Sol_{z,E}(7,8)
      \\
      \Sol_{z,E}(8,7) & \Sol_{z,E}(8,8)
    \end{array}
  \right)
  ,
\end{equation}
so that, in particular, \eqref{eq:31} and \eqref{eq:169} can be rewritten as
\begin{align}
  \label{eq:197}
  m_{22}
  &=
    4 \cstT^2 z m_{11}
    ,
  \\ \label{eq:198}
  m_{21} - m_{12}
  &=
    \frac{\imu}{ \cstT } m_{22}
    .
\end{align}
Our goal is to show that  $\Sol_{z,E}$ and in particular $\Sol_{z,E}(1,1)$  is bounded everywhere if $  z $ is away from $0$ or $1$.
From \eqref{eq:28} we have that 
\begin{equation}
  \label{eq:78}
  \Soll{1}
  =
  -\left(
    \begin{array}{ccc}
      m_{22} + z & m_{22} & m_{21}
      \\
      m_{22} & m_{22} & m_{21} - \frac{\imu}{ \cstT }
      \\
      m_{12} & m_{12} + \frac{\imu}{ \cstT } & m_{11}
    \end{array}
  \right)^{-1}
  ,\quad
   \Soll{2}
  =
  -\left(
    \begin{array}{ccc}
      m_{22}  & m_{21} & m_{21} - \frac{\imu}{ \cstT }
      \\
      m_{12} & m_{11} + \frac{1}{4 \cstT^2} & m_{11} 
      \\
      m_{12}  + \frac{\imu}{ \cstT } & m_{11} & m_{11}
    \end{array}
  \right)^{-1}
 ,
\end{equation}
which after some elementary computations yields
\begin{equation}
  \label{eq:150}
  \det \Soll{1}
  =
  \frac{-1}{z \det T_1}
  ,\quad
  \det \Soll{2}
  =
  \frac{- \cstT^2}{\det T_1}
\end{equation}
and 
\begin{align}
  \label{eq:79}
  \Sol_{z,E}(1,1)
  =
    -\frac{1}{z} - \frac{4m_{11}}{z\det T_{1}}
    ,
\end{align}
where $T_{1}$ was defined in \eqref{eq:56}.
The functions $\{m_{ij}, 1\leq i,j \leq 2\}$, $\{\Sol_{i}, 1\leq i \leq 3\}$ and $\{T_{i}, 1\leq i \leq 2\}$ defined above all depend on the variables $z$ and $E$, but in order to make the exposition lighter  we drop the explicit dependence on these variables from the notation. 
Using \eqref{eq:78}-\eqref{eq:79} it is  enough to show that for any fixed  $(z_{\infty},E_{\infty})$ with $z_{ \infty } \in \overline{\CC_{+}}$, $z_{\infty}  \notin \{0,1\}$ and  $E_{\infty} \in \RR$ we have that  $\lim_{(z,E)\rightarrow (z_{\infty}, E_{\infty})}  |m_{ ij }| <\infty$ for $i,j\in \{1,2\}$  and  $\lim_{(z,E)\rightarrow (z_{\infty}, E_{\infty})}|\det T_{1}|>0$.

We now prove some additional relations that can be obtained from \eqref{eq:197}, \eqref{eq:198} and \eqref{eq:61}-\eqref{eq:62}.
Plugging \eqref{eq:197} and \eqref{eq:198} into \eqref{eq:63} (recall that we are using notation \eqref{eq:58}) gives
\begin{equation}
  \label{eq:81}
  \Big(-\frac{1}{\det \Soll{3}}+ \frac{2}{\det T_{1} }+1\Big) m_{12} + 4  \cstT \imu z m_{11} +  \frac{2\imu}{ \cstT \det T_{1}} - E
  =
  0
  ,
\end{equation}
which, after applying \eqref{eq:62} to the terms in the parenthesis, can be rewritten as
\begin{equation}
  \label{eq:82}
  \Big( \frac{1}{ \cstT^2 z m_{11} \det T_{1}} +2\Big) m_{12}
  =
  E - \frac{2 \imu}{ \cstT \det T_{1}} - 4  \cstT \imu z m_{11}
  .
\end{equation}
From the definitions of $T_{1}$ and $\Soll{3}$ we have
\begin{equation}
  \label{eq:83}
  \det T_{1} = \det \Soll{3} + 4 (z-1) m_{11} - \frac{1}{ \cstT^2}
  ,
\end{equation}
while \eqref{eq:62} gives
\begin{equation}
  \label{eq:84}
  \frac{1}{\det T_{1}} \Big( 2- \frac{1}{ \cstT^2 z m_{11}} \Big)
  =
  \frac{1}{\det \Soll{3}} + 1
  .
\end{equation}
Combining \eqref{eq:83} and \eqref{eq:84}, we get the following quadratic equation for $\det \Soll{3}$
\begin{equation}
  \label{eq:85}
  \big(\det \Soll{3}\big)^2 + \det \Soll{3}\bigg( 4(z-1) m_{11} + \frac{1}{ \cstT^2 z m_{11}} -\bigg(1+\frac{1}{ \cstT^2}\bigg)\bigg) + \bigg( 4(z-1)m_{11} - \frac{1}{ \cstT^2}\bigg)
  =
  0
  .
\end{equation}
Note, that \eqref{eq:82}-\eqref{eq:85} hold for all $E\in \RR$ and all $z\in\CC_{+}$.

Using the above relations, we proceed with a proof by contradiction.
Assume that there exists a sequence  $(z_{n},E_{n})_{n=1}^{\infty}\subset \CC_{+}\times [-\epsA^{-1},\epsA^{-1}]$, such that $|m_{11}^{(n)}| \rightarrow \infty$ as $n\rightarrow \infty$ (here and below we denote the evaluations at  $(z_{n},E_{n})$ by adding the superscript $(n)$).
 Solving \eqref{eq:85} for $\det \Soll{3}$ allows us to express $\det \Soll{3}$ in terms of $m_{11}$
\begin{multline}
  \label{eq:87}
  \det \Soll{3}
    =
    \frac{1}{2} \bigg\{-4(z-1)m_{11}+\Big(1+\frac{1}{ \cstT^2}\Big)
  \\
  \quad
    \pm 4(z-1)m_{11} \bigg[1 -\frac{1}{2}\bigg(\frac{1}{2(z-1)}\Big(1+\frac{1}{ \cstT^2}\Big)+\frac{1}{z-1}\bigg)\frac{1}{m_{11}} +\OO{\frac{1}{|m_{11}|^2}}\bigg] \bigg\}
  .
\end{multline}
By passing to a subsequence, we may assume that the choice of the $\pm$ sign in \eqref{eq:87} is constant for all $n$.

If we take the $-$ sign in \eqref{eq:87}, then
\begin{equation}
  \label{eq:88}
  \det \Soll{3}^{(n)}
  =
  -4(z_{n}-1)m_{11}^{(n)} + \OO{1}
\end{equation}
and by \eqref{eq:84}
\begin{equation}
  \label{eq:89}
  \det T_{1}^{(n)}
  \rightarrow
  2
  ,\quad
  n\rightarrow \infty
  .
\end{equation}
From \eqref{eq:82}, \eqref{eq:197}, \eqref{eq:198} and \eqref{eq:89},
\begin{equation}
  \label{eq:90}
  m_{12}^{(n)}
  =
  -2 \cstT \imu z_{n} m_{11}^{(n)} + \OO{1}
  ,\quad
  m_{21}^{(n)}
  =
  2 \cstT \imu z_{n} m_{11}^{(n)} + \OO{1}
  ,
\end{equation}
and therefore
\begin{equation}
  \label{eq:91}
  \det \Soll{3}^{(n)}
  =
  4 \cstT^2 z_{n} \big(m_{11}^{(n)}\big)^2-4 \cstT^2z_{n}^2 \big(m_{11}^{(n)}\big)^2
  =
  4 \cstT^2 z_{n}(1-z_{n})\big(m_{11}^{(n)}\big)^2
  +
  \OO{m_{11}^{(n)}}
  ,
\end{equation}
which contradicts to \eqref{eq:88} since $|z_{n}(1-z_{n})|$ is separated away from $0$.

If we take $+$ sign in \eqref{eq:87}, then
\begin{equation}
  \label{eq:92}
  \det \Soll{3}^{(n)}
  =
  -1   +\OO{\frac{1}{|m_{11}^{(n)}|}} 
  ,
\end{equation}
and from \eqref{eq:83}
\begin{equation}
  \label{eq:93}
  \det T_{1}^{(n)}
  =
  4(z_{n}-1)m_{11}^{(n)} + \OO{1}
  .
\end{equation}
But then again, from \eqref{eq:82}, \eqref{eq:197}, \eqref{eq:198} and \eqref{eq:93},
\begin{equation}
\label{eq:94}
  m_{12}^{(n)}
  =
  -2 \cstT \imu z_{n} m_{11}^{(n)} + \OO{1}
  ,\quad
  m_{21}^{(n)}
  =
  2 \cstT \imu z_{n} m_{11}^{(n)} + \OO{1},
\end{equation}
so that
\begin{equation}
\label{eq:95}
  \det \Soll{3}^{(n)}
  =
  4 \cstT^2 z_{n}(1-z_{n}) (m_{11}^{(n)})^2 + \OO{|m_{11}^{(n)}|}
  ,
\end{equation}
which contradicts to \eqref{eq:92}.
Therefore, we have proven that $m_{11}$ is bounded everywhere away from the points $z\in \{0,1\}$.
 It is clear from \eqref{eq:197} that the boundedness of $m_{11}$ guarantees the boundedness of $m_{22}$.
On the other hand, assuming that $m_{12}$ and $m_{21}$ (see \eqref{eq:198}) are unbounded implies that $|\det \Soll{3}^{(n)}|\rightarrow \infty$ and $|\det T_1^{(n)}|\rightarrow \infty$  on some sequence  $(z_n,E_n)_{n=1}^{\infty}$, which contradicts to the boundedness of $m_{11}$ in \eqref{eq:81}.
We conclude that all entries of $\Soll{3}$ are bounded everywhere away from the points $z\in \{0,1\}$. 

Assume now that there exists a sequence  $(z_{n},E_{n})_{n=1}^{\infty}\subset \CC_{+}\times [-\epsA^{-1},\epsA^{-1}]$ such that $\det T_{1}^{(n)} \rightarrow 0$ as $n\rightarrow \infty$.
Then by \eqref{eq:82}
\begin{equation}
  \label{eq:96}
  \bigg(\frac{1}{ \cstT^2 z_{n} m_{11}^{(n)}   }+\OO{\det T_{1}^{(n)}}\bigg)m_{12}^{(n)}
  =
  -\frac{2\imu}{ \cstT } + \OO{\det{T_{1}}^{(n)}}
  ,
\end{equation}
which together with the fact that $m_{11}^{(n)}$ is bounded implies that 
\begin{equation}
  \label{eq:97}
  m_{12}^{(n)}
  =
  -2\imu  \cstT z_{n} m_{11}^{(n)} + \OO{\det T_{1}^{(n)}}
  .
\end{equation}
Then by \eqref{eq:197} and \eqref{eq:198} we find
\begin{equation}
  \label{eq:98}
  \det \Soll{3}^{(n)}
  =
  4 \cstT^2 z_{n}(1-z_{n}) \big(m_{11}^{(n)}\big)^2 + \OO{\det T_{1}^{(n)}}
  ,
\end{equation}
and  conclude from \eqref{eq:98} and \eqref{eq:83} that $\det \Soll{3}^{(n)}$ does not vanish as $n\rightarrow \infty$.
But then  \eqref{eq:84} implies
\begin{equation}
  \label{eq:99}
  m_{11}^{(n)}
  =
  \frac{1}{2 \cstT^2 z_{n} } + \OO{\det T_{1}^{(n)}}
  ,
\end{equation}
and thus by \eqref{eq:83} and \eqref{eq:99} also
\begin{equation}
  \label{eq:100}
  \det \Soll{3}^{(n)}
  =
  \frac{1}{ \cstT^2} - 2(z_{n}-1) \frac{1}{ \cstT^2 z_{n} } + \OO{\det T_{1}^{(n)}},
\end{equation}
as well as 
\begin{equation}
  \label{eq:101}
  \det \Soll{3}^{(n)}
  =
  4 \cstT^2 z_{n}(1-z_{n}) \frac{1}{4 \cstT^4 z_{n}^2 } + \OO{\det T_{1}^{(n)}}
  =
  (1-z_{n}) \frac{1}{ \cstT^2 z_{n} } + \OO{\det T_{1}^{(n)}}
  ,
 \end{equation}
 by \eqref{eq:98} and \eqref{eq:99},
 which contradicts to \eqref{eq:100} since $z_{n}$ is away from $0$.
 We conclude that $|\det T_{1}|$ is bounded away from $0$, which together with \eqref{eq:79} establishes (a non-effective version of) \eqref{eq:67}.

 In order to keep the presentation simple, above we showed that $\|\Sol_{z,E}\|$ is bounded for $z\in \overline{\CC_{+}} \setminus \{0,1\}$ and $E\in \RR$ without providing an explicit effective bound $\cstC$ as formulated in \eqref{eq:67}.
  Note that the constants hidden in the $\OO{\cdot}$ terms (for example, in \eqref{eq:88}-\eqref{eq:91}, \eqref{eq:92}-\eqref{eq:95} or \eqref{eq:96}-\eqref{eq:101}) depend only on $|z_n|$, $|1-z_n|$ and  $E_n$.
  Therefore, using the assumptions $|m_{11}^{(n)}| \geq \cstD$ and $|\det T_{1}^{(n)}|\leq 1/\cstD$ for $n$ large enough and carefully chosen $\cstD >0$ instead of $|m_{11}^{(n)}| \rightarrow \infty$ and $|\det T_{1}^{(n)}|\rightarrow 0$ as $n\rightarrow \infty$, together with the a priori bound \eqref{eq:21}, eventually leads to the uniform bound \eqref{eq:67}.
\end{proof}

\subsection{Singularities of $\Sol_{z,E}(1,1)$}
\label{sec:proof-edge-asympt}

\begin{lem}[Behavior of $\Sol_{z,E}(1,1)$ in the vicinities of $z=0$ and $z=1$]\leavevmode \label{lem:sing-sol}
  \begin{itemize}
  \item[(a)] For all $E\in \RR$
  \begin{equation}
    \label{eq:175}
    \Sol_{z,E}(1,1)
    =
    \sqrt[3]{\frac{1+E^2}{4 \cstT^{2}}}\,z^{-2/3}  + \OO{\frac{1}{|z|^{1/3}}}
    \mbox{ as }
      z\rightarrow 0
    ;
  \end{equation}
\item[(b)] for all $|E| < \frac{1}{ \cstT } \sqrt{2 \sqrt{1 + 6  \cstT^2 +  \cstT^4}-2-2  \cstT^2}  $
    \begin{equation}
    \label{eq:176}
    \Sol_{z,E}(1,1)
    =
    - \frac{16  \cstT^2 \xi_{0}}{ \cstT^2 E^2 + 4+16 \cstT^4 \xi_{0}^{2}}\, (z-1)^{-1/2} + \OO{1}
    \mbox{ as }
    z\rightarrow 1
    ,
  \end{equation}
  where the constant $\xi_{0}<0$ is as in part (ii) of Theorem~\ref{thm:main} (see also \eqref{eq:170} below).
  \end{itemize}
  The choices of the branches for $(\cdot)^{1/3}$ and $(\cdot)^{1/2}$ are specified in the course of the proof below.
\end{lem}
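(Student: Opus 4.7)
The approach in both parts is to derive the Puiseux expansion of the key quantities $m_{11}, m_{12}, \det\Soll{3}, \det T_{1}$ near $z=0$ and $z=1$ respectively, and then read off the behaviour of $\Sol_{z,E}(1,1)$ from \eqref{eq:79}. The input is the closed algebraic system consisting of \eqref{eq:82}, \eqref{eq:84}-\eqref{eq:85}, the relation $\det T_{1} = \det\Soll{3}+4(z-1)m_{11}-1/\cstT^{2}$ from \eqref{eq:83}, and the identity $\det\Soll{3}=4\cstT^{2}z\, m_{11}^{2}-m_{12}^{2}-4\imu\cstT z\, m_{11}m_{12}$, which follows from combining Lemmas \ref{lem:useful-identities-4} and \ref{lem:useful-identities-5} with the definition of $\det\Soll{3}$.

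For part (a), as $z\to 0_{+}$, I would start from the ansatz $m_{11}(z)=A z^{-1/3}(1+O(z^{1/3}))$, motivated by the target $z^{-2/3}$ behaviour of $\Sol_{z,E}(1,1)$. Via \eqref{eq:83} this forces $\det T_{1}\sim -4A z^{-1/3}$, so the explicit $-1/z$ in \eqref{eq:79} is exactly cancelled by $-4m_{11}/(z\det T_{1})\sim 1/z$, and the $z^{-2/3}$ contribution comes from the sub-leading correction. Equation \eqref{eq:82} at leading order fixes $m_{12}=-4\cstT^{2}A^{2}E\, z^{1/3}+O(z^{2/3})$, so $\det\Soll{3}\sim 4\cstT^{2}A^{2}z^{1/3}$. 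Matching the $O(1)$ order in \eqref{eq:84}, and observing that the contributions from the $O(1)$ correction of $m_{11}$ cancel between the two sides, yields a cubic equation whose relevant root is $A=-[16\cstT^{4}(1+E^{2})]^{-1/3}$, with the branch chosen so that $m_{11}\in\CC_{+}$ for $z\in\CC_{+}$. Substituting back into \eqref{eq:79} produces the leading coefficient $-1/(4A\cstT^{2})=\sqrt[3]{(1+E^{2})/(4\cstT^{2})}$, establishing \eqref{eq:175}.

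For part (b), as $z\to 1$, the $(z-1)^{-1/2}$ blow-up of $\Sol_{z,E}(1,1)$ in \eqref{eq:79} forces $\det T_{1}(1)=0$, hence $\det\Soll{3}(1)=1/\cstT^{2}$ by \eqref{eq:83}, and \eqref{eq:85} at $z=1$ then gives $m_{11}(1)=1/(2\cstT^{2})$. I would expand every quantity as a Puiseux series in $\epsilon:=\sqrt{z-1}$, writing $\det T_{1}=\xi_{0}\,\epsilon+O(\epsilon^{2})$ and analogously for $m_{11}, m_{12}, \det\Soll{3}$. Matching the corresponding powers of $\epsilon$ in \eqref{eq:82}, \eqref{eq:84} and the identity for $\det\Soll{3}$ produces the algebraic equation characterising $\xi_{0}$ announced in \eqref{eq:170}, whose Herglotz-compatible root is real and negative throughout the stated $E$-range. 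Substitution into \eqref{eq:79} then yields the coefficient $-16\cstT^{2}\xi_{0}/(\cstT^{2}E^{2}+4+16\cstT^{4}\xi_{0}^{2})$ appearing in \eqref{eq:176}.

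The step that I expect to be most delicate is the choice of fractional-power branches and the selection of the physically relevant root of the matching equations: in part (a) the cubic for $A$ has three roots, and one must verify that precisely one corresponds to the analytic Herglotz continuation \eqref{eq:20}; in part (b) the linearised system at the edge $z=1$ is degenerate, so isolating the branch for which $\Im\Sol_{z,E}\ge 0$ on $\CC_{+}$ requires the a priori bound \eqref{eq:21} together with a boundary-uniqueness argument for matrix-valued Herglotz functions.
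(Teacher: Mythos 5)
Your part (a) argument is essentially a scalar reformulation of the paper's: the paper substitutes a $2\times 2$ matrix ansatz \eqref{eq:106} for $\Soll{3}$ into the polynomial identity $\Delta=0$ coming from \eqref{eq:105}, whereas you solve the equivalent scalar system \eqref{eq:82}--\eqref{eq:85} together with the determinant identity $\det\Soll{3}=4\cstT^2 z\, m_{11}^2-m_{12}^2-4\imu\cstT z\,m_{11}m_{12}$. The ansatz $m_{11}\sim A z^{-1/3}$ with $A=\zeta=-[16\cstT^4(1+E^2)]^{-1/3}$ matches the paper's choice of root in \eqref{eq:107}, the double cancellation of $-1/z$ and of the $O(1)$ correction to $m_{11}$ in \eqref{eq:79} is correct, and $-1/(4A\cstT^2)=\sqrt[3]{(1+E^2)/(4\cstT^2)}$. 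You still owe a justification (non-vanishing Jacobian + implicit function theorem, as in the paper's Step~1, followed by the regularization argument of Step~3) that the Puiseux series you posit is actually the unique Herglotz solution rather than some spurious algebraic branch; you acknowledge this but it remains the substance of the proof, not a footnote.

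Part (b), however, rests on a wrong branch choice. You deduce from the $(z-1)^{-1/2}$ blow-up of $\Sol_{z,E}(1,1)=-1/z-4m_{11}/(z\det T_1)$ that $\det T_1(1)=0$, and from \eqref{eq:83} and \eqref{eq:85} that $\det\Soll{3}(1)=1/\cstT^2$ and $m_{11}(1)=1/(2\cstT^2)$ are finite. This is not what happens. The paper's ansatz \eqref{eq:108}, which Step~3 proves equals $\Soll{3}$, has $m_{11}=\xi/(4\cstT^2 t)$ with $t=\sqrt{z-1}$ and $\xi(0)=\xi_0<0$, so $m_{11}\to\infty$, and a direct computation from \eqref{eq:56} and \eqref{eq:108} gives $\det T_1\to -(\xi_0^2+\cstT^2 E^2+4)/(4\cstT^2)\ne 0$ and $\det\Soll{3}\sim\xi_0^2/(2\cstT^2 t^2)\to\infty$. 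Thus the singularity of $\Sol_{z,E}(1,1)$ at $z=1$ comes from $m_{11}$ diverging against a bounded, nonzero $\det T_1$, not from $\det T_1$ vanishing against a bounded $m_{11}$. A heuristic warning sign: the $(1-\lambda)^{-1/2}$ law for the spectral density near $\lambda=1$ must also appear in the diagonal block $V(7,7)$ of the matrix measure (by Lemma~\ref{lem:exist-uniq}(vii) all blocks share a support), forcing $\Im m_{11}$ to be unbounded near $z=1$, which is incompatible with a finite limit $m_{11}(1)=1/(2\cstT^2)$. To salvage part (b) you would have to restart with the correct Puiseux ansatz $m_{11}\sim \xi_0/(4\cstT^2\sqrt{z-1})$ (equivalently, adopt the matrix ansatz \eqref{eq:108}), after which the system $(q_1,q_2)=0$ from \eqref{eq:109} delivers $(\xi_0,\nu_0)$ as in \eqref{eq:110}, and the coefficient in front of $(z-1)^{-1/2}$ follows from \eqref{eq:79} using $\det T_1(1)=-(\xi_0^2+\cstT^2E^2+4)/(4\cstT^2)$.
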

\begin{proof}
We multiply \eqref{eq:29} from the left by $\Soll{3}$ and from the right by $(Z_1 + \Soll{3})(Z_1-Z_2)^{-1}(Z_2+\Soll{3})$ with the short hand notation 
\begin{equation}
  \label{eq:104}
  Z_1
  :=
  -\frac{1}{2 \cstT^2 z}(I_2+\cstV_3) -\frac{1}{ \cstT }\cstV_2
  \,, \qquad
  Z_2
  :=
  -2(I_2-\cstV_3) -\frac{1}{ \cstT }\cstV_2
  \,.  
\end{equation}
Subsequently, the equation for $\Soll{3}$ takes the form $\Delta =0$ with 
\begin{equation}
  \label{eq:105}
    \Delta
    :=
    \Big(Z_1-\Soll{3} + \Soll{3} \cstV_1(\Soll{3}\cstV_1-E)(Z_1+\Soll{3})\Big)\frac{1}{Z_1-Z_2}(Z_2+\Soll{3}) -\Soll{3}
    \,.
\end{equation}
Using that $(Z_1-Z_2)^{-1} = - \frac{ \cstT^2 z}{2}(I_2+\cstV_3) +\frac{1}{8}(I_2-\cstV_3)$ and performing the  matrix products we see that the entries of $\Delta \in \CC^{2 \times 2}$ are polynomials in the entries of $\Soll{3}$, $E$ and $z$. 

\emph{Step 1: Expansion around $z=0$}.
We will now first construct a solution to \eqref{eq:29} in a vicinity of $z=0$ by asymptotic expansion.
Later, in \emph{Step 3}, we will show that the constructed solution coincides with $\Soll{3}$ defined in \eqref{eq:20} and \eqref{eq:27}.
For this purpose we write $t = z^{1/3}$ with an analytic cubic root on $\CC \setminus (\imu (-\infty,0]))$ such that $(-1)^{1/3} =-1$.
Then we make the ansatz 
\begin{equation}
  \label{eq:106}
  \widetilde{\Sol}_3
  =
  \left(
    \begin{array}{cc}
        \xi_{11} t^{-1}& 4  \cstT^2 \xi_{12} t
        \\
        4  \cstT^2 \xi_{21} t  & 4  \cstT^2 \xi_{22} t^2 
      \end{array}
    \right)
    \,,
\end{equation}
where we will determine the unknown functions $\Xi_t:=(\xi_{ij}(t))_{i,j=1}^2$.
Plugging \eqref{eq:106} into \eqref{eq:105} reveals that
\begin{equation}
  \label{eq:34}
  \Delta
  = 
  \left(
    \begin{array}{cc}
      4  \cstT^2  ( q_{11} + t p_{11}) & 4  \cstT^2t(q_{12} + t p_{12})
      \\
      -4  \cstT^2t(q_{21} + t p_{21}) & -16 \cstT^4 t^3 (q_{22} + t p_{22})
    \end{array}
  \right)
  \,,
\end{equation}
where the entries of  $P:=(p_{ij})_{i,j=1}^2$ are polynomials in $t,E,\xi_{ij}$  and where $Q:=(q_{ij})_{i,j=1}^2$ is given by
\begin{equation}
  \label{eq:36}
  Q
  =
  \left(
    \begin{array}{cc}
      \frac{1}{16  \cstT^4} + \xi_{11}^2 \xi_{22} -  E \xi_{11} \xi_{12} & \xi_{12}  +E \xi_{11} \xi_{22}
      \\
      \xi_{21} + E \xi_{11} \xi_{22} & -\frac{1}{16 \cstT^4}- \xi_{11} \xi_{22}^2+E \xi_{21} \xi_{22}
    \end{array}
  \right)
  \,.  
\end{equation}
Thus the equation $\Delta =0$ is equivalent to $Q+t P =0$.
For $t=0$ the three possible solutions are
\begin{equation}
  \label{eq:107}
  \begin{split}
    \Xi_0 =
    \left(
      \begin{array}{cc}
        \zeta  & -E \zeta^2
        \\
        -E \zeta^2& \zeta
      \end{array}
    \right)
    \,, \quad
    \text{where}
    \quad
    (1+E^2) \zeta^3
    =
    -\frac{1}{16  \cstT^4}\,.
  \end{split}
\end{equation}
Since $\det\nabla_{\Xi} Q|_{t=0} = -3 \zeta^4(1+E^2)^2$ the equation $Q+tP = 0$ is linearly stable at $t=0$ and can thus be solved for $\Xi_t$ in a neighborhood of $\Xi_0$ when $t$ is sufficiently small.
Since the equation is polynomial, the solution $\Xi_t$ admits a power series expansion in $t$.
Now we define the analytic function $\widetilde{\Sol}_3(z)$ through \eqref{eq:106} on a neighborhood of $z=0$ in $\CC \setminus (\imu (-\infty,0]))$ with $\Xi$ the solution to $Q+tP = 0$ and the choice $\zeta <0$ in \eqref{eq:107}.  

We will now check that for any phase $e^{\imu \psi} \ne -1$ in the complex upper half-plane the imaginary part of $\widetilde{\Sol}_3(\theta e^{\imu \psi})$ is positive definite in the sense that for any fixed $\varepsilon>0$ we have 
\begin{equation}
  \label{eq:37}
  \inf_{\psi \in [-\pi +\varepsilon,0]}\inf_{\| u \|_{2}=1}\Im \la u, \widetilde{\Sol}_3(\theta e^{\imu \psi}) u \ra
  >
  0
  \,,  
\end{equation}
for sufficiently small $\theta >0$.
Indeed, for any vector $u=(u_1,u_2) \in \CC^2$ we find
\begin{multline}
  \label{eq:38}
  \Im \la u, \widetilde{\Sol}_3 u \ra
  \ge
  \cstE \Im (-t^{-1} |u_1|^2 - t^2 |u_2|^2 ) - C |u_1||u_2| |t|
  \\
  \ge
  \cstE_\varepsilon \bigg(\frac{1}{|t|} |u_1|^2 + |t|^2 |u_2|^2\bigg) - \frac{C}{R}\frac{1}{|t|}|u_1|^2 - C R|u_2| |t|^3
\end{multline}
for some constants $\cstE,C>0$, small $t$, an $\varepsilon$-dependent constant $\cstE_\varepsilon$ and any $R>0$.
For $R=2C/\cstE_\varepsilon$ and sufficiently small $t$ this is still positive. 

\emph{Step 2: Expansion around $z=1$.}
For the expansion around $z=1$ we proceed similarly to the discussion at $z=0$.
We set $t = \sqrt{z-1}$, where the square root has a branch cut at $\imu (-\infty,0]$ and  $\sqrt{1}=1$.
Here we make an ansatz with a reduced number of unknown functions by exploiting the identities \eqref{eq:31} and \eqref{eq:169}, namely
\begin{equation}
  \label{eq:108}
    \widetilde{\Sol}_3 =
    \left(
      \begin{array}{cc}
        \frac{\xi}{4 \cstT^2 t} & \frac{E}{2}- \frac{\imu \xi}{2 \cstT } (t^{-1}+t) + \frac{\nu t}{4 \cstT }
        \\
        \frac{E}{2} + \frac{\imu \xi}{2 \cstT } (t^{-1}+t) + \frac{\nu t}{4 \cstT } & \xi (t^{-1}+t)
      \end{array}
    \right)
    \,.
\end{equation}
We will determine the unknown functions $\xi$ and $\nu$.
Plugging \eqref{eq:108} into \eqref{eq:105}, multiplying out everything and simplifying afterwards reveals
\begin{equation}
  \label{eq:40}
  \Delta 
  =
  \left(
    \begin{array}{cc}
      -\frac{1}{64  \cstT^4 }(q_1+t p_1) & -\frac{1}{32  \cstT^3} (q_2 + t p_2)
      \\
      \frac{1}{32  \cstT^3} (q_2 + t p_2) & \frac{1+t^2}{16  \cstT^2}(q_1+t p_1 -2 \imu (q_2 + t p_2))
    \end{array}
  \right)
  \,,  
\end{equation}
where $p_1,p_2$ are polynomials in $t,E,\xi,\nu$ and where
\begin{equation}
  \label{eq:109}
  \begin{split}
    q_1
    =
    & \xi^4 +  (2  E^2  \cstT^2   +  4 + 4 \cstT^2) \xi^2 +   \cstT^2(  \cstT^2 E^4 + 4 E^2 (1 +  \cstT^2)- 16 )
    \\ & 
    - 16 \imu  \cstT^3 E + \imu E^2  \cstT^2 \xi \nu  +  4  \imu   \xi \nu  +  \imu \xi^3 \nu  \,,
    \\
    q_2 = & \xi^3 \nu +  (4 +   \cstT^2 E^{2}) \xi \nu - 16  \cstT^{3} E   \, .
  \end{split}
\end{equation}
The solution to the system $(q_1,q_2)=0$ at $t=0$ has the form
\begin{equation}
  \label{eq:110}
    \nu_0 = \frac{16  \cstT^3 E}{\xi (4+ \cstT^2 E^2+ \xi^2 )}\,, \quad  r:= \cstT^2E^4  +  \frac{1}{ \cstT^2} \xi^4  +  \frac{4}{  \cstT^2}  (1 +  \cstT^2) \xi^2  +  4 E^2 (1 +  \cstT^2 + \frac{1}{2} \xi^2 )-16 =0
\end{equation}
In $r=0$ from \eqref{eq:110} we choose the unique negative solution (as long as the expression inside the square root is positive)
\begin{equation}
  \label{eq:170}
  \xi_0 = -\sqrt{2 \sqrt{1 + 6  \cstT^2 +  \cstT^4}-2-2  \cstT^2 - \cstT^2 E^2} 
  .
\end{equation}
We compute the Jacobian
\begin{equation}
  \label{eq:111}
  \begin{split}
    \det \nabla_{\xi,\nu}(q_1,q_2)
    =  4 \xi^2 (8 + 2 \cstT^2 (12 + E^2)  +  \cstT^4( \frac{2}{ \cstT^4} \xi^2 - 2 E^2)  -  2  \cstT^2 \xi^2  + \cstT^2 r)\,, 
  \end{split}
\end{equation}
and evaluate at $t=0$ with $\xi=\xi_0$ to find
\begin{equation}
  \label{eq:171}
\det \nabla_{\xi_0,\nu_0}(q_1,q_2) = 16 \xi_0^2 (1 +  \cstT^4 + \sqrt{
   1 + 6  \cstT^2 +  \cstT^4} -  \cstT^2 (-6 + \sqrt{
      1 + 6  \cstT^2 +  \cstT^4}))<0\,.  
\end{equation}
In particular, $\xi$ and $\nu$ admit a power series expansion in $t$. 

\emph{Step 3: Coincidence of $\widetilde{\Sol}_3$ and $\Soll{3}$, asymptotic behavior of $\Sol_{z,E}(1,1)$.}
Here we show that the asymptotic expansion $\widetilde{\Sol}_3$ around $z=0,1$ coincides with $\Soll{3}$ defined in \eqref{eq:20} and \eqref{eq:27}, the solution to \emph{DEL} \eqref{eq:29} constructed from free probability.
For this purpose, for any small $\delta >0$, we construct a modification $\Sol_{z,E}^{\delta}$ of the explicit solution $\Sol_{z,E}$ (given by \eqref{eq:33}) as follows
\begin{equation}
  \label{eq:172}
  \Sol^{\delta}_{z,E}
  :=
  (\id_{8} \otimes \,\tau_{\Sc})(\Hb_E^{(\mathrm{sc}),\delta}- z J_{8} \otimes \scone)^{-1} \,, 
\end{equation}
with
\begin{equation}
  \label{eq:177}
  \qquad 
\Hb_E^{(\mathrm{sc}),\delta}
:=
  \left(
    \begin{array}{ccc}
      \cstU_1 \otimes \scone & 0 &\cstU_5^t \otimes c_2^*
      \\
      0& \cstU_2 \otimes \scone & \cstU_4^t \otimes c_1^*
      \\
      \cstU_5 \otimes c_2 & \cstU_4 \otimes c_1 &  \cstV_1 \otimes (E \cdot \scone -(1-\delta)\cdot \semic ) -\delta \cstV_1 \widetilde{\Sol}_3 \cstV_1\otimes \scone 
    \end{array}
  \right)
  \,,
\end{equation}
where $c_i$ are circular and $\semic$ semicircular elements.
Since the original generalised resolvent has the bound $\|\Sol_{z,E}\| \leq C (1+\frac{1}{\Im z})$, we also get 
\begin{equation}
  \label{eq:112}
  \|\Sol_{z,E}^{\delta}-\Sol_{z,E}\|
  \leq \cstF \delta\,.
\end{equation}
Now, similarly as in \eqref{eq:28}-\eqref{eq:29}, we derive the \emph{DEL} corresponding to the generalized resolvent of $\Hb_E^{(\mathrm{sc}),\delta}$ and find
\begin{align}
  \label{eq:199}
  -\frac{1}{\Sol_{1}^{\delta}}
  =
  z J_{3} - \cstU_{1} + (1-\delta) \cstU_{5}^{t} \Sol_{3}^{\delta} \cstU_{5}
  ,\quad
  -\frac{1}{\Sol_{2}^{\delta}}
  =
  - \cstU_{2} + (1-\delta) \cstU_{4}^{t} \Sol_{3}^{\delta} \cstU_{4}
  ,
  \\   \label{eq:173}
  -\frac{1}{\Sol_{3}^{\delta}} = -E \cstV_1 - \frac{1}{Z_1 +\Sol_{3}^{\delta}} - \frac{1}{Z_2 +\Sol_{3}^{\delta}} + (1-\delta) \cstV_1 \Sol_{3}^{\delta}\cstV_1 + \delta \cstV_1 \widetilde{\Sol}_{3}\cstV_1
  \,,
\end{align}
where we exploited the block structure of $\Sol_{z,E}^{\delta}$ and denoted
\begin{equation}
  \label{eq:200}
  \Sol_{z,E}^{\delta}
  =
  \left(
    \begin{array}{ccc}
      \Sol_{1}^{\delta}& 0 & 0
      \\
      0 & \Sol_{2}^{\delta} & 0
      \\
      0 & 0 & \Sol_{3}^{\delta}
    \end{array}
  \right)
  .
\end{equation}
Notice that $\Im \big(\delta \cstV_1 \widetilde{\Sol}_{3}\cstV_1\big)>0$ and $R \mapsto -E \cstV_1 - (Z_1 +R)^{-1} - (Z_2 +R)^{-1} + (1-\delta) \cstV_1 R \cstV_1$ is a positivity preserving analytic mapping.
The existence and uniqueness of the solution to \eqref{eq:173} now follows from \cite[Theorem~2.1]{HeltRaFaSpei07}.
Inverting both sides of \eqref{eq:173}, we obtain a fixed point equation $\Sol_{3}^{\delta}= \Phi_{\delta}(\Sol_{3}^{\delta})$ for $\Sol_{3}^{\delta}$, where the map $\Phi_{\delta}$  can be read off from the inverse of the right-hand side of \eqref{eq:173}. Clearly, $\Phi_{\delta}$ with any $\delta>0$ is a contraction with respect to the Carath\'eodory metric mapping the set of $2 \times 2$ matrices with (strictly) positive definite imaginary parts strictly into itself (for more details see \cite{HeltRaFaSpei07}).
In particular, there is a unique solution with positive semidefinite imaginary part and thus  $\Sol_{3}^{\delta} = \widetilde{\Sol}_{3}$ for $\delta>0$.
Together with  \eqref{eq:112} and taking the limit $\delta \downarrow 0$ we conclude $\Soll{3} = \widetilde{\Soll{3}}$. 
In particular, $\Soll{3}$ has a power law expansion around the singularities at $z=0,1$.

Finally, plugging the expansions \eqref{eq:106} and \eqref{eq:108} into \eqref{eq:79} yields the asymptotics \eqref{eq:175}-\eqref{eq:176}.
\end{proof}

\subsection{ Proof of (i) and (ii) of Theorem~\ref{thm:main} for $\cstS = 1$ }
\label{sec:proof-ii-iii}

We will need the following classical result from the theory of Herglotz functions (see, e.g., \cite[Theorem~2.3]{GeszTsek00})
\begin{lem} \label{lem:herglotz}
  Let $m:\CC_{+}\rightarrow \CC_{+}$ be a Herglotz function with representation
  \begin{equation}
    \label{eq:174}
    m(z)
    =
    m^{\infty} + \int_{\RR} \frac{1}{\lambda - z} v (d\lambda)
  \end{equation}
  for $m^{\infty}\in \RR$ and a Borel measure $v(d\lambda)$ on $\RR$.
  If for some $p>1$ and interval $I\subset \RR$
  \begin{equation}
    \label{eq:178}
    \sup_{0< \eta < 1}\int_{I}|\Im m(\lambda + \imu \eta)|^{p} d\lambda
    <
    \infty
    ,
  \end{equation}
 then the measure $v(d\lambda)$ is absolutely continuous on $I$.
\end{lem}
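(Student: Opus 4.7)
The plan is to use the Stieltjes inversion formula together with weak compactness in $L^p$ to exhibit an integrable density for $v$ on $I$. The starting point is the well known identity
\begin{equation*}
\frac{1}{\pi}\Im m(\lambda + \imu \eta)
=
\int_{\RR} \frac{\eta/\pi}{(\lambda-t)^2 + \eta^2}\, v(dt)
=
(P_\eta * v)(\lambda),
\end{equation*}
where $P_\eta$ denotes the Poisson kernel. As $\eta \downarrow 0$ this family of absolutely continuous measures on $\RR$ converges to $v$ in the vague topology; this is the standard Stieltjes inversion and follows by dominated convergence against any compactly supported continuous test function.

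Next I would exploit the uniform $L^p$ bound. By \eqref{eq:178}, the family $\{\tfrac{1}{\pi}\Im m(\,\cdot\,+\imu \eta)\}_{0<\eta<1}$ is bounded in the reflexive Banach space $L^p(I)$. Hence by the Banach--Alaoglu theorem one can extract a sequence $\eta_n \downarrow 0$ such that
\begin{equation*}
\frac{1}{\pi}\Im m(\,\cdot\,+\imu \eta_n) \;\rightharpoonup\; f \quad \text{weakly in } L^p(I)
\end{equation*}
for some $f \in L^p(I)$. For any $\varphi \in C_c(I)$, using $\varphi \in L^{p'}(I)$ with $1/p+1/p'=1$, the weak convergence gives
\begin{equation*}
\int_I \varphi(\lambda)\, \frac{1}{\pi}\Im m(\lambda+\imu \eta_n)\, d\lambda \;\longrightarrow\; \int_I \varphi(\lambda)\, f(\lambda)\, d\lambda.
\end{equation*}
On the other hand, by the vague convergence of the approximating measures recalled above, the same left-hand side converges to $\int_I \varphi(\lambda)\, v(d\lambda)$.

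Comparing the two limits yields $\int_I \varphi\, v(d\lambda) = \int_I \varphi f\, d\lambda$ for every $\varphi \in C_c(I)$, and by the Riesz representation theorem this forces $v|_I = f\, d\lambda$, i.e.\ $v$ is absolutely continuous on $I$ with $L^p$ density. The only delicate point is the justification of the vague convergence $P_{\eta}*v \to v$ up to the interval $I$; this is handled by first testing against $\varphi$ supported strictly inside $I$ (where the Poisson kernel approximation is classical on any compact set) and then using the boundedness of $v$ on bounded sets, which is automatic from the Herglotz representation. No other serious obstacle arises.
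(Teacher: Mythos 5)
The paper does not prove this lemma; it simply cites it as a classical result from the theory of Herglotz functions, namely Theorem~2.3 of Gesztesy and Tsekanovskii. Your argument --- rewrite $\frac{1}{\pi}\Im m(\cdot+\imu\eta)$ as the Poisson regularization $P_\eta * v$, use the uniform $L^p(I)$ bound and reflexivity of $L^p$ to extract a weak $L^p$ limit $f$, and match it against the vague limit $v$ on test functions in $C_c(I)$ --- is correct and is the standard proof of this fact, so it essentially reproduces the argument of the cited reference. One small point worth being explicit about: testing only against $\varphi \in C_c(I)$ identifies $v = f\,d\lambda$ on the interior of $I$; if $I$ is taken closed, one should additionally note that the bound \eqref{eq:178} also rules out atoms at the endpoints (an atom at $a\in I$ forces $\int_I |\Im m(\lambda+\imu\eta)|^p d\lambda \gtrsim \eta^{1-p}\to\infty$), so absolute continuity holds on all of $I$. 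In the paper's application $I=\RR$, so this does not arise.
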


\begin{proof}[ Proof of (i) and (ii) of Theorem~\ref{thm:main} for $\cstS = 1$ ]
     The weak limits in the part \emph{(i)} of Theorem~\ref{thm:main} have been established in Lemma~\ref{lem:global-law}.
  
  We know from \eqref{eq:22} that $\Sol_{z,E}(1,1)$ is a Herglotz function admitting the representation \eqref{eq:174}  with $m^{\infty} = \Sol_{E}^{\infty}(1,1) $ and $v(d\lambda) = \la e_1, V_{E}(d\lambda) \, e_1\ra = \rho_{E}(d\lambda)$.
Therefore, the absolute continuity of $\rho_{E}(d\lambda) = \rho_{E}(\lambda) d\lambda$ is a direct consequence of Lemma~\ref{lem:herglotz} and Lemmas \ref{lem:boundedness} and \ref{lem:sing-sol}  establishing together the integrability of $\lambda \mapsto \Im \Sol_{\lambda + \imu \eta ,E}(1,1)$ in the form \eqref{eq:178} on the whole real axis for any $p<3/2$.

Finally, the asymptotic behavior of $\rho_{E}(\lambda)$ \eqref{eq:5}-\eqref{eq:179} near its singularities at $0$ and $1$ follows from Lemma~\ref{lem:sing-sol} and the inverse Stieltjes transform formula \eqref{eq:24}.
This, together with the global law established in Lemma~\ref{lem:global-law}, finishes the proof of Theorem~\ref{thm:main}  for $\cstS = 1$ .  
\end{proof}

\section{Proof of Theorem~\ref{thm:main} for  general rational  $\cstS \in (0,1)$ }
\label{sec:proof-theor-refthm:m}

In this section we explain how the techniques described in Section~\ref{sec:proofs} can be used to prove  Theorems \ref{thr:Global law} and  \ref{thm:main} in the case when $\cstS = k/l\in (0,1)$  for fixed $k,l\in \NN$, i.e. when $M=ln$ and $N=kn$ for some integer $n$ tending to infinity.
In addition to the steps used in Section~\ref{sec:proofs}, we 
need to tensorize the setup to accommodate for rectangular matrices. For example,
the $M\times N = ln\times kn$ matrices $W_1$ and $W_2$  will be viewed  as  $l\times k$ rectangular block matrices with blocks of dimension $n\times n$.
As we will see later, this allows us to treat the linearization matrix as a Kronecker random matrix in independent Wigner and i.i.d. matrices, which in turn makes various probabilistic estimates of the error terms in the corresponding \emph{DEL} readily accessible from \cite{AltErdoKrugNemi_Kronecker}. The restriction $\cstS = k/l$ makes the presentation conceptually easier.

Note that different values of $\cstS = k/l \in (0,1)$ require slightly different approaches.
For $\cstS \in (1/2,1)$ the matrix $\Tb_{E,\cstS} \in \CC^{kn \times kn}$ given by \eqref{eq:3} is well defined and bounded with very high probability.
Indeed, in this case the product  $W W^*$ is a sample covariance matrix with concentration ratio $\frac{1}{2\cstS} \in (0,1)$,  therefore similarly as in Section~\ref{sec:linearization-trick}, one can define a sequence of events $\Theta_{\cstS,N}$ holding \emph{a.w.o.p.} (see \eqref{eq:12} and \eqref{eq:219} below) such that  for any small  $\delta>0$ and big enough $n\geq n_0( \delta )$, the spectrum of $W W^{*}$ is contained inside the interval $[(1-\frac{1}{\sqrt{2 \cstS}})^{2}(1-  \delta) ,(1+\frac{1}{\sqrt{2 \cstS}})^{2}(1+  \delta) ]$ when restricted to $\Theta_{\cstS, N}$ (see, e.g., \cite[Section~5]{BaiSilvBook}).
Thus, as $n$ tends to infinity, the  matrices in the denominator of \eqref{eq:3} have positive imaginary parts and therefore bounded inverses with very high probability. 

On the other hand, for $\cstS \in (0,1/2]$ we need to proceed via regularization by replacing $\imu  \cstT W W^*$ with $\imu  \cstT W W^{*} + \imu \cstX \cstS  I_{M}$ for some small $\cstX > 0$ to ensure the invertibility of the  denominators in \eqref{eq:3}.
This requires a more careful analysis of the $\cstX$-dependence of various bounds and identities before we can take $\cstX\rightarrow 0$.

Note also that for $\cstS > 1$, $\mathrm{Rank}(\Tb_{E,\cstS})\leq M$ and the spectral measure of the matrix $\Tb_{E,\cstS}$ has an atom of mass $1-1/2\cstS$ at zero, while, as we will show, for $\cstS <1$ the spectral measure $\mu_{\Tb_{E,\cstS}}$ does not have the pure point component.
The regime $\cstS=1$ studied in Section~\ref{sec:proofs} is borderline: the limiting spectral measure of $\Tb_{E,\cstS}$ does not have atom at $0$, but its behavior near the origin is more singular than in the case $\cstS<1$.

\subsection{Linearization trick and the Dyson equation for linearization}
\label{sec:line-trick-dyson-1}

In order to apply the linearization trick for $\cstS = k/l\in (0,1)$, we split $H$, $W_{1}$ and $W_{2}$ into blocks of size $n\times n$, so that
\begin{equation}
\label{eq:51}
  H
  =
  (\widehat{H}_{ij})_{\substack{i=1 \ldots l \\ j=1 \ldots l}}
  ,\quad
  W_{1}
  =
  (\widehat{W}_{1,ij})_{\substack{i=1 \ldots l \\ j=1 \ldots k}}
  ,\quad
  W_{2}
  =
  (\widehat{W}_{2,ij})_{\substack{i=1 \ldots l \\ j=1 \ldots k}}
  ,
\end{equation}
with $\widehat{H}_{ij}, \widehat{W}_{1,ij}, \widehat{W}_{2,ij} \in \CC^{n\times n}$.
Note that for any $i\in \{1,\ldots,l\}$, $\sqrt{l}\cdot\widehat{H}_{ii}$ is a (normalized) Wigner matrix of size $n$, and for any $i\neq j$, $\sqrt{l} \cdot \widehat{H}_{ij}$ is a (normalized) \emph{i.i.d.} matrix of size $n$.
Similarly, all the matrices $\sqrt{l} \cdot \widehat{W}_{1,ij}$ and $\sqrt{l} \cdot \widehat{W}_{2,ij}$ are also \emph{i.i.d.} matrices of size $n$.
All these matrices are independent apart from the natural constraint $\widehat{H}_{ij} = \widehat{H}_{ji}^{*}$.

 Define the events
\begin{equation}
  \label{eq:219}
  \Theta_{\cstS, N}
  :=
  \left\{
    \begin{array}{ll}
      \{\, \|H\| \leq 3,\, \| W_{1}\| \leq 3, \, \|W_2\| \leq 3, \,\|(W W^*)^{-1}\| \leq  \frac{1}{(1-\frac{1}{\sqrt{2 \cstS}})^{2}(1-\delta)}  \, \},  & \quad \cstS \in (1/2,1),
      \\
      \{\, \|H\| \leq 3,\, \| W_{1}\| \leq 3, \, \|W_2\| \leq 3 \, \},  & \quad \cstS \in (0,1/2],
    \end{array}
    \right.
  \end{equation}
for some $\delta >0$.
Similarly as in Section~\ref{sec:linearization-trick}, one can show that the events $\Theta_{\cstS,N}$ hold \emph{a.w.o.p.} and the random matrix models \eqref{eq:73} and \eqref{eq:3} for $\cstS \in (1/2,1)$ and $\cstY \in \CC_{+}\cup \RR$ restricted to $\Theta_{\cstS,N}$ are well defined (see Remark~\ref{rem:matrix-valued-ext} and Lemma~\ref{lem:norm-bounds-random}). 
At the same time, in order to deal with $\cstS\in (0,1/2]$, we consider the regularized models \eqref{eq:73} and \eqref{eq:3} with $\cstY \in \CC_{+} $, i.e., $\Im \cstY >0$ strictly positive, which guarantees the invertibility of $\cstY-H + \imu \cstT W W^{*}$ without any additional restrictions on $H$ and $W$.

The linearization matrix $\Hb_{\cstY,\cstS}$ for \eqref{eq:3} is defined as in \eqref{eq:6}.
This is a \textit{Kronecker } random matrix consisting of $(6k+2l)\times (6k+2l)$ blocks of size $n$.
We now introduce a tensorized version of the generalized resolvent that takes into account the additional structure coming from \eqref{eq:51}.
\begin{defn}[Generalized resolvent]
  Let $(\cstS, \cstY) \in ((0,1/2]\times \CC_{+})\cup ((1/2,1)\times (\CC_{+}\cup \RR))$ with $\cstS = k/l \in \QQ$.
  We call the matrix-valued function $\CC_{+}\ni z \mapsto (\Hb_{\cstY,\cstS}-z\Jb_{k}\otimes I_{n})^{-1}$ the \emph{generalized resolvent} of $\Hb_{\cstY,\cstS}$.
  Here we denote $\Jb_{k}:= \sum_{i=1}^{k} E_{ii}  \in \CC^{(6k+2l)\times (6k+2l)}$ with $\{E_{ij}\}$ being the standard basis of $\CC^{(6k+2l)\times (6k+2l)}$.
\end{defn}

\begin{lem}[Basic properties of the generalized resolvent] \leavevmode
  \label{lem:trivial-gen-res2}
\begin{itemize}
  \item[(i)] For any $\cstT>0$ and $\cstS \in (1/2,1)\cap \QQ$ there exists $C_{\cstT,\cstS}>0$ such that \emph{a.w.o.p.}
    \begin{equation}
      \label{eq:230}
       \big\| (\Hb_{\cstY,\cstS} - z\Jb_{k}\otimes I_{n})^{-1}\big\|
      \leq
      C_{\cstT,\cstS}\bigg(1 + \frac{1}{\Im z}\bigg)
    \end{equation}
    uniformly for all $z\in\CC_{+}$ and $\cstY \in \CC_{+}\cup \RR$.
  \item[(ii)] For any $\cstT>0$, $\cstS \in (0,1/2]\cap \QQ$ and $\cstY \in \CC_{+}$ there exists $C_{\cstT,\cstS,\cstY}>0$ such that \emph{a.w.o.p.}
    \begin{equation}
      \label{eq:246}
      \big\| (\Hb_{\cstY,\cstS} - z\Jb_{k}\otimes I_{n})^{-1}\big\|
      \leq
      C_{\cstT,\cstS,\cstY}\bigg(1 + \frac{1}{\Im z}\bigg)
    \end{equation}
    for all $z\in \CC_{+}$.
  \item[(iii)] For all $(\cstS, \cstY) \in \big(((0,1/2]\cap \QQ)\times \CC_{+}\big)\cup \big(((1/2,1)\cap \QQ)\times (\CC_{+}\cup \RR)\big)$ and $1\leq i,j\leq N$
    \begin{equation}
      \label{eq:231}
      \Big[(\Hb_{\cstY,\cstS}-z\Jb_{k}\otimes I_{n})^{-1}\Big]_{ij}
      =
      \Big[(\Tb_{\cstY,\cstS}- zI_N)^{-1}\Big]_{ij}
      ,\quad
      1\leq i,j \leq N
      .
    \end{equation}
  \end{itemize} 
\end{lem}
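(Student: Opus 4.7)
The argument will closely parallel the proof of Lemma~\ref{lem:trivial-gen-res} in the $\cstS=1$ case, with the main modification being that the scalar ``slots'' in the linearization are now $n\times n$ blocks rather than $N\times N$ blocks, and the tensor structure of the projection $\Jb_{k}\otimes I_{n}$ must be tracked carefully. First I would apply the linearization algorithm of Appendix~\ref{sec:line-algor} to the rational expression \eqref{eq:3} viewed through the block decomposition \eqref{eq:51} to produce an initial linearization $\Hb_{\cstY,\cstS}^{\mathrm{(init)}}$ of dimension $(6k+2l)n$. Then I would identify a block permutation together with a block-diagonal scaling (depending only on $\cstT$) which, via conjugation, sends $\Hb_{\cstY,\cstS}^{\mathrm{(init)}}$ to the matrix $\Hb_{\cstY,\cstS}$, in analogy with the transformation \eqref{eq:10}. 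Crucially this conjugation is block-diagonal in the indices corresponding to the upper-left $N\times N$ corner, hence it leaves the position of that corner intact and the spectral norm is preserved up to a multiplicative constant depending only on $\cstT$.

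For parts (i) and (ii), the next step is to specialize Lemma~\ref{lem:trivialBound} to the algebra $\Ac=\CC^{n\times n}$ with $x_{1}=H$, $y_{1}=W_{1}$, $y_{2}=W_{2}$ and $q$ given by \eqref{eq:3}. The essential input to that lemma is a uniform upper bound on $\|(\cstY-H+\imu\cstT\,WW^{*})^{-1}\|$ for all relevant $\cstY$. In part (i), $\cstS\in(1/2,1)\cap\QQ$ and $\cstY\in\CC_{+}\cup\RR$: on the event $\Theta_{\cstS,N}$ the sample-covariance matrix $WW^{*}$ has spectrum bounded away from zero by a constant depending only on $\cstS$ and $\delta$, so the imaginary part $\cstT WW^{*}$ is uniformly positive and $(\cstY-H+\imu\cstT\,WW^{*})^{-1}$ is uniformly bounded in $\cstY\in\CC_{+}\cup\RR$; Lemma~\ref{lem:trivialBound} then yields \eqref{eq:230} with a constant $C_{\cstT,\cstS}$ independent of $\cstY$. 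In part (ii), $\cstS\in(0,1/2]\cap\QQ$ and $\Im\cstY>0$: even though $WW^{*}$ may be singular, the inequality $\|(\cstY-H+\imu\cstT\,WW^{*})^{-1}\|\le(\Im\cstY)^{-1}$ (from positivity of $\imu\cstT WW^{*}$ together with $\Im\cstY>0$) supplies the required bound, and one obtains \eqref{eq:246} with a constant $C_{\cstT,\cstS,\cstY}$ that may depend on $\Im\cstY$. Finally the a.w.o.p.\ statement transfers from Lemma~\ref{lem:trivialBound} since $\Theta_{\cstS,N}$ itself holds a.w.o.p.\ by the Bai--Yin--type arguments recalled in Section~\ref{sec:linearization-trick} and used again in Section~\ref{sec:proof-theor-refthm:m}.

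For part (iii), the Schur complement identity \eqref{eq:231} will follow from Definition~\ref{def:linearization} together with the Schur complement formula, exactly as \eqref{eq:9} was derived from~\eqref{eq:10}. Because the conjugating block permutation and block scaling from the first step act trivially on the upper-left $N\times N=(kn)\times(kn)$ corner (they only permute and rescale the other $(6k+2l-k)$ block rows and columns), the $(i,j)$-entry for $1\le i,j\le N$ of the generalized resolvent of $\Hb_{\cstY,\cstS}$ coincides with that of the generalized resolvent of $\Hb_{\cstY,\cstS}^{\mathrm{(init)}}$, which by the defining property of the linearization equals the $(i,j)$-entry of $(\Tb_{\cstY,\cstS}-zI_{N})^{-1}$.

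The main obstacle I anticipate is the careful bookkeeping required to verify that the upper-left corner carved out by $\Jb_{k}\otimes I_{n}$ genuinely corresponds to $\Tb_{\cstY,\cstS}\in\CC^{kn\times kn}$ and not to a larger or smaller block; in particular the factor $k$ in $\Jb_{k}$ must match the number of scalar output slots produced by the linearization algorithm when applied to the rational expression~\eqref{eq:3} with the rectangular block convention \eqref{eq:51}. Once this bookkeeping is in place, both the norm bound and the Schur complement identity reduce to direct specializations of the general results in Appendix~\ref{sec:spectr-prop-gener}.
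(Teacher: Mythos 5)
Your proposal is correct and follows essentially the same route as the paper's proof: one verifies that, on $\Theta_{\cstS,N}$ (resp.\ for $\Im\cstY>0$), the tuple $(H,W_1,W_2)$ lies in the effective domain $\Dc_{\qb_0,\{q_{1,\cstY}\};C}$ with a constant depending on $(\cstT,\cstS)$ (resp.\ $(\cstT,\cstS,\cstY)$), and then invokes the a priori bound of Lemma~\ref{lem:trivialBound} together with the dimension bookkeeping of Remark~\ref{rem:matrix-valued-ext} and the conjugation/Schur-complement argument transferred from Lemma~\ref{lem:trivial-gen-res}. The only minor discrepancy is that the paper records the bound in part (ii) as $\frac{1}{\cstT\Im\cstY}$ whereas the correct elementary estimate is $\frac{1}{\Im\cstY}$ as you state, but since the constant $C_{\cstT,\cstS,\cstY}$ is allowed to depend on all three parameters this makes no difference.
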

\begin{proof}
  Denote by $\Dc_{\qb_{0},\{q_{1,\cstY}\};C}$ the effective domain related to the noncommutative rational function  $q_{1,\cstY}(x,y_1,y_2,y_1^*,y_2^*)= \cstY-x + \imu \cstT (y_1y_1^* + y_2 y_2^*)$.
  More precisely (see \eqref{eq:116} below), the set $\Dc_{\qb_{0},\{q_{1,\cstY}\};C}$ consists of all triples of elements $(\hat{x},\hat{y}_1,\hat{y}_2)$ such that
  \begin{equation}
    \label{eq:247}
    \left\| \frac{1}{q_{1,\cstY}(\hat{x},\hat{y}_1,\hat{y}_2,\hat{y}_1^*,\hat{y}_2^*)}\right\|
    \leq
    C
    .
  \end{equation}
  
  As explained at the beginning of this section, for $\cstS = k/l \in (1/2,1)$ \emph{a.w.o.p.} the random matrices $H$, $W_1$ and $W_2$ defined in \eqref{eq:51} belong to the domain  $\Dc_{\qb_{0},\{q_{1,\cstY}\};C}$ with constant (see \eqref{eq:219} with $\delta = 1/2$) $C=\frac{1}{2 \cstT (1-\frac{1}{\sqrt{2\cstS}})}$
  depending only on $\cstT$ and $\cstS$.
  
  For $\cstS = k/l \in (0,1/2]$ on the other hand, the evaluation of the function $q_{1,\cstY}$ on random matrices $H$, $W_1$ and $W_2$ is invertible only for $\Im \cstY >0$, in which case the norm of the inverse of $q_{1,\cstY}$ evaluated on any triple $(\hat{x},\hat{y}_1,\hat{y}_2)$ is bounded by $\frac{1}{\cstT \Im \cstY}$.
  
 With the above choice of the constant $C$ in $\Dc_{\qb_{0},\{q_{1,\cstY}\};C}$ depending on the value of $\cstS$, the proof of (i)-(iii) can be obtained from the same argument as in Lemma~\ref{lem:trivial-gen-res} by restricting $\Hb_{\cstY,\cstS}$ to the events $\Theta_{\cstS, N}$ defined in \eqref{eq:219} and taking into account the dimensions of $H$, $W_{1}$ and $W_{2}$ and the relation $N=kn$ (see Remark~\ref{rem:matrix-valued-ext}).
\end{proof}
From the structure of the linearization, we derive the DEL corresponding to  $\Hb_{\cstY,\cstS}$
\begin{equation}
  \label{eq:52}
  -\frac{1}{\Sol}
  =
  z \Jb_{k} - K_0(\cstY) + \SuOp_{\cstS}[\Sol]
\end{equation}
for an unknown matrix-valued function $M$ depending on $z$, $\cstY$ and $\cstS$, having the following components:
\begin{itemize}
\item[(i)] the expectation matrix is given by
\begin{equation}
  \label{eq:54}
  K_0(\cstY)
  :=
  \left(
    \begin{array}{ccc|ccc|ccc}
      &\cstU_{1}\otimes I_{k}& &&& &&&
      \\
      &&&&&&&&
      \\ \hline
      &&&&&&&&
      \\
      &&          & &\cstU_{2} \otimes I_{k}& &&&
      \\
      &&&&&&&&
      \\ \hline
      &&&&&&&&
      \\
      &&&&& & &\cstU_{3}(\cstY) \otimes I_{l}&
    \end{array}
  \right)
\end{equation}
with matrices $\cstU_{1}, \cstU_{2},\cstU_{3}(\cstY)$ defined as in \eqref{eq:13}-\eqref{eq:14}; 
\item[(ii)] the operator $\SuOp_{\cstS} : \CC^{(6k+2l)\times(6k+2l)} \rightarrow \CC^{(6k+2l)\times(6k+2l)}$ maps an arbitrary matrix
  \begin{equation}
    \label{eq:66}
    R
    =
    \left(
      \begin{array}{ccc}
        R_{11}& R_{12}& R_{13}                        
        \\
        R_{21}& R_{22}& R_{23}
        \\
        R_{31}& R_{32}& R_{33}                        
      \end{array}
    \right)
    \in \CC^{(6k+2l)\times (6k+2l)}
  \end{equation}
  with $R_{11}, R_{22} \in \CC^{3k\times 3k}$, $R_{33} \in \CC^{2l\times 2l}$ into a block diagonal matrix with the first $3k\times 3k$ block equal to
  \begin{equation}
    \label{eq:68}
    \cstU_{5}^{t}\bigg(\Big(\id_{2}\otimes \frac{1}{l}\Tr_{l}\Big) R_{33}\bigg)\cstU_{5}\otimes I_{k}
    ,
  \end{equation}
  the second $3k\times 3k$ diagonal block equal to
  \begin{equation}
    \label{eq:69}
    \cstU_{4}^{t}\bigg(\Big(\id_{2}\otimes \frac{1}{l}\Tr_{l}\Big) R_{33}\bigg)\cstU_{4}\otimes I_{k}
    ,
  \end{equation}
  and the lower-right $2l\times 2l$ block equal to 
  \begin{equation}
    \label{eq:70}
    \cstU_{5}\bigg(\Big(\id_{3}\otimes \frac{1}{l}\Tr_{k}\Big) R_{11}\bigg)\cstU_{5}^{t} \otimes I_{l} + \cstU_{4}\bigg(\Big(\id_{3}\otimes \frac{1}{l}\Tr_{k}\Big) R_{22}\bigg)\cstU_{4}^{t} \otimes I_{l} +  \cstV_{1} \bigg( \Big(\id_{2} \otimes \frac{1}{l}\Tr_{l}\Big) R_{33}\bigg) \cstV_{1} \otimes I_{l}
    .
  \end{equation}
  Here, for $n\in \NN$, we denote by $\Tr_{n}$ the trace of an $n\times n$ matrix  $\cstU_{4},\cstU_{5}$ are defined as in \eqref{eq:14} and $\cstV_{1}$ is a standard Pauli matrix. 
  The operator $\SuOp_{\cstS}$ is the tensorized analogue of $\SuOp$ from \eqref{eq:26}.
\end{itemize}

Now we can proceed similarly as for $\cstS=1$ in Section~\ref{sec:proofs} just the $(3+3+2)\times (3+3+2)$ structure of the linearized matrices is replaced by larger block matrices structured as $(3k+3k+2l)\times (3k+3k+2l)$.
\begin{lem}[Existence and basic properties of the solution to the DEL \eqref{eq:52}] \label{lem:exist-uniq-sol-2} \leavevmode
  For any $\cstT>0$, $(\cstS, \cstY) \in (((0,1/2]\cap \QQ)\times \CC_{+})\cup (((1/2,1)\cap \QQ)\times (\CC_{+}\cup \RR))$ and $z\in \CC_{+}$ define $\Sol_{z, \cstY } \in \CC^{\,(6k+2l)\times (6k+2l)}$ as
\begin{align}
  \Sol_{z,\cstY}
  :=
  (\id_{6k+2l} \otimes \,\tau_{\Sc}) \bigg[ \Big((K_0(\cstY) - & z \Jb_{k}) \otimes \scone + K_{1}\otimes H^{\mathrm{(sc)}}  + L_1\otimes W_{1}^{\mathrm{(sc)}} \nonumber
  \\ \label{eq:168}
  & +L_1^{*}\otimes \big(W_{1}^{\mathrm{(sc)}}\big)^* + L_2 \otimes W_{2}^{\mathrm{(sc)}} + L_2^{*} \otimes \big(W_{2}^{\mathrm{(sc)}}\big)^{*}\Big)^{-1}\bigg]
  ,
\end{align}
where $W_{1}^{(\mathrm{sc})}$ and $W_{2}^{(\mathrm{sc})}$ are $l\times k$ matrices consisting of freely independent circular  elements multiplied by $1/\sqrt{l}$, and $H^{(\mathrm{sc})}$ is an $l\times l$ self-adjoint matrix with freely independent semicircular elements multiplied by $1/\sqrt{l}$ on the diagonal and freely independent circulars multiplied by $1/\sqrt{l}$ above the diagonal.

Then
  \begin{itemize}
  \item[(i)]  For any $\cstT>0$ and  $\cstS \in (1/2,1)\cap \QQ$ there exists $C_{\cstT,\cstS}>0$ such that $\Sol_{z,\cstY}$ satisfies the a priori 
   bound 
\begin{equation}
  \label{eq:86}
  \| \Sol_{z,\cstY} \|
  \leq
  C_{\cstT,\cstS} \Big(1 + \frac{1}{\Im z}\Big)
\end{equation}
uniformly for all $\cstY \in \CC_{+}\cup \RR$ and $z\in \CC_{+}$.
\item[(ii)] For any $\cstT>0$, $\cstS \in (0,1/2]\cap \QQ$ and $\cstY \in \CC_{+}$ there exists $C_{\cstT,\cstS,\cstY}>0$ such that function $\Sol_{z,\cstY}$ satisfies the a priori bound
  \begin{equation}
    \label{eq:244}
      \| \Sol_{z,\cstY} \|
  \leq
  C_{\cstT,\cstS,\cstY} \Big(1 + \frac{1}{\Im z}\Big)
  .
  \end{equation}
  \item[(iii)] For any $\cstT>0$, $(\cstS, \cstY) \in (((0,1/2]\cap \QQ)\times \CC_{+})\cup (((1/2,1)\cap \QQ)\times (\CC_{+}\cup \RR))$ and $z\in \CC_{+}$, matrix $\Sol_{z,\cstY}$ satisfies the DEL \eqref{eq:52} and has positive semidefinite imaginary part, $\Im  \Sol_{z,\cstY} \geq 0$. Moreover, for all $\cstT>0$, $(\cstS, \cstY) \in (((0,1/2]\cap \QQ)\times \CC_{+})\cup (((1/2,1)\cap \QQ)\times (\CC_{+}\cup \RR))$, the matrix-valued function $z\mapsto \Sol_{z,\cstY}$ is analytic on $\CC_{+}$.
  \item[(iv)] For any $\cstT>0$ and $(\cstS, \cstY) \in (((0,1/2]\cap \QQ)\times \CC_{+})\cup (((1/2,1)\cap \QQ)\times (\CC_{+}\cup \RR))$ function $z\mapsto \Sol_{z,\cstY}$  admits the representation
\begin{equation}
  \label{eq:102}
  \Sol_{z,\cstY}
  =
  \Sol^{\infty}_{\cstY,\cstS} + \int_{\RR} \frac{V_{\cstY,\cstS}(d\lambda)}{\lambda - z}
  ,
\end{equation}
where $\Sol_{\cstY,\cstS}^{\infty} \in \CC^{(6k + 2l)\times (6k+2l)}$ is a self-adjoint matrix,  and $V_{\cstY,\cstS}(d\lambda)$ is a positive-semidefinite matrix-valued measure on $\RR$ with compact support.
  \end{itemize}
\end{lem}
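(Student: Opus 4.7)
The plan is to follow essentially the same route as in Lemma~\ref{lem:exist-uniq-sol}, adapting the bounds to the tensorized $(3k+3k+2l)\times (3k+3k+2l)$ block structure and keeping track of how the two regimes $\cstS\in(1/2,1)$ and $\cstS\in(0,1/2]$ require different inputs. Throughout, we work with the noncommutative rational expression
\[
 q_{1,\cstY}(x,y_{1},y_{2},y_{1}^{*},y_{2}^{*})
 := \cstY - x + \imu\cstT\bigl(y_{1}y_{1}^{*} + y_{2}y_{2}^{*}\bigr),
\]
evaluated now on the \emph{block} operator-valued tuple $(H^{(\mathrm{sc})},W_{1}^{(\mathrm{sc})},W_{2}^{(\mathrm{sc})})$ described in the statement. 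The three matrix coefficients $K_{1},L_{1},L_{2}\in\CC^{(6k+2l)\times(6k+2l)}$ are the natural tensorized analogues of \eqref{eq:17}, and the map $\SuOp_{\cstS}$ in \eqref{eq:68}-\eqref{eq:70} is precisely the covariance superoperator read off from these coefficients.

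The first step is to check that the free triple $(H^{(\mathrm{sc})},W_{1}^{(\mathrm{sc})},W_{2}^{(\mathrm{sc})})$ lies in the effective domain $\Dc_{\qb_{0},\{q_{1,\cstY}\};C}$ with an appropriate constant $C$. For $\cstS=k/l\in(1/2,1)$ the element $W_{1}^{(\mathrm{sc})}(W_{1}^{(\mathrm{sc})})^{*} + W_{2}^{(\mathrm{sc})}(W_{2}^{(\mathrm{sc})})^{*}\in M_{l}(\Sc)$ is a free compound Poisson/covariance element whose spectrum is contained in $[(1-1/\sqrt{2\cstS})^{2},(1+1/\sqrt{2\cstS})^{2}]$, in particular bounded below by a positive constant depending only on $\cstS$; this yields, via $\Im q_{1,\cstY}\geq \cstT(1-1/\sqrt{2\cstS})^{2}$, a bound $\|q_{1,\cstY}^{-1}\|\leq C_{\cstT,\cstS}$ uniformly in $\cstY\in\CC_{+}\cup\RR$. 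For $\cstS\in(0,1/2]\cap\QQ$ this lower bound is lost (the spectrum touches $0$), so we instead use $\Im q_{1,\cstY}\geq \cstT\,\Im\cstY$, which gives $\|q_{1,\cstY}^{-1}\|\leq (\cstT\Im\cstY)^{-1}$, hence a constant $C_{\cstT,\cstS,\cstY}$ depending additionally on $\cstY$. These are exactly the two constants appearing in \eqref{eq:86} and \eqref{eq:244}.

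Second, with $(H^{(\mathrm{sc})},W_{1}^{(\mathrm{sc})},W_{2}^{(\mathrm{sc})})\in\Dc_{\qb_{0},\{q_{1,\cstY}\};C}$ in hand, we invoke the operator-valued analogues of Lemmas~\ref{lem:invertibility} and \ref{lem:trivialBound}, specialized exactly as in Lemma~\ref{lem:exist-uniq-sol}, to obtain the norm bound
\[
 \Bigl\|\bigl((K_{0}(\cstY)-z\Jb_{k})\otimes\scone + K_{1}\otimes H^{(\mathrm{sc})} + L_{1}\otimes W_{1}^{(\mathrm{sc})} + L_{1}^{*}\otimes (W_{1}^{(\mathrm{sc})})^{*} + L_{2}\otimes W_{2}^{(\mathrm{sc})} + L_{2}^{*}\otimes (W_{2}^{(\mathrm{sc})})^{*}\bigr)^{-1}\Bigr\|
 \leq C\Bigl(1+\tfrac{1}{\Im z}\Bigr)
\]
with $C=C_{\cstT,\cstS}$ in regime (i) and $C=C_{\cstT,\cstS,\cstY}$ in regime (ii). Taking the partial trace $\id\otimes\tau_{\Sc}$ and recalling \eqref{eq:168} yields (i) and (ii).

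Finally, the DEL \eqref{eq:52}, the positivity of $\Im\Sol_{z,\cstY}$, the analyticity in $z\in\CC_{+}$ and the Herglotz representation \eqref{eq:102} all follow from parts (ii)-(iv) of Lemma~\ref{lem:exist-uniq} in the Appendix applied to this operator-valued linearization; one only has to observe that the block-diagonal pattern of $K_{0}(\cstY)$, $K_{1}$, $L_{1}$, $L_{2}$ combined with the block partial traces in \eqref{eq:68}-\eqref{eq:70} give exactly the form of $\SuOp_{\cstS}$ and $K_{0}(\cstY)$ that appears in \eqref{eq:52}. The main technical point is really step one: producing a $C$ that is \emph{independent} of $\cstY\in\CC_{+}\cup\RR$ in the regime $\cstS\in(1/2,1)$, so that the bound \eqref{eq:86} can later be used to justify taking $\Im\cstY\downarrow 0$ (the analogue of \eqref{eq:214}-\eqref{eq:218} for general $\cstS$). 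Apart from this, everything is a direct transcription of the $\cstS=1$ argument with ``matrices of size $8$'' replaced by ``block matrices with blocks of size $k$ or $l$'', and no new analytic difficulty arises.
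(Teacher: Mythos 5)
Your proposal is correct and follows essentially the same route as the paper's proof: identify the dichotomy in $\|q_{1,\cstY}^{-1}\|$ between $\cstS\in(1/2,1)$ (spectrum of $W^{(\mathrm{sc})}(W^{(\mathrm{sc})})^*$ bounded away from zero, giving a $\cstY$-uniform effective-domain constant) and $\cstS\in(0,1/2]$ (invertibility forced by $\Im\cstY>0$, giving a $\cstY$-dependent constant), then invoke Lemmas~\ref{lem:invertibility}, \ref{lem:trivialBound} and Lemma~\ref{lem:exist-uniq} (via Remark~\ref{rem:matrix-valued-ext} for the block/tensor structure). This is exactly how the paper argues.
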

\begin{proof}
  The proof follows from parts (i)-(v) of Lemma~\ref{lem:exist-uniq} (see also Remark~\ref{rem:matrix-valued-ext}).
  Similarly as in the proof of Lemma~\ref{lem:trivial-gen-res2}, notice that the noncommutative rational expression $q_{1,\cstY}= \cstY-x + \imu \cstT (y_1y_1^* + y_2 y_2^*)$ evaluated on  matrices $ x= H^{\mathrm{(sc)}}$, $y_1 = W_1^{\mathrm{(sc)}}$ and $y_2 = W_2^{\mathrm{(sc)}}$ expresses different behavior in variable $\cstY$ for $\cstS \in (0,1/2) $ and $\cstS \in [1/2,1)$.
  
  In the first case, $\cstS \in (1/2,1)$, the invertibility of $q_{1,\cstY}$ evaluated on $(s,c_1,c_2)$ does not depend on $\cstY$ and thus $(s,c_1,c_2) \in \Dc_{\qb_{0},\{q_{1,\cstY}\};C}$ with $C=C(\cstT,\cstS)$.
  In the case $\cstS \in (0,1/2]$, $q_{1,\cstY}$ is invertible if and only if $\Im \cstY >0$, and the norm of $(q_{1,\cstY})^{-1}$ depends on $\Im \cstY$.
  This, in particular, means that $(s,c_1,c_2) \in \Dc_{\qb_{0},\{q_{1,\cstY}\};C}$ with a $\cstY$-dependent constant $C(\cstT,\cstS,\cstY)$.
  
  This leads to two different a priori estimates: a bound~\eqref{eq:86}  uniform in $\cstY$  for $\cstS \in (1/2,1)$ and a $\cstY$-dependent bound for $\cstS \in (0,1/2]$.
  The rest of the proof follows directly from Lemma~\ref{lem:exist-uniq}.
\end{proof} 
We omit the dependence of $\Sol_{z,\cstY}$ on $\cstS$ for brevity. 
With these notations we have the following global law establishing Theorem~\ref{thr:Global law} and partially \emph{(i)} of Theorem~\ref{thm:main} for $\cstS \in (0,1)$.
The proof of the weak limit \eqref{eq:233} for $\cstS \in (0,1/2)$ is postponed to Section~\ref{sec:proof-ii-iii-1}.
\begin{lem}[Global law for $\Tb_{\cstY,\cstS}$, $\cstS \in (0,1)$]
  \label{lem:global-law-gen}
  For $(\cstS, \cstY) \in ((0,1/2]\times \CC_{+})\cup ((1/2,1)\times (\CC_{+}\cup \RR))$, $\cstS = k/l$, the empirical spectral measure $\mu_{\Tb_{\cstY,\cstS}}(d\lambda)$ converges weakly in probability (and almost surely) to $\rho_{\cstY, \cstS}(d\lambda)$, where
  \begin{equation}
    \label{eq:232}
    \rho_{\cstY,\cstS}(d\lambda)
    :=
    \frac{1}{k}\Tr ( \Jb_{k} \, V_{\cstY,\cstS}(d\lambda)  )
  \end{equation}
  is the normalized trace of the upper-left $k\times k$ submatrix of the matrix-valued measure $V_{\cstY,\cstS}(d\lambda)$ from \eqref{eq:102}.
  The support of the measure $\rho_{\cstY, \cstS}(d\lambda)$ is a subset of the interval $[0,1]$.
  Moreover, for any $\cstS\in (1/2,1)$ and $E\in \RR$, the measure $\rho_{\cstY, \cstS}(d\lambda)$ converges weakly to $\rho_{ E , \cstS}(d\lambda)$ as $\cstY \in \CC_+$ tends to $E\in \RR$.
\end{lem}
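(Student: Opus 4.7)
The plan is to mirror the strategy used for $\cstS=1$ in Lemma~\ref{lem:global-law}, replacing the scalar $(1,1)$-entry of the generalized resolvent by the normalized $k\times k$ block trace compatible with the tensorization $(N,M)=(kn,ln)$ from \eqref{eq:51}. First I would invoke Proposition~\ref{pr:glaw} in Appendix~\ref{sec:spectr-prop-gener}, specialised to the rational expression $\Tb_{\cstY,\cstS}$ and its linearization $\Hb_{\cstY,\cstS}$, to obtain that the generalized resolvent $(\Hb_{\cstY,\cstS}-z\Jb_{k}\otimes I_{n})^{-1}$ is stochastically dominated by $\Sol_{z,\cstY}\otimes I_{n}$ entrywise, uniformly on $\{\,z:\,\Im z\ge\cstP^{-1},\,|z|\le\cstP\,\}$ and for $\cstY$ in the appropriate range. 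Combining this with the block-matching identity \eqref{eq:231} from Lemma~\ref{lem:trivial-gen-res2} and the fact that the first $k$ diagonal blocks of $\Sol_{z,\cstY}$ carry an $I_{k}$-tensor factor inherited from the structure of $K_0(\cstY)$ and $\SuOp_{\cstS}$ in \eqref{eq:54}--\eqref{eq:70}, averaging the first $N=kn$ diagonal entries of the generalized resolvent gives the analog of \eqref{eq:183}:
\begin{equation*}
\PP\Big[\,\Big|\tfrac{1}{N}\Tr(\Tb_{\cstY,\cstS}-zI_{N})^{-1}-\tfrac{1}{k}\Tr(\Jb_{k}\,\Sol_{z,\cstY})\Big|\ge \tfrac{N^{\varepsilon}}{N}\Big]\le \tfrac{C_{\cstP,\varepsilon,D}}{N^{D}}.
\end{equation*}

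By the integral representation \eqref{eq:102}, the right-hand side is exactly the Stieltjes transform of the deterministic probability measure $\rho_{\cstY,\cstS}$ defined in \eqref{eq:232}, so the standard criterion (e.g.~\cite[Theorem~2.4.4]{AndeGuioZeitBook}) yields weak convergence in probability of $\mu_{\Tb_{\cstY,\cstS}}$ to $\rho_{\cstY,\cstS}$. The almost-sure upgrade proceeds verbatim as in Lemma~\ref{lem:global-law}: fix a countable set $\{z_i\}\subset\CC_{+}$ with an accumulation point, apply Borel--Cantelli to the events analogous to \eqref{eq:241}, and invoke the Vitali--Porter theorem together with the local uniform boundedness of the normalized trace guaranteed by Lemma~\ref{lem:exist-uniq-sol-2}. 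The support bound $\mathrm{supp}\,\rho_{\cstY,\cstS}\subset[0,1]$ follows from the fact that the regularized scattering matrix $S(\cstY)$ from \eqref{eq:73} is a contraction whenever its defining inverse exists, which holds on $\Theta_{\cstS,N}$ for $\cstS\in(1/2,1)$ and automatically when $\Im\cstY>0$ for $\cstS\in(0,1/2]$; hence $\|T\|\le 1$ and this constraint passes to any weak limit.

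For $\cstS\in(1/2,1)$, the continuity $\cstY\to E\in\RR$ is then a direct adaptation of \eqref{eq:214}--\eqref{eq:218}: the uniform-in-$\cstY$ a priori bound \eqref{eq:86} together with the resolvent identity applied to \eqref{eq:168} yields $\|\Sol_{z,\cstY}-\Sol_{z,E}\|\le C(1+(\Im z)^{-1})^{2}|\cstY-E|$, and the resulting pointwise convergence of the Stieltjes transform transfers to the weak convergence of the measures. The main technical obstacle is the subcritical range $\cstS\in(0,1/2]$: here Lemma~\ref{lem:exist-uniq-sol-2}(ii) only provides a $\cstY$-dependent bound that blows up as $\Im\cstY\downarrow 0$, because $WW^{*}$ develops arbitrarily small eigenvalues in the rectangular regime, so the $\cstY\to E$ limit cannot be obtained at the level of the resolvent and is deferred to the regularization-removal argument in Section~\ref{sec:proof-ii-iii-1}, which operates directly on the \emph{DEL}~\eqref{eq:52}.
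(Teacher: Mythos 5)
Your proposal is correct and follows essentially the same route as the paper, whose proof of Lemma~\ref{lem:global-law-gen} simply refers back to Lemma~\ref{lem:global-law} and Remark~\ref{rem:matrix-valued-ext} and states that the argument carries over once the dimensions and the block structure~\eqref{eq:51} are taken into account. You have filled in exactly the intended details: the tensorized global law from Proposition~\ref{pr:glaw}, the block-matching identity~\eqref{eq:231}, the reduction of the normalized trace to $\tfrac1k\Tr(\Jb_k\Sol_{z,\cstY})$ via the block structure~\eqref{eq:72}, the Borel--Cantelli/Vitali--Porter upgrade, the contraction property of $S(\cstY)$ for the support bound, and the $\cstY$-uniform a priori bound~\eqref{eq:86} for the $\cstS\in(1/2,1)$ continuity statement (correctly noting that the $\cstS\in(0,1/2]$ case is deferred).
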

\begin{proof}
 The proofs are similar to the case $\cstS=1$  (see Lemma~\ref{lem:global-law} and Remark~\ref{rem:matrix-valued-ext}) after taking into account the dimensions of the matrices $H$, $W_1$ and $W_2$ and the additional structure \eqref{eq:51}.
\end{proof}
\begin{defn}[Self-consistent density of states]
  We call the function
  \begin{equation}
  \label{eq:103}
    \rho_{\cstY,\cstS}(\lambda)
  :=
  \lim_{\eta\downarrow 0} \frac{1}{\pi k}  \Im \Tr (\Jb_{k} \Sol_{\lambda+\imu \eta,\cstY})
\end{equation}
that gives the absolutely continuous part of $\rho_{\cstY,\cstS}(d\lambda)$, the \emph{self-consistent density of states} of the model \eqref{eq:3}.
\end{defn}
Since $\mathrm{supp} (\rho_{\cstY,\cstS})\subset [0,1]$ by unitarity of $S(\cstY)$, part \emph{(iii)} of Theorem~\ref{thm:main} can be established by proving the boundedness of the upper-left $k\times k$ minor of $\Sol_{z,\cstY}$ for the spectral parameter $z$ bounded away from $0$ and $1$ (Section~\ref{sec:boundedness-sol_z-e1} below), and analyzing the asymptotic behavior of this upper-left submatrix in the vicinity of the special points $z=0$ and $z=1$ (Section~\ref{sec:singularities} below).
    The study of $\Sol_{z,\cstY}$ is simplified by the particular form of $K_{0}(\cstY)$ and $\SuOp_{\cstS}$, which implies that
    \begin{equation}
      \label{eq:72}
      \Sol_{z,\cstY}
      =
      \left(
        \begin{array}{ccc}
          \Sol_1\otimes I_{k}&&
          \\
                          & \Sol_{2} \otimes I_{k} &
          \\
          && \Sol_{3} \otimes I_{l}
        \end{array}
        \right)
      \end{equation}
      with $\Sol_1,\Sol_2\in \CC^{\,3\times 3}$ and $\Sol_{3}\in \CC^{\,2\times 2}$ satisfying
      \begin{equation}
        \label{eq:74}
        -\frac{1}{\Sol_1}
        =
        zJ_{3}-\cstU_{1} + \cstU_{5}^{t} \Sol_{3} \cstU_{5}
        ,\quad
        -\frac{1}{\Sol_{2}}
        =
        -\cstU_{2} + \cstU_{4}^{t} \Sol_{3} \cstU_{4}        
      \end{equation}
      and
      \begin{equation}
        \label{eq:76}
        -\frac{1}{\Sol_{3}}
        =
        -  \cstU_{3}(\cstY)  
        + \cstS \cstU_{5} \Sol_{1} \cstU_{5}^{t} + \cstS \cstU_{4} \Sol_{2} \cstU_{4}^{t} +  \cstV_{1} \Sol_{3}  \cstV_{1} 
        .
      \end{equation}
      Similarly as in the case $\cstS=1$, plugging \eqref{eq:74} into \eqref{eq:76} leads to the following self-consistent equation for $\Sol_{3}$
      \begin{equation}
        \label{eq:77}
        -\frac{1}{\Sol_{3}}
        =
        - \cstU_{3}(\cstY) 
        -  \frac{ \cstS }{-\frac{1}{2\cstT^2 z}(I_{2}+ \sigma_{3})-\frac{1}{\cstT}\sigma_{2}+\Sol_{3}} -  \frac{\cstS }{-2(I_{2}- \sigma_{3})-\frac{1}{\cstT}\sigma_{2}+\Sol_{3}} + \sigma_{1} \Sol_{3} \sigma_{1}
        ,
      \end{equation}
      which is the analogue of \eqref{eq:29} for $\cstS \neq 1$.

\subsection{Useful identities}
\label{sec:useful-identities}
Below we prove that identities similar to \eqref{eq:41}, \eqref{eq:31} and \eqref{eq:169} hold for $\cstS \in (0,1)$.
\begin{lem}
  \label{lem:useful-identities-2}
  For all  $(\cstS, \cstY) \in (((0,1/2] \cap \QQ)\times \CC_{+})\cup (((1/2,1)\cap \QQ)\times (\CC_{+}\cup \RR))$, $\cstT>0$  and $z\in \CC_{+}$
  \begin{itemize}
  \item[(i)]   $\Sol_{z,\cstY}(i,i) = \Sol_{z,-\overline{\cstY}}(i,i)$ for all $1\leq i \leq 6k+2l$;
  \item[(ii)]  $  \Sol_{z,\cstY}(6k+l+1,6k+l+1) = 4  \cstT^2  z \Sol_{z,\cstY}(6k+1,6k+1)$;
  \item[(iii)]  $   \Sol_{z,\cstY}(6k+l+1,6k+1) - \Sol_{z,\cstY}(6k+1,6k+l+1)  =  \frac{\imu}{ \cstT }\Sol_{z,\cstY}(6k+l+1,6k+l+1)\Big(1-\frac{ \cstT \Im \cstY  }{2 \cstS} \det T_1\Big)$,
    where, similarly as in \eqref{eq:56}, we denoted
    \begin{equation}
      \label{eq:149}
      T_1
      =
        \left(
    \begin{array}{cc}
      -\frac{1}{ \cstT^2 z} & \frac{\imu}{ \cstT }
      \\
      -\frac{\imu}{ \cstT } & 0
    \end{array}
  \right)
  +
  \Soll{3} 
    \end{equation}
  \end{itemize}
\end{lem}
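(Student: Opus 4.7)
The three identities are tensorized, possibly complex-$\cstY$ generalizations of Lemmas~\ref{lem:useful-identities-3}--\ref{lem:useful-identities-5}. I will work throughout with the free-probabilistic representation~\eqref{eq:168} of $\Sol_{z,\cstY}$, exploiting that the joint $*$-distribution of the free family $(H^{(\mathrm{sc})},W_1^{(\mathrm{sc})},W_2^{(\mathrm{sc})})$ is invariant under $H^{(\mathrm{sc})}\mapsto -H^{(\mathrm{sc})}$, $W_i^{(\mathrm{sc})}\mapsto -(W_i^{(\mathrm{sc})})^*$, and the swap $W_1^{(\mathrm{sc})}\leftrightarrow W_2^{(\mathrm{sc})}$.

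For (i), I would introduce the block-diagonal sign matrix
\[
Q^-:=\mathrm{diag}(-I_k,-I_k,I_k,-I_k,I_k,I_k,-I_l,I_l)\in\CC^{(6k+2l)\times(6k+2l)},
\]
the tensorized analogue of the sign pattern used in~\eqref{eq:30}, and verify that
\[
\bigl((Q^-\otimes\scone)(\Hb_\cstY^{(\mathrm{sc})}-z\Jb_k\otimes\scone)(Q^-\otimes\scone)\bigr)^{t}
\]
coincides, after using the distributional invariances above, with the analogous operator associated to the parameter $-\overline{\cstY}$. The transpose converts $\cstU_3(\cstY)$ into $\cstU_3(\overline{\cstY})$, the $Q^-$ conjugation flips its sign to produce $\cstU_3(-\overline{\cstY})$, while the blocks $\cstU_1,\cstU_2$ are preserved by the combined operation (as already computed blockwise in the $\cstS=1$ case). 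Applying $\id\otimes\tau_{\Sc}$ and using that the transpose preserves diagonal entries and that $\Jb_k^{t}=\Jb_k$ then yields (i).

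For (ii), I would apply the Schur-complement formula to the lower-right $2l\times 2l$ block of $\Hb_\cstY^{(\mathrm{sc})}-z\Jb_k\otimes\scone$, mirroring~\eqref{eq:43}--\eqref{eq:50} but with $H^{(\mathrm{sc})}$ in place of $s$ and the rectangular $l\times k$ matrices $W_i^{(\mathrm{sc})}$ in place of $c_i$. The extraction of $\Sol_3$ from the resulting block uses the partial trace $(\id_2\otimes\tfrac{1}{l}\Tr_l\otimes\tau_{\Sc})$. The swap $W_1^{(\mathrm{sc})}\leftrightarrow W_2^{(\mathrm{sc})}$ together with the replacement $a\mapsto -a^{*}$, where $a=H^{(\mathrm{sc})}+\imu\cstT\bigl(W_1^{(\mathrm{sc})}(W_1^{(\mathrm{sc})})^{*}+W_2^{(\mathrm{sc})}(W_2^{(\mathrm{sc})})^{*}\bigr)$, combined with part (i) to exchange $\cstY$ with $-\overline{\cstY}$ inside the trace, then gives the desired identity $\Sol_3(2,2)=4\cstT^{2}z\,\Sol_3(1,1)$, which after the tensor identification $\Sol_{z,\cstY}(6k+j,6k+j)=\Sol_3(\lceil j/l\rceil,\lceil j/l\rceil)$ is (ii).

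For (iii), I would expand the reduced equation~\eqref{eq:77} componentwise with $\Sol_3=(m_{ij})_{i,j=1}^{2}$. Using (ii), the identity $\det T_1=\det T_2$ from~\eqref{eq:59} continues to hold, and the four scalar equations mirror~\eqref{eq:61}--\eqref{eq:64} with two changes: an extra factor $\cstS$ on the $1/T_1$ and $1/T_2$ contributions, and $E$ replaced by $\cstY$ and $\overline{\cstY}$ respectively in the off-diagonal equations. Subtracting the $(2,1)$ equation from the $(1,2)$ equation produces a relation for $m_{21}-m_{12}$ whose right-hand side involves $\cstY-\overline{\cstY}=2\imu\Im\cstY$ and $\det T_1$; eliminating the common prefactor via the diagonal equation for $m_{22}$ gives the claimed identity, with the extra factor $1-\tfrac{\cstT\Im\cstY}{2\cstS}\det T_1$ arising precisely from the $\Im\cstY$ contribution (and vanishing for real $\cstY$, consistently recovering~\eqref{eq:169}). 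The main technical obstacle I anticipate is in step (i), where one must carefully track how the ordinary transpose on $\CC^{(6k+2l)\times(6k+2l)}$ interacts with the two distinct block sizes $k,l$ and with $Q^-$ so that every constant and random block transforms consistently into its claimed image; once (i) is in place, (ii) follows the Lemma~\ref{lem:useful-identities-4} template closely and (iii) reduces to direct algebraic manipulation of~\eqref{eq:77} together with (ii).
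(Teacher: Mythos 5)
Your proposal is correct and follows essentially the same route as the paper: for (i) conjugate by the tensorized sign matrix $Q^-$ and transpose; for (ii) Schur-complement the lower-right $2l\times 2l$ block and exploit the invariance of the free family under swapping $W_1^{(\mathrm{sc})}\leftrightarrow W_2^{(\mathrm{sc})}$ and flipping the sign of $H^{(\mathrm{sc})}$; for (iii) expand~\eqref{eq:77} componentwise using (ii) and subtract the two off-diagonal equations. The only small imprecision is the statement that the family is invariant under $W_i^{(\mathrm{sc})}\mapsto -(W_i^{(\mathrm{sc})})^*$, which does not preserve the $l\times k$ shape when $k\ne l$; the actual mechanism used (and needed) is the $Q^-$-conjugation plus matrix transpose, which handles the rectangular blocks consistently, so this does not affect the argument.
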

 Denote the entries of $\Sol_{3}$ in \eqref{eq:72} by $m_{ij}$, $1\leq i,j\leq 2$.
Then the parts $(ii)$ and $(iii)$ of the above lemma can be rewritten as
\begin{align}
  \label{eq:44}
  m_{22} &= 4 \cstT^2 z m_{11}
  ,\\
  \label{eq:45}
  m_{21} - m_{12}
 & =
  \frac{\imu}{ \cstT }m_{22} \Big(1-\frac{ \cstT \Im \cstY }{2 \cstS } \det T_1\Big)
  .
\end{align}

\begin{proof}
In order to establish Lemma~\ref{lem:useful-identities-2}, we can follow the proofs of Lemmas~\ref{lem:useful-identities-3}-\ref{lem:useful-identities-5} and  apply them to the matrix
\begin{equation}
  \label{eq:48}
  (K_0(\cstY) - z \Jb_{k}) \otimes \scone + K_{1}\otimes H^{\mathrm{(sc)}}  + L_1\otimes W_{1}^{\mathrm{(sc)}} +L_1^{*}\otimes \big(W_{1}^{\mathrm{(sc)}}\big)^* + L_2 \otimes W_{2}^{\mathrm{(sc)}} + L_2^{*} \otimes \big(W_{2}^{\mathrm{(sc)}}\big)^{*}
  .
\end{equation}
Note that the above matrix \eqref{eq:48} is obtained from \eqref{eq:32} by substituting $c_{1}$, $c_{2}$ and $\semic$ with matrices $W_{1}^{(\mathrm{sc})}$, $W_{2}^{(\mathrm{sc})}$ and $H^{(\mathrm{sc})}$ correspondingly, and taking into account the dimensions of these matrices.
For example, if we replace each diagonal entry of the matrix $Q^{-}$ from the proof of Lemma~\ref{lem:useful-identities-3} by the tensor product of this entry and a corresponding identity matrix ($I_{k}$ or $I_{l}$), we obtain that the diagonal blocks of $\Sol_{z,\cstY}$ and $\Sol_{z, -\overline{\cstY}}$ coincide.

In order to prove \eqref{eq:44}, similarly as in the proof of Lemma~\ref{lem:useful-identities-4}, use the Schur complement formula with respect to the invertible upper-left $6k\times 6k$ submatrix of \eqref{eq:48} to write the $2l\times 2l$ lower-right submatrix of its inverse as
\begin{equation}
  \label{eq:55}
  \left(
    \begin{array}{cc}
      4  \cstT^2 W_{1}^{(\mathrm{sc})} (W_{1}^{(\mathrm{sc})})^* & \cstY-H^{(\mathrm{sc})}+ \imu  \cstT \Big(W_{1}^{(\mathrm{sc})} (W_{1}^{(\mathrm{sc})})^* + W_{2}^{(\mathrm{sc})} (W_{2}^{(\mathrm{sc})})^*\Big)
                              \\
      \overline{\cstY}-H^{(\mathrm{sc})}- \imu  \cstT \Big(W_{1}^{(\mathrm{sc})} (W_{1}^{(\mathrm{sc})})^* + W_{2}^{(\mathrm{sc})} (W_{2}^{(\mathrm{sc})})^*\Big) & \frac{1}{z} W_{2}^{(\mathrm{sc})} (W_{2}^{(\mathrm{sc})})^*
    \end{array}
  \right)^{-1}
  .
\end{equation}
Then we can switch the blocks of \eqref{eq:55} as in \eqref{eq:46} and apply the Schur complement formula with respect to
\begin{equation}
  \label{eq:60}
  \cstY - H^{(\mathrm{sc})} + \imu  \cstT \Big(W_{1}^{(\mathrm{sc})} \big(W_{1}^{(\mathrm{sc})}\big)^* + W_{2}^{(\mathrm{sc})} \big(W_{2}^{(\mathrm{sc})}\big)^*\Big)
  .
\end{equation}
 The expression in \eqref{eq:60} is invertible: for $(\cstS, \cstY) \in (((1/2,1)\cap \QQ)\times (\CC_{+}\cup \RR))$ the spectrum of  $W_{1}^{(\mathrm{sc})} (W_{1}^{(\mathrm{sc})})^* + W_{2}^{(\mathrm{sc})} (W_{2}^{(\mathrm{sc})})^*$ follows the free Poisson distribution with rate  $2\cstS \in (1,2)$ and is therefore bounded away from zero, and for  $(\cstS, \cstY) \in (((0,1/2]\cap \QQ)\times \CC_{+})$ the invertibility is guaranteed by $\Im \cstY$ being strictly positive.
Due to the properties of freely independent circular and semicircular elements, switching the labels of the pair $(W_{1}^{(\mathrm{sc})},W_{2}^{(\mathrm{sc})})$ or changing the sign of $H^{(\mathrm{sc})}$ does not change the value of an expression involving these matrices after applying $\id_{6k+2l}\otimes \,\tau_{\Sc}$.
Therefore, by proceeding as in \eqref{eq:49}-\eqref{eq:53} with $\Sol_{z,\cstY}(7,7)$ and $\Sol_{z, \cstY}(8,8)$ replaced by the corresponding $l\times l$ blocks of $\Sol_{z, \cstY}$, and using the diagonal structure of these blocks \eqref{eq:72} and the part \textit{(i)} of this lemma, we obtain \eqref{eq:44}.

Now it is straightforward to check by plugging \eqref{eq:44} into \eqref{eq:77} and following the proof of Lemma~\ref{lem:useful-identities-5}, that \eqref{eq:45} holds.
This proves Lemma~\ref{lem:useful-identities-2}.
  
\end{proof}

\subsection{Boundedness of $\Sol_{z,\cstY}$ away from $z=0$ and $z=1$  for $\cstS\in (0,1)$}

 The goal of this section is to establish a uniform bound on $\|\Sol_{z,\cstY}\|$ for parameter $\cstY$ close to the real line and parameter $z$ bounded away from $0$ and $1$.
This will be used later in the proof of Theorem~\ref{thm:main}, in particular to show the absolute continuity of the measure $\rho_{E,\cstS}(d \lambda)$.
In the case $\cstS \in (1/2,1)$ we can set $\cstY = E \in \RR$ and  work directly with $\Sol_{z,E}$.
For $\cstS \in (0,1/2]$ we prove the uniform bound for $\|\Sol_{z,\cstY}\|$ with $\cstY = E + \imu \cstS \, \cstX$ and small $\cstX>0$, which will allow taking the limit $\cstX \to 0$ in Section~\ref{sec:singularities}.

\label{sec:boundedness-sol_z-e1}
\begin{lem}[Boundedness of $\Sol_{z,\cstY}$]\label{lem:boundedness-2} \leavevmode
  \begin{itemize}
  \item[(i)] Case  $\cstS \in (1/2,1]$:  For any $\cstT >0$ and small $\epsA>0$ there exists $\cstCC>0$ such that 
  \begin{equation}
    \label{eq:161}
    \sup \Big\{\,\|\Sol_{z,E}\|  \, : \,  \cstS \in (1/2,1)\cap\QQ,\, | z |\geq \epsA,  \, |1-  z | \geq \epsA, \, \Im z > 0 ,\, |E|\leq \frac{1}{\epsA}\, \Big\} 
    \leq
    \cstCC
    ;
  \end{equation}
  \item[(ii)] Case $\cstS \in (0, 1)$:  Let $\cstY = E + \imu \cstS \cstX$, $\cstX >0$.
For any $\cstT>0$, small $\epsA>0$, $\cstS_0\in (0,1/2)$ and $\cstX_0>0$ small enough there exists $\cstCCC>0$ such that 
  \begin{align}
    \label{eq:162}
    \|\Sol_{z,\cstY}\| 
    \leq
    \cstCCC
  \end{align}
  uniformly on the set
  \begin{equation}
    \label{eq:234}
    \Big\{ \cstS \in [\cstS_0,1-\cstS_0]\cap\QQ, |z|\geq \epsA,   |1-z| \geq \epsA,  \Im z > 0, |E|\leq \frac{1}{\epsA}\,, \cstX \in (0, \cstX_0]\,   \Big\}
    .
  \end{equation}
  \end{itemize}
\end{lem}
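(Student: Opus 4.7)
The strategy closely follows that of Lemma~\ref{lem:boundedness} in the $\cstS = 1$ case, and it suffices to control the four entries $m_{ij}$ of the $2 \times 2$ block $\Sol_3$ together with the quantity $\det T_1$ defined in \eqref{eq:149}. Indeed, inspecting \eqref{eq:74} shows that the entries of $\Sol_1$ and $\Sol_2$ are rational functions of the entries of $\Sol_3$ whose denominators coincide (up to harmless factors) with $\det T_1$, so uniform boundedness of the $m_{ij}$ together with $|\det T_1| \geq c > 0$ translates directly into \eqref{eq:161} and \eqref{eq:162} via a Schur-type identity analogous to \eqref{eq:79}.

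In case (i), since $\cstY = E \in \RR$ the identity \eqref{eq:45} reduces to the $\cstS = 1$ relation $m_{21} - m_{12} = \frac{\imu}{\cstT} m_{22}$ used in Section~\ref{sec:proof-boundedness}, while the self-consistent equation \eqref{eq:77} differs from \eqref{eq:29} only through the prefactor $\cstS$ in front of the two rational terms. I would therefore re-derive the $\cstS$-deformed analogues of the scalar equations \eqref{eq:61}--\eqref{eq:62} and of the coupled system \eqref{eq:82}--\eqref{eq:85}; the quadratic equation \eqref{eq:85} for $\det \Sol_3$ keeps its overall structure but acquires $\cstS$-dependent coefficients. Running the contradiction argument verbatim, one checks that both branches of the analogue of \eqref{eq:87} lead to a contradiction exactly as in \eqref{eq:88}--\eqref{eq:95}, with the key computation $\det \Sol_3^{(n)} = 4 \cstT^2 \cstS \, z_n(1-z_n)(m_{11}^{(n)})^2 + \OO{|m_{11}^{(n)}|}$ replacing \eqref{eq:91} and \eqref{eq:95}; the prefactor does not vanish since $\cstS \in [\cstS_0, 1]$ and $|z_n(1-z_n)|$ is bounded below by $\epsA \cdot \min(\epsA, 1-\epsA)$. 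Similarly, assuming $|\det T_1^{(n)}| \to 0$ leads to a contradiction through the chain \eqref{eq:96}--\eqref{eq:101} with the $\cstS$ factors in place. Uniformity in $\cstS$ follows because all coefficients depend continuously on $\cstS$ on the compact interval $[\cstS_0, 1]$.

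In case (ii) we set $\cstY = E + \imu \cstS \cstX$ with $\cstX > 0$ small, and \eqref{eq:45} carries the extra term $- \frac{\imu \cstX}{2} m_{22} \det T_1$. The plan is to absorb this perturbation into the case-(i) argument. If $|\det T_1|$ is bounded from above (which will be verified a posteriori) and $\cstX$ is small, the extra term is a perturbation of order $\cstX$ compared to the leading $\frac{\imu}{\cstT} m_{22}$; consequently the $\cstS$-deformed versions of \eqref{eq:82}--\eqref{eq:85} hold with $\OO{\cstX}$ error terms, and both limiting contradictions from case (i) survive provided $\cstX_0 > 0$ is chosen sufficiently small (depending only on $\cstS_0$, $\cstT$ and $\epsA$) so that the perturbation cannot change the signs of the leading coefficients in the analogues of \eqref{eq:91}, \eqref{eq:95} and \eqref{eq:101}. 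The a priori bound \eqref{eq:244} (with a $\cstY$-dependent constant) is only used qualitatively to guarantee that the entries $m_{ij}$ and $\det T_1$ are finite for each fixed $(z, \cstY)$, so that the contradiction argument applied to a hypothetical divergent sequence is well-posed.

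The main obstacle is verifying that the $\cstS$-deformed quadratic \eqref{eq:85} still has the two-branch structure responsible for the dichotomy \eqref{eq:88}/\eqref{eq:92}, and that the leading coefficient in the resulting expression for $\det \Sol_3^{(n)}$ does not degenerate anywhere on $[\cstS_0, 1 - \cstS_0]$. This is a routine but somewhat involved algebraic computation which I expect to reduce, after simplification, to tracking how the factor $\cstS$ enters \eqref{eq:83}--\eqref{eq:84}; once these $\cstS$-deformed identities are in place, the rest of the argument proceeds mechanically, with $\cstX$ appearing only through uniformly small error terms, and the constants $\cstCC$, $\cstCCC$ in \eqref{eq:161}--\eqref{eq:162} depend on $(\cstT, \epsA, \cstS_0, \cstX_0)$ but not on the particular $\cstS \in [\cstS_0, 1 - \cstS_0] \cap \QQ$ or $(z, E, \cstX)$ in the prescribed range.
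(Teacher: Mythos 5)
Your plan follows the same route as the paper: re-derive the $\cstS$- and $\cstX$-deformed analogues of \eqref{eq:82}--\eqref{eq:85} from \eqref{eq:44}--\eqref{eq:45} and \eqref{eq:77}, and rerun the contradiction argument of Lemma~\ref{lem:boundedness} along a hypothetical sequence $(z_n,E_n,\cstS_n,\cstX_n)$. There are however two inaccuracies in the sketch that should be flagged.

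First, the claimed leading coefficient $4\cstT^2\cstS\, z_n(1-z_n)(m_{11}^{(n)})^2$ for $\det\Sol_3^{(n)}$ has a spurious factor of $\cstS$. To see this, note that the $\cstS$-deformed version of \eqref{eq:82} is
$\bigl(\tfrac{\cstS}{\cstT^2 z m_{11}\det T_1}+2\bigr)m_{12}=E-\tfrac{2\cstS\imu}{\cstT\det T_1}-4\cstT\imu z m_{11}$.
In the limit $|m_{11}^{(n)}|\to\infty$ the coefficient on the left tends to $2$ (regardless of which branch $\det T_1$ sits on), so $m_{12}^{(n)}=-2\cstT\imu z_n m_{11}^{(n)}+\OO{1}$ and (via $m_{21}-m_{12}=\tfrac{\imu}{\cstT}m_{22}$ at $\cstX=0$) $m_{21}^{(n)}=2\cstT\imu z_n m_{11}^{(n)}+\OO{1}$ with no $\cstS$; hence $\det\Sol_3^{(n)}=m_{11}m_{22}-m_{12}m_{21}=4\cstT^2 z_n(1-z_n)(m_{11}^{(n)})^2+\OO{m_{11}^{(n)}}$, as the paper states. (The $\cstS$-dependence you anticipated does appear, but in $\det T_1^{(n)}\to 2\cstS_n$ rather than in this leading coefficient.) The error is benign since you only use that the coefficient is bounded away from zero, which is still true.

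Second, describing the extra term in \eqref{eq:45} as ``a perturbation of order $\cstX$'' is too optimistic: the term is $-\tfrac{\imu\cstX}{2}m_{22}\det T_1$, and $\det T_1$ is not uniformly bounded along the hypothetical divergent sequence — in the ``$+$'' branch it scales like $4(z-1)m_{11}$. The paper therefore does not absorb the $\cstX$-dependence into error terms; it carries the perturbation through to the explicit leading coefficient, obtaining $4\cstT^2 z_n\bigl(1-z_n(1-\cstX_n\cstS_n\cstT)^2\bigr)(m_{11}^{(n)})^2$ in \eqref{eq:163}, and then requires $\cstX_0$ small enough that this factor stays bounded away from zero whenever $|z_n|,|1-z_n|\ge\epsA$. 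Your conclusion survives because the contradiction on the $+$ branch is produced by the divergence of $\det\Sol_3^{(n)}$ from the product formula versus its boundedness from the quadratic, which does not need $\cstX\det T_1$ to be small — but the phrasing in your sketch would not survive a literal implementation. With these two corrections your proposal coincides with the paper's proof in structure and substance.
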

\begin{proof}
  Consider first \eqref{eq:161} for which $\cstS \in (1/2,1)$.
  By setting $\cstX = 0$ in Lemma~\ref{lem:useful-identities-2} we can proceed by establishing \eqref{eq:161} in the same manner as in the proof of Lemma~\ref{lem:boundedness}.
  To guarantee a uniform bound in parameters $z$, $E$ and $\cstS$, instead of a sequence $(z_n, E_{n})_{n=1}^{\infty}$ as in the proof of Lemma~\ref{lem:boundedness},  we assume the existence of a sequence $((z_n, E_n, \cstS_n))_{n=1}^{\infty}$, $z_n \in \CC_+$, $|E|\leq 1/\epsA$, $\cstS_n \in (1/2,1)$, on which $|m_{11}^{(n)}|\rightarrow \infty$ or $|\det T_{1}^{(n)}|\rightarrow 0$.
  Note that for $\cstS_n\in (1/2,1)$ the leading terms in the analogues of the mutually contradicting pairs of statements \eqref{eq:88}/\eqref{eq:91}, \eqref{eq:92}/\eqref{eq:95} and \eqref{eq:100}/\eqref{eq:101} do not depend on $\cstS_n$ (with the exception of \eqref{eq:92} where the constant $-1$ is replaced by $\cstS_n - 2$).

  For \eqref{eq:162}, i.e. $\cstS \in [\cstS_0,1- \cstS_0]$, the above argument has to be slightly adjusted to ensure a uniform bound for small $\cstX>0$.
  Instead of a sequence $((z_n, E_n, \cstS_n))_{n=1}^{\infty}$ as in the first part of the proof, we now assume the existence of a sequence $((z_n, E_n ,\cstS_n, \cstX_n))_{n=1}^{\infty}$, $z_n \in \CC_+$, $|E|\leq 1/\epsA$, $\cstS_n \in [\cstS_0,1-\cstS_0]$, $\cstX_n>0$, on which $|m_{11}^{(n)}|\rightarrow \infty$ or $|\det T_{1}^{(n)}|\rightarrow 0$.

  The analogues of \eqref{eq:88}/\eqref{eq:91} in this case are
  \begin{align}
    \label{eq:163}
      \det \Soll{3}^{(n)}
  &=
  -4(z_{n}-1)m_{11}^{(n)} + \OO{1}
    ,\\
       \det \Soll{3}^{(n)}
    &=
      4  \cstT^2  z_n (1-z_n(1-\cstX_n\cstS_n  \cstT )^2)(m_{11}^{(n)})^2 + \OO{m_{11}^{(n)}}
      ,
  \end{align}
  which contradict each other if $|m_{11}^{(n)}|\rightarrow \infty$, $\cstX_n\leq \cstX_0$ is small enough and $z_n$ is bounded away from $0$ and $1$.

  Instead of the pair \eqref{eq:92}/\eqref{eq:95}, we take the analogues of \eqref{eq:92} and \eqref{eq:93} for $\cstS_n \in [\phi_0, 1-\phi_0]$ and $\cstX_n>0$
  \begin{align}
    \label{eq:164}
      \det \Soll{3}^{(n)}
  &=
  \cstS_n -2   +\OO{\frac{1}{|m_{11}^{(n)}|}} 
  ,\\
  \det T_{1}^{(n)}
  &=
    4(z_{n}-1)m_{11}^{(n)} + \OO{1}
    ,
  \end{align}
  and observe that since $1 + 1/(\cstS_n-2)\geq \cstS_0/(1+\cstS_0)>0$ for $\cstS_n \in [\cstS_0,1-\cstS_0]$, the above equations contradict to
  \begin{equation}
    \label{eq:165}
       \Big(\frac{1}{\det \Soll{3}} -\frac{2 \cstS_n}{\det T_{1}} + 1 \Big) m_{11}^{(n)} = -\frac{\cstS_n}{ \cstT^2  z\det T_1},
  \end{equation}
  the analogue of \eqref{eq:62}, in the regime $|m_{11}^{(n)}|\rightarrow \infty$.

  For the last pair \eqref{eq:100}/\eqref{eq:101} note that in the analogue of \eqref{eq:96}
  \begin{equation}
    \label{eq:166}
      \Big(\frac{\cstS_n}{  \cstT^2  z_{n} m_{11}^{(n)}   }+\OO{\det T_{1}^{(n)}}\Big)m_{12}^{(n)}
  =
  -\frac{2 \cstS_n \imu}{  \cstT } + \OO{\det{T_{1}}^{(n)}}
\end{equation}
with $|\det{T_{1}}^{(n)}|\rightarrow 0$, the parameter $\cstS_n$ disappears after dividing \eqref{eq:166} by $\cstS_n$, therefore we can proceed exactly as in the proof of Lemma~\ref{lem:boundedness}.
We conclude, similarly as in Lemma~\ref{lem:boundedness}, that $\|\Sol_{z,\cstY}\|$ is uniformly bounded provided that $\min\{|z|, |1-z|\} \geq \epsA$, $|E|\leq 1/\epsA$, $\cstS_0 \leq \cstS\leq 1-\cstS_0$ and $\cstX \leq \cstX_0$ for some $\cstS_0 >0$ and  $\epsA,\cstX_0>0$ small enough.
\end{proof}

\subsection{Singularities of $\Sol_{z,E}(1,1)$}
\label{sec:singularities}

\newcommand{\ii}{\mathrm{i}} 
\newcommand{\1} {\mspace{1 mu}}
\newcommand{\2} {\mspace{2 mu}}
\newcommand{\msp}[1] {\mspace{#1 mu}}
 For $\cstS\in (1/2,1)$, 
 the solution matrix $\Sol_{z,E}$ with $E\in \RR$ is given directly  via \eqref{eq:168}.
 For $\cstS \in (0,1/2]$ this formula cannot be directly applied when $\cstY  = E \in \RR $ is real; 
 we need  an additional regularization argument.   Nevertheless, 
  in the next lemma we   show the existence of the solution to the Dyson equation \eqref{eq:74}-\eqref{eq:77} for $\cstS \in (0,1/2]$ and $\cstY  = E \in \RR $ and we 
establish the asymptotic behavior of $\Sol_{z, E}(1,1)$ near $z=0$ and $z=1$ for $\cstS\in (0,1)$.
We start by constructing an expansion of the solution $\Sol_{z, \cstY}$ in the vicinity of $z=0$ for $\cstS\in (0,1)$ and $\cstY = E + \imu \cstS \cstX$ with $\cstX>0$ sufficiently small.
We then use this expansion to extend $\Sol_{z,\cstY}$ to $\cstY = E \in \RR$ for $\cstS \in (0,1/2]$ by taking $\cstX \downarrow 0$ and to study the asymptotic behavior of $\Sol_{z,E}$ at special points $z=0$ and $z=1$. 

Recall that  the solution to the Dyson equation  has the block structure \eqref{eq:72}, where $\Sol_{1},\Sol_{2}$ are determined by $\Sol_{3}$ through \eqref{eq:74}  and $\Sol_{3}$ satisfies \eqref{eq:77}.

\begin{lem}[Existence of $\Sol_{z,E}$ for $\cstS\in (0,1/2{]}$ and singularities of $\Sol_{z, E}(1,1)$ for $\cstS \in (0,1)$] \label{lem:sinularities-2}\leavevmode
\begin{itemize}
\item[(a)]  For $\cstT >0$, let $\Sol_{z,\cstY, \cstS}$ denote the function $\Sol_{z,\cstY}$  defined in \eqref{eq:168} evaluated at a point $(z,\cstY,\cstS)$ with
  \begin{equation}
    \label{eq:167}
    z\in \CC_{+}
    \quad \mbox{and} \quad
   (\cstY,\cstS)\in \big(\CC_{+}\times \big((0,1/2]\cap \QQ \big)\big)\cup \big((\CC_{+}\cup \RR) \times \big((1/2,1)\cap \QQ\big)\big). 
  \end{equation}
  Then $\Sol_{z,\cstY,\cstS}$ can be continuously extended to the set
  \begin{equation}
    \label{eq:243}
    z\in \CC_{+}
    \quad \mbox{and} \quad
    (\cstY,\cstS)\in \RR \times (0,1) 
  \end{equation}
  in the following sense: for any $(\cstY,\cstS) \in \RR \times (0,1)$ and any sequence $\{(\cstY_{n},\cstS_{n}) \}_{n\geq 1} \subset \big(\CC_{+}\times \big((0,1/2]\cap \QQ \big)\big)\cup \big((\CC_{+}\cup \RR) \times \big((1/2,1)\cap \QQ\big)\big)$ with $(\cstY_{n},\cstS_{n})\rightarrow (\cstY,\cstS)$ as $n\rightarrow \infty$, there exists an analytic 
   matrix-valued function $\Sol_{z,\cstY,\cstS}:\CC_+ \rightarrow \CC^{(6k + 2l)\times(6k + 2l)}$, $ z\mapsto \Sol_{z,\cstY,\cstS}$, such that
  \begin{equation}
    \label{eq:80}
    \Sol_{z,\cstY_{n},\cstS_{n}}
    \rightarrow
    \Sol_{z,\cstY,\cstS}
  \end{equation}
uniformly on compact $z$-subsets of $\CC_+$ as $n\rightarrow \infty$.
\end{itemize}
For $\cstS \in (0,1)$ and $\cstY = E \in \RR$, denote by $\Sol_{z,E}:= \Sol_{z,E,\cstS}$ the function defined in \eqref{eq:80} omitting explicitly the dependence on $\cstS$. 
\begin{itemize}
  \item[(b)] For all $\cstS \in (0,1)$, $\cstT >0$ and $E\in \RR$
  \begin{equation}
\label{eq:75}
    \Sol_{z,E}(1,1)
    =
    \imu \frac{4+  \cstT^2 \nu_0^2+  \cstT^2 \2\xi_0^2}{4  \cstT \2\xi_0\2}\,z^{-1/2} + O(1)\,
    \mbox{ as }
      z\rightarrow 0
    ,
  \end{equation}
  with constants $\xi_{0} := \xi_{0} ( \cstS, \cstT) $ and $\nu_{0} := \nu_{0} ( \cstS, \cstT) $ given in \eqref{eq:42} in the proof below.
\item[(c)] For all $\cstS \in (0,1)$, $\cstT>0$ and  $|E| \leq E_{0}:=E_{0}(\cstS,\cstT)$
    \begin{equation}
\label{eq:140}
    \Sol_{z,E}(1,1)
    =
    \frac{4\2\xi_0}{(\xi_0^2 +  \cstT^2 E^2+4  )}(z-1)^{-1/2}  + \OO{1}
    \mbox{ as }
    z\rightarrow 1
    ,
  \end{equation}
  with constants $E_{0}$ and $ \xi_{0}:= \xi_{0} ( \cstS, \cstT) <0$ given in \eqref{eq:146} and \eqref{eq:141} correspondingly.
  \end{itemize}
  The branch of the square root is chosen to be continuous on $\CC\setminus (\ii (-\infty,0])$ such that $\sqrt{1}=1$.
\end{lem}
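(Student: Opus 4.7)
The strategy will follow the template of Lemma~\ref{lem:sing-sol}, with three adaptations: (i) the expansion around $z=0$ will use the parameter $t = z^{1/2}$ rather than $z^{1/3}$, since the singularity in the $\cstS \ne 1$ case is weaker; (ii) the Dyson equation~\eqref{eq:77} depends explicitly on $\cstS$, so the polynomial systems governing the leading-order coefficients become $\cstS$-dependent; and (iii) when $\cstS \in (0, 1/2]$ and $\cstY \in \RR$ the free-probabilistic solution~\eqref{eq:168} is not directly defined, so part (a) requires a regularization argument. The plan is first to construct the local expansions in parts (b) and (c) in the regime where $\Sol_3$ is well defined via~\eqref{eq:168}, and then to extend to the regime of part (a) using these expansions together with the a priori bounds of Lemma~\ref{lem:boundedness-2}.

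For part (b), I would set $t = z^{1/2}$ (branch cut on $\imu(-\infty,0]$, $\sqrt{1}=1$) and, guided by the identities $m_{22} = 4\cstT^2 z\, m_{11}$ and $m_{21} - m_{12} = \frac{\imu}{\cstT} m_{22}$ from Lemma~\ref{lem:useful-identities-2} at $\Im \cstY = 0$, make the ansatz
\[
\widetilde{\Sol}_3(t)
=
\begin{pmatrix}
\xi(t)/t & \alpha(t) - 2\imu\cstT\, t\, \xi(t) \\
\alpha(t) + 2\imu\cstT\, t\, \xi(t) & 4\cstT^2\, t\, \xi(t)
\end{pmatrix}
\]
with unknown analytic functions $\xi, \alpha$. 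Substituting into~\eqref{eq:77} and clearing denominators produces a polynomial system whose leading order at $t=0$ determines $\xi_0$ via the sextic equation for $\xi_0$ appearing in Theorem~\ref{thm:main}(iii)(a), together with $\alpha_0$ (equivalently, the constant $\nu_0$ of the present lemma) as an algebraic function of $\xi_0, E, \cstS$. Selecting the unique positive root $\xi_0$ (compatible with positivity of the density) and verifying non-degeneracy of the Jacobian allows invoking the implicit function theorem to obtain analytic $\xi(t), \alpha(t)$; substituting back into the analogue of~\eqref{eq:79} for $\Sol_{z,E}(1,1)$ yields~\eqref{eq:75}. Part (c) proceeds analogously with $t = \sqrt{z-1}$ and an ansatz modelled on~\eqref{eq:108}; the negative root of the resulting leading system defines $\xi_0$, the threshold $E_0$ is the largest $|E|$ for which this root is real, and the Jacobian computation mirrors~\eqref{eq:111}--\eqref{eq:171}.

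For part (a) and to identify the analytic expansion $\widetilde{\Sol}_3$ with the actual $\Sol_3$, I would apply Montel's theorem together with Lemma~\ref{lem:boundedness-2} --- crucially uniform in $\cstX \downarrow 0$ and in $\cstS$ on compact subsets of $(0,1)$ --- to extract convergent subsequences of $\Sol_{z, \cstY_n, \cstS_n}$ on compact subsets of $\CC_+ \setminus \{0, 1\}$; near $z \in \{0, 1\}$ the local expansions from (b)--(c) would control the limit. Any subsequential limit satisfies the DEL at $(E, \cstS)$, and uniqueness within the Herglotz class follows by repeating the $\delta$-regularization argument of~\eqref{eq:172}--\eqref{eq:177}: perturb by $\delta\, \cstV_1 \widetilde{\Sol}_3 \cstV_1$ to make the fixed-point map a Carath\'eodory contraction, conclude $\Sol_3^\delta = \widetilde{\Sol}_3$ for $\delta > 0$, and take $\delta \downarrow 0$ using $\| \Sol_3^\delta - \Sol_3 \| \lesssim \delta$. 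The main obstacle will be verifying non-degeneracy of the Jacobian of the leading polynomial systems for the full range $\cstS \in (0,1)$: unlike the explicit $\cstS = 1$ computation~\eqref{eq:107}, \eqref{eq:111}--\eqref{eq:171}, the $\cstS$-dependence produces more intricate algebraic identities, and one must confirm both that the selected branch yields $\Im \widetilde{\Sol}_3 > 0$ on $\CC_+$ (analogously to~\eqref{eq:37}--\eqref{eq:38}) and that the critical value $E_0$ in part (c) arises as the precise vanishing point of the discriminant.
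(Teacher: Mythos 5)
Your outline captures the right high-level template --- a square-root rather than cube-root scaling, $\cstS$-dependent leading polynomial systems, implicit function theorem, and Vitali/Montel compactness --- but it misses the key device the paper uses, and the omission creates a genuine circularity for $\cstS \in (0,1/2]$.

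Your ansatz for the expansion around $z=0$ enforces the \emph{real-energy} identities $m_{22} = 4\cstT^2 z\, m_{11}$ and $m_{21} - m_{12} = \frac{\imu}{\cstT} m_{22}$. But for $\cstS \in (0,1/2]$ the free-probability formula~\eqref{eq:168} is only valid when $\cstY \in \CC_+$, i.e.\ $\cstX := \Im\cstY/\cstS > 0$, and in that regime the second identity has a correction factor $\big(1 - \tfrac{\cstT\Im\cstY}{2\cstS}\det T_1\big)$ (Lemma~\ref{lem:useful-identities-2}(iii)). So the $\cstX=0$ ansatz you propose does \emph{not} describe the pre-limit objects $\Sol_{z,\cstY_n,\cstS_n}$ that you later feed into Montel's theorem. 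The paper handles this by inserting explicit $\cstX$-dependent corrections into the ansatz~\eqref{ansatz with tau at z=0} (the terms $\frac{\cstX}{\cstT}f$ and $\cstX\frac{\ii\xi^2}{4}$, with $f$ chosen via~\eqref{eq:def of f eps} exactly to restore the symmetry identity). This gives a uniform-in-$\cstX$ description of $\Sol_3$ near $z=0$, and it is this uniform control that yields pointwise convergence of $\Sol_{z,\cstY_n,\cstS_n}$ on a small disk in $\CC_+$ near the origin --- the hypothesis that makes Vitali--Porter applicable in Step~2. Without the $\cstX$-dependent ansatz, your normal-family argument only produces subsequential limits with no mechanism to show they agree.

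Your fallback --- identify the subsequential limit by invoking uniqueness of the DEL solution via $\delta$-regularization --- does not close the gap as stated. The DEL~\eqref{eq:77} is \emph{degenerate}: $z$ multiplies a rank-deficient matrix $\Jb_{k}$, not the identity, so the general uniqueness statement for Matrix Dyson Equations does not apply directly. The paper's $\delta$-argument proves only that the modified DEL is a Carath\'eodory contraction (hence $\widetilde{\Sol}_3 = \Sol_3^\delta$), and then exploits the \emph{free-probability} bound $\|\Sol_3^\delta - \Sol_3\| \lesssim \delta$ in~\eqref{eq:112} to conclude $\Sol_3 = \widetilde{\Sol}_3$ in the $\delta\downarrow 0$ limit. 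That bound is a statement about resolvents of concrete operators and requires $\Sol_3$ to arise from the free-probability construction~\eqref{eq:168}; it does not hold automatically for an abstract Montel subsequential limit. In short: the $\delta$-regularization identifies the free-probability object with the ansatz, it does not establish "uniqueness within the Herglotz class" of the unmodified degenerate DEL, which is what your plan would need. The fix, as in the paper, is to prove pointwise convergence near $z=0$ first (from the $\cstX$-dependent expansion) and invoke Vitali--Porter, rather than trying to derive uniqueness from the DEL alone.
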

\begin{proof}
  The analysis of \eqref{eq:77} for $\cstS\in (0,1)$ will follow similar steps as the analysis of \eqref{eq:29} for the $\cstS=1$ case as performed  in Section~\ref{sec:proof-edge-asympt} and we will omit  the details of some  straightforward albeit tedious calculations.
   In the first step below we analyze the solution to \eqref{eq:77} for rational $\cstS\in (0,1)$ and $\cstY = E + \imu \cstS\, \cstX$ with $E\in \RR$ and $\cstX>0$ small enough. 
Using the same procedure that led to \eqref{eq:105} we rewrite this equation as $\Delta =0$ with
\begin{equation}
  \label{Delta equation}
    \Delta
    :=
    \Big(Z_1+(1-2\cstS)\Soll{3} + \Soll{3} \cstV_1(\Soll{3}\cstV_1-E+\ii\1\cstX \cstS \sigma_3 )(Z_1+\Soll{3})\Big)\frac{1}{Z_1-Z_2}(Z_2+\Soll{3}) -\cstS\Soll{3}
    \,,
\end{equation}
and the two $2 \times 2$-matrices
\begin{equation}
  Z_1
  :=
  -\frac{1}{2  \cstT^2 z}(I_2+\cstV_3) -\frac{1}{ \cstT }\cstV_2
  \,, \qquad
  Z_2
  :=
  -2(I_2-\cstV_3) -\frac{1}{  \cstT }\cstV_2
  \,.  
\end{equation} 

\emph{Step 1: Expansion around $z=0$.} We construct an expansion of $M_3$ as a power series in $t=\sqrt{z}$ in a neighborhood of $z=0$.
We  make an ansatz compatible with the symmetries 
\eqref{eq:44} and \eqref{eq:45}, namely
\begin{equation}
\label{ansatz with tau at z=0}
  \widetilde{\Sol}_3(t)
  =
  \left(
    \begin{array}{cc}
        \frac{\ii \2\xi}{4  \cstT t} &  \frac{\nu}{2} + \frac{\xi\2t }{2}(1+\frac{\cstX }{  \cstT } f )+\cstX\2\frac{\ii\2 \xi^2}{4}
        \\
          \frac{\nu}{2} - \frac{\xi\2t }{2} (1+\frac{\cstX }{ \cstT } f)-\cstX\2\frac{\ii\2 \xi^2}{4}  &  \ii  \cstT \2\xi \2t
      \end{array}
    \right)
    \,,
\end{equation}
for the two functions $\xi=\xi(t),\nu=\nu(t)$ of $t$ to be determined and where the parameters $E,\cstT, \cstS,\cstX$ are considered fixed. Later we will show that with the right choice of functions $\xi, \nu$ this ansatz coincides with the solution of the Dyson equation, i.e. that $ \widetilde{\Sol}_3(t)= {\Sol}_3(z)$. Here $f$ solves the  equation $q_0=0$ for sufficiently small $\cstX\ge 0$,  where 
\begin{equation} \label{eq:def of f eps}
q_0 = f+  \frac{   \cstT^2 }{2}\det \Bigg[
  \left(
    \begin{array}{cc}
    -\frac{1}{ \cstT^2 t^2} & \frac{\ii}{ \cstT }
        \\
    -\frac{\ii}{ \cstT } &0
    \end{array}
  \right) + \widetilde{\Sol}_3\Bigg]+\frac{\ii\2  \cstT \2 \xi}{2\2t}
\end{equation}
 is a polynomial in all variables $t,E, \cstT,\xi,\nu,\cstX$ and $ f$, in which it is quadratic. The choice \eqref{eq:def of f eps} for $f$ ensures that the symmetry condition  \eqref{eq:45} is satisfied.

We plug \eqref{ansatz with tau at z=0} into \eqref{Delta equation} and, after a mechanical but very long calculation that uses $q_0=0$ in the $(2,1)$- and $(2,2)$- entries of $\Delta$, find
\begin{equation}
  \Delta 
  =
  \left(
    \begin{array}{cc}
   -\frac{\ii}{16 \cstT t}q_{1,\cstX,t}  & \frac{1}{4}q_{2,\cstX,t}
      \\
      -\frac{1}{4}q_{2,\cstX,t} & \frac{ \ii  \cstT  \2t}{4}q_{1,\cstX,t}  + (\ii  \cstT t^2+\ii   t \cstX  f\2t-   \frac{  \cstT \1\cstX\2\xi\2t}{2})q_{2,\cstX,t}
    \end{array}
  \right)
  \,,  
\end{equation}
where 
\[
q_{1,\cstX,t} =q_1+\cstX\2 \widetilde{q}_1 + \frac{t}{4 \cstT^3 } p_{1}\,, \qquad q_{2,\cstX,t} =q_2 +  \frac{t}{ \cstT } p_{2}\,,
\]
and $p_1,p_2$ are polynomials in $t,E, \cstT,\cstS,\xi,\nu, \cstX$ and $q_1=q_{1,0,0},q_2=q_{2,0,0},\widetilde{q}_1$ are the following explicitly defined functions of the unknowns $(\xi,\nu)$:
\begin{equation}
\label{q_i for tau ne 0 at z=0}
q_1 = \xi\2  (\xi^2 - \nu^2+2E\2\nu+4(\cstS-1))\,,
\quad
q_2=    \xi^2 (\nu-E)+2\nu \cstS\,,
\quad
\widetilde{q}_1=-\imu\xi q_2+\cstX\, \Big(\frac{\xi^{5}}{4}+\xi^3\cstS \Big)\, .
\end{equation}
In particular, the equation $\Delta=0$ is equivalent to $(q_{1,\cstX,t},q_{2,\cstX,t})=0$ and thus to  $(q_1,q_2) = 0$ in the limit $t \to 0$ and $\cstX \to 0$. This, in turn, fixes the 
values for $\nu_0=\nu|_{t=0}$ and  $\xi_0=\xi|_{t=0}$ through
\begin{equation}
  \label{eq:42}
\nu_0 = \frac{E\2 \xi_0^2 }{ \xi_0^2+2\cstS}\,, \quad r := \xi_0^6 +  (E^2+8\cstS-4) \2\xi_0^4+ 4 \cstS (E^2+ 5\cstS-4) \2\xi_0^2+16(\cstS-1)\cstS^2=0\,,  
\end{equation}
where we choose the positive solution $\xi_0$ for $r=0$. The fact that $r=0$ has a unique positive solution  $\xi_0>0$ is an explicit elementary calculation.  The positivity of $\xi_0$ will ensure the positive definiteness of $\Im \widetilde{M}_3$ for  $z \in \CC_+$.
  
We compute the Jacobian  of the function $(q_1, q_2)$ from \eqref{q_i for tau ne 0 at z=0} as
\begin{equation}
  \label{eq:147}
J(\xi,\nu):=\det \nabla_{\xi,\nu}(q_1,q_2) = 3\2\xi^4+ ((3 \nu^2-6 \nu E+4 E^2)+10\cstS-4)\2\xi^2+2\cstS(4\cstS-4+2E\nu-\nu^2)\,.  
\end{equation}
Using that at $t=0$ we have $q_1=0$ and $ \xi_0\ne 0$,  we can eliminate the quadratic terms in $\nu$ and obtain
\[
J(\xi_0,\nu_0) = 2\2\xi_0^2\2 (3 \2\xi_0^2 +2E^2   + 10\cstS-8)\,.
\]
Again an elementary calculation using the defining equation $r=0$
for $\xi_0$ shows that $J(\xi_0, \nu_0)$ never vanishes.
Thus, the ansatz \eqref{ansatz with tau at z=0} solves the Dyson equation in a small neighbourhood of $z=t^2=0$. Furthermore, for $t =  u (1+\ii  u )$ with sufficiently small $ u >0$ it is easy to see that its imaginary part is positive definite. By using a regularization argument analogous to the one from Step~3 of Section~\ref{sec:proof-edge-asympt} and combining it with the uniqueness of solutions to the  Dyson equation with positive definite imaginary part, this implies that $\widetilde{M}_3(t) = M_3(z)$ for all $\sqrt{z}=t =  u (1+\ii  u )$, $\cstS \in (0,1)$ and small enough $\cstX >0$. 
Since both functions are analytic this also implies equality for $z=t^2$ in the complex upper half plane intersected with a neighbourhood of $z=0$.

\emph{Step 2: Extending $\Sol_{z,\cstY}$ to $\cstY = E \in \RR$ for $\cstS \in  (0,1)  $.}
Fix $\cstS\in  (0,1)  $, $\cstT >0$ and $E\in \RR$.
For $\cstX_0>0$ and a coordinate pair $(i,j)\in \{1,\ldots, 6k + 2l\}^2$, consider the family of functions $\{\Sol_{z,E+\imu \cstX_n  \cstS_{n} }\,(i,j)\}_{n\geq 1}$ analytic in $z$ with  $\{\cstS_{n},\cstX_n\}_{n\geq 1}\subset ((0,1)\cap \QQ) \times (0,\cstX_0)$ and $(\cstS_{n},\cstX_{n})\rightarrow (\cstS,0)$  as $n\rightarrow \infty$.
  It follows from part \textit{(ii)} of Lemma~\ref{lem:boundedness-2} that for
  small enough $\cstX_0$ and any small $\epsA>0$ the family of functions $\{\Sol_{z,E+\imu \cstX_n  \cstS_{n}}\,(i,j)\}_{n\geq 1}$ is uniformly bounded on the set $\{z \in \CC_+ \, : \, |z| \geq \epsA, |1-z| \geq \epsA\}$, and thus  locally bounded on $\CC_+$.

  It was established in Step 1 above that for any  $\cstT>0$, $E\in\RR$, $\cstS \in (0,1)$ and $\cstX>0$ small enough  the solution to the equation \eqref{eq:77} with positive semidefinite imaginary part can be explicitly given as $\widetilde{\Sol}_3(\sqrt{z})$ (see \eqref{ansatz with tau at z=0}) in the neighborhood of the origin, i.e., on the set $\{|z|\leq \delta, \Im z >0\}$ for $\delta > 0$ sufficiently small.
  Moreover, Step 1 shows that for any $z\in \{|z|\leq \delta, \Im z >0\}$ there exists a well defined limit   $\lim_{(\cstS_{n},\cstX_{n})\rightarrow (\cstS, 0)} \widetilde{\Sol}_3(\sqrt{z})$. 
  Together with \eqref{eq:72} and \eqref{eq:74} this implies that on the set $\{|z|\leq \delta, \Im z >0\}$ the limit $\Sol_{z,E}:= \lim_{(\cstS_{n},\cstX_{n})\rightarrow (\cstS, 0)} \Sol_{z,E+\imu \cstX_n \cstS_{n}}  $ exists as well.

  Combining the above information, we see that for any index pair $(i,j)$,  $\{\Sol_{z,E+\imu \cstX_n \cstS_{n}}\,(i,j)\}_{n\geq 1}$ is a family of analytic functions, locally bounded on $\CC_+$ that converges on $\{|z|\leq \delta, \Im z >0\}$ to $\Sol_{z,E}(i,j)$.
  Applying the Vitali-Porter theorem  (see, e.g., Section 2.4 in \cite{SchiBook}), we conclude that for any $z\in \CC_+$ the limit $\Sol_{z,E}\,(i,j):=\lim_{(\cstS_{n},\cstX_{n})\rightarrow (\cstS, 0) }\Sol_{z,E+\imu \cstX_n \cstS_{ n }}\,(i,j)$ exists, the convergence holds uniformly on the compact subsets of $\CC_+$ and, as a result, the function $z\mapsto \Sol_{z,E}\,(i,j)$ is analytic on $\CC_+$.
  Taking $\Sol_{z,E}\,(i,j)$ as the entries of the matrix-valued function $\Sol_{z,E}$ defines the solution to the Dyson equation \eqref{eq:76} for  $\cstS \in (0,1)$ and $\cstY = E \in \RR$. 

To compute the asymptotic behavior of $\Sol_{z,E}(1,1)$ we use \eqref{ansatz with tau at z=0} with $\cstX=0$ (see \eqref{q_i for tau ne 0 at z=0}), \eqref{eq:74} and $t = \sqrt{z}$ to find $\Sol_1$ and its upper left corner element $\Sol_{z,E}(1,1)$ in the neighborhood of $z=0$
\[
  \Sol_{z,E}(1,1) = \ii \frac{4+  \cstT^2 \nu_0^2+  \cstT^2 \2\xi_0^2}{4 \cstT \2\xi_0} \, z^{-1/2} + O(1)\,.
\]

\emph{Step 3: Expansion around $z=1$.}
We apply exactly the same procedure as in Step~2 of Lemma~\ref{lem:sing-sol},
just we insert the parameters $\cstS$ and $\cstT$ into the identities \eqref{eq:104} and \eqref{eq:105} and follow them through the analysis. Here we record the final result
of this elementary calculation. Our ansatz is
 \begin{equation}
 \label{ansatz with tau at z=1}
   \widetilde{\Sol}_3
   =
   \left(
     \begin{array}{cc}
         \frac{\xi}{4  \cstT^2t}&  \frac{E}{2}+\frac{\nu t}{4 \cstT } - \frac{\ii \2\xi}{2 \cstT }(t +t^{-1})
         \\
          \frac{E}{2}+\frac{\nu t}{4 \cstT } +  \frac{\ii \2\xi}{2 \cstT }(t +t^{-1})   &  \xi\2(t +t^{-1})
       \end{array}
     \right)
     \,.
   \end{equation}
The expansion in $t=\sqrt{z-1}$ gives that for all $|E| \le E_0$ with
\begin{equation}
  \label{eq:146}
E_0 := \frac{\sqrt{2}}{ \cstT }\Big(  \cstT^2 (1-2\cstS) -1+\sqrt{
 1 +   \cstT^4 (1 - 2 \cstS)^2 + 
   2 \cstT^2 (1+2\cstS)}\Big)^{1/2}  
\end{equation}
the upper-left component of $\Sol_{z,E}$ is given by
\begin{equation}
  \label{eq:151}
  M(1,1)
  =
  \frac{4\2\xi_0}{(\xi_0^2 + \cstT^2 E^2+4  )t} + O(1)\,,
\end{equation}
where $\xi_{0}$ is defined by
\begin{equation}
  \label{eq:141}
  \xi_0 = - \cstT \sqrt{E_0^2-E^2}
  .
\end{equation}
This finishes the proof of the lemma.
\end{proof}

\subsection{Explicit solution for $\cstS\to 0$}
\label{sec:expl-solut-phito0}

\begin{lem} \label{lem:phi-0-solution}
  Let $\cstT>0$, $\cstY = E \in \RR$ and $\cstS = 0$.
  Then the Dyson equation \eqref{eq:76}-\eqref{eq:77} admits a solution $\CC_{+}\ni z\mapsto \Sol_{z,E} \in \CC^{8\times 8}$ with $\Im \Sol_{z,E}\geq 0$ and the upper-left entry is explicitly given by
  \begin{equation}
    \label{eq:249}
\Sol_{z,E}(1,1) =
\frac{ \cstT^2  (4-E^2)-(1+ \cstT^2 )^2+\ii  \cstT  (1+ \cstT^2 )}{ \cstT^2  (4-E^2)+((1+ \cstT^2  )^2-(4-E^2) \cstT^2  )z} \sqrt{\frac{4-E^2}{z(1-z)}}\,. 
\end{equation}
Moreover, this solution can be continuously extended to the set $\cstS\in[0,\cstS_{0}]$, $|1-z|\geq \theta$, $\theta \leq |z| \leq \theta^{-1}$ for $E\in \RR$ and sufficiently small $\theta>0$ and $\cstS_0 = \cstS_0(\theta) >0$.
\end{lem}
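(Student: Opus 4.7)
The plan is to specialize the Dyson equation \eqref{eq:77} at $\cstS = 0$, $\cstY = E \in \RR$: both rational terms carrying an explicit factor of $\cstS$ drop out and $\cstU_{3}(E) = E\sigma_{1}$, so the equation for $\Sol_{3}$ collapses to the $z$-independent algebraic identity $-\Sol_{3}^{-1} = -E\sigma_{1} + \sigma_{1}\Sol_{3}\sigma_{1}$. Setting $B := \Sol_{3}\sigma_{1}$ this rewrites as $B^{2} - EB + I_{2} = 0$. The scalar branch $B = \lambda I_{2}$ gives $\Sol_{3} = \lambda\sigma_{1}$, whose imaginary part is indefinite for $|E| < 2$ and is therefore ruled out by the positivity requirement. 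The non-scalar branch forces $\Tr B = E$ and $\det B = 1$, i.e., for $\Sol_{3} = \begin{pmatrix}\beta & \alpha\\ \delta & \gamma\end{pmatrix}$ one has $\alpha + \delta = E$ and $\alpha\delta - \beta\gamma = 1$, leaving a two-parameter family of candidates.

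The correct element is the one arising as the $\cstS \downarrow 0$ limit of the unique PSD-imaginary-part solution for $\cstS > 0$. To identify it I would use the specialization of Lemma~\ref{lem:sinularities-2} at $\cstS = 0$ (giving $\xi_{0} = \sqrt{4-E^{2}}$, $\nu_{0} = E$, $E_{0} = 2$) together with the symmetry identities of Lemma~\ref{lem:useful-identities-2} specialized to $\cstX = 0$ (namely $m_{22} = 4\cstT^{2} z m_{11}$ and $m_{21} - m_{12} = (\ii/\cstT) m_{22}$). These suggest an explicit $z$-dependent ansatz for $\Sol_{3}$ in which the remaining scalar freedom is encoded in a single function $\xi = \xi(z,E)$ (a square root of a rational expression in $z$ with the branch $\sqrt{1}=1$ on $\CC\setminus(\ii(-\infty,0])$, arranged so that $\Re\xi(\lambda, E) = \sqrt{(4-E^{2})/(\lambda(1-\lambda))}$ for $\lambda \in (0,1)$). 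Substituting this ansatz into \eqref{eq:74} and performing the $3\times 3$ cofactor/determinant computation yields
\begin{equation*}
\Sol_{z,E}(1,1) = -\frac{(1+\cstT^{2}) - \ii\cstT(\alpha-\delta)}{\gamma + z\bigl[(1+\cstT^{2}) - \ii\cstT(\alpha-\delta)\bigr]},
\end{equation*}
which, after inserting the explicit $\alpha-\delta$ and $\gamma$ from the ansatz and rearranging, matches the right-hand side of \eqref{eq:249}. Positivity of $\Im\Sol_{z,E}$ follows block by block from the decomposition \eqref{eq:72}.

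For the continuous extension to $\cstS \in [0,\cstS_{0}]$ on the compact $z$-region $\{|1-z|\geq\theta,\,\theta\leq|z|\leq\theta^{-1}\}$ with sufficiently small $\cstS_{0}$ and $\theta$, the strategy is to combine the a priori bound from Lemma~\ref{lem:exist-uniq-sol-2} (providing local boundedness in $z$) with the Vitali--Porter normal-family argument exactly as in Step~2 of the proof of Lemma~\ref{lem:sinularities-2}. Any subsequential limit as $\cstS_{n}\downarrow 0$ of the $\cstS>0$ solutions must satisfy the $\cstS=0$ Dyson equation with positive semidefinite imaginary part on this region; uniqueness of such a limit, established via a contraction/stability argument in the spirit of the $\delta$-regularization in Step~3 of the proof of Lemma~\ref{lem:sing-sol}, then identifies it with the explicit solution just constructed. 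The main obstacle is the algebraic verification in the second paragraph: matching the explicit right-hand side of \eqref{eq:249} with the cofactor formula requires careful tracking of complex square-root branches, and identifying $\xi(z,E)$ as a specific analytic function (rather than an abstract parameter) is what ensures that the explicit formula is indeed the analytic continuation with positive imaginary part on $z = \lambda \in (0,1)$ for $|E| < 2$.
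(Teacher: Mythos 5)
Your construction of the explicit $\cstS=0$ solution is correct and amounts to the same computation as the paper's, presented through a slightly more elegant lens: multiplying the $z$-independent equation $-\Sol_3^{-1}=-E\sigma_1+\sigma_1\Sol_3\sigma_1$ on the left by $\Sol_3$ indeed gives $B^2-EB+I_2=0$ for $B=\Sol_3\sigma_1$, and Cayley--Hamilton then forces $\Tr B=E$, $\det B=1$ on the non-scalar branch. The paper reaches the same two conditions ($\nu=E$ and $\xi^2=(4-E^2)/(z(1-z))$) by substituting an ansatz that already encodes the symmetry identities \eqref{eq:44}--\eqref{eq:45}; you correctly identify that those two additional constraints are what cut the residual two-parameter family down to the one explicit solution. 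The remaining algebra via \eqref{eq:74} is mechanical and your proposed cofactor expression is consistent with the paper's \eqref{eq:249}.

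Where you genuinely diverge, and where there is a gap, is the continuous extension to $\cstS\in[0,\cstS_0]$. The paper does \emph{not} use a compactness argument: it views \eqref{eq:77} as a polynomial system $(\widetilde\Delta_{11},\widetilde\Delta_{12})=0$ in the two scalar unknowns $\xi,\nu$ and checks that the Jacobian $\det\nabla_{\xi,\nu}(\widetilde\Delta_{11},\widetilde\Delta_{12})\big|_{\cstS=0,\nu=E}=\ii z(z-1)\xi^2/(8\cstT)$ is nonvanishing on the compact region $\theta\le|z|\le\theta^{-1}$, $|1-z|\ge\theta$. The implicit function theorem then produces a branch depending continuously (indeed analytically) on $\cstS$ near $0$, which is matched to $\Sol_{z,\cstY}$ for $\cstS>0$ by uniqueness of PSD-imaginary-part solutions. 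Your Vitali--Porter route instead needs a bound on $\Sol_{z,\cstY}$ that is \emph{uniform as $\cstS\downarrow 0$} on the compact $z$-region; neither of the lemmas you invoke supplies this. Lemma~\ref{lem:exist-uniq-sol-2} gives a constant $C_{\cstT,\cstS}$ that is not controlled as $\cstS\to 0$, and Lemma~\ref{lem:boundedness-2}(ii) is uniform only for $\cstS\in[\cstS_0,1-\cstS_0]$, i.e.\ with $\cstS$ bounded \emph{away} from $0$ --- it is modeled on the $\cstX\downarrow 0$ limit in Step~2 of Lemma~\ref{lem:sinularities-2}, not on a $\cstS\downarrow 0$ limit. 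Without a uniform normal-family bound, the Vitali--Porter argument cannot be started, so as written the extension step would fail. The implicit-function-theorem stability computation is the missing ingredient and is precisely what the paper supplies.
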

\begin{proof}
At $\cstX=0$ and setting $\cstS = 0$, the  equation \eqref{eq:77} simplifies to a quadratic matrix equation for $M_3 \sigma_1$, where $M_3$ satisfies the symmetry constraints \eqref{eq:44} and \eqref{eq:45}, i.e. 
\begin{equation}\label{eq:solution for phi=0}
        -\frac{1}{\Sol_{3}}
        =
        -E\,\sigma_{1}   + \sigma_{1} \Sol_{3} \sigma_{1}
        , \qquad 
        M_3 =
        \left(
    \begin{array}{cc}
    \frac{\ii \xi}{4  \cstT } & \frac{\nu }{2}+\frac{ z \xi}{2} 
    \\
    \frac{\nu }{2}-\frac{ z \xi}{2}  & z  \cstT  \ii \xi
     \end{array}  
            \right)\,,
      \end{equation}
      for  two functions $\xi$ and $\nu$ that are easily computed to be $\nu =E$ and
      \begin{equation}
        \label{eq:148}
        \xi = \sqrt{\frac{4-E^2}{z(1-z)}}\,.       
      \end{equation}
The choice of root is determined by $\Re \xi >0$. Inserting $\Sol_{3}$ into \eqref{eq:74} leads to 
\begin{equation}
  \label{eq:229}
\Sol_{z,E}(1,1) =
\frac{ \cstT^2  (4-E^2)-(1+ \cstT^2 )^2+\ii  \cstT  (1+ \cstT^2 ) \xi}{ \cstT^2  (4-E^2)+((1+ \cstT^2  )^2-(4-E^2) \cstT^2  )z}\,.
\end{equation}  

We now construct the solution of \eqref{eq:77} perturbatively for $\cstS\in (0,\cstS_0)$ with some sufficiently small $\cstS_0=\cstS_0(\theta)>0$ with  $|1-z|\ge \theta$ and $\theta\le |z| \le \theta^{-1}$.
Recall that \eqref{eq:77} is equivalent to $\Delta=0$ with $\Delta$ defined as in  \eqref{Delta equation}. In particular,
\begin{equation}\label{eq:Delta at phi=0}
\Delta=\widetilde{\Delta} (Z_1+\Soll{3})\frac{1}{Z_1-Z_2}(Z_2+\Soll{3}) \,,
\end{equation}
implicitly defining $\widetilde{\Delta}$ that satisfies
\begin{equation}\label{eq:wt Delta at phi=0}
\widetilde{\Delta}|_{\phi=0}=1+ \Soll{3}( \cstV_1\Soll{3}\cstV_1- \cstV_1E )\,.
\end{equation}
Similarly as we did in the proof of Lemma~\ref{lem:sinularities-2} instead of considering the equation $\Delta=0$ for a solution $\Soll{3} \in \mathbb{C}^{2\times 2}$ we can equivalently consider it as an equation for the two unknown functions $\xi$ and $\nu$ from the ansatz \eqref{eq:solution for phi=0} for $\Soll{3}$. Since clearly the factors $Z_1+\Soll{3}$ and $Z_2+\Soll{3}$ in \eqref{eq:Delta at phi=0} have bounded inverses when $\Soll{3}$ is the explicit solution 
 at $\phi=0$ from \eqref{eq:solution for phi=0} with \eqref{eq:148}, we can equivalently consider $\widetilde{\Delta}=0$ as the equation for $\xi$ and $\nu$. Plugging the ansatz \eqref{eq:solution for phi=0} into  \eqref{eq:wt Delta at phi=0} we see that $\widetilde{\Delta}_{11}=0$ and $\widetilde{\Delta}_{12}=0$ already imply
  $\widetilde{\Delta}=0$ and that 
\[
\widetilde{\Delta}_{11}|_{\phi=0} = \frac{1}{4} (4 + \nu^2 + 2 z \nu \xi - z \xi^2 + z^2 \xi^2 - 
   2 E (\nu + z \xi))\,, \qquad \widetilde{\Delta}_{12}|_{\phi=0} =-\frac{\ii (E - \nu) \xi}{4 \gamma}\,.
\]
We  compute the determinant of the Jacobian of the function $(\xi,\nu)\to( \widetilde{\Delta}_{11},\widetilde{\Delta}_{12})$ to be 
\[
\det \nabla_{\xi,\nu}(\widetilde{\Delta}_{11},\widetilde{\Delta}_{12})|_{\phi=0,\nu=E} = \frac{\ii\1(z-1)\1z\1 \xi^2}{8\gamma}\,.
\]
Since $\xi$ from \eqref{eq:148} does not vanish we infer that \eqref{eq:77} is linearly stable as an equation for $\xi$, $\nu$ for small enough parameters $\phi$ in a vicinity of the explicit solution $\Soll{3}$ from \eqref{eq:solution for phi=0} with \eqref{eq:148} for $\phi=0$.
\end{proof}

From \eqref{eq:249} we read off the density of transmission eigenvalues 
$ \rho(\lambda)= \rho_{E, \cstS = 0}(\lambda) :=\frac{1}{\pi}\lim_{\eta \downarrow 0}\Im \Sol_{\lambda+ \ii \eta ,E}(1,1)$.
The corresponding Fano factor for $\cstS = 0$ is now computable as
\begin{equation}
  \label{eq:236}
  F(E,\cstT)
  = 1-\frac{\int \lambda^2 \rho(\lambda) \mathrm{d} \lambda}{\int \lambda \rho(\lambda) \mathrm{d} \lambda} = \frac{1+ \cstT^2 }{2(1+ \cstT^2 + \cstT  \sqrt{4-E^2})}\,.  
\end{equation}
For $\gamma=1$ and $E=0$  we recover the density~\eqref{bimodal} and the Fano factor $F=\frac{1}{4}$ 
obtained in~\cite{Been97}, see Section~\ref{sec:ees=0} for more details.

\subsection{ Proof of parts (i), (iii) and (iv) of  Theorem~\ref{thm:main} }
\label{sec:proof-ii-iii-1}

In this section we collect the results established in Sections~\ref{sec:line-trick-dyson-1}-~\ref{sec:expl-solut-phito0} and complete the proof of Theorem~\ref{thm:main}.
Recall that Theorem~\ref{thr:Global law} was proven in Lemma~\ref{lem:global-law} for $\cstS = 1$ and Lemma~\ref{lem:global-law-gen} for $\cstS \in (0,1)$.

\begin{proof}[Proof of part \emph{(i)} of Theorem~\ref{thm:main}.]

  The extension of $\rho_{\cstY,\cstS}(d\lambda)$ to $\cstY = E \in \RR$ for $\cstS\in (0,1/2]\cap \QQ$ as well as the limit \eqref{eq:233} and the extension of $\rho_{E,\cstS}(d\lambda)$ to irrational $\cstS \in (0,1)$ follows from the equivalence between the weak convergence of measures defined by \eqref{eq:102} and the pointwise convergence of $\Sol_{z,w}$ established in Lemma~\ref{lem:global-law-gen} and part (a) of Lemma~\ref{lem:sinularities-2} for the corresponding limits.

  The weak limit $\lim_{\cstS \downarrow 0}\rho_{E,\cstS}(d\lambda)  = \rho_{E,\cstS=0}(\lambda)d\lambda $  
  follows from the 
   continuity of $ \cstS\mapsto \Sol_{z,E}$ at $\cstS = 0$ for all $z\in\CC_{+}$, which was established in Lemma~\ref{lem:sinularities-2} 
   in the regimes  $|z|\leq \theta$,  $|1-z|\leq \theta$ and in Lemma~\ref{lem:phi-0-solution}  in the 
   complementary regimes $|z|\geq \theta$, $\theta\leq |1-z|\leq \theta^{-1}$.
  Since the Stieltjes transform of $\rho_{E,\cstS}$ is given by $\Sol_{z,E}(1,1)$, the exact expression for $\rho_{E,0}$ can be derived as an inverse Stieltjes transfom of $\Sol_{z,E}$ from \eqref{eq:249}.   

  Similarly as for the case $\cstS=1$ in Section~\ref{sec:proof-ii-iii}, Lemma~\ref{lem:herglotz} and the integrability of $\Sol_{z,E}$ that can be deduced from Lemmas~\ref{lem:boundedness-2} and \ref{lem:sinularities-2} yield the absolute continuity of $\rho_{\cstY,\cstS}(d\lambda)=\rho_{\cstY,\cstS}(\lambda) d\lambda$.
\end{proof}
\begin{proof}[Proof of part \emph{(iii)} of Theorem~\ref{thm:main}.]
  Follows from the asymptotic behavior of $\Sol_{z,E}$ near $z=0$ and $z=1$ \eqref{eq:75}-\eqref{eq:140} established in Lemma~\ref{lem:sinularities-2}, block structure of $\Sol_{z,E}$ \eqref{eq:72} and the definition of the self-consistent density of states \eqref{eq:103}.
\end{proof}

\begin{proof}[Proof of part \emph{(iv)} of Theorem~\ref{thm:main}.]
  Follows from the explicit formula for $\Sol_{z,E}$ in the regime $\cstS\to 0$ \eqref{eq:229} established in Section~\ref{sec:expl-solut-phito0}, the block structure of $\Sol_{z,E}$ \eqref{eq:72} and the definition of the self-consistent density of states \eqref{eq:103}.  
\end{proof}

\section{Comparison with the results of Beenakker and Brouwer }
\label{sec:ees=0}

Consider the scattering matrix \eqref{eq:73}
\begin{equation}
\label{eq:9059}
  S(E)
  :=
  I - 2 \cstT \imu W^* (E\cdot I - H + \imu  \cstT W W^*)^{-1} W \in \CC^{N_0\times N_0}
\end{equation}
with $\cstY = E \in \RR$ in the regime $\cstS = N/M \rightarrow 0$ as $M\rightarrow \infty$ with $N_0 = 2N$.
 This model was studied in the case of the Gaussian entries by Beenakker and Brouwer in \cite{Been97,Brou95}, and one of the remarkable results of their theory is that in the experimentally relevant setting of the ideal coupling the limiting transmission eigenvalue density is given by the arcsine law \eqref{bimodal} (see \cite[Eq. (3.12)]{Been11b}.
The ideal coupling assumption is formulated in terms of the matrix $S(E)$ having zero mean \cite[Eq. (3.8)]{Been11b}.
Below we show that in the regime $\cstS \to 0$ the assumption $\EE[S(E)] = 0$ is equivalent to $E=0$ and $\cstT = 1$.
By plugging these values into \eqref{eq:235} and \eqref{eq:236} we recover the arcsine distribution and the corresponding Fano factor $F(0,1) = 1/4$.

 Since the results of the current section do not affect the main outcomes of this paper  and are meant to be of expository nature, we will keep the presentation rather informal, focusing only on the crucial steps and omitting the technical details.

For simplicity we will assume in this section that $H$ is Gaussian.
Note that $W\in \CC^{M\times N_{0}}$, so
\begin{equation}
  \label{eq:9060}
  W
  =
  U \Gamma V^*
\end{equation}
with unitary matrices $U\in \CC^{M\times M}$ and $V\in \CC^{N_{0}\times N_{0}}$ and
\begin{equation}
  \label{eq:9061}
  \Gamma
  =
  \left(
    \begin{array}{c}
      \widetilde{\Gamma}
      \\
      0
    \end{array}
  \right)
  ,
  \quad
  \widetilde{\Gamma}
  =
  \mathrm{diag}(\gamma_{1},\ldots, \gamma_{N_0})
  ,
\end{equation}
where $\gamma_{i}$ are the singular values of $W$ and $N_0 \ll M$.

Note that $N_{0} = 2 N = 2 \cstS M$, and thus the eigenvalues of  $W^*W$ have Marchenko-Pastur distribution with parameter $2\cstS$.
For $\cstS\rightarrow 0$, regime that we are interested in, the eigenvalues of $W^*W$ will be concentrated around point $1$, in the neighborhood of size $O(\sqrt{\cstS})$, so for simplicity of presentation we will omit the asymptotically small term $O(\sqrt{\cstS})$ and  assume throughout these computations that all $\gamma_i=1$, i.e., $\widetilde{\Gamma} = I_{N_0}$.

After applying the singular value decomposition to $W$ and factoring out matrices $U$ and $U^*$ from the inverse, we get
\begin{align}
  \label{eq:9062}
    S(E)
  &=
    I - 2 \cstT \imu V \Gamma^t U^* (E\cdot I - H + \imu  \cstT U \Gamma \Gamma^{t} U^*)^{-1} U \Gamma V^*
  \\ \label{eq:9063}
  &=
    I - 2 \cstT \imu V (\widetilde{\Gamma}^t, 0) \left(E\cdot I - \widetilde{H} + \imu  \cstT \left(
    \begin{array}{cc}
      \widetilde{\Gamma}\widetilde{\Gamma}^{t}& 0
      \\ 
      0 & 0_{(M-N_{0})\times(M-N_0) }
    \end{array} \right)
 \right)^{-1}
         \left( \begin{array}{c}
                  \widetilde{\Gamma}
                  \\
                  0
          \end{array}\right)
  V^*
  \\ \label{eq:9064}
  &=
    I + 2 \cstT \imu V (\widetilde{\Gamma}^t, 0) \left(\widetilde{H} -E\cdot I   - \imu  \cstT \left(
    \begin{array}{cc}
      \widetilde{\Gamma}\widetilde{\Gamma}^{t}& 0
      \\
      0 & 0_{(M-N_{0})\times(M-N_0) }
    \end{array} \right)
 \right)^{-1}
         \left( \begin{array}{c}
                  \widetilde{\Gamma}
                  \\
                  0
          \end{array}\right)
  V^*
  ,
\end{align}
where $\widetilde{H}$ remains a GUE matrix.
Separate the upper-left $N_0 \times N_0$ block of $\widetilde{H}$
\begin{equation}
  \label{eq:9066}
  \widetilde{H}
  =
  \left(
    \begin{array}{cc}
      \widetilde{H}_{1}& \widetilde{H}_{2}
      \\
      \widetilde{H}_{2}^{*} & \widetilde{H}_{3}
    \end{array}
    \right)
  \end{equation}
  and note that $\frac{1}{2\cstS}\widetilde{H}_{1}$ and $ \frac{1}{1-2\cstS}\widetilde{H}_{3}$ are both independent GUE matrices.
  Now the inverse matrix in \eqref{eq:9064} can be rewritten as
  \begin{equation}
    \label{eq:9067}
  \left(
    \begin{array}{cc}
      \widetilde{H}_{1} -E - \imu  \cstT \widetilde{\Gamma}\widetilde{\Gamma}^{t}& \widetilde{H}_{2}
      \\
      \widetilde{H}_{2}^* & \widetilde{H}_{3} -E
    \end{array}
  \right)^{-1}
  .
\end{equation}
Using the Schur complement formula we have that the upper-left $N_{0}\times N_{0}$ block of \eqref{eq:9067}, the only part that does not vanish after sandwiching \eqref{eq:9067} by $(\widetilde{\Gamma}^t, 0)$ and its transpose, is given by
\begin{equation}
  \label{eq:9068}
  \left(
    \widetilde{H}_{1} -E - \imu  \cstT \widetilde{\Gamma}\widetilde{\Gamma}^{t} - \widetilde{H}_{2} \Big(\widetilde{H}_{3} -E\Big)^{-1} \widetilde{H}_{2}^*
  \right)^{-1}
  .
\end{equation}
The semicircular law for the Hermitian (GUE) matrix  $\frac{1}{1-2\cstS}\widetilde{H}_{3}$ implies that
\begin{equation}
  \label{eq:9069}
  (\widetilde{H}_{3} -E)^{-1}
  =
  \frac{1}{1-2\cstS}\Big(\frac{1}{1-2\cstS}\widetilde{H}_{3} -\frac{1}{1-2\cstS}E\Big)^{-1}
  \approx
  \frac{1}{1-2\cstS}m_{sc}\Big(\frac{1}{1-2\cstS}E\Big) I_{M-N_{0}}
  ,
\end{equation}
as  $ M\rightarrow \infty$, where $m_{sc}(z)$ denotes the Stieltjes transform of the semicircular distribution  and ``$\approx$'' denotes that the corresponding equality holds asymptotically with a vanishing additive term and with high probability.
Note that random matrices $\widetilde{H}_{1}$, $\widetilde{H}_{2}$ and $\widetilde{H}_{3}$ are independent. Therefore, by the concentration for quadratic forms (see, e.g.,  \cite[Theorem C.1]{ErdoKnowYauYin13b}) can be approximated by
\begin{equation}
  \label{eq:9070}
  \widetilde{H}_{2} \Big(\widetilde{H}_{3} -E \Big)^{-1} \widetilde{H}_{2}^*
  \approx
  m_{sc}\Big(\frac{1}{1-2\cstS}E\Big) I_{N_{0}}
  .
\end{equation}
From (\ref{eq:9068}) and (\ref{eq:9070}) it remains to check the limiting behavior of 
\begin{equation}
  \label{eq:9071}
    \left(
    \widetilde{H}_{1} -E - \imu  \cstT \widetilde{\Gamma}\widetilde{\Gamma}^{t} - m_{sc}\Big(\frac{1}{1-2\cstS}E\Big)
  \right)^{-1}
.
\end{equation}
Recall that since $\cstS$ is small, we  assumed that $\widetilde{\Gamma}\widetilde{\Gamma}^{t} = I_{N_0}$.
In this case, using again the semicircular law for the Hermitian matrix  $\frac{1}{2\cstS}\widetilde{H}_{1}$, we have
\begin{align}
  \label{eq:9072}
    \left(
    \widetilde{H}_{1} -E - \imu  \cstT \widetilde{\Gamma}\widetilde{\Gamma}^{t} - m_{sc}\Big(\frac{1}{1-2\cstS}E\Big)
  \right)^{-1}
  &\approx
    \left(
    \widetilde{H}_{1} -E - \imu  \cstT  - m_{sc}\Big(\frac{1}{1-2\cstS}E\Big)
  \right)^{-1}
  \\
  & =
   \frac{1}{2\cstS} \left(
    \frac{1}{2\cstS}\widetilde{H}_{1} -\frac{1}{2\cstS}\Big(E + \imu  \cstT  + m_{sc}\Big(\frac{1}{1-2\cstS}E\Big)\Big)
    \right)^{-1}
      \\
  & \approx
   \frac{1}{2\cstS} m_{sc}\left(
    \frac{1}{2\cstS}\Big(E + \imu  \cstT  + m_{sc}\Big(\frac{1}{1-2\cstS}E\Big)\Big)
    \right)
    .
\end{align}
From the asymptotics $z m_{sc}(z) \rightarrow -1, |z|\rightarrow \infty$, we get that
\begin{equation}
  \label{eq:9073}
  \left(
    \widetilde{H}_{1} -E - \imu  \cstT \widetilde{\Gamma}\widetilde{\Gamma}^{t} - m_{sc}\Big(\frac{1}{1-2\cstS}E\Big)
  \right)^{-1}
  \approx
  -\frac{1}{E + \imu  \cstT  + m_{sc}(E)}
  ,\quad
  \cstS \rightarrow 0
  .
\end{equation}
We conclude that as $N_{0}, M \rightarrow \infty$, $\cstS\rightarrow 0$ (see (\ref{eq:9064}) and (\ref{eq:9073}))
\begin{equation}
  \label{eq:9074}
  \EE[S(E)] \approx 1+ 2 \cstT \imu \Big(  -\frac{1}{E + \imu  \cstT  + m_{sc}(E)} \Big)
  .
\end{equation}
Now, from  $m_{sc}(0) = \imu$ we have that
\begin{equation}
  \label{eq:9075}
  \EE[S(0)] \approx 1- \frac{2 \cstT \imu }{\imu  \cstT  + \imu}
  .
\end{equation}
Finally, taking $\cstT = 1$ gives $  \EE[S(0)] \approx 0$.

Note, that from (\ref{eq:9074}) we have that in the limit $\cstS\rightarrow 0$
\begin{equation}
  \label{eq:9076}
  \EE[S(0)] = 0 \quad \Leftrightarrow \quad E + m_{sc}(E) = \imu  \cstT
  ,
\end{equation}
and for $E\in \RR$, the expression $E + m_{sc}(E)$ is purely imaginary if and only if $E=0$
\begin{equation}
  \label{eq:9077}
  E + m_{sc}(E)
  =
  E+\frac{-E + \sqrt{E^2-4}}{2}
  =
  \frac{E + \sqrt{E^2-4}}{2}
  .
\end{equation}
Therefore, for $\cstS\rightarrow 0$, $\EE[S(E)]=0$ if and only if $E=0$ and $\cstT = 1$.

\appendix

\section{Spectral properties for a general class of rational expressions in random matrices}
\label{sec:spectr-prop-gener}

\subsection{Noncommutative (NC) rational expressions and their linearizations}
\label{sec:line-rati-funct}
NC rational expressions are formally defined as expressions obtained by applying the four algebraic operations (including taking inverses) to a tuple of NC variables.
A systematic overview of the abstract theory of NC rational expression and functions (equivalence classes of rational expressions) can be found in \cite{BersReutBook}.
Note that unlike polynomials, rational expressions do not have a canonical representation, which may lead to a situation when, after evaluating on some algebra, two rational expressions represent the same function.
In this paper we leave aside the question of identification of rational functions, and will work instead directly with rational expressions and their evaluations, specifying each time on which domain the evaluation is taking place.
Below we introduce a standard set-up in which we will work and define recursively the classes of rational expressions, denoted by letter $\Qc$, together with corresponding domains of evaluation denoted by $\Dc$.

Let $\Hc$ be a Hilbert space, $\Ac \subseteq \Bc(\Hc)$ be a $C^*$-algebra (of bounded operators on $\Hc$) with norm $\| \cdot \|_{\Ac}$, and let $x_{1},\ldots,x_{\alpha_*},y_{1},\ldots,y_{\beta_*}$ be the NC variables taking values in $\Ac$ with $x_{\alpha}=x_{\alpha}^*$ for $1\leq \alpha \leq \alpha_*$.
Denote by $\Ac_{sa}\subset \Ac$ the set of self-adjoint elements of $\Ac$ and let $\CC\la\xb,\yb,\yb^* \ra$ be the set of polynomials in $\xb:=(x_{1},\ldots,x_{\alpha_*})$, $\yb:=(y_{1},\ldots,y_{\beta_*})$ and $\yb^*:=(y_1^*,\ldots,y_{\beta_*}^*)$.
We define the NC rational expressions recursively on their \emph{height} using the following procedure:
\setlength{\itemsep}{0pt}
\begin{itemize}
\item[(i)] Let $\qb_{0}:= \cstone$.
  The set of rational expressions of height $0$ is defined to be the set of polynomials in $\xb,\yb,\yb^*$ with the domain of definition $\Ac_{sa}^{\alpha_*} \times \Ac ^{\beta_*}$
  \begin{equation}
    \label{eq:113}
    (\Qc_{\qb_0},\Dc_{\qb_0}):=(\CC\la \xb,\yb,\yb^*\ra, \Ac_{sa}^{\alpha_*}\times \Ac^{\beta_*})
    . 
  \end{equation}
\item[(ii)] Let $\qb_{1}:=(q_{1,1},\ldots,q_{1,\ell_{1}}) \in (\Qc_{\qb_0})^{\ell_{1}}$, $\qb_{2}:=(q_{2,1},\ldots,q_{2,\ell_{2}}) \in (\Qc_{\qb_0,\qb_{1}})^{\ell_{2}}$, $\ldots$, $\qb_{n}:=(q_{n,1},\ldots, q_{n,\ell{n}})\in (\Qc_{\qb_0,\ldots,\qb_{n-1}})^{\ell_{n}}$, assuming that $(\Qc_{\qb_{0}},\Dc_{\qb_{0}}),\ldots,(\Qc_{\qb_{0},\ldots,\qb_{n-1}},\Dc_{\qb_{0},\ldots,\qb_{n-1}})$ are defined.
  Then we define
  \begin{align}
    \label{eq:114}
    &\Qc_{\qb_{0},\ldots,\qb_{n-1},\qb_{n}}
      :=
      \CC\Big\la \xb,\yb,\yb^*,\frac{1}{\qb_{1}},\frac{1}{\qb_1^*},\ldots,\frac{1}{\qb_{n}},\frac{1}{\qb_n^*} \Big\ra
      ,
    \\ \label{eq:115}
    &\Dc_{\qb_{0},\ldots,\qb_{n-1},\qb_{n}}
      :=
      \Big\{(\xb,\yb)\in \Dc_{\qb_{0},\qb_{1},\ldots,\qb_{n-1}} : \Big\|\frac{1}{q_{n,j}(\xb,\yb,\yb^*)}\Big\|_{\Ac} < \infty \mbox{ for }  1\leq j \leq \ell_{n}\Big\}
      ,
  \end{align}
  where $\frac{1}{\qb_{i}}:=(q_{i,1}^{-1},\ldots,q_{i,\ell_{i}}^{-1})$ and $\frac{1}{\qb_{i}^*}:=((q_{i,1}^*)^{-1},\ldots,(q_{i,\ell_{i}}^*)^{-1})$.
\end{itemize}
We say that the rational expression $q\in \Qc_{\qb_{0},\ldots,\qb_{n-1},\qb_{n}}$ defined on $\Dc_{\qb_{0},\ldots,\qb_{n-1},\qb_{n}}$ has height $n$.

For any $C>0$ and $\qb_{1}\in (\Qc_{\qb_0})^{\ell_{1}}$, $\qb_{2}\in (\Qc_{\qb_0,\qb_{1}})^{\ell_{2}}$, $\ldots$, $\qb_{n}\in (\Qc_{\qb_0,\ldots,\qb_{n-1}})^{\ell_{n}}$, define the \emph{effective} domain
\begin{equation}
  \label{eq:116}
  \Dc_{\qb_{0},\ldots,\qb_{n};C}
  :=
  \Big\{(\xb,\yb)\in \Dc_{\qb_{0},\ldots,\qb_{n-1};C} : \Big\|\frac{1}{q_{n,j}(\xb,\yb,\yb^*)}\Big\|_{\Ac} \leq C \mbox{ for } 1\leq j \leq \ell_{n}\Big\}
  \subset
  \Dc_{\qb_{0},\ldots,\qb_{n}}
  .
\end{equation}
This domain will allow an effective control of the norm of rational expressions of height $n$.
\begin{rem}
  Similar constructions of rational functions/expressions involving their \emph{height} have been exploited actively in the literature (see, e.g., \cite{Reut96}, \cite[Chapter~4]{BersReutBook}, \cite{Yin18}).
  Note that here we allow the ``denominators'' without constant terms, so they cannot be automatically expanded into geometric series for small $\xb, \yb$.
  Hence we need to introduce and follow explicit domains.
\end{rem} 
\begin{rem}
  In the sequel, the statement ``$q$ is a rational expression of height $n$'' will implicitly mean that there (uniquely) exist $\qb_{1}\in (\Qc_{\qb_0})^{\ell_{1}}$, $\qb_{2}\in (\Qc_{\qb_0,\qb_{1}})^{\ell_{2}}$, $\ldots$, $\qb_{n}\in (\Qc_{\qb_0,\ldots,\qb_{n-1}})^{\ell_{n}}$ and $C>0$ such that $q\in\Qc_{\qb_{0},\ldots,\qb_{n}}$ and $q$ is evaluated on the effective domain $\Dc_{\qb_{0},\ldots,\qb_{n};C}$.
  Note that many of the basic results, in particular about constructing the linearizations, can be formulated in a completely abstract form or without restriction to effective domains.  
\end{rem}
\begin{rem}
  When evaluating a rational expression of height $n$ on different $C^*$-algebras $\Ac_{1},\ldots,\Ac_{k}$, we will use the notation $\Dc_{\qb_{0},\ldots,\qb_{n}}(\Ac_{i})$ and $\Dc_{\qb_{0},\ldots,\qb_{n};C}(\Ac_{i})$ correspondingly.
\end{rem}
\begin{defn}[Self-adjoint rational expression]
  We say that a rational expression $q=q(\xb,\yb, \yb^*)$ is self-adjoint if $q(\xb,\yb, \yb^*) = [q(\xb,\yb, \yb^*)]^{*}$ for all $(\xb,\yb)\in \Dc$.
\end{defn}

\subsection{Linearizations and linearization algorithm}
\label{sec:line-algor}

\begin{defn}
  \label{def:linearization}
  Let $q$ be a self-adjoint rational expression of height $n$ in NC variables $\xb$, $\yb$ and $\yb^*$.
  We say that the self-adjoint matrix
  \begin{equation}
    \label{eq:117}
    \Lb
    =
    \left(
      \begin{array}{c|ccc}
        \lambda & & \bm{\ell}^*&
        \\ \hline
                &&&
        \\
        \bm{\ell} & &\widehat{\Lb} &
        \\
                &&&
      \end{array}
    \right)
    \in (\CC\la \xb,\yb,\yb^*\ra)^{m\times m}
  \end{equation}
  with $\lambda\in\CC\la \xb,\yb,\yb^*\ra$, $\bm{\ell} \in(\CC\la \xb,\yb,\yb^*\ra)^{(m-1)\times 1}$, $\widehat{\Lb}\in(\CC\la \xb,\yb,\yb^*\ra)^{(m-1)\times (m-1)}$,  whose entries are polynomials of degree at most $1$, is a \emph{(self-adjoint) linearization} of $q$ if 
  \begin{itemize}\setlength{\itemsep}{0pt}
  \item[(i)] the submatrix $\widehat{\Lb}\in (\CC\la \xb,\yb,\yb^* \ra)^{(m-1)\times (m-1)}$ is invertible, and
  \item[(ii)] $\lambda - \bm{\ell}^* {\widehat{\Lb}}^{-1} \bm{\ell} = q$
  \end{itemize}
  for all $(\xb,\yb)\in\Dc_{\qb_{0},\ldots,\qb_{n}}$.
\end{defn}

The linearization of $q$ can be written as
\begin{equation}
  \label{eq:118}
  \Lb
  =
  K_0 \otimes \cstone - \sum_{\alpha=1}^{\alpha_*} K_{\alpha} \otimes x_{\alpha} - \sum_{\beta=1}^{\beta_*}\big( L_{\beta} \otimes y_{\beta} + L_{\beta}^* \otimes y_{\beta}^*\big)
  ,
\end{equation}
where $K_0,K_{\alpha},L_{\beta} \in \CC^{m\times m}$.

The idea of studying noncommutative rational functions/expressions via linearizations goes back to Kleene \cite{Klee56}.
Since the publication of this work various approaches and algorithms have been developed for constructing linearizations of general classes of rational functions/expressions (see, e.g., \cite{BersReutBook} or \cite{HeltMaiSpei18} for a pedagogical presentation of the subject).
For reader's convenience we provide below a simple linearization algorithm based on the method described in \cite[Section~A.1]{ErdoKrugNemi_Poly}.
We use the following observation: for matrices $\Ab_{i}\in (\CC\la \xb,\yb,\yb^*\ra)^{m_i\times m_i}$ and $\Bb_{j} \in (\CC\la \xb,\yb,\yb^*\ra)^{m_{j}\times m_{j+1}}$, $m_{j}\in \NN$, the lower right $m_1 \times m_{k}$ submatrix of the inverse of the block matrix
\begin{equation}
  \label{eq:119}
    \left(
      \begin{array}{ccccccc}
        &&&\Bb_{1}&-\Ab_{1}
        \\
        && \Bb_{2} & -\Ab_{2} &
        \\
        &\iddots &\iddots&&
        \\
        \Bb_{k-1}& -\Ab_{k-1} &    && 
        \\
        -\Ab_{k} &  &    &&
      \end{array}
    \right)
\end{equation}
is equal to
\begin{equation}
  \label{eq:120}
  -\Ab_{1}^{-1}\Bb_{1}\Ab_{2}^{-1}\Bb_{2}\cdots \Ab_{k-1}^{-1} \Bb_{k-1} \Ab_{k}^{-1}
  \in
  (\CC\la \xb,\yb,\yb^*\ra)^{m_1\times m_k}
  .
\end{equation}
Now, if $q_{1},\ldots,q_{\ell}$ are rational expressions having known linearizations $\Ab_{1}, \ldots, \Ab_{\ell}$, then one can easily check that the linearization of the product $w_0 \frac{1}{q_{1}}w_{1}\frac{1}{q_{2}}\cdots \frac{1}{q_{k-1}}w_{k-1}\frac{1}{q_{k}}w_{k}$ with $w_{j}\in \{\cstone, x_{\alpha}, y_{\beta}, y_{\beta}^{*} \, | \,1\leq \alpha \leq \alpha_*, 1\leq \beta \leq \beta_{*}\}$ can be given by
\begin{equation}
  \label{eq:121}
  \left(
    \begin{array}{c|ccccc}
      &&&&&\bb_{0}
      \\ \hline
      &&&&\Bb_{1}&-\Ab_{1}
      \\
      &&& \Bb_{2} & -\Ab_{2} &
      \\
      &&\iddots &\iddots&&
      \\
      &\Bb_{k-1}& -\Ab_{k-1} &    && 
      \\
      \bb_{k} &-\Ab_{k} &  &    &&
    \end{array}
  \right)
\end{equation}
with $\bb_0=(w_0,0,\ldots,0)\in (\CC\la \xb,\yb,\yb^*\ra)^{1\times m_{1}}$, $\bb_k=(w_{k},0,\ldots,0)^{t} \in (\CC\la \xb,\yb,\yb^*\ra)^{m_{k}\times 1}$ and  $\Bb_{j}$ being matrices of the form
\begin{equation}
  \label{eq:122}
  \Bb_{j}
  =
  \left(
    \begin{array}{cccc}
      w_{j}&0&\cdots&0
      \\
      0& 0 &\cdots & 0
      \\
      \vdots & \vdots &\ddots   & \vdots
      \\
      0 & 0 & \cdots & 0
    \end{array}
  \right)
  \in
  (\CC\la \xb,\yb,\yb^*\ra)^{m_{j}\times m_{j+1}}
  .
\end{equation}
Using \eqref{eq:121}, we can construct a linearization of rational expressions using the induction on the height.
In the case of a rational expression of height $0$ (polynomial function), linearization can be constructed using, e.g., the algorithm from \cite[Section~A.1]{ErdoKrugNemi_Poly}.
If $\qb_{1}\in (\Qc_{\qb_{0}})^{\ell_{1}}$ and $q\in \Qc_{\qb_0,\qb_1}$ is a rational expression of height $1$, the linearization can be obtain using the following algorithm:
\setlength{\itemsep}{0pt}
\begin{itemize}
\item[(B0)] write $q$ as a sum of monomials in $\xb$, $\yb$, $\yb^*$, $\frac{1}{\qb_{1}}$, $\frac{1}{\qb_{1}^*}$ of the form $w_{1}\frac{1}{q_{1}}w_{2}\frac{1}{q_{2}}\cdots w_{k-1}\frac{1}{q_{k-1}} w_{k}$ with $w_{j} \in \{\cstone,x_{\alpha},y_{\beta},y_{\beta}^* \, | \, 1\leq \alpha \leq \alpha_*,1\leq \beta \leq \beta_*\}$ and $q_{i}\in \{\cstone, q_{1,\gamma}, q_{1,\gamma}^* \, | \, 1\leq \gamma\leq \ell_{1}\}$ using $\frac{1}{\cstone}=\cstone$ if necessary;
\item[(B1)] for each polynomial $q_{1,\gamma_{1}}, q_{1,\gamma_{1}}^*, 1\leq \gamma_{1}\leq \ell_{1}$ construct a linearization (not necessarily self-adjoint) using the algorithm from \cite[Section~A.1]{ErdoKrugNemi_Poly};
\item[(B2)] linearization of a monomial in $\xb$, $\yb$, $\yb^*$, $\frac{1}{\qb_{1}}$, $\frac{1}{\qb_{1}^*}$ of the form $w_{1}\frac{1}{q_{1}}w_{2}\frac{1}{q_{2}}\cdots w_{k-1}\frac{1}{q_{k-1}} w_{k}$ is given by \eqref{eq:121} with $A_{i}$ being either linearizations of polynomials $q_{i}\in \{q_{1,\gamma_{1}}, q_{1,\gamma_{1}}^*\, | \, 1\leq \gamma_{1} \leq \ell_{1} \}$, or $\cstone$ (linearization of $\frac{1}{\cstone} = \cstone$), and $w_{j} \in \{\cstone,x_{\alpha},y_{\beta},y_{\beta}^* \, | \, 1\leq \alpha \leq \alpha_*,1\leq \beta \leq \beta_*\}$;
\item[(B3)] the (possibly not self-adjoint) linearization of a linear combination of monomials (and thus $q$) is constructed by putting the linearizations of monomials obtained at (B2) into a block-diagonal form using a procedure similar to (R1)-(R2) from \cite[Section~A.1]{ErdoKrugNemi_Poly};
\item[(B4)] if after step (B3) the resulting linearization is not self-adjoint,  the symmetrized linearization of $q = (q+q^*)/2$ can be obtained by putting the linearization obtained at step (B3) and its conjugate transpose into a block-skew-diagonal form similarly as in (R3) from \cite[Section~A.1]{ErdoKrugNemi_Poly}.
\end{itemize}
Suppose that we know how to construct linearizations for rational expressions of height $\leq n-1$ and suppose that $q\in \Qc_{\qb_{0},\ldots,\qb_{n}}$ is a rational expression of height $n$.
Then the linearization of $q$ can be constructed using the following algorithm,  that is an adaptation of (B0)-(B4):
\begin{itemize}\setlength{\itemsep}{0pt}
\item[(S0)]  write $q$ as a sum of monomials in $\xb$, $\yb$, $\yb^*$, $\frac{1}{\qb_{1}}$, $\frac{1}{\qb_{1}^*},\ldots,\frac{1}{\qb_{n}},\frac{1}{\qb_{n}^*}$ of the form $w_{1}\frac{1}{q_{1}}w_{2}\frac{1}{q_{2}}\cdots w_{k-1}\frac{1}{q_{k-1}} w_{k}$ with $w_{j} \in \{\cstone,x_{\alpha},y_{\beta},y_{\beta}^* \, | \, 1\leq \alpha \leq \alpha_*,1\leq \beta \leq \beta_*\}$ and $q_{i}\in \{\cstone, q_{t,\gamma_{t}}, q_{t,\gamma_{t}}^* \, | \, 1\leq t \leq n, 1\leq \gamma_{t} \leq \ell_{n}\}$ using $\frac{1}{\cstone}=\cstone$ if necessary;
\item[(S1)]  construct linearizations (not necessarily self-adjoint) of each rational expression  $q_{t,\gamma_{t}}$ for  $1\leq t \leq n$, $1\leq \gamma_{t}\leq \ell_{t}$ of height $\leq n-1$ (rational expressions of height $\leq n-1$);
\item[(S2)] linearization of a monomial in $\xb$, $\yb$, $\yb^*$, $\frac{1}{\qb_{1}}, \frac{1}{\qb_{1}^*}, \ldots, \frac{1}{\qb_{n}}, \frac{1}{\qb_{n}^*}$ of the form $w_{1}\frac{1}{q_{1}}w_{2}\frac{1}{q_{2}}\cdots w_{k-1}\frac{1}{q_{k-1}} w_{k}$ is given by \eqref{eq:121} with $A_{i}$ being either linearizations of polynomials $q_{i}\in \{q_{t,\gamma_{t}}, q_{t,\gamma_{t}}^* \, | \, 1\leq t \leq n, 1\leq \gamma_{t} \leq \ell_{n}\}$, or $\cstone$, and $w_{j} \in \{\cstone,x_{\alpha},y_{\beta},y_{\beta}^* \, | \, 1\leq \alpha \leq \alpha_*,1\leq \beta \leq \beta_*\}$.
\end{itemize}
The steps (S3)-(S4) are identical to (B3)-(B4).
\begin{rem}
  It is always possible to obtain the specific form of monomials required in steps (B0) and (S0) by adding factors $\cstone$ or $\frac{1}{\cstone}$, as, for example, in
  \begin{equation}
    \label{eq:123}
    \frac{1}{(\cstone-x)^{2}}x^2
    =
    \cstone \frac{1}{\cstone - x} \cstone \frac{1}{\cstone - x} x \frac{1}{\cstone} x
    .
  \end{equation}
  Note that there is some ambiguity at steps (B0) and (S0), representing the fact that we have freedom to choose the order in which the monomials appear in the sum, as well as freedom to put the product of the constant terms $\cstone$ and $\frac{1}{\cstone}$ between the terms $w_j$ and $\frac{1}{q_i}$.
  All the results of Section~\ref{sec:spectr-prop-gener} hold for any choice of the representation of $q$ as a sum of monomials, therefore we do not fix any particular order or other rules to guarantee the uniqueness of the representation in (B0) and (S0).
\end{rem}
The condition (ii) in the definition of the linearization is satisfied by construction.
The condition (i) follows from the following lemma, which gives a bound on $\|( \widehat{\Lb}(\xb,\yb,\yb^*))^{-1}\|_{\CC^{(m-1)\times (m-1)}\otimes \,\Ac}$, where for any $n\in \NN$ and $\Rb = (\Rb_{ij})_{i,j=1}^{n} \in \CC^{n\times n}\otimes \Ac$ we denote
\begin{equation}
  \label{eq:124}
  \| \Rb \|_{\CC^{n\times n}\otimes \,\Ac}
  :=
  \max_{1\leq i,j\leq n} \|\Rb_{ij}\|_{\Ac}
  .
\end{equation}

\begin{lem}[Invertibility of $\widehat{\Lb}$]
  \label{lem:invertibility}
Let $q$ be a self-adjoint rational expression of height $n$ and 
  let $\Lb  = \Lb_{q} \in (\CC\la \xb,\yb,\yb^*\ra)^{m\times m} $ be the linearization of $q$ constructed via the above algorithm.
  Let $\widehat{\Lb}$ be the submatrix of $\Lb$ defined using the decomposition \eqref{eq:117}.
  Then there exist $\cstG>0$ and $\cstH \in \NN$ such that for any $(\xb,\yb)\in \Dc_{\qb_{0},\ldots,\qb_{n};C}$
  \begin{equation}
    \label{eq:125}
    \| (\widehat{\Lb}(\xb,\yb,\yb^*))^{-1} \|_{\CC^{(m-1)\times (m-1)}\otimes \,\Ac}
    \leq
    \cstG (1 + C + \max_{\alpha}\|x_{\alpha}\|_{\Ac} + \max_{\beta}\|y_{\beta}\|_{\Ac})^{\cstH}
    .
  \end{equation}
\end{lem}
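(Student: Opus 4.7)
The plan is to proceed by induction on the height $n$ of the rational expression $q$, propagating a polynomial bound of the claimed form through each step of the recursive linearization algorithm (S0)--(S4). First I would strengthen the inductive statement to cover not only self-adjoint linearizations, but also the possibly non-self-adjoint intermediate ones for sub-expressions $q_{t,\gamma_t}$ produced in step (S1), since these appear as the blocks $\Ab_i$ when the algorithm assembles higher-height linearizations in (S2); the Schur complement characterization of linearization does not require self-adjointness, so this strengthening is natural and its proof is parallel.

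The base case $n=0$ treats polynomials, where the algorithm of \cite[Section~A.1]{ErdoKrugNemi_Poly} produces a $\widehat{\Lb}$ that is block-triangular with identity blocks on the diagonal and entries of degree at most one in the $x_\alpha, y_\beta, y_\beta^{*}$ elsewhere. Block back-substitution then gives a bound of the desired form with no $C$-dependence, since no inverse of a rational sub-expression appears at this level.

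For the inductive step, the key preliminary estimate is a bound on $\|\Ab_i^{-1}\|$ for each sub-linearization $\Ab_i$ of height at most $n-1$ used in (S2). Using that $\Ab_i$ linearizes $q_i$, the Schur complement formula with respect to $\widehat{\Ab}_i$ gives
\[
\Ab_i^{-1}
=
\begin{pmatrix}
q_i^{-1} & -q_i^{-1}\ell_i^{*}\widehat{\Ab}_i^{-1}
\\
-\widehat{\Ab}_i^{-1}\ell_i\, q_i^{-1} & \widehat{\Ab}_i^{-1} + \widehat{\Ab}_i^{-1}\ell_i\, q_i^{-1}\ell_i^{*}\widehat{\Ab}_i^{-1}
\end{pmatrix},
\]
and combining this with the effective-domain bound $\|q_i^{-1}\|_{\Ac}\le C$, the inductive bound on $\|\widehat{\Ab}_i^{-1}\|$, and the obvious polynomial bound on $\|\ell_i\|$ coming from its degree-one entries yields the required control of $\|\Ab_i^{-1}\|$. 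Next I would propagate these estimates through the block structure assembled in (S2)--(S4): for a single monomial, the matrix $\widehat{\Lb}$ is exactly the anti-bidiagonal block matrix \eqref{eq:119}, whose inverse decomposes via block Gaussian elimination into explicit products of the $\Ab_i^{-1}$'s and the $\Bb_j$'s, generalizing the lower-right block formula \eqref{eq:120}, with $\|\Bb_j\|\le 1+\max_\alpha\|x_\alpha\|_{\Ac}+\max_\beta\|y_\beta\|_{\Ac}$ by their explicit form; the block-diagonal assembly (S3) is handled by taking a maximum over diagonal blocks; and the self-adjointization in (S4) introduces a block-skew-diagonal structure whose inverse is expressible in terms of the inverse of the non-self-adjoint linearization and its adjoint, so polynomial bounds carry through.

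The hardest part, I expect, will be the combinatorial bookkeeping: tracking how the exponent $\cstH$ accumulates additively through each layer of the recursion, verifying at each step that the strengthened (non-self-adjoint) inductive hypothesis is applicable, and ensuring that the multiplicative constants produced by the various block-matrix manipulations do not depend on the evaluation point $(\xb,\yb)$ but only on the syntactic form of $q$. Since the recursion depth is bounded in terms of $n$ and the fixed depth of the polynomial linearization algorithm, $\cstG$ and $\cstH$ are ultimately well-defined finite quantities determined by $q$ alone, yielding \eqref{eq:125}.
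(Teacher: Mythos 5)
Your proposal follows essentially the same route as the paper: induction on height, reducing to the anti-bidiagonal blocks \eqref{eq:126}, expressing the blocks of the inverse as products of $\Ab_i^{-1}$'s and $\Bb_j$'s, and controlling each $\Ab_i^{-1}$ by the Schur complement formula together with the effective-domain bound $\|q_i^{-1}\|_{\Ac}\leq C$ and the inductive bound on $\widehat{\Ab}_i^{-1}$. Your explicit strengthening of the inductive hypothesis to cover the (generally non-self-adjoint) linearizations of sub-expressions $q_{t,\gamma_t}$ is a good point of care — the paper implicitly assumes this broader hypothesis when it writes ``Suppose \eqref{eq:125} holds for all rational expressions of height $k\leq n-1$'' — but it does not change the substance of the argument.
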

\begin{proof}
  We prove \eqref{eq:125} by induction on $n$.
  For $n=0$ (the special case of polynomial functions) \eqref{eq:125} follows from, for example, \cite[(3.16)]{ErdoKrugNemi_Poly}.
  Suppose \eqref{eq:125} holds for all rational expressions of height $k\leq n-1$.
  Consider $q$ of height $n$ with linearization obtained via (S0)-(S4).
  Steps (S3) and (S4) of the linearization algorithm endow $\widehat{\Lb}$ with block-diagonal (S3) or block-skew-diagonal (S4) structure with blocks being the linearizations of monomials obtained at step (S2).
  Therefore, in order to obtain the bound \eqref{eq:125} it is enough to consider only the inverses of the blocks of the form
  \begin{equation}
    \label{eq:126}
    \left(
      \begin{array}{ccccccc}
        &&&\Bb_{1}& -\Ab_{1}
        \\
        && \Bb_{2} & -\Ab_{2} &
        \\
        &\iddots &\iddots&&
        \\
        \Bb_{k-1}&- \Ab_{k-1} &    && 
        \\
        - \Ab_{k} &  &    &&
      \end{array}
    \right)
    \in (\CC\la \xb, \yb ,\yb^*\ra)^{m'\times m'}
  \end{equation}
  with $\Ab_{i}$ being the linearizations of the rational expressions $q_{i}\in \{\frac{1}{q_{t,\gamma_{t}}},\frac{1}{q_{t,\gamma_{t}}^*}\, | \, 0 \leq t \leq n, 1\leq \gamma_{t} \leq \ell_{t}\}$, and $\Bb_{j}$ being of the form \eqref{eq:122}.
  One can easily check that the inverse of \eqref{eq:126} consists of the blocks of the type $\Ab_{i}^{-1} \Bb_{i} \Ab_{i + 1}^{-1} \Bb_{i + 1}\ldots$.
  The induction step, together with the Schur complement formula for $\Ab_{i}^{-1}$ and the condition that for $(\xb,\yb) \in \Dc_{\qb_{0},\ldots,\qb_{n};C}$
  \begin{equation}
    \label{eq:127}
    \Big\| \frac{1}{q_{i}(\xb,\yb,\yb^*)}\Big\|_{\Ac}
    \leq
    C
  \end{equation}
  implies that for each block of type \eqref{eq:126} there exist $C'>0$ and $n'\in \NN$ such that
  \begin{equation}
    \label{eq:128}
    \left\|
      \left(
        \begin{array}{ccccccc}
          &&&\Bb_{1}&-\Ab_{1}
          \\
          && \Bb_{2} & -\Ab_{2} &
          \\
          &\iddots &\iddots&&
          \\
          \Bb_{k-1}& -\Ab_{k-1} &    && 
          \\
          -\Ab_{k} &  &    &&
        \end{array}
      \right)^{-1} 
    \right\|_{\CC^{m'\times m'}\otimes \,\Ac}
    \leq
    C'(1 + C + \max_{\alpha}\|x_{\alpha}\|_{\Ac} + \max_{\beta}\|y_{\beta}\|_{\Ac})^{n'}
    ,
  \end{equation}
  where $\Ab_{i}$'s and $\Bb_{j}$'s in the left-hand side are evaluated at $(\xb,\yb)\in \Dc$.
  Taking $\cstG$ and $\cstH$, respectively, the maximum over all $C'$'s and the maximum over all $n'$'s in the bounds \eqref{eq:128} running through all monomials in the representation of $q$, leads to \eqref{eq:125}
\end{proof}
\begin{rem}
  Suppose that $P\in \CC^{m\times m}$ is of the form
  \begin{equation}
    \label{eq:129}
    P
    =
    \left(
      \begin{array}{c|ccc}
        1& 0 & \cdots & 0
        \\ \hline
        0 & & &
        \\
        \vdots && Q & 
        \\
        0 & & &
      \end{array}
      \right)
    \end{equation}
    with $Q\in\CC^{(m-1)\times (m-1)}$ invertible.
    It is easy to see that if  $\Lb\in (\CC\la \xb,\yb,\yb^*\ra)^{m\times m}$ is a linearization of a rational expression of height $q$, then so is $(P\otimes \cstone) \Lb (P^{-1} \otimes \cstone)$.
    We will use this freedom to bring linearizations to more convenient form. 
\end{rem}

\subsection{ A priori  bound on generalized resolvents}
\label{sec:trivial-bound}

 \begin{defn}[Generalized resolvent] \label{defn:a-generalized-resolvent}
  Let $\Lb\in (\CC\la \xb,\yb,\yb^*\ra)^{m\times m}$. 
  We call the matrix-valued function $z \mapsto (\Lb - z J_{m}\otimes \cstone)^{-1}$ defined for $z\in \CC_+$ the \emph{generalized resolvent} of $\Lb$.
\end{defn}

\begin{lem}
  \label{lem:trivialBound}
  Let $q$ be a self-adjoint rational expression of height $n$ and let $\Lb \in (\CC\la \xb,\yb,\yb^*\ra)^{m\times m} $ be the linearization of $q$ constructed using the algorithm from Section~\ref{sec:line-algor}.
  Then there exist $\cstI>0$ and $\cstK\in \NN$ such that for all $C>1$, $(\xb,\yb)\in \Dc_{\qb_{0},\ldots,\qb_{n};C}$ and $z\in \CC_+$
  \begin{equation}
    \label{eq:130}
    \|(\Lb(\xb,\yb,\yb^*)-zJ_{m}\otimes \cstone)^{-1}\|_{\CC^{m\times m}\otimes \,\Ac}
    \leq
    \cstI(1+C + \max_{\alpha}\|x_{\alpha}\|_{\Ac} + \max_{\beta} \|y_{\beta}\|_{\Ac})^{\cstK}\Big(1+\frac{1}{\Im z}\Big)
    .
  \end{equation}
\end{lem}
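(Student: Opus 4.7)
The plan is to derive the bound by block-inverting $\Lb - zJ_{m}\otimes \cstone$ via the Schur complement with respect to the distinguished $(1,1)$-entry that defines the linearization, and then invoke Lemma~\ref{lem:invertibility} together with the self-adjointness of $q$ to control the resulting pieces.

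First I would write $\Lb - zJ_{m}\otimes \cstone$ in the block form dictated by the definition of linearization in~\eqref{eq:117}, namely
\begin{equation*}
\Lb - zJ_{m}\otimes \cstone
=
\left(
\begin{array}{c|c}
\lambda - z\,\cstone & \bm{\ell}^{\,*}
\\ \hline
\bm{\ell} & \widehat{\Lb}
\end{array}
\right),
\end{equation*}
where the evaluation at $(\xb,\yb,\yb^{*})$ is understood. Since $\widehat{\Lb}(\xb,\yb,\yb^{*})$ is invertible on $\Dc_{\qb_{0},\ldots,\qb_{n};C}$ by Lemma~\ref{lem:invertibility}, I can apply the usual block Schur inversion formula. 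The Schur complement is exactly
\begin{equation*}
(\lambda - z\,\cstone) - \bm{\ell}^{\,*}\widehat{\Lb}^{-1}\bm{\ell}
\;=\;
q(\xb,\yb,\yb^{*}) - z\,\cstone,
\end{equation*}
by condition (ii) in Definition~\ref{def:linearization}.

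Next, since $q$ is self-adjoint, the spectrum of $q(\xb,\yb,\yb^{*})$ is contained in $\RR$, so for every $z\in\CC_{+}$ the element $q - z\,\cstone$ is invertible in $\Ac$ with the elementary estimate $\|(q-z\,\cstone)^{-1}\|_{\Ac}\le 1/\Im z$. The four blocks of $(\Lb - zJ_{m}\otimes\cstone)^{-1}$ are then given explicitly by
\begin{align*}
\begin{pmatrix} (q-z)^{-1} & -(q-z)^{-1}\bm{\ell}^{\,*}\widehat{\Lb}^{-1} \\ -\widehat{\Lb}^{-1}\bm{\ell}\,(q-z)^{-1} & \widehat{\Lb}^{-1} + \widehat{\Lb}^{-1}\bm{\ell}\,(q-z)^{-1}\bm{\ell}^{\,*}\widehat{\Lb}^{-1} \end{pmatrix},
\end{align*}
so estimating the matrix norm reduces to estimating four quantities: $\|(q-z)^{-1}\|_{\Ac}$, $\|\widehat{\Lb}^{-1}\|$, $\|\bm{\ell}\|$ (and its adjoint), entry-by-entry. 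The first is bounded by $1/\Im z$; the second by Lemma~\ref{lem:invertibility}, giving $C_{q}\big(1+C+\max_{\alpha}\|x_{\alpha}\|_{\Ac}+\max_{\beta}\|y_{\beta}\|_{\Ac}\big)^{n_{q}}$; and for the third I observe that by construction the entries of $\bm{\ell}$ are polynomials of degree at most one in $\xb,\yb,\yb^{*}$ with numerical coefficients depending only on $\Lb$, so $\|\bm{\ell}\|$ is bounded by a constant multiple of $1+\max_{\alpha}\|x_{\alpha}\|_{\Ac}+\max_{\beta}\|y_{\beta}\|_{\Ac}$.

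Combining these estimates, each block of the inverse is bounded by a polynomial of fixed degree $\cstK$ in $1+C+\max_{\alpha}\|x_{\alpha}\|_{\Ac}+\max_{\beta}\|y_{\beta}\|_{\Ac}$, multiplied by the factor $1+(\Im z)^{-1}$ coming from the worst term $\widehat{\Lb}^{-1}\bm{\ell}(q-z)^{-1}\bm{\ell}^{\,*}\widehat{\Lb}^{-1}$ (and from $(q-z)^{-1}$ itself). Using the trivial relation between the entrywise max norm in~\eqref{eq:124} and the norms of the individual blocks yields~\eqref{eq:130}. No step here presents a real obstacle; the only care needed is to verify that the Schur complement indeed equals $q-z$ (which is precisely the defining property of the linearization) so that self-adjointness of $q$ can be exploited to absorb all the $z$-dependence into the single factor $(1+(\Im z)^{-1})$.
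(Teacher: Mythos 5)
Your proof is correct and follows essentially the same route as the paper: block-decompose $\Lb - zJ_m\otimes\cstone$, use the Schur complement formula together with Definition~\ref{def:linearization}(ii) to identify the $(1,1)$-block of the inverse as $(q-z\cstone)^{-1}$, and then combine Lemma~\ref{lem:invertibility} for $\widehat{\Lb}^{-1}$ with the trivial resolvent bound $\|(q-z\cstone)^{-1}\|_{\Ac}\le 1/\Im z$. The only addition you make is spelling out the degree-one bound on $\bm{\ell}$, which the paper leaves implicit.
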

\begin{proof}
  Rewrite $\Lb - zJ_{m}\otimes \cstone$ using the block decomposition from \eqref{eq:117}
  \begin{equation}
    \label{eq:131}
    \Lb - zJ_{m}\otimes \cstone
    =
    \left(
      \begin{array}{c|ccc}
        \lambda - z \cstone&& \bm{\ell}^*&
        \\ \hline
                           &&&
        \\
        \bm{\ell} & &\widehat{\Lb} &
        \\
                           &&&                          
      \end{array}
    \right)
  \end{equation}
  with $\widehat{\Lb}\in(\CC\la \xb,\yb,\yb^*\ra)^{(m-1)\times (m-1)}$.
  By (ii) in the definition of the linearization and the Schur complement formula we have
  \begin{equation}
    \label{eq:132}
    (\Lb - zJ_{m}\otimes \cstone)^{-1}
    =
    \left(
      \begin{array}{c|ccc}
        (q-z\cstone)^{-1}& &-(q-z\cstone)^{-1} \bm{\ell}^* \widehat{\Lb}^{-1}
        \\ \hline
                         &&&
        \\
        -\widehat{\Lb}^{-1} \bm{\ell} (q-z\cstone)^{-1} & &\widehat{\Lb}^{-1}+\widehat{\Lb}^{-1} \bm{\ell} (q-z\cstone)^{-1} \bm{\ell}^* \widehat{\Lb}^{-1} &
        \\
                         &&&                          
      \end{array}
    \right).
  \end{equation}
  Now \eqref{eq:130} follows from Lemma~\ref{lem:invertibility} and the trivial bound for resolvents of self-adjoint elements
  \begin{equation}
    \label{eq:133}
    \Big\|\frac{1}{q - z\cstone }\Big\|_{\Ac}
    \leq
    \frac{1}{\Im z}
    \quad
    \mbox{uniformly for } z\in\CC_{+}
    .
  \end{equation}
\end{proof}

\subsection{Dyson equation for linearizations of NC rational expressions}
\label{sec:exist-uniq-solut}

Let $q$ be a self-adjoint rational expression of height $n$ and let $\Lb\in (\CC\la \xb,\yb,\yb^*\ra)^{m\times m} $ be its linearization constructed using the algorithm from Section~\ref{sec:line-algor}.
Write $\Lb$ as
\begin{equation}
  \label{eq:134}
  \Lb
  =
  K_{0} \otimes \cstone - \sum_{\alpha = 1}^{\alpha_*} K_{\alpha} \otimes x_{\alpha}- \sum_{\beta = 1}^{\beta^*} \big( L_{\beta} \otimes y_{\beta} + L_{\beta}^* \otimes y_{\beta}^* \big)
\end{equation}
with $K_0,K_{\alpha},L_{\beta} \in \CC^{\,m\times m}$ and $K_{0}$, $K_{\alpha}$ self-adjoint.
Define the completely positive map  $\SuOp : \CC^{\,m \times m} \rightarrow \CC^{\,m \times m}$ by
\begin{equation}
  \label{eq:135}
  \SuOp\big[R\,\big]
  =
  \sum_{\alpha = 1}^{\alpha_*} K_{\alpha} R K_{\alpha} + \sum_{\beta=1}^{\beta_*} \big( L_{\beta} R L_{\beta}^* + L_{\beta}^* R L_{\beta}\big)
  ,\quad
  R\in \CC^{m \times m}
  .
\end{equation}
\begin{defn}[Dyson equation for linearizations]
We call the equation
\begin{equation}
  \label{eq:136}
  -\frac{1}{\Sol}
  =
  z J_{m} - K_{0} + \SuOp[ \Sol ]
\end{equation}
the \emph{Dyson equation for the linearization (DEL)} (of a rational expression).
\end{defn}

\begin{lem}[Solution of \emph{DEL}: existence and basic properties]\label{lem:exist-uniq}
  Let $\Hc$ be a Hilbert space and let $\Sc \subset \Bc(\Hc)$ be a $C^*$-algebra containing a freely independent family $\{\semic_1,\ldots,\semic_{\alpha_*}, c_{1},\ldots,c_{\beta_*}\}$ of $\alpha_*$ semicircular and $\beta_*$ circular elements in a NC probability space $(\Sc,\tau_{\Sc})$.
  Let $q\in \Qc_{\qb_{0},\ldots,\qb_{n}}$ and assume that $(\sbm,\cb)\in \Dc_{\qb_{0},\ldots,\qb_{n};C}$ for $\sbm = (\semic_{1},\ldots,\semic_{\alpha_*})$, $\cb:=(c_1,\ldots,c_{\beta_*})$ and some $C>0$.
  Define
  \begin{equation}
    \label{eq:137}
    \Sol^{(\mathrm{sc})}_{z}
    :=
    (\id_{m}\otimes \,\tau_{\Sc})\bigg( \Big[(K_0-zJ)\otimes \scone - \sum_{\alpha=1}^{\alpha_*} K_{\alpha} \otimes \semic_{\alpha} - \sum_{\beta = 1}^{\beta_*}\big(L_{\beta} \otimes c_{\beta} + L_{\beta}^* \otimes c_{\beta}^*\big)\Big]^{-1}\bigg)
    .
  \end{equation}
  Then
  \begin{itemize}\setlength{\itemsep}{0pt}
  \item[(i)] there exists $\cstJ>0$ such that 
    \begin{equation}
      \label{eq:138}
      \|\Sol^{(\mathrm{sc})}_{z}\|_{ \CC^{m\times m}}
      \leq
      \cstJ \Big(1 + \frac{1}{\Im z}\Big)
      ;  
    \end{equation}
  \item[(ii)] $\Sol^{(\mathrm{sc})}_{z}$ satisfies the DEL \eqref{eq:136};
  \item[(iii)] $\Sol^{(\mathrm{sc})}_{z} $ depends analytically on $z$;
  \item [(iv)] $\Im \Sol^{(\mathrm{sc})}_{z} \geq 0$;
  \item[(v)] $\Sol^{(\mathrm{sc})}_{z}$ admits the representation 
    \begin{equation}
      \label{eq:139}
      \Sol^{(\mathrm{sc})}_{z} 
      =
      \Sol^{\infty} + \int_{\RR} \frac{V(d\lambda)}{\lambda-z}
      ,
    \end{equation}
    where  $\Sol^{\infty}\in \CC^{m \times m }$ is a self-adjoint matrix, and $V(d\lambda)$ is a positive semidefinite  matrix-valued measure  on $\RR$ with compact support;
  \item[(vi)] for almost every  $\lambda \in R$ the limit $\lim_{\eta \rightarrow 0} \pi^{-1} \Im \Sol_{\lambda + \imu \eta} = V(\lambda) \in \CC^{m\times m}$ exists; if the limit is finite on some interval $I\subset \RR$ everywhere, then $V(d \lambda)$ is absolutely continuous on $I$ and $V(d\lambda) = V(\lambda) d\lambda$;
  \item[(vii)] $\mathrm{supp}(V_{11}) = \mathrm{supp}(\Tr V)$.
  \end{itemize}
\end{lem}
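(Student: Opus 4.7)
\emph{Strategy.} My plan is to derive all seven items from a single analytical object, the matrix-valued Cauchy transform
\[
\Sol^{(\mathrm{sc})}_z = (\id_m \otimes \tau_{\Sc})\bigl[X(z)^{-1}\bigr],
\quad
X(z) := (K_0 - zJ_m)\otimes \scone - \sum_{\alpha} K_\alpha \otimes \semic_\alpha - \sum_\beta (L_\beta \otimes c_\beta + L_\beta^* \otimes c_\beta^*).
\]
First I would verify that $X(z)$ is invertible in $\CC^{m\times m}\otimes \Sc$ for every $z\in\CC_+$. Since $(\sbm,\cb)\in\Dc_{\qb_{0},\ldots,\qb_{n};C}(\Sc)$ by hypothesis, Lemma~\ref{lem:invertibility} bounds $\|\widehat{\Lb}(\sbm,\cb)^{-1}\|$, and the Schur complement identity \eqref{eq:132} reduces invertibility of $X(z)$ to that of the selfadjoint element $q(\sbm,\cb)-z$, which is automatic for $z\in\CC_+$. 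Applying Lemma~\ref{lem:trivialBound} in the $C^*$-algebra $\Sc$, with the uniformly bounded norms $\|\semic_\alpha\|,\|c_\beta\|$, then yields (i). Analyticity (iii) is immediate since $X(z)$ is affine in $z$ and invertible, and $\id_m\otimes\tau_{\Sc}$ preserves analyticity. Positivity (iv) follows from $X(z)-X(z)^*=-2\imu\,\Im(z)\,J_m\otimes\scone$, whence $\Im X(z)^{-1} = \Im(z)\, X(z)^{-1}(J_m\otimes\scone)(X(z)^*)^{-1}\ge 0$, a relation preserved by the completely positive map $\id_m\otimes\tau_{\Sc}$.

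\emph{The DEL (ii).} This is the step carrying the essential free-probabilistic content. The identity $-M^{-1}=zJ_m-K_0+\SuOp[M]$ with $M=\Sol^{(\mathrm{sc})}_z$ is the operator-valued extension of the semicircle/circle laws: for freely independent semicirculars $\{\semic_\alpha\}$ and circulars $\{c_\beta\}$, the only nonvanishing free cumulants are $\kappa_2(\semic_\alpha,\semic_\alpha)=1$ and $\kappa_2(c_\beta,c_\beta^*)=\kappa_2(c_\beta^*,c_\beta)=1$. A standard operator-valued moment--cumulant expansion of $(\id_m\otimes\tau_{\Sc})[X(z)^{-1}]$ contracts every noncrossing pair into a sandwich of the form $K_\alpha M K_\alpha$, $L_\beta M L_\beta^*$, or $L_\beta^* M L_\beta$, and the total contribution is exactly $\SuOp[M]$; rearranging then yields the DEL.

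\emph{Representation and support.} Parts (v) and (vi) are instances of the matrix-valued Herglotz representation: any analytic function $\CC_+\to\CC^{m\times m}$ with positive semidefinite imaginary part and norm $\OO{1/\Im z}$ at infinity admits a Stieltjes representation \eqref{eq:139} with a finite positive semidefinite matrix-valued measure $V(d\lambda)$, and the Stieltjes inversion formula identifies the density $V(\lambda)$ wherever the boundary limit is locally bounded, in which case $V(d\lambda)=V(\lambda)d\lambda$ there. For (vii), the inclusion $\mathrm{supp}\,V_{11}\subseteq\mathrm{supp}\,\Tr V$ is immediate from $0\le V_{11}\le \Tr V$ as matrix-valued measures. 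The converse exploits the linearization structure: by the Schur complement formula \eqref{eq:132}, every entry of $X(z)^{-1}$ depends on $z$ only through the scalar resolvent $(q(\sbm,\cb)-z)^{-1}$, because $\widehat{\Lb}$ is $z$-independent and invertible. Hence any real point across which $M_{11}(z)=\tau_{\Sc}((q(\sbm,\cb)-z)^{-1})$ extends analytically is automatically a regularity point for every entry of $M(z)$, forcing $\mathrm{supp}\,\Tr V\subseteq\mathrm{supp}\,V_{11}$. I expect the only nonroutine step to be the DEL derivation in the second paragraph; all the remaining items reduce to matrix linear algebra combined with the classical Herglotz machinery.
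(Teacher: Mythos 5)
Your proposal follows essentially the same route as the paper for parts (i), (iii)--(vi): the a priori bound via Lemma~\ref{lem:trivialBound}, analyticity, the resolvent identity for positivity (cf.\ \eqref{eq:152}), and the matrix Herglotz representation. Your argument for (vii) is also in the spirit of the paper's (Schur complement together with absolute continuity of each block measure with respect to the spectral distribution of $q(\sbm,\cb)$); one should only be a bit more careful in phrasing it, since $(q(\sbm,\cb)-z)^{-1}$ is not a scalar but an operator in $\Sc$, and the key step is that $\tau_{\Sc}\bigl(A(q-z)^{-1}B\bigr)$ is the Stieltjes transform of a complex measure absolutely continuous with respect to the spectral distribution of $q$ for every fixed $A,B\in\Sc$, using faithfulness of $\tau_{\Sc}$.

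The genuine gap is in part (ii). You invoke a ``standard operator-valued moment--cumulant expansion'' of $(\id_m\otimes\tau_{\Sc})[X(z)^{-1}]$ and claim this directly yields the \emph{DEL}. The difficulty is that the spectral parameter enters only through $zJ_m$, and $J_m$ is rank one; consequently $\Im\bigl((K_0-zJ_m)\otimes\scone\bigr)=-\Im z\,J_m\otimes\scone$ is merely \emph{semi}-definite. The operator-valued subordination/moment--cumulant machinery for semicircular families (Voiculescu's $R$-transform, the results of Lehner and Haagerup--Thorbj{\o}rnsen that the paper cites) all require a strictly definite imaginary part of the argument, i.e.\ a genuine MDE with spectral parameter $z\cdot I_m$, not a DEL with $z\cdot J_m$. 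Moreover, the generalized resolvent does not decay as $\Im z\to\infty$ (its limit is a block containing $\widehat{\Lb}^{-1}\neq 0$), so a Neumann expansion around the zeroth-order term is not available either. The paper bridges this exactly by a regularization step --- one first proves the MDE for the regularized parameter (e.g.\ $zJ_m+\imu\varepsilon(I_m-J_m)$) and then passes $\varepsilon\downarrow 0$, using the uniform bound \eqref{eq:143} to control the limit; this is the content of the cited argument from the polynomial paper and is precisely what your sketch omits. You should either carry out this regularization explicitly, or restructure (ii) so that the free-cumulant argument is applied only to the nondegenerate MDE and the passage to the DEL is done separately.
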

\begin{proof}    
  \textbf{Proof of $(i)$.} It follows from Lemma~\ref{lem:trivialBound} and the norm bounds for semicircular and circular operators
  \begin{equation}
    \label{eq:142}
    \|s_{\alpha}\|_{\Sc}
    =
    2,\quad
    \|c_{\beta}\|_{\Sc}
    =
    2,\quad
    1\leq \alpha \leq \alpha_*
    ,\quad
    1\leq \beta \leq \beta_*
  \end{equation}
  that 
  \begin{equation}
    \label{eq:143}
        \Big\|\Big((K_{0}-zJ)\otimes \scone -\sum_{\alpha=1}^{\alpha_*} K_{\alpha} - \sum_{\beta = 1}^{\beta_*} \big(L_{\beta} \otimes c_{\beta} + L_{\beta}^* \otimes c_{\beta}^* \big)\otimes \semic_{\alpha}\Big)^{-1}\Big\|_{\CC^{m\times m}\otimes \,\Sc}
    \leq
    \cstJ \Big(1+\frac{1}{\Im z}\Big)
  \end{equation}
  for $\cstJ:= \cstI (1+C + 4)^{\cstK}$.
  
  \textbf{Proof of $(ii)$.}
    First note, that the real and imaginary parts of free circular elements form a freely independent family of semicirculars.
  Therefore, by defining for each $1\leq \beta \leq \beta_*$
  \begin{equation}
    \label{eq:144}
    s_{\alpha_*+\beta} := \sqrt{2}\Re c_{\beta}
    ,\quad
    s_{\alpha_*+\beta_*+\beta} := \sqrt{2}\Im c_{\beta}
    , \quad
    K_{\alpha_* + \beta} := \sqrt{2} \Re L_{\beta}
    ,\quad
    K_{\alpha_* + \beta_* + \beta} := -\sqrt{2} \Im L_{\beta}
    ,
  \end{equation}
  $\Sol^{(\mathrm{sc})}_{z}$ can be rewritten as 
  \begin{equation}
    \label{eq:145}
    \Sol^{(\mathrm{sc})}_{z}
    =
      (\id_{m}\otimes \,\tau_{\Sc})\bigg( \Big[(K_0-zJ)\otimes \scone - \sum_{\alpha=1}^{\alpha_*+2\beta_*} K_{\alpha} \otimes \semic_{\alpha} \Big]^{-1}\bigg)
    \end{equation}
    and it will be enough to show that $\Sol^{(\mathrm{sc})}_{z}$ satisfies the \emph{DEL} \eqref{eq:136} with $\SuOp[\,\cdot\,] = \sum_{\alpha=1}^{\alpha + 2\beta_*} K_{\alpha} \,\cdot\,  K_{\alpha}$.
    This last fact can be established via the argument similar to the proof of the  existence of the solution to the \emph{DEL} for the linearizations of \emph{polynomials} in \cite[Lemma~2.6 (iv)]{ErdoKrugNemi_Poly}.
    This proof relies on the results of \cite[Proposition~4.1]{Lehn99} and \cite[Lemma~5.4]{HaagThor05} establishing the existence of the solution to a particular class of \emph{matrix Dyson equations (MDE)}, as well as regularization technique which allows to extend these results from \emph{MDE} to \emph{DEL}.
    The trivial bound from \cite[Lemma~2.5]{ErdoKrugNemi_Poly}, which justifies the application of the Schur complement formula and relies on the nilpotency structure of the linearizations, in the setting of the current paper can be replaced by the bound \eqref{eq:143} coming from the specific choice of the domain of evaluation.

  \textbf{Proof of $(iii)-(vii)$.}
  The analyticity of $\Sol^{(\mathrm{sc})}_{z}$ follows from \eqref{eq:143} and the positive semidefiniteness of $\Im \Sol^{(\mathrm{sc})}_{z}$ is a direct consequence of the representation
  \begin{equation}
    \label{eq:152}
    \Im \Sol^{(\mathrm{sc})}_{z}
    =
    \eta\, (\id_{m} \otimes \,\tau_{\Sc})\Big( \big( \Lb^{(\mathrm{sc})} - \overline{z}J_{m}\otimes \scone \big)^{-1} (J_{m} \otimes \scone )\big( \Lb^{(\mathrm{sc})} - z J_{m}\otimes \scone \big)^{-1}\Big)
  \end{equation}
  with $\Lb^{(\mathrm{sc})}:= K_0\otimes \scone - \sum_{\alpha=1}^{\alpha_*+2\beta_*} K_{\alpha} \otimes \semic_{\alpha}$.
  Properties $(v)-(vii)$ follow from the general properties of matrix-valued Herglotz functions, Schur formula \eqref{eq:132} applied to $(\Lb^{(\mathrm{sc})} - zJ_{m}\otimes \scone )^{-1}$ and the bound \eqref{eq:143} using the similar argument as in the proof of \cite[Lemma~2.7]{ErdoKrugNemi_Poly}.
\end{proof}

\subsection{Convergence of spectrum for the rational expressions in random matrices  and the a priori
 bound for  the generalized resolvent in random matrices}
\label{sec:conv-spectr-rati}

The next two sections are devoted to the study of the eigenvalues of a general class of rational expressions evaluated on random Wigner and \emph{iid} matrices.
\begin{ass}[Wigner and \emph{iid} matrices]
Let $X_{1},\ldots, X_{\alpha_*}\in\CC^{N\times N}$ and $Y_{1},\ldots,Y_{\beta_*} \in \CC^{N\times N}$ be two independent families of independent random matrices satisfying the following assumptions
\begin{itemize}
\item[\textrm{(\textbf{H1})}] $X_{\alpha}=(X_{\alpha}(i,j))_{i,j=1}^{N}$, $1\leq \alpha\leq \alpha_*$, are Hermitian random matrices having independent (up to symmetry constraints) centered entries of variance $1/N$;
\item[\textrm{(\textbf{H2})}] $Y_{\beta}=(Y_{\beta}(i,j))_{i,j=1}^{N}$, $1\leq \beta\leq \beta_*$, are (non-Hermitian) random matrices having independent centered entries of variance $1/N$;
\item[\textrm{(\textbf{H3})}] there exist $\cstA_{n}>0$, $n \in \NN$, such that
  \begin{equation}
\label{eq:154}
    \max_{1\leq i,j \leq N} \Big( \max_{1\leq \alpha \leq \alpha_*} \EE\big[|\sqrt{N}X_{\alpha}(i,j)|^{n}\big] + \max_{1\leq \beta \leq \beta_*} \EE\big[|\sqrt{N}Y_{\beta}(i,j)|^{n}\big]\Big)
    \leq
    \cstA_{n}
    .
  \end{equation}

\end{itemize}
We call $X_{\alpha}$ \emph{Wigner} matrices and $Y_{\beta}$ \emph{iid} matrices.
\end{ass}

Denote $\Xb:=(X_{1},\ldots,X_{\alpha_*})$, $\Yb:=(Y_{1},\ldots,Y_{\beta_*})$, $\Yb^*:=(Y_{1}^*,\ldots,Y^*_{\beta_*})$ and let $q$ be a (self-adjoint) rational expression in $\alpha_*$ self-adjoint and $\beta_*$ non self-adjoint noncommutative variables.
In order to prove the local law for $q(\Xb,\Yb,\Yb^*)$ we will need to show that the spectrum of $q(\Xb,\Yb,\Yb^*)$ converges to the spectrum of $q(\sbm,\cb,\cb^*)$.
To this end, for any $\varepsilon>0$, $m\in \NN$ and operator $\Rb\in \CC^{mN\times mN}$, denote by $\mathrm{Spec}_{\varepsilon}(\Rb)$ the $\varepsilon$-pseudospectrum of $\Rb$ defined by
\begin{equation}
  \label{eq:202}
  \mathrm{Spec}_{\varepsilon}(\Rb)
  =
  \mathrm{Spec}(\Rb)\cup \{z\in\CC\, : \| (\Rb - z I_{m}\otimes I_{N})^{-1}\|_{\CC^{mN\times mN}} \geq \varepsilon^{-1}\}
  .
\end{equation}
It is easy to check that for any $\Rb\in \CC^{mN\times mN} \cong \CC^{m\times m} \otimes \CC^{N\times N}$
\begin{equation}
  \label{eq:213}
  \| \Rb \|_{\CC^{mN\times mN}}
  \leq
  m \, \| \Rb \|_{\CC^{m\times m}\otimes \,\CC^{N\times N}}
  ,
\end{equation}
where $\| \, \cdot \, \|_{\CC^{mN\times mN}}$ and $\| \, \cdot \, \|_{\CC^{N\times N}}$ denote the operator norms on $\CC^{mN \times mN}$ and $\CC^{N\times N}$ correspondingly.

For any (not necessarily self-adjoint) rational expression $r(\xb,\yb,\yb^*)$ in NC variables $\xb,\yb,\yb^*$, denote by $\Lb_{r}:=\Lb_{r}(\xb,\yb,\yb^*) \in (\CC\la \xb,\yb,\yb^*\ra)^{m_{r}\times m_{r}}$ its (not necessarily self-adjoint) linearization, which can be constructed using, for example, the algorithm from  Section~\ref{sec:line-algor} omitting steps \textrm{(B4)} and \textrm{(S4)} if $r$ is not self-adjoint.
Define the corresponding \emph{Hermitized linearization} by
\begin{equation}
  \label{eq:191}
  \Lb^{r,z}(\xb,\yb,\yb^*)
  :=
  \left(
    \begin{array}{cc}
      0 & \Lb_{r}(\xb,\yb,\yb^*)-zJ_{m_{r}}\otimes \cstone
      \\
      \big(\Lb_{r}(\xb,\yb,\yb^*)\big)^{*}-\overline{z}J_{m_{r}}\otimes \cstone & 0
    \end{array}
  \right)
\end{equation}
with $\Lb^{r,z}(\xb,\yb,\yb^*)   \in (\CC\la \xb,\yb,\yb^*\ra )^{2m_{r}\times 2m_{r}}$.
For the Hermitized linearization \eqref{eq:191} we define (similarly as in \eqref{eq:135}) the \emph{self-energy operator} $\SuOp^{r,z}:\CC^{2m_{r}\times 2m_{r}} \rightarrow \CC^{2m_{r}\times 2m_{r}}$, given by the completely positive map
\begin{equation}
  \label{eq:192}
  \SuOp^{r,z}[R]
  =
  \sum_{\alpha=1}^{\alpha_*} K_{\alpha}^{r,z} R K_{\alpha}^{r,z} + \sum_{\beta=1}^{\beta_*} \big( L_{\beta}^{r,z} R (L_{\beta}^{r,z})^{*} + (L_{\beta}^{r,z})^* R L_{\beta}^{r,z}\big)
  ,
\end{equation}
where $K_{\alpha}^{r,z}$ and $L_{\beta}^{r,z}$ are the coefficient matrices of $\Lb^{r,z}$ (see, e.g., \eqref{eq:118}).
Note, that if we evaluate $\Lb^{r,z}$ on the tuple of random matrices $(\Xb,\Yb,\Yb^*)$, then $\Lb^{r,z}(\Xb,\Yb,\Yb^*)$ belongs to the class of \emph{Kronecker} random matrices, which were studied in \cite{AltErdoKrugNemi_Kronecker}.
Therefore, from \cite[Lemma~2.2]{AltErdoKrugNemi_Kronecker}, we have that the corresponding \emph{Matrix Dyson equation}
\begin{equation}
  \label{eq:193}
  -\frac{1}{\Sol^{r,z}_{\omega}}
  =
  \omega I_{2m_{r}} -  K^{r,z}_{0} + \SuOp^{r,z}[\Sol^{r,z}_{\omega}]
  ,\quad
  z\in \CC
  ,\quad
  \omega \in \CC_{+}
\end{equation}
has a unique solution with positive semidefinite imaginary part $\Im \Sol^{r,z}_{\omega} \geq 0$.
Moreover, for each $z\in\CC$, the solution matrix $\Sol^{r,z}_{\omega}$ admits the Stieltjes transform representation
\begin{equation}
  \label{eq:224}
  \Sol^{r,z}_{\omega}
  =
  \int_{\RR} \frac{V^{r,z}(d\lambda)}{\lambda-\omega}
  ,
\end{equation}
where $\{V^{r,z}\}_{z\in \CC}$ is a family of measures taking values in the set of positive definite matrices.
In the limit $N \to \infty$ the solution $  \Sol^{r,z}_{\omega}\otimes I_N$ well approximates $(\Lb^{r,z}(\Xb,\Yb,\Yb^*)-\omega I)^{-1}$ in the entrywise maximum norm (see \cite[Lemma~B.1]{AltErdoKrugNemi_Kronecker}).
\begin{rem} \label{rem:knoneckerReduction}
  The statement of \cite[Lemma~2.2]{AltErdoKrugNemi_Kronecker} is formulated in the more general setting of \emph{Wigner-type} matrices allowing independent but not necessarily identically distributed entries.
  This model, in general, leads to a system of $N$ matrix equations.
In our case, the matrices in $\Xb$ and $\Yb$ have \emph{i.i.d.} entries (up to symmetry constraints), which reduces the system of $N$ possibly different matrix equations (see, e.g., \cite[Eq.~(2.6)]{AltErdoKrugNemi_Kronecker}) to $N$ identical matrix equations of the form \eqref{eq:193}.
\end{rem}
For any rational expression $r$ with linearization $\Lb_r$ define the set $\DD_{\varepsilon}^{\Lb_{r}}\subset \CC$ by 
    \begin{equation}
    \label{eq:184}
    \DD_{\varepsilon}^{\Lb_{r}}
    :=
    \Big\{z \, : \, \mathrm{dist}(0,\mathrm{supp}\, \rho^{r,z}) \leq \varepsilon \Big\}
    ,
  \end{equation}
  where  $\rho^{r,z}(d\lambda):=\frac{1}{2m_r} \Tr V^{r,z}(d\lambda)$ and the family of measures $V^{r,z}(d\lambda)$ were defined in \eqref{eq:224}.
  The set $\DD_{\varepsilon}^{\Lb_{r}}$ is called the \emph{self-consistent} $\varepsilon$-pseudospectrum of $r$ related to its linearization  $\Lb_{r}$, and $\rho^{r,z}$ is called the \emph{self-consistent density of states} of $\Lb^{r,z}(\Xb,\Yb,\Yb^*)$.

The next lemma contains the main result of this section.
\begin{lem}[Convergence of the (pseudo)spectrum] \label{lem:pseudospectrum}
  Suppose that $q\in\Qc_{\qb_{0},\ldots,\qb_{n}}$ is a (not necessarily self-adjoint) rational expression of height $n$, and  $(\sbm, \cb)\in \Dc_{\qb_{0},\ldots,\qb_{n};C}(\Sc)$  with some constant $C>0$.
  Then there exists $\cstL>0$ such that 
  \begin{equation}
    \label{eq:187}
    \Big\|\big(\widehat{\Lb}_{q}(\Xb,\Yb,\Yb^*)\big)^{-1}\big\|_{\CC^{(m-1)\times (m-1)}\otimes \,\CC^{N\times N}}
    \leq
    \cstL
    \quad
    \mbox{a.w.o.p.},
  \end{equation}
  where $\widehat{\Lb}_{q}$ is defined as in \eqref{eq:117}.
  There exists also a constant $\widetilde{C}>0$ depending only on the linearization $\Lb_q$ and the constant $C$ such that 
 \begin{equation}
    \label{eq:domain of RM}
   (\Xb,\Yb) \in  \Dc_{\qb_{0},\ldots,\qb_{n};\widetilde{C}}\big(\CC^{N \times N}\big)\qquad a.w.o.p.\,.
 \end{equation}
 Moreover, for any $\varepsilon \in (0,1)$ the $\varepsilon$-pseudospectrum of $q(\Xb,\Yb,\Yb^*)$ satisfies 
  \begin{equation}
    \label{eq:181}
    \mathrm{Spec}_{\varepsilon}(q(\Xb,\Yb,\Yb^*))
    \subset
    \DD_{2\varepsilon}^{\Lb_{q}}
    \quad
    \mbox{a.w.o.p.}
    .
  \end{equation}
\end{lem}
\begin{proof}
  We split the proof of this lemma in two parts.
  First we show that the condition \eqref{eq:187} together with \eqref{eq:domain of RM} implies \eqref{eq:181} for \emph{any}, not necessarily self-adjoint, rational expression and its linearization.
  After that we prove that \eqref{eq:187} and \eqref{eq:domain of RM} are satisfied for $q$ if $(\sbm, \cb)\in \Dc_{\qb_{0},\ldots,\qb_{n};C}(\Sc)$ using induction on the height $n$.

  Suppose that we have an arbitrary rational expression $r$ and its linearization $\Lb_{r}$ of size $m_{r}$, and suppose that there exists $\cstM>0$ such that \emph{a.w.o.p.}
  \begin{equation}
    \label{eq:215}
    \|(\widehat{\Lb}_{r}(\Xb,\Yb,\Yb^*))^{-1}\|_{\CC^{(m_{r}-1)\times (m_{r}-1)}\otimes \,\CC^{N \times N}}\leq \cstM
    ,
  \end{equation}
  where $\widehat{\Lb}_{r}$ is defined similarly as in \eqref{eq:117}.
  Then from the definition of the linearization (Definition~\ref{def:linearization}) and the Schur complement formula \eqref{eq:132}, we can choose $C_{3},C_{4}>0$ such that 
    \begin{equation}
    \label{eq:207}
    \| r(\Xb,\Yb,\Yb^*)  \|_{\CC^{N\times N}}
    \leq
    C_{3}
  \end{equation}
and  the sequence of inequalities
  \begin{align}
    \label{eq:208}
    \| (r(\Xb,\Yb,\Yb^*) - z I_{N})^{-1} \|_{\CC^{N \times N}}
     &\leq
      \|(\Lb_{r}(\Xb,\Yb,\Yb^*)-zJ_{m_{r}}\otimes I_{N})^{-1}\|_{\CC^{m_{r}N\times m_{r}N}}
    \\ \label{eq:212}
    & \leq
     m_{r} \|(\Lb_{r}(\Xb,\Yb,\Yb^*)-zJ_{m_{r}}\otimes I_{N})^{-1}\|_{\CC^{m_{r}\times m_{r}}\otimes \,\CC^{N\times N}}
      \\
    &
      \leq
    C_{4}\| (r(\Xb,\Yb,\Yb^*) - z I_{N})^{-1} \|_{\CC^{N\times N}}
  \end{align}
  hold \emph{a.w.o.p.} for all $z\in\CC$.
  Here the first and third inequalities follow from the Schur complement formula \eqref{eq:132} and the norm bounds $\max_{1\leq \alpha \leq \alpha_{*}}\|X_{\alpha}\|_{\CC^{N\times N}} \leq 3$, $\max_{1\leq \beta \leq \beta_{*}}\|Y_{\beta}\|_{\CC^{N\times N}} \leq 3$ holding \emph{a.w.o.p.}, the second inequality holds deterministically for all realizations of $\Xb$ and $\Yb$ (see \eqref{eq:213}), $C_{3}>0$ depends on $\widehat{C}_{r}$, $m_{r}$ and the norms of $X_{\alpha}$, $Y_{\beta}$, which we bound by $3$, and $C_{4}>1$ additionally depends on $C_{3}$.

    Note again from \eqref{eq:132} that if $\widehat{\Lb}_{r}(\Xb,\Yb,\Yb^*)$ is invertible, then
     \begin{equation}
     \label{eq:209}
     \mathrm{Spec}(r(\Xb,\Yb,\Yb^*))
     =
     \{z\in\CC \, : \Lb_{r}(\Xb,\Yb,\Yb^*)-zJ_{m_{r}}\otimes I_N \mbox{ is not invertible}\}
     .
   \end{equation}
   On the other hand, using the definition of $\Lb^{r,z}$ from \eqref{eq:191}, the set on the right-hand side of \eqref{eq:209} can be described via the spectrum of $\Lb^{r,z}(\Xb,\Yb,\Yb^*)$ as
   \begin{equation}
     \label{eq:210}
     \{z\in\CC \, : \Lb_{r}(\Xb,\Yb,\Yb^*)-zJ_{m_{r}}\otimes I_N \mbox{ is not invertible}\}
     =
     \{z\in \CC \, : 0 \in \mathrm{Spec}(\Lb^{r,z}(\Xb,\Yb,\Yb^*))\}
   \end{equation}
   with the identity \eqref{eq:210} holding deterministically for all realizations of $\Xb$ and $\Yb$.

   Under the condition \eqref{eq:215},
    the equality  \eqref{eq:209} can be rewritten in terms of the pseudospectrum using \eqref{eq:208} as 
      \begin{align}
     \label{eq:203}
     \mathrm{Spec}_{\varepsilon}(r(\Xb,\Yb,\Yb^*))
     & \subset
     \Big\{z\in \CC : \| ( \Lb_{r}(\Xb,\Yb,\Yb^*) - zJ_{m_{r}} \otimes I_{N})^{-1}\|_{\CC^{m_{r}N\times m_{r}N}} \geq \frac{1}{\varepsilon} \Big\}
     \\ & \subset
     \mathrm{Spec}_{C_{4} \varepsilon}(r(\Xb,\Yb,\Yb^*))
   \end{align}
   holding \emph{a.w.o.p.}
   At the same time, from the definition \eqref{eq:191} we have that the set of the singular values of the Hermitian matrix $\Lb^{r,z}(\Xb,\Yb,\Yb^*)$
   coincides with the set of the singular values of $ \Lb_{r}(\Xb,\Yb,\Yb^*) - zJ_{m_{r}} \otimes I_{N}$, 
   so that 
   \begin{multline}
     \label{eq:201}
     \Big\{z\in \CC \, : \| ( \Lb_{r}(\Xb,\Yb,\Yb^*) - zJ_{m_{r}} \otimes I_{N})^{-1}\|_{\CC^{m_{r} N\times m_{r} N}} \geq \frac{1}{\varepsilon} \Big\}
     \\
     =
     \{ z\in \CC \, : \mathrm{dist}(0, \mathrm{Spec}(\Lb^{r,z}(\Xb,\Yb,\Yb^*))\leq \varepsilon \}
     .
   \end{multline}

    In order to study the spectrum of $\Lb^{r,z}(\Xb,\Yb,\Yb^*)$, we  will exploit the fact that $\Lb_{r}(\Xb,\Yb,\Yb^*)-zJ_{m_{r}}\otimes I_{N}$ belongs to the class of \emph{Kronecker} random matrices and thus we can use the results from \cite{AltErdoKrugNemi_Kronecker} about the location of spectrum for this class of random matrix ensembles.
    By applying  part (i) of \cite[Theorem~4.7]{AltErdoKrugNemi_Kronecker} to $\Lb^{r,z}$, we have (similarly as in the proof of  \cite[Lemma~6.1]{AltErdoKrugNemi_Kronecker} for bounded $\zeta$) that for any $z$ satisfying $\mathrm{dist}\,(0,\mathrm{supp}\,\rho^{r,z}) \geq 2\varepsilon$, \emph{a.w.o.p.} 
  \begin{equation}
    \label{eq:206}
    \mathrm{Spec}(\Lb^{r,z}(\Xb,\Yb,\Yb^*))
    \cap
    \Big[-\frac{3\varepsilon}{2} ,\frac{3\varepsilon}{2}\Big]
    =
    \emptyset
    .
  \end{equation}
  Now we claim that \eqref{eq:206} holds simultaneously \emph{a.w.o.p.} for all $\{z \, : |z|\leq 2C_{3},\, z\notin \DD^{\Lb_{r}}_{2\varepsilon} \}$.
  To prove this strengthening, we apply the standard grid argument (again analogously as in the proof of  \cite[Lemma~6.1]{AltErdoKrugNemi_Kronecker}) together with the Lipschitz continuity of the eigenvalues of $\Lb^{r,z}(\Xb,\Yb,\Yb^*)$ in $z$. 
  Together with \eqref{eq:203} and \eqref{eq:201} this implies that \emph{a.w.o.p.}
  \begin{equation}
    \label{eq:216}
    \mathrm{Spec}_{\varepsilon}(r(\Xb,\Yb,\Yb^*))
    \subset
    \{ z\in \CC \, : \mathrm{dist}(0, \mathrm{Spec}(\Lb^{r,z}(\Xb,\Yb,\Yb^*))\leq \varepsilon \}
    \subset
    \Big(\DD^{\Lb_{r}}_{2\varepsilon} \cup \{z \, : |z|\geq 2C_{3} \} \Big)
    .
  \end{equation}
  We conclude from \eqref{eq:207} that \emph{a.w.o.p.}
  \begin{equation}
    \label{eq:226}
    \mathrm{Spec}_{\varepsilon}(r(\Xb,\Yb,\Yb^*))
    \subset
    \DD^{\Lb_{r}}_{2\varepsilon}
    . 
  \end{equation}
  This finishes the first part of the proof by establishing that for any rational expression the conditions \eqref{eq:187} and \eqref{eq:domain of RM} imply \eqref{eq:181}.

  In order to prove \eqref{eq:187} and \eqref{eq:domain of RM}, we proceed with a proof by induction on the height of $q$.
  If $q$ has height $0$ (i.e., if $q$ is a polynomial), then \eqref{eq:domain of RM} is trivially true and  \eqref{eq:187} holds by nilpotency \cite[Lemma~2.5]{ErdoKrugNemi_Poly} and the norm bounds
  \begin{equation}
    \label{eq:195}
    \|X_{\alpha}\|_{\CC^{N\times N}}
    \leq
    3
    ,\quad
    \|Y_{\beta}\|_{\CC^{N\times N}}
    \leq
    3
    \quad
    \mbox{a.w.o.p.}
  \end{equation}
  
  Suppose that the statement of the theorem is true for all rational expressions of height $\leq n-1$.
   Together with $(\sbm, \cb)\in \Dc_{\qb_{0},\ldots,\qb_{n};C}(\Sc)$ this, in particular, means that \emph{a.w.o.p.}
  \begin{equation}
    \label{eq:71}
    (\Xb,\Yb) \in  \Dc_{\qb_{0},\ldots,\qb_{n-1};\widetilde{C}_1}\big(\CC^{N \times N}\big)
  \end{equation}
  for some $\widetilde{C}_1>0$, and
  \begin{equation}
    \label{eq:259}
        \|(\widehat{\Lb}_{r}(\Xb,\Yb,\Yb^*))^{-1}\|_{\CC^{(m_r-1)\times (m_r-1)}\otimes \,\CC^{N\times N}}
    \leq
    \widehat{C}^{\mathrm{w}}_{\Lb_{r}}
  \end{equation}
for all $ r\in \Tc_{q}:= \{q_{i,\gamma_{i}} \,: 0\leq i\leq n, 1\leq \gamma_{i} \leq \ell_{i}\}$ with $q_{i,\gamma_{i}}$ as in the definition of $\Qc_{\qb_{0},\ldots,\qb_{n}}$. 
We now show that there exists $\widetilde{C}>0$ such that $(\Xb,\Yb) \in  \Dc_{\qb_{0},\ldots,\qb_{n};\widetilde{C}}\big(\CC^{N \times N}\big)$ \emph{a.w.o.p.}, or equivalently, that for all $\gamma_{n} \in \{1,\ldots,\ell_n\}$  \emph{a.w.o.p.}
\begin{equation}
  \label{eq:245}
  \Big\| \frac{1}{q_{n,\gamma_n}(\Xb,\Yb,\Yb^*)}\Big\|_{\CC^{N\times N}}
  \leq
  \widetilde{C}
  .
\end{equation} 

Using the result established in the first part of the proof, \eqref{eq:71} and \eqref{eq:259} with $r = q_{n, \gamma_n}$ imply that \emph{a.w.o.p.}
\begin{equation}
  \label{eq:260}
      \mathrm{Spec}_{\varepsilon}(q_{n,\gamma_n}(\Xb,\Yb,\Yb^*))
    \subset
    \DD^{\Lb_{q_{n,\gamma_n}}}_{2\varepsilon}
    .
\end{equation}
Therefore, it is enough to show that there exists $\widetilde{C}>0$ such that for all $\gamma_n \in \{1,\ldots,\ell_n\}$ the point $z=0$ does not belong to $\DD^{\Lb_{q_{n,\gamma_n}}}_{2/\widetilde{C}}$.
By the definition \eqref{eq:184}, the last condition is equivalent to
\begin{equation}
  \label{eq:262}
  \mathrm{dist}(0,\mathrm{supp}\, \rho^{q_{n,\gamma_n},0}) \geq \frac{2}{\widetilde{C}}
  .
\end{equation}
We now show that \eqref{eq:262} holds for one fixed $\gamma_n$.
The desired bound \eqref{eq:245} can then be obtained by taking the maximum over $\widetilde{C}$ for all $\gamma_n\in \{1,\ldots, \ell_n\}$.

Fix $\gamma_{n}\in \{1,\ldots, \ell_n\}$, and denote by $\Lb_{q_{n,\gamma_n}}(\sbm,\cb,\cb^*) \in \CC^{m\times m}\otimes \Sc$ the linearization of $q_{n,\gamma_n}$ evaluated at $(\sbm,\cb,\cb^*)$.
By examining carefully the derivation of the  a priori
   bound \eqref{eq:130}, we observe that 
  there exists a constant $\cstO>0$ depending only on the linearization and the constant $C$, such that 
  \begin{equation}
    \label{eq:253}
    \Big\|\Big(\Lb_{q_{n,\gamma_n}}(\sbm,\cb,\cb^*)\Big)^{-1}\Big\|_{\CC^{m \times m} \otimes \, \Sc}
    \leq
    \cstO
    .
  \end{equation}
  Indeed, each entry of $(\Lb_{q_{n,\gamma_n}}(\sbm,\cb,\cb^*))^{-1}$ is a polynomial in $\sbm$, $\cb$, $\cb^*$ and  $(r(\sbm,\cb,\cb^*))^{-1}$ with $r\in \Tc_{q}$.
  Therefore, the bounds $\|(r(\sbm,\cb,\cb^*))^{-1}\|_{\Sc}\leq C$ that follow from the assumption $(\sbm,\cb)\in \Dc_{\qb_{0},\ldots,\qb_{n};C}$ imply that \eqref{eq:253} holds for some $\cstO>0$.

  Next notice, that from the definition of $\Lb^{r,z}$ (see \eqref{eq:191}), we have that
  \begin{equation}
    \label{eq:227}
    \Big\|\Big(\Lb^{q_{n,\gamma_n},0 }(\sbm,\cb,\cb^*)\Big)^{-1}\Big\|_{\CC^{2m \times 2m} \otimes \, \Sc}
    =
    \Big\|\Big(\Lb_{q_{n,\gamma_n}}(\sbm,\cb,\cb^*)\Big)^{-1}\Big\|_{\CC^{m \times m} \otimes \, \Sc}
    .
  \end{equation}
  The resolvent identity
  \begin{equation}
    \label{eq:221}
    \Big(\Lb^{q_{n,\gamma_n},z }(\sbm,\cb,\cb^*)- \omega I \otimes \scone \Big)^{-1} \Big(I\otimes \scone - \omega \, \Big(\Lb^{q_{n,\gamma_n},z }(\sbm,\cb,\cb^*)\Big)^{-1} \Big)
      =
    \Big(\Lb^{q_{n,\gamma_n},z }(\sbm,\cb,\cb^*)\Big)^{-1}
  \end{equation}
  together with \eqref{eq:253} and \eqref{eq:227} implies that
  \begin{equation}
    \label{eq:222}
    \Big\|\Big(\Lb^{q_{n,\gamma_n},0 }(\sbm,\cb,\cb^*)-\omega I\otimes \scone \Big)^{-1} \Big\|_{\CC^{2m \times 2m} \otimes \, \Sc}
    \leq
     2\cstO
  \end{equation}
  for all $\omega \in \CC$  satisfying $|\omega | \leq 1/(4m  t)$, where we used the relation between the operator and max norms as in \eqref{eq:212} to estimate the operator norm of $(\Lb^{q_{n,\gamma_n},z }(\sbm,\cb,\cb^*))^{-1}$. 
  The bound \eqref{eq:222} means, in particular, that
  \begin{equation}
    \label{eq:254}
    \mathrm{Spec}\Big(\Lb^{q_{n,\gamma_n},0 }(\sbm,\cb,\cb^*)\Big) \cap \Big[-\frac{1}{4mt}, \frac{1}{4mt}\Big]
    =
    \emptyset
    .
  \end{equation}
  The spectrum of the self-adjoint operator $\Lb^{q_{n,\gamma_n},0 }(\sbm,\cb,\cb^*)$ is characterized by the solution to the self-consistent equation \ref{eq:193} (see e.g. \cite[Theorem 4.1.12]{Spei98}) via 
  \begin{equation}
    \label{eq:255}
    \mathrm{Spec}\Big(\Lb^{q_{n,\gamma_n},0 }(\sbm,\cb,\cb^*) \Big)
    =
    \mathrm{supp} \, \rho^{q_{n,\gamma_n},0 }
    ,
  \end{equation}
  which establishes \eqref{eq:262} with $\widetilde{C} = 8 m t$.

  Finally, \eqref{eq:domain of RM} together with the Schur complement formula, \eqref{eq:127}-\eqref{eq:128}, and the special form of the linearization blocks \eqref{eq:121}-\eqref{eq:122}, yields \eqref{eq:187} for $q\in\Qc_{\qb_{0},\ldots,\qb_{n}}$.
  This finishes the proof of the lemma.
  \end{proof}
  
  The bound \eqref{eq:187} together with the Schur complement formula \eqref{eq:132} applied to $(\Lb_{q}(\Xb,\Yb,\Yb^*)-zJ_{m}\otimes I_{N})^{-1}$, the trivial bound \eqref{eq:133} applied to $q(\Xb,\Yb,\Yb^*)$ and the norm bounds $\|X_{\alpha}\|_{\CC^{N\times N}}\leq 3$ and $\|Y_{\beta}\|_{\CC^{N\times N}}\leq 3$ holding (similarly as in \eqref{eq:11}-\eqref{eq:12}) \emph{a.w.o.p.}, imply the trivial bound for the generalized resolvent in random matrices.
  \begin{cor} \label{cor:trivBndG}
    There exists $\cstR>0$ depending only on the  linearization $\Lb_{q}$, such that \emph{a.w.o.p.}
    \begin{equation}
    \label{eq:180}
    \|(\Lb_{q}(\Xb,\Yb,\Yb^*)-zJ_{m}\otimes I_{N})^{-1} \|_{\CC^{m\times m} \otimes \CC^{N\times N}}
    \leq \cstR \Big(1+\frac{1}{\Im z} \Big)
    .
  \end{equation}
\end{cor}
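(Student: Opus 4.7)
The plan is to execute the sketch the authors give in the paragraph immediately preceding the corollary, which reduces to a direct Schur-complement calculation once the nontrivial input \eqref{eq:187} from Lemma~\ref{lem:pseudospectrum} is in hand. First, I would decompose the linearization in the block form \eqref{eq:117}, writing
\begin{equation*}
\Lb_{q}(\Xb,\Yb,\Yb^*) - zJ_m\otimes I_N
=
\begin{pmatrix} q(\Xb,\Yb,\Yb^*) - zI_N & \bm{\ell}^*(\Xb,\Yb,\Yb^*) \\ \bm{\ell}(\Xb,\Yb,\Yb^*) & \widehat{\Lb}_q(\Xb,\Yb,\Yb^*) \end{pmatrix},
\end{equation*}
and then apply the Schur complement formula \eqref{eq:132} to express the inverse explicitly in terms of the three building blocks $(q-z)^{-1}$, $\widehat{\Lb}_q^{-1}$, and $\bm{\ell}$.

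The next step is to bound each of these three ingredients separately. For the corner block, $q(\Xb,\Yb,\Yb^*)$ is self-adjoint by construction, so the trivial resolvent bound \eqref{eq:133} gives $\|(q(\Xb,\Yb,\Yb^*)-zI_N)^{-1}\|_{\CC^{N\times N}} \leq 1/\Im z$ deterministically on $\CC_+$. For the central block, Lemma~\ref{lem:pseudospectrum} provides directly the a.w.o.p.\ bound $\|\widehat{\Lb}_q(\Xb,\Yb,\Yb^*)^{-1}\|_{\CC^{(m-1)\times (m-1)}\otimes \CC^{N\times N}} \leq \cstL$, where $\cstL$ depends only on $\Lb_q$. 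For the coupling vectors $\bm{\ell}$, the construction of the linearization via \eqref{eq:118} ensures that the entries of $\bm{\ell}$ are polynomials of degree at most one in the variables $X_\alpha, Y_\beta, Y_\beta^*$, so their operator norm is controlled a.w.o.p.\ by a constant multiple of $1 + \max_\alpha \|X_\alpha\|_{\CC^{N\times N}} + \max_\beta \|Y_\beta\|_{\CC^{N\times N}}$, each of which is bounded by $3$ a.w.o.p.\ as already noted in \eqref{eq:11}--\eqref{eq:12}.

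Combining these bounds entrywise through the Schur formula, the dominant contribution comes from the $(1,1)$-block (scaling like $1/\Im z$) and the lower-right block (scaling like $1 + \|\bm{\ell}\|^2/\Im z$), while the off-diagonal blocks scale like $\|\bm{\ell}\|/\Im z$. Since all the constants and the norms of $\bm{\ell}$, $\widehat{\Lb}_q^{-1}$ are bounded a.w.o.p.\ by quantities depending only on $\Lb_q$, taking a maximum and using the elementary estimate $a/\Im z \leq a(1+1/\Im z)$ yields the desired bound with some constant $\cstR = \cstR(\Lb_q)$. There is no genuine obstacle here; the corollary is essentially a bookkeeping consequence of Lemma~\ref{lem:pseudospectrum} together with the self-adjointness of $q$, with all nontrivial probabilistic content already absorbed in proving \eqref{eq:187}.
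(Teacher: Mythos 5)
Your proposal is correct and follows exactly the route the paper intends: the paper's entire proof of Corollary~\ref{cor:trivBndG} is the one-sentence sketch immediately preceding it, which lists precisely the same three ingredients you use (the Schur decomposition \eqref{eq:132} of the generalized resolvent, the bound \eqref{eq:187} on $\widehat{\Lb}_q^{-1}$ from Lemma~\ref{lem:pseudospectrum}, the trivial resolvent estimate \eqref{eq:133} for the self-adjoint corner block, and the a.w.o.p.\ operator-norm bounds on $X_\alpha, Y_\beta$). Your elaboration of how each Schur-complement block scales in $1/\Im z$ fills in the routine bookkeeping the authors left implicit, and is accurate.
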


\subsection{ Global and local laws  for  rational expressions in random matrices}
\label{sec:proof-local-law}
Denote, as before, by $\Sc$ a $C^*$-algebra containing a freely independent family $\{\semic_1,\ldots,\semic_{\alpha_*}, c_{1},\ldots,c_{\beta_*}\}$ of $\alpha_*$ semicircular and $\beta_*$ circular elements in a NC probability space $(\Sc,\tau_{\Sc})$.
  Let $q\in \Qc_{\qb_{0},\ldots,\qb_{n}}$ be a rational expression of height $n$ and assume that $(\sbm,\cb)\in \Dc_{\qb_{0},\ldots,\qb_{n};C}(\Sc)$ for $\sbm = (\semic_{1},\ldots,\semic_{\alpha_*})$, $\cb:=(c_1,\ldots,c_{\beta_*})$ and some $C>0$.
Let
\begin{equation}
  \label{eq:153}
  \Lb
  =
  \Lb(\xb,\yb,\yb^*)
  :=
  K_0\otimes \cstone - \sum_{\alpha=1}^{\alpha_*} K_{\alpha} \otimes x_{\alpha} - \sum_{\beta=1}^{\beta_*}\big( L_{\beta} \otimes y_{\beta} + L_{\beta}^* \otimes y_{\beta}^*\big)
\end{equation}
be the linearization of $q$ constructed via the algorithm from Section~\ref{sec:line-algor}.

In order to formulate the local law, we need to introduce the notion of the \emph{stochastic domination}.
\begin{defn}[Stochastic domination] \label{defn:stochastic-domination}
  Let $\Phi :=  (\Phi_{N})_{N\in \NN}$ and $ \Psi :=  (\Psi_{N})_{N\in \NN}$ be two sequences of nonnegative random variables.
  We say that $\Phi$ is \emph{stochastically dominated} by $\Psi$ (denoted $\Phi \prec \Psi$), if for any $\varepsilon, D > 0$ there exists $C(\varepsilon,D)>0$ such that for all $N\in \NN$
  \begin{equation}
    \label{eq:7}
    \PP[\Phi_{N} \geq N^{\varepsilon} \Psi_{N}]
    \leq
    \frac{C(\varepsilon,D)}{N^{D}}
    .
  \end{equation}
\end{defn}
Let $q(\Xb,\Yb,\Yb^*)\in \CC^{N\times N}$ be the evaluation of $q$ on the $\alpha_*$-tuple of Wigner and $\beta_*$-tuple of \emph{iid} random matrices satisfying (\textrm{\textbf{H1}})-(\textrm{\textbf{H3}}), and define the linearization matrix
\begin{equation}
  \label{eq:155}
  \Hb
  :=
  \Lb(\Xb,\Yb,\Yb^*)
  =
  K_0\otimes I_{N} - \sum_{\alpha=1}^{\alpha_*} K_{\alpha} \otimes X_{\alpha} - \sum_{\beta=1}^{\beta_*}\big( L_{\beta} \otimes Y_{\beta} + L_{\beta}^* \otimes Y_{\beta}^*\big)
  .
\end{equation}
Let $\GRes_{z}: = (\Hb - zJ\otimes I_N)^{-1} \in \CC^{mN \times mN}$ be the \emph{generalized} resolvent of $\Hb$.
Note that the generalized resolvent $\GRes_{z}$, when viewed as taking values in $\CC^{m\times m} \otimes \CC^{N \times N}$, can be written as
\begin{equation}
  \label{eq:156}
  \GRes_{z} = \sum_{i,j=1}^{N} G_{z, ij} \otimes E_{ij},
\end{equation}
where  the collection of
 matrices $E_{ij} := (\delta_{ki} \delta_{jl})_{1\leq k,l \leq N}$ form a standard basis of $\CC^{N \times N}$
 and  $G_{z,ij} \in \CC^{m \times m}$ is an $m\times m$ matrix for each $(i,j)$ pair.  In general, we will follow
 the convention that for any $\Ab \in \CC^{m\times m} \otimes \CC^{N \times N}$ we denote by $ \Ab_{kl}  \in \CC^{N \times N}$,
 $k,l=1,2,\ldots, m$ the $(k,l)$-th block according to the $\CC^{m\times m} $ factor in the tensor product, while
 $A_{ij} \in \CC^{m\times m}$,  $i,j =1,2,\ldots , N$ denotes the $(i,j)$-th block in the second factor, i.e.
 \begin{equation}\label{notation}
      \Ab =  \sum_{k,l=1}^{m} E_{kl} \otimes \Ab_{kl}=\sum_{i,j=1}^{N} A_{ij} \otimes E_{ij},
 \end{equation}
in particular $A_{ij}(k,l) = \Ab_{kl}(i,j)$.

Let $\Sol^{(\mathrm{sc})}_{z} : \CC_{+} \rightarrow \CC^{m \times m}$ be a matrix valued function
given by~\eqref{eq:137}.
For each
$z\in\CC_{+}$ we define the \emph{stability operator} $\StOp_{z}: \CC^{m\times m} \rightarrow \CC^{m \times m}$ corresponding to $\Sol^{(\mathrm{sc})}_{z} $ by
\begin{equation}
  \label{eq:157}
  \StOp_{z}\big[R\,\big]
  =
  R - \Sol^{(\mathrm{sc})}_{z}  \SuOp \big[R\,\big] \Sol^{(\mathrm{sc})}_{z} 
  ,\quad
  R \in \CC^{m \times m}
  .
\end{equation}
For $n \in \NN$ and an operator $\Rcc : \CC^{n\times n} \rightarrow \CC^{n \times n}$ we denote by $\| \Rcc \|_{\CC^{n \times n} \rightarrow \CC^{n \times n}}$ the operator norm of $\Rcc$ generated by the the operator norm on $\CC^{n\times n}$.
Then the following holds.
\begin{thm}[Local law for rational expressions]
  \label{thm:locallawL}
Let $\Sol^{(\mathrm{sc})}_{z}$ be defined as in \eqref{eq:137} and let $\StOp_{z}$ be the stability operator corresponding to $\Sol^{(\mathrm{sc})}_{z} $.
  If there exist $C_{0}>0$ and $\Ic \subset \RR$ such that for all $z$ with $\Re z \in \Ic $  and $0\leq \Im z < \infty$
  \begin{itemize}
  \item[(\textrm{\textbf{M1}})] $\|\Sol^{(\mathrm{sc})}_{z}\|_{\CC^{m \times m} } \leq C_{0}$;
  \item[(\textrm{\textbf{M2}})] $\| \StOp_{z}^{-1} \|_{\CC^{m \times m} \rightarrow \CC^{m \times m} } \leq C_{0}$,
  \end{itemize}
  then the optimal local law holds for $\GRes(z)$ on the set $\Ic$, i.e., uniformly for $\Re z \in \Ic $
  \begin{equation}
    \label{eq:158}
    \max_{1\leq i,j \leq N} \Big\| G_{z,ij}- \Sol^{(\mathrm{sc})}_{z} \delta_{ij} \Big\|_{\CC^{m\times m}}
    \prec
    \sqrt{\frac{1}{N\Im z}}
    ,\quad
    \Big\|\frac{1}{N} \sum_{i=1}^{N} G_{z,ii} -  \Sol^{(\mathrm{sc})}_{z} \Big\|_{\CC^{m\times m} }
    \prec
    \frac{1}{N\Im z}
    .
  \end{equation}
  In particular, this implies that on the set $\Ic $ the optimal local law holds for the rational expression in random matrices $q(\Xb,\Yb,\Yb^*)$, i.e., uniformly for $\Re z \in \Ic $
  \begin{equation}
    \label{eq:159}
    \max_{1 \leq i,j\leq N}\Big|g_{z,ij} - \Sol^{(\mathrm{sc})}_{z}(1,1) \delta_{ij} \Big|
    \prec
    \sqrt{\frac{1}{N\Im z}}
    ,\quad
    \Big|\frac{1}{N}\sum_{i=1}^{N} g_{z,ii} -\Sol^{(\mathrm{sc})}_{z}(1,1) \Big|
    \prec
    \frac{1}{N\Im z}
    ,
  \end{equation}
  where $\gb_{z}=(g_{z,ij})_{i,j=1}^{N} :=(q(\Xb,\Yb,\Yb^*)-zI_N)^{-1}\in \CC^{N\times N}$.
\end{thm}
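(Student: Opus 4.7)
The plan is to reduce the statement to the general local law for Kronecker random matrices established in \cite{AltErdoKrugNemi_Kronecker}. The linearization $\Hb$ from \eqref{eq:155} falls into the Kronecker class treated there, with the important simplification that the building blocks $X_\alpha, Y_\beta$ have i.i.d.\ entries (up to Hermitian symmetry). As noted in Remark~\ref{rem:knoneckerReduction}, this i.i.d.\ structure collapses the a priori $N$-dimensional vector Dyson equation of \cite{AltErdoKrugNemi_Kronecker} to the single matrix equation \eqref{eq:136}, so the approximating object is $\Sol^{(\mathrm{sc})}_{z} \otimes I_N$ rather than a block of $N$ different $m\times m$ matrices.

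The argument proceeds in three steps. First, derive an approximate self-consistent equation for the empirical averaged block resolvent $\langle \GRes \rangle := \frac{1}{N}\sum_{i=1}^N G_{z,ii}$ and for the individual blocks $G_{z,ij}$. Expanding the identity $(\Hb - zJ\otimes I_N)\GRes_z = I_{mN}$ via a cumulant expansion in the independent entries of $\Xb,\Yb$, and applying the fluctuation averaging lemma of \cite{AltErdoKrugNemi_Kronecker}, one obtains the perturbed equation
\begin{equation*}
-I_m = \bigl(zJ_m - K_0 + \SuOp[\langle \GRes \rangle]\bigr)\langle \GRes \rangle + \Dc,
\end{equation*}
where the random matrix $\Dc \in \CC^{m\times m}$ satisfies $\|\Dc\|_{\CC^{m\times m}} \prec (N\Im z)^{-1}$ in the averaged form and the naive rate $(N\Im z)^{-1/2}$ entrywise. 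Second, use the stability hypothesis \textbf{(M2)} to invert the linearized DEL. Writing $\langle\GRes\rangle = \Sol^{(\mathrm{sc})}_z + \Rc$ and subtracting \eqref{eq:136}, one finds at leading order $\StOp_z[\Rc] = \Sol^{(\mathrm{sc})}_z\,\Dc + \OO{\|\Rc\|^2}$, so that $\|\Rc\|_{\CC^{m\times m}} \lesssim C_0^2\|\Dc\|_{\CC^{m\times m}}$ as long as the quadratic error can be absorbed, which is guaranteed by \textbf{(M1)} and the bootstrap described below. The entrywise bound \eqref{eq:158} for off-diagonal blocks follows from the same scheme without averaging.

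Third, a standard continuity/bootstrap argument propagates the optimal rate from large $\Im z$ down to the local scale $\Im z \gtrsim N^{-1+\gamma}$ for arbitrary $\gamma>0$. One uses the trivial a priori bound \eqref{eq:180}, Lipschitz continuity of $\GRes_z$ in $z$, and a dense grid on $\{\Re z \in \Ic, \Im z \in [N^{-1+\gamma}, C]\}$ to upgrade high-probability estimates at one scale into high-probability estimates at the next, relying at each step on the uniform bounds from \textbf{(M1)}-\textbf{(M2)}. The scalar local law \eqref{eq:159} for $\gb_z = (q(\Xb,\Yb,\Yb^*) - zI_N)^{-1}$ then follows immediately from \eqref{eq:158}: by Definition~\ref{def:linearization} and the Schur complement formula \eqref{eq:132}, the matrix $g_{z,ij}$ equals the $(1,1)$ entry of $G_{z,ij}$, so controlling the blocks entrywise controls $g_z$ at the same rate.

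The main obstacle is the cumulant expansion and the fluctuation averaging at the optimal scale. Because $\Hb$ has dependence between entries coming from the $K_\alpha, L_\beta$ coefficient matrices, the expansion produces many terms indexed by partitions of the cumulant labels, and showing that all but the Gaussian (second-order) contribution are negligible requires the combinatorial bookkeeping developed in \cite{ErdoKrugSchr18, AltErdoKrugNemi_Kronecker, ErdoKrugNemi_Poly}. Since the structural input for these estimates, namely a bounded solution \textbf{(M1)} and a bounded inverse stability operator \textbf{(M2)}, is precisely what the theorem assumes, the argument reduces to a direct application of the machinery in \cite{AltErdoKrugNemi_Kronecker} once one verifies that the scalar-valued DEL reduction is consistent with the vector-valued Kronecker theory there.
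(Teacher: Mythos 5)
Your proposal is correct and follows essentially the same strategy as the paper: reduce to the Kronecker/MDE machinery of \cite{AltErdoKrugNemi_Kronecker} (via the polynomial local law of \cite[Theorem~5.1]{ErdoKrugNemi_Poly}), derive the perturbed Dyson equation by cumulant expansion and fluctuation averaging, invert via the stability hypothesis \textrm{(\textbf{M2})}, propagate to the local scale by a grid/bootstrap using the a priori bound~\eqref{eq:180}, and extract~\eqref{eq:159} from~\eqref{eq:158} by the Schur complement identity $g_{z,ij} = G_{z,ij}(1,1)$.

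The one point you underplay is precisely what the paper flags as the substantive modification relative to the polynomial case. For polynomials, $q(\Xb,\Yb,\Yb^*)$ is always defined and the a priori bound on the generalized resolvent comes for free from nilpotency (\cite[Lemma~2.5]{ErdoKrugNemi_Poly}). For rational expressions, $q(\Xb,\Yb,\Yb^*)$ may fail to exist for some realizations, so the \emph{entire} analysis must be restricted to the \emph{a.w.o.p.} event $\Theta_N$ on which $(\Xb,\Yb)$ lands in the effective domain (Lemma~\ref{lem:pseudospectrum}, formula~\eqref{eq:domain of RM}), and the nilpotency bound is replaced by the pseudospectrum-based bound of Corollary~\ref{cor:trivBndG}, which then also feeds into the Ward-identity surrogate and the regularization step. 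You do invoke~\eqref{eq:180} in the bootstrap, so the ingredient is present, but making the conditioning on $\Theta_N$ explicit is what makes the proof genuinely go through for rational functions; without it, the cumulant expansion and the fluctuation averaging lemma are applied to quantities that are not even well defined outside $\Theta_N$. Beyond that, your exposition of the inner machinery (cumulant expansion combinatorics, reduction of the $N$-component vector Dyson equation to a single $m\times m$ equation by i.i.d.\ symmetry, the stability/bootstrap scheme) is more detailed than the paper's, which handles those steps by citation; both are valid.
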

Note, that by the definition of the generalized resolvent and \eqref{eq:132} and the notational convention~\eqref{notation},
 we have that  $g_{z,ij} = G_{z,ij}(1,1)$ for all $1\leq i,j \leq N$.

\begin{proof}
  Our proof of the local law for linearizations of rational expressions \eqref{eq:158} is analogous to the proof of the corresponding result for linearizations of \emph{polynomials} in Wigner and \emph{iid} matrices \cite[Theorem~5.1]{ErdoKrugNemi_Poly}.
  Below we provide a summary of the important steps of that proof and show how these steps are adjusted to the current setting of rational expressions.
   \medskip
  \\ 
  \noindent
  \textit{1. Restricting analysis to the set where $q(\Xb,\Yb,\Yb^*)$ is well defined.} In contrast to the case when $q$ is a polynomial, the evaluation $q(\Xb,\Yb,\Yb^*)$ may not always be well defined and the generalized resolvent $\Gb_{z}$ may not always be bounded, even when $z\in \CC_{+}$, i.e. the a priori bound analogous to (2.5) in \cite[Lemma~2.5]{ErdoKrugNemi_Poly} may not hold. But according to Lemma~\ref{lem:pseudospectrum} and  Corollary~\ref{cor:trivBndG} this bound can be replaced by \eqref{eq:180} and the existence of $q(\Xb,\Yb,\Yb^*)$ can be guaranteed on an event $\Theta=\Theta_{N}$ of asymptotically overwhelming probability. The entire analysis is then restricted to this set. In particular, the indicator sets $\chi(\cdot)$ in the proof of \cite[Theorem~5.1]{ErdoKrugNemi_Poly} should be replaced by $\chi(\cdot)\cap \Theta$.
  \medskip
  \\ 
  \noindent
  \textit{2. Exploiting the Kronecker structure of $\Hb$ and regularization of the \emph{DEL}.}
  Similarly as in the polynomial case, any linearization of a rational expression evaluated on Wigner and \emph{iid} ensembles belongs to the class of \emph{Kronecker} random matrices.
  Therefore, in order to obtain the initial estimates on the error term, we use the results of \cite[Lemma~4.4]{AltErdoKrugNemi_Kronecker}. As in the polynomial setup these results require  introducing a small regularization $\omega = i u$ with $u>0$ in order to use the stability theory of the MDE \eqref{eq:193}. The bounds in  \cite[Lemma~5.2]{ErdoKrugNemi_Poly} are uniform in this regularization and are a consequence of the a priori estimate \cite[Lemma~2.5]{ErdoKrugNemi_Poly}. In our current setting they remain true when this a priori estimate is replaced by  \eqref{eq:130}.
  \medskip
  \\ 
  \noindent
  \textit{3. Effective replacement of the Ward identity.}
  Using again the  a priori bound \eqref{eq:130} instead of \cite[Lemma~2.5]{ErdoKrugNemi_Poly}, one can obtain the Ward identity type estimates (see, e.g., \cite[formula (5.13)]{ErdoKrugNemi_Poly}) for the error terms involving the generalized resolvent.
  \medskip
  \\ 
  \noindent
  \textit{4. Finishing the proof.}
  With the above modifications the proof of Theorem~\ref{thm:locallawL} can be obtained by following the proof of \cite[Theorem~5.1]{ErdoKrugNemi_Poly} line by line.
\end{proof}

 In the same spirit we can follow line by line the proof of \cite[Proposition~2.17]{ErdoKrugNemi_Poly}, replacing the use of \cite[Lemma~2.5]{ErdoKrugNemi_Poly} with the trivial bound \eqref{eq:130}, to obtain the following \emph{global law} for (the linearizations of) rational expressions.
 Note that in the proof of Proposition~\ref{pr:glaw} below  we do not have to  assume that the conditions (\textrm{\textbf{M1}})-(\textrm{\textbf{M2}}) hold as stated in Theorem~\ref{thm:locallawL}.
 For the global law the boundedness of $\Sol^{(sc)}_z$ and $\mathscr{L}_z^{-1}$ is needed only for $z$ away from the real axis, in which case it follows from \eqref{eq:130} and the representation \cite[equation (5.22)]{ErdoKrugNemi_Poly} of $\mathscr{L}_z^{-1}$. 
 
\begin{pr}[Global law]
  \label{pr:glaw}
  For any $\cstP>0$, uniformly on $\{\,z\, : \, \Im z \geq \cstP^{-1}, \, |z| \leq \cstP \,\}$
  \begin{equation}
    \label{eq:185}
        \max_{1\leq i,j \leq N} \Big\| G_{z,ij}- \Sol^{(\mathrm{sc})}_{z} \delta_{ij} \Big\|_{\CC^{m\times m}}
    \prec_{\cstP}
    \frac{1}{\sqrt{N}}
    ,\quad
    \Big\|\frac{1}{N} \sum_{i=1}^{N} G_{z,ii} -  \Sol^{(\mathrm{sc})}_{z} \Big\|_{\CC^{m\times m} }
    \prec_{\cstP}
    \frac{1}{N}
    \,,
  \end{equation}
  where $\prec_{\cstP}$ indicates that the constant in the definition of the stochastic domination (see Definition~\ref{defn:stochastic-domination}) may depend on $\cstP$.
  In particular, uniformly on $\{\,z\, : \, \Im z \geq \cstP^{-1}, \, |z| \leq \cstP \,\}$
  \begin{equation}
    \label{eq:160}
    \Big|\frac{1}{N} \Tr \gb_{z} - \Sol^{(\mathrm{sc})}_{z}(1,1) \Big|
    \prec_{\cstP}
    \frac{1}{N}
    \,.
  \end{equation}
\end{pr}

The global law for  the trace of the resolvent of a  rational expression, $\Tr \gb_{z}$,
 has already been proven in~\cite{Yin18}   with a somewhat different method; we comment on this 
 point in Remark~\ref{rmk:yin}. 
\begin{proof}
  Firstly we show that for any rational expression $q$ as defined at the beginning of this section and any $\cstP>0$, the bounds (\textrm{\textbf{M1}})-(\textrm{\textbf{M2}}) from Theorem~\ref{thm:locallawL} hold uniformly on $\{\,z\, : \, \Im z \geq \cstP^{-1}, \, |z| \leq \cstP \,\}$, namely that for any $\cstP>0$ there exist $C_{\cstP}>0$ such that for $\{\,z\, : \, \Im z \geq \cstP^{-1}, \, |z| \leq \cstP \,\}$
  \begin{equation}
    \label{eq:238}
    \|\Sol^{(\mathrm{sc})}_{z}\|_{\CC^{m \times m} } \leq C_{\cstP},
    \qquad
    \| \StOp_{z}^{-1} \|_{\CC^{m \times m} \rightarrow \CC^{m \times m} } \leq C_{\cstP}
    .
  \end{equation}

  The first estimate in \eqref{eq:238} follows directly from \eqref{eq:138} and $(\Im z)^{-1}\leq \cstP$.
  In order to obtain the second estimate, we use the identity
  \begin{equation}
    \label{eq:239}
    \StOp_{z}[\,R\,]
    =
    (\id \otimes \,\tau_{\Sc}) \bigg( \Big(\Lb^{\mathrm{sc}} -z J_m \otimes \scone \Big)^{-1}  \Big( (\Sol_z^{\mathrm{sc}})^{-1}\, R\, (\Sol_{z}^{\mathrm{sc}})^{-1} \otimes \scone \Big) \Big(\Lb^{\mathrm{sc}} -z J_m \otimes \scone \Big)^{-1}\bigg)
  \end{equation}
  for all $\RR\in \CC^{m\times m}$, where $\Lb^{\mathrm{sc}}:=\Lb(\sbm,\cb, \cb^*)$ is the linearization of the rational expression $q$ evaluated on the freely independent semicircular and circular elements (see the proof of \cite[Proposition 2.17]{ErdoKrugNemi_Poly} for the derivation of this identity).
  The boundedness of $(\Sol_{z}^{\mathrm{sc}})^{-1}$ follows from the boundedness of $\Sol_{z}^{\mathrm{sc} }$, the Dyson equation \eqref{eq:136} and $|z|<\cstP$, while the trivial bound in Lemma~\ref{lem:trivialBound} implies the boundedness of $\|(\Lb^{\mathrm{sc}} -z J_m \otimes \scone )^{-1}\|_{\CC^{m\times m} \otimes \Sc}$ holding polynomially in $\cstP$.
  Combining this with \eqref{eq:239} and $(\Im z)^{-1} \leq \cstP$ we obtain the second inequality in \eqref{eq:238}.
  Note that in the above argument we rely on the fact that $(\sbm,\cb)\in\Dc_{\qb_{0},\ldots,\qb_{n};C}(\Sc)$.

  Now we can proceed with the proof \eqref{eq:185} by following the argument used in Theorem~\ref{thm:locallawL} but restricting the analysis to a subset $\{\,z\, : \, \Im z \geq \theta^{-1}, \, |z| \leq \theta \,\}$ bounded away from the real line.
  The concentration inequalities \eqref{eq:185} then follow from \eqref{eq:159} with $(\Im z)^{-1} \leq \cstP$.
  Finally taking the (1,1)-component in the averaged global law for the linearization in \eqref{eq:185} yields \eqref{eq:160}.
\end{proof}

  From \eqref{eq:139} the function $\Sol^{(\mathrm{sc})}_{z}(1,1)$ is a Stieltjes transform of a probability measure $\la e_1, V(d\lambda) e_1 \ra$, which together with Proposition~\ref{pr:glaw} implies that in probability (and almost surely) the empirical spectral measure of $q(\Xb,\Yb,\Yb^*)$ converges weakly to $\la e_1, V(d\lambda) e_1 \ra$ as $N\rightarrow \infty$.
  
\begin{rem}\label{rmk:yin}
In  \cite{Yin18} an induction argument on the height of rational functions (similar as in the proof of Lemma~\ref{lem:pseudospectrum}) was used to show, that the trace  and the norm of a rational expression in GUE (or Wigner) matrices converges almost surely to the trace and norm of the same rational expression in semicircular elements.
  This result implies in particular the global law for self-adjoint rational expressions in Wigner matrices as in \eqref{eq:160}
  (without giving the optimal convergence rate) and the convergence of the (pseudo)spectrum.
  We use a slightly different approach compared to \cite{Yin18}, in particular in the proof of Lemma~\ref{lem:pseudospectrum}, by working with the linearization matrix and the generalized resolvent rather than directly with the rational expressions.
  This allows us to prove  not only the global  law but also the local laws  for the \emph{linearized} models in Section~\ref{sec:proof-local-law}, leading to the global and local laws for the rational expressions \eqref{eq:159} and \eqref{eq:160} with a precise control of the convergence speed.
  This also allows us to link the main outcomes of the present paper with the results about the convergence of the (pseudo)spectrum and the global and local laws established in \cite{AltErdoKrugNemi_Kronecker} and \cite{ErdoKrugNemi_Poly}.   \end{rem}
  
  \begin{rem}
    \label{rem:matrix-valued-ext}
    The results of Sections~\ref{sec:trivial-bound}-~\ref{sec:proof-local-law} can be extended to the setting when the NC variables $x_{1},\ldots,x_{\alpha_*},y_1,\ldots,y_{\beta_*}$ are replaced by the \emph{matrices} of NC variables
    \begin{equation}
      \label{eq:237}
      x_{\alpha}
      =
      \big(x_{\alpha}(i,j)\big)_{1\leq i,j\leq l_{\alpha}}\in \Ac^{l_{\alpha}\times l_{\alpha}}
      , \quad
      y_{\beta}
      =
      \big(y_{\beta}(i,j)\big)_{\substack{1\leq i \leq l_{\beta} \\ 1 \leq j \leq k_{\beta}}}\in \Ac^{l_{\beta}\times k_{\beta}}    
    \end{equation}
    and $q(\xb,\yb,\yb^*)\in \Ac^{l_{q}\times l_{q}}$ with properly chosen dimensions that  make the corresponding matrix operations well defined.
    The linearization of such  model has the same structure \eqref{eq:118} and we can analyze it by considering the matrix entries $x_{\alpha}(i,j)$, $1\leq i,j\leq k_{\alpha}$, and $y_{\beta}(i,j)$, $1\leq i \leq l_{\beta}, 1 \leq j \leq k_{\beta}$, as the new NC variables.
    Note that in this case the diagonal entries of $x_{\alpha}$ are self-adjoint.
    
    In order to relate the spectrum of the rational expression $q(\xb,\yb,\yb^*)$ to its linearization, the generalized resolvent should be replaced by 
    \begin{equation}
      \label{eq:240}
      z\mapsto (\Lb - z \Jb_{l_q})^{-1}
      .
    \end{equation}
    where $\Jb_{l_{q}} = \sum_{i=1}^{l_{q}} E_{ii} \in \CC^{m\times m}$ with $E_{ij}$ being the standard basis of $\CC^{m\times m}$ and $m$ the dimension of the linearization.
    With the above definition the upper-left $l_{q}\times l_{q}$ corner of $(\Lb - z \Jb_{l_q})^{-1}$ is equal to  $(q(\xb,\yb,\yb^*) - I_{l_{q}}\otimes \cstone )^{-1}$, the resolvent of the rational expression $q(\xb,\yb,\yb^*)$.

    We can then proceed with the study of the corresponding random matrix model, with $x_{\alpha}(i,i)$ independent Wigner matrices and all other elements being independent \emph{i.i.d.} matrices. The linearization of this model is again a \emph{Kronecker} random matrix, therefore all the probabilistic estimates in the above analysis remain valid.
    The proof of the main results of Sections~\ref{sec:trivial-bound}-~\ref{sec:proof-local-law} follows the same lines as in the $l_{q}=l_{\alpha} = l_{\beta} = k_{\beta} = 1$ setting, the main changes are notational and are reduced to incorporating the additional structure \eqref{eq:237} and \eqref{eq:240}.
  \end{rem}

\section{Norm bounds for random sample covariance matrices}
\label{sec:norm-bounds-certain}
In this section we prove the norm bounds for random sample covariance ensembles that are used frequently in the paper.
Although various forms of these bounds are well known in the literature (see e.g. \cite{BaiYin93}, \cite{FeldSodi10} or \cite{RudeVers09}), we provide a short alternative proof to make sure that the assumptions about the random matrix ensembles and the probabilistic estimates match the setting of the present paper.

\begin{lem}
\label{lem:norm-bounds-random}  
Let $l,m \in \NN$ and let $W\in\CC^{\,l n\times m n}$ be a (non-Hermitian) random matrix with independent centered entries of variance $1/(ln)$.
Suppose that the entries of $W$ have finite moments of all orders.
Then for any $m<l$ and $\delta>0$, as $n\to \infty$, \emph{a.w.o.p.} 
\begin{equation}
  \label{eq:263}
  \big\| (WW^*)^{-1} \big\|_{\CC^{ln\times ln}}
  \leq
  \frac{1}{\Big(1-\frac{1}{\sqrt{m/l}}\Big)^{2}(1-\delta)}
  .
\end{equation}
\end{lem}
\begin{proof}
  Let $\widehat{W}_{ij}\in \CC^{n\times n}$, $1\leq i \leq l$, $1 \leq j \leq m$, be independent $n\times n$ random matrices having i.i.d. centered entries with variance $1/n$,  so that $W$ can be written as a block matrix
  \begin{equation}
    \label{eq:264}
    W=\Big(\frac{1}{\sqrt{l}}\widehat{W}_{ij}\Big)_{\substack{i = 1\ldots l \\ j=1 \ldots m}}
    .
  \end{equation}
  Denote $Q:=WW^*$.
  For any $z \in \CC$, the linearization of $Q- zI_{ln}$ has a simple structure
  \begin{equation}
    \label{eq:265}
    \Lb_{Q,z}
    =
    \left(
      \begin{array}{cc}
        -zI_{ln} & W
        \\
        W^* & -I_{mn}
      \end{array}
    \right)
    ,
  \end{equation}
  and using the representation \eqref{eq:264} can be viewed as a Kronecker matrix in i.i.d. matrices $\widehat{W}_{ij}$.
  Denote also
  \begin{equation}
    \label{eq:266}
    \Lb_{Q,z}(\cb,\cb^*)
    :=
    \left(
      \begin{array}{cc}
        -zI_{l}\otimes \scone & W^{\mathrm{(c)}}
        \\
        (W^{\mathrm{(c)}})^* & -I_{m}\otimes \scone
      \end{array}
    \right)
    ,
  \end{equation}
where $W^{\mathrm{(c)}}:=(\frac{1}{\sqrt{l}}c_{ij})_{\substack{i=1\ldots l \\ j=1 \ldots m}}$, and $\{c_{ij}\}$ is a family of freely independent circular elements.
By using the properties of the Kronecker random matrices from \cite[Theorems 2.4 and 6.1]{AltErdoKrugNemi_Kronecker}, we get that for any $\varepsilon \in (0,1)$ \emph{a.w.o.p.}
\begin{equation}
  \label{eq:270}
  \mathrm{Spec}\,(W W^*)
  \subset
  \big\{ z \, : \, \mathrm{dist}\big(0,\mathrm{supp}\, \rho^{z}\big) \leq \varepsilon\big\}
  ,
\end{equation}
where $\rho^{z}$ is the self-consistent density of states satisfying $  \mathrm{supp}\, \rho^{z} = \mathrm{Spec}\,(\Lb^{Q,z}_{0}(\cb,\cb^*))$ and 
\begin{equation}
  \label{eq:271}
\Lb^{Q,z}_{\omega}(\cb,\cb^*)
  =
  \left(
    \begin{array}{cc}
      -\omega I_{l+m}\otimes \scone & \Lb_{Q,z}(\cb,\cb^*)
      \\
      \big(\Lb_{Q,z}(\cb,\cb^*)\big)^{*} & -\omega I_{l+m}\otimes \scone 
    \end{array}
  \right)
  .
\end{equation}
Inclusion \eqref{eq:270} can be obtained by repeating the argument from Lemma~\ref{lem:pseudospectrum} leading to \eqref{eq:216}, and taking into account the $(l+m)\times (l+m)$ decomposition in \eqref{eq:265} and \eqref{eq:266}.

Fix $\delta\in (0,1)$. Then we claim that there exists a sufficiently small $\varepsilon >0$ such that
\begin{equation}
  \label{eq:272}
  \big\{ z \, : \, \mathrm{dist}\big(0,\mathrm{supp}\, \rho^{z}\big) \leq \varepsilon\big\}
  \cap
  \Big\{z \, : \, \Re z < \bigg(1-\frac{1}{\sqrt{m/l}} \bigg)^{2}(1-\delta) \Big\}
  =
  \emptyset
  .
\end{equation}
Indeed, $W^{\mathrm{(c)}} (W^{\mathrm{(c)}})^*$ follows the free Poisson distribution with rate $l/m > 1$ (equivalent to Marchenko-Pastur distribution with parameter $m/l<1$) with
  \begin{equation}
    \label{eq:269}
    \mathrm{Spec}\, (W^{\mathrm{(c)}} (W^{\mathrm{(c)}})^*)
    =
    \Big[\Big(1-\sqrt{m/l}\Big)^2,\Big(1+\sqrt{m/l}\Big)^2\Big]
    .
  \end{equation}
  There exists $C>0$ such that for any $z\in \CC$ with $\Re z< (1-\sqrt{m/l})^2(1-\delta)$
  \begin{equation}
    \label{eq:273}
    \Big\| \Big( W^{\mathrm{(c)}} (W^{\mathrm{(c)}})^* - z I_l\otimes \scone \Big)^{-1}\Big\|_{\CC^{l\times l}\otimes \, \Sc}
    \leq
    C
    ,
  \end{equation}
  and thus, by the Schur complement formula, there exists $\widetilde{C}>0$ depending only on $C$, such that
  \begin{equation}
    \label{eq:274}
    \big\| \big( \Lb_{Q,z}(\cb,\cb^*) \big)^{-1}\big\|_{\CC^{(l+m)\times (l+m)}\otimes \, \Sc}
    \leq
    \widetilde{C}
    .
  \end{equation}
  From \eqref{eq:271} and \eqref{eq:274} we get that the resolvent of the Hermitized matrix \eqref{eq:271} is bounded at $\omega=0$
  \begin{equation}
    \label{eq:275}
    \big\| \big( \Lb^{Q,z}_0(\cb,\cb^*) \big)^{-1}\big\|_{\CC^{2(l+m)\times 2(l+m)}\otimes \, \Sc}
    =
    \big\| \big( \Lb_{Q,z}(\cb,\cb^*) \big)^{-1}\big\|_{\CC^{(l+m)\times (l+m)}\otimes \, \Sc}
    \leq
    \widetilde{C}
  \end{equation}
  uniformly for $\Re z<  (1-\sqrt{m/l})^2(1-\delta)$, from which \eqref{eq:272} can be obtained using the resolvent identity (similarly as in \eqref{eq:221}-\eqref{eq:254}).

  Finally, \eqref{eq:270} and \eqref{eq:272} imply that for any $\delta \in (0,1)$ \emph{a.w.o.p.} the smallest eigenvalue of the positive definite matrix $WW^*$ is greater than $(1-1/\sqrt{m/l})^2(1-\delta)$, from which \eqref{eq:263} follows.
\end{proof}

\bibliographystyle{abbrv}
\bibliography{./bib.bib}

\end{document}